\newtheorem{theorem}{Theorem}[section]
\newtheorem{proposition}[theorem]{Proposition}
\newtheorem*{conj}{Conjecture}
\newtheorem{corollary}[theorem]{Corollary}
\newtheorem{lemma}[theorem]{Lemma}
\theoremstyle{definition}
\newtheorem{example}[theorem]{Example}
\newtheorem{remark}[theorem]{Remark}
\newtheoremstyle{named}{}{}{\itshape}{}{\bfseries}{.}{.5em}{\thmnote{#3}}
\theoremstyle{named}
\newtheorem*{namedtheorem}{Theorem}
\DeclareMathOperator{\im}{Im}
\DeclareMathOperator{\codim}{codim}
\DeclareMathOperator{\reg}{reg}
\DeclareMathOperator{\Lie}{Lie}
\DeclareMathOperator{\GL}{GL}
\DeclareMathOperator{\SL}{SL}
\DeclareMathOperator{\sl2}{\mathfrak{sl}_2}
\DeclareMathOperator{\Sp}{Sp}
\DeclareMathOperator{\SO}{SO}
\DeclareMathOperator{\ad}{ad}
\def\mini{\mathrm{min}}
\def\special{\mathrm{sp}}
\newcommand{\calP}{\mathcal{P}}
\newcommand{\fg}{\mathfrak{g}}
\newcommand{\g}{\mathfrak{g}}
\newcommand{\cg}{\mathfrak{c}}
\newcommand{\Ad}{\rm Ad}
\newcommand{\C}{\mathbb{C}}
\newcommand{\0}{\mathcal O}
\title{Local geometry of special pieces of nilpotent orbits}
\author{Baohua Fu, Daniel Juteau, Paul Levy and Eric Sommers}
\date{\today}
\begin{document}

\begin{abstract}
%Special nilpotent orbits in simple Lie algebras play an important role in representation theory.
The nilpotent cone of a simple Lie algebra is partitioned into locally closed subvarieties called special pieces, each containing exactly one special orbit. Lusztig conjectured that each special piece is the quotient of some smooth variety by a precise finite group $H$, a result proved for the classical types by Kraft and Procesi.
The present work is about exceptional types.
Our main result is a local version of Lusztig's conjecture: the intersection of a special piece with a Slodowy slice transverse to the minimal orbit in the piece is isomorphic to the quotient of a vector space by $H$.
Along the way,  we complete our previous work on the generic singularities of nilpotent orbit closures, by providing proofs for the last two `exotic' singularities.
Four further, non-isolated, exotic singularities are studied: we show that quotients $\overline{\0_{\mini}(\mathfrak{so}_8)}/\mathfrak{S}_4$, $S^2({\mathbb C}^2/\mu_3)$, $S^3({\mathbb C}^2/\mu_2)$ and $\overline{\0_{\mini}(\mathfrak{sl}_3)}/\mathfrak{S}_4$ occur as Slodowy slice singularities between nilpotent orbits in types $F_4$, $E_6$, $E_7$ and $E_8$ respectively.
We also extend, to fields other than ${\mathbb C}$, the results of Brylinski and Kostant on shared orbit pairs.
In the course of our analysis, we discover a shared pair which is missing from Brylinski and Kostant's classification.
\end{abstract}

\maketitle

\setcounter{section}{-1}

\section{Introduction}
\subsection{Geometry of special pieces}

Let $G$ be a simple algebraic group over $\C$, with Lie algebra $\g$.
For a nilpotent orbit $\0$ in $\g$, we denote by $A(\0)$ the component group of the centralizer of an element of $\0$ in the adjoint group of $G$.
%Let $\g={\rm Lie}(G)$ be a simple Lie algebra over $\C$.
The Springer correspondence \cite{Springer:weyl} is a bijection between irreducible representations of the Weyl group $W$ of $\fg$ and certain pairs $(\0, \rho)$, where $\rho$ is an irreducible representation of  $A(\0)$.
Lusztig \cite{Lusztig:special} introduced special representations of $W$ and observed that they are assigned via the Springer correspondence to certain nilpotent orbits with trivial $\rho$.
This defines a subset of the nilpotent orbits, called the special nilpotent orbits. (Spaltenstein gave an alternative definition of the special orbits in \cite[Chap. III]{Spaltenstein}.)
Although this notion is of key importance in representation theory, its underlying geometric significance has remained somewhat mysterious.
However, Lusztig made a conjecture in \cite{Lusztig:unipotent}, which we will now explain.
%In the context of the Springer correspondence, Lusztig \cite{Lusztig:special} introduced the special representations, observing subsequently that they are assigned via the Springer correspondence to nilpotent orbits with trivial $\rho$.
%This gives a remarkable subset of the nilpotent orbits, called the special nilpotent orbits.
%Although this notion is of key importance in representation theory, its underlying geometric significance has remained somewhat mysterious (cf. the introduction to \cite{Lusztig:unipotent}).

The set ${\mathcal N}(\g)/G$ of nilpotent orbits in $\g$ comes equipped with the closure partial ordering: $\0'\leq \0$ if and only if $\0'\subseteq\overline\0$.
For a special nilpotent orbit $\0$,  the {\em special piece} $\calP(\0)$ containing $\0$ is the locally closed subvariety of $\mathfrak{g}$ consisting of the union of the nilpotent  orbits in $\overline{\0}$ which are not contained in the closure of any special nilpotent orbits $\0' <\0$. 
Specifically, we have
$${\mathcal P}(\0):=\overline\0 -\bigcup_{ \0' < \0 \text{ special}} \overline{\0'}.$$
%\mbox{\small{special}}} 

The special pieces give a partition of ${\mathcal N}(\g)$ (this is clear in Spaltenstein's approach \cite{Spaltenstein}).
In \cite{Lusztig:green}, Lusztig conjectured that all special pieces ${\mathcal P}(\0)$ are rationally smooth, that is, the intersection cohomology complex of $\overline\0$ (with characteristic $0$ coefficients), restricted to ${\mathcal P}(\0)$, coincides with the constant sheaf.
Rational smoothness is trivially true for special pieces consisting of only one nilpotent orbit; this covers type $A_n$, where all orbits are special.
For the exceptional types, the conjecture was established by computational methods (see the history in \cite{Lusztig:unipotent}).
In the remaining classical types, Kraft and Procesi \cite{Kraft-Procesi:special} established it as a consequence of the stronger statement that ${\mathcal P}(\0)$ is a quotient of a smooth variety by a finite group.
%In exceptional types, this was proved using computational results on the representation theory of Weyl groups (see \cite{Beynon-Spaltenstein}, \cite{Shoji:Green} and the history in \cite[0.4]{Lusztig:unipotent}).
%In classical types, Kraft and Procesi \cite{Kraft-Procesi:special} established it as a consequence of the stronger statement that ${\mathcal P}(\0)$ is a quotient of a smooth variety by a finite group.
This led to the conjecture \cite{Lusztig:unipotent}:

\begin{conj}[``Lusztig's special pieces conjecture''] \label{LusztigConj}
Any special piece $\calP(\0)$ is a quotient $Z/H$ of a smooth variety $Z$ by a finite group $H$.
Points of $Z$ have conjugate $H$-stabilizers if and only if their images in $Z/H$ lie in the same nilpotent orbit of $\calP(\0)$.
\end{conj}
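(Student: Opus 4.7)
The plan is to reduce Conjecture \ref{LusztigConj} to a local statement via Slodowy slices, then globalize using the $G$-equivariant structure. Fix a special piece $\calP(\0)$ and let $\0_{\mini}$ denote its minimum orbit. Pick $x \in \0_{\mini}$ and let $S$ be a Slodowy slice transverse to $\0_{\mini}$ at $x$. The key local input would be an isomorphism $\calP(\0) \cap S \cong V/H$, where $V$ is a vector space and $H$ is Lusztig's canonical quotient $\overline{A}(\0_{\mini})$ --- precisely the local version of the conjecture that the paper announces in its abstract. Granting this, the transverse slice theorem provides an \'etale-local $G$-equivariant identification of a neighborhood of $\0_{\mini}$ in $\overline{\0}$ with $G \times^{G_x} S$; restricting to the special piece presents a neighborhood of $\0_{\mini}$ in $\calP(\0)$ as the quotient of the smooth variety $G \times^{G_x} V$ by $H$, with the required stabilizer behaviour along $\0_{\mini}$.

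The harder step is to extend this local quotient presentation to all of $\calP(\0)$. The natural strategy is to construct a $G$-equivariant smooth variety $Z$ carrying a commuting $H$-action such that (i) the local model $G \times^{G_x} V$ realises the preimage of a neighborhood of $\0_{\mini}$, (ii) $Z/H \cong \calP(\0)$, and (iii) $H$-conjugacy classes of point stabilisers on $Z$ are in bijection with the $G$-orbits in $\calP(\0)$. The stabiliser condition at each non-minimum orbit $\0' \subseteq \calP(\0)$ would be verified by applying the local theorem again at a point of $\0'$, thereby obtaining a local description in terms of $\overline{A}(\0')$, and comparing this with the local structure of $Z$ at $\0'$ via a further slice reduction inside the piece.

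The principal obstacle, I expect, is twofold. First, for exceptional types there is no combinatorial model analogous to the matrix constructions used by Kraft and Procesi in the classical types, so $Z$ must be assembled abstractly from local data; the gluing is not automatic, with obstructions lying in the nonabelian cohomology of $H$ acting on automorphisms of local models. Second, and more seriously, matching $H$-stabilisers with orbits for every pair $(\0',\0)$ inside the piece forces compatibility of the canonical quotients $\overline{A}(\0')$ across the various strata of $\overline{\0}$; this is not reducible to the analysis of any single generic singularity, and seems to require an inductive argument over the orbit partial order in each exceptional type, combined with the case-by-case classification of generic singularities of nilpotent orbit closures that this paper helps complete.
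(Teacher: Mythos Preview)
The statement you are attempting to prove is a \emph{conjecture} in the paper, not a theorem; the paper does not prove it. What the paper proves is the \emph{local} version (the Main Theorem): the Slodowy slice to the minimal orbit in the piece is isomorphic to $V/H$. The full global conjecture is explicitly deferred to a sequel (referenced as \cite{Fu-Juteau-Levy-Sommers:GeomSP} in the paper), so there is no ``paper's own proof'' to compare against.

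That said, your globalization strategy is not the one the paper indicates will be used. The paper hints (in the discussion of $d_4/\mathfrak{S}_4$) that the global $Z$ is the canonical affinization $\widetilde{\0}$ of the universal cover of the special orbit $\0$: this already carries a $G$-equivariant $H$-action with $\widetilde{\0}/H\cong\overline{\0}$, and the task reduces to proving that $\widetilde{\0}$ is \emph{smooth} over the special piece. This is a single global object given in advance, so no gluing or nonabelian cohomology enters; the computations behind the Main Theorem are what is needed to verify smoothness of this pre-existing $Z$ over each stratum. Your proposed approach---assembling $Z$ by patching local models $G\times^{G_x}V$---is more indirect, and the gluing obstructions you identify are genuine obstacles to it; the universal-cover approach sidesteps them entirely.

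A further issue: your suggestion that $H$ is the canonical quotient $\bar A(\0_{\mini})$ of the \emph{minimal} orbit is not what Lusztig specifies. The group $H$ is a subgroup of $\bar A(\0)$ attached to the \emph{special} orbit $\0$ at the top of the piece, and the bijection between conjugacy classes of stabilizers and orbits in the piece is part of Lusztig's prediction, not something to be deduced by applying the local theorem at each $\0'$ separately.
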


Lusztig's statement \cite{Lusztig:unipotent} of the conjecture predicts both the group  $H$, %$H=K(\0)$, 
which is a subgroup of $\bar{A}(\0)$, {\it Lusztig's canonical quotient} of $A(\0)$, as well as the correspondence between stabilizer subgroups of $H$ up to conjugacy and nilpotent orbits in the special piece.
The group $H$ is also a product of symmetric groups.
By \cite{Kraft-Procesi:special}, the conjecture remains open only for exceptional types.
Achar and Sage proposed \cite{Achar-Sage} a candidate for $Z$ by using the Deligne-Bezrukavnikov theory of perverse coherent sheaves.  
However, smoothness of Achar-Sage's $Z$ seems difficult to prove.

\subsection{Local geometry of special pieces}

Here we take a somewhat different perspective on the conjecture.
In each special piece, there is a unique minimal orbit; we study the transverse slice to that orbit in the piece, which turns out to have a surprisingly simple form:  it is the quotient of a vector space by $H$.  

Let $\mathfrak{h}_{n-1}$ be the $(n-1)$-dimensional reflection representation
of the symmetric group $\mathfrak{S}_{n}$.
%Writing $H \simeq H_1 \times \dots \ H_m$
Taking into account the symplectic structure of the slice, we have the following result.

% (cf. \cite[Thm. 1.3]{FJLS}, which is however only valid for exceptional types).
%\begin{theorem}\label{main}
%\begin{namedtheorem}[Main Theorem] \label{main}
%Consider a special piece in a simple Lie algebra of exceptional type. 
%A Slodowy transverse slice to the minimal orbit in the piece is isomorphic to
%$$
%(\mathfrak{h}_{n-1} \oplus \mathfrak{h}_{n-1}^*)^k/\mathfrak{S}_{n}
%$$
%where $k$ and $n$ are (uniquely determined) integers and
%$\mathfrak{h}_{n-1}$ is the $(n-1)$-dimensional reflection representation
%of the symmetric group $\mathfrak{S}_{n}$.
%%Here $\mathfrak{S}_n$ is a subgroup of $\bar{A}(\0)$.
%\end{namedtheorem}

\begin{namedtheorem}[Main Theorem] \label{main}
	Consider a special piece in a simple Lie algebra.
	A Slodowy transverse slice to the minimal orbit in the piece is isomorphic to a product of $r$ varieties of the form
	$$
	(\mathfrak{h}_{n-1} \oplus \mathfrak{h}_{n-1}^*)^k/\mathfrak{S}_{n}
	$$
	where $r$ is the number of irreducible factors in the reflection group $H$.
	In the exceptional groups, $r=1$ always, so that $H=\mathfrak{S}_{n}$ and there is a single value of $k$.
	In the classical groups $n=2$ always and the value of $k$ varies across the product.
\end{namedtheorem}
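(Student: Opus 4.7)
My plan is to reduce the problem to the ambient Slodowy slice and then proceed by case-by-case analysis, using the classification of simple Lie algebras, of their special orbits, and of the generic singularities between them.

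First I would fix a special orbit $\calO$ and an $\mathfrak{sl}_2$-triple $(e_0, h_0, f_0)$ with $e_0$ in the minimal orbit $\calO_{\mini}$ of the special piece $\calP(\calO)$. Writing $S_{e_0} = e_0 + \mathfrak{z}_{\g}(f_0)$ for the Slodowy slice in $\g$, transitivity of slices reduces the problem to a Poisson-equivariant description of
$$\calP(\calO) \cap S_{e_0} \;=\; \bigl(\overline{\calO} \cap S_{e_0}\bigr) \setminus \bigcup_{\calO' < \calO \text{ special}} \bigl(\overline{\calO'} \cap S_{e_0}\bigr).$$
The normal slice at $e_0$ in each $\overline{\calO'}$ is controlled by the generic singularity along the stratum $\calO_{\mini}$, so the task becomes to identify this local model and then carve out the non-special components, keeping track of the symplectic structure.

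For classical types the answer follows quickly from Kraft and Procesi: their construction realizes $\calP(\calO)$ as an explicit quotient $Z/H$ with $Z$ smooth, and $H$ is a product of copies of $\mathfrak{S}_2$ acting on each symplectic factor as a two-dimensional reflection representation. Since Slodowy slices commute with their smooth cover, the transverse slice acquires the required product form $\prod_i (\mathfrak{h}_1 \oplus \mathfrak{h}_1^*)^{k_i}/\mathfrak{S}_2$, with $n=2$ throughout and the $k_i$ determined by the partition-combinatorics of the piece. For exceptional types I would enumerate the finitely many non-trivial special pieces in $F_4, E_6, E_7, E_8$, read off Lusztig's prediction $H = \bar A(\calO) = \mathfrak{S}_n$, and compare the minimal degenerations inside $\calP(\calO)$ against the generic singularities classification. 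Whenever the minimal degenerations are all of type $A_{n-1}$, the conclusion is nearly automatic: the $k$ transverse $A_{n-1}$-slices, each of which is an $\mathfrak{S}_n$-quotient of $\mathfrak{h}_{n-1}\oplus \mathfrak{h}_{n-1}^*$, are glued along the diagonal $\mathfrak{S}_n$-action dictated by $\bar A(\calO)$, yielding exactly $(\mathfrak{h}_{n-1} \oplus \mathfrak{h}_{n-1}^*)^k/\mathfrak{S}_n$.

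The main difficulty, and the reason the paper's earlier sections are required, lies in those cases where the minimal degeneration is one of the exotic singularities. Here I would invoke the completion of the generic singularities classification given earlier in the paper---including the last two isolated exotic types and the two newly identified non-isolated ones, $\overline{\calO_{\mini}(\mathfrak{so}_8)}/\mathfrak{S}_4$ and $\overline{\calO_{\mini}(\mathfrak{sl}_3)}/\mathfrak{S}_4$, occurring in $F_4$ and $E_8$ respectively---to pin down the local model, then verify directly that after excising the appropriate special closures the residual stratum again assumes the predicted form. For several such verifications I would appeal to the shared orbit pair machinery (extended beyond $\C$ in the paper, with the missing Brylinski-Kostant pair supplied) to transfer the computation to a smaller Lie algebra where the $\mathfrak{S}_n$-quotient description is transparent. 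The most delicate bookkeeping should occur in $E_8$, where the largest symmetric group appears and several exotic degenerations can stack inside a single special piece; there I expect to do the most careful dimension counting and checking that the ``residual'' variety is irreducible of the expected dimension before identifying it with the predicted quotient.
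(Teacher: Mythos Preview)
Your proposal has several genuine gaps that would prevent it from going through.

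First, the claim that ``the minimal degenerations are all of type $A_{n-1}$'' and that an $A_{n-1}$ slice ``is an $\mathfrak{S}_n$-quotient of $\mathfrak{h}_{n-1}\oplus\mathfrak{h}_{n-1}^*$'' is simply false for $n\geq 3$: a Kleinian $A_{n-1}$ singularity is $\mathbb{C}^2/\mu_n$, which is two-dimensional, whereas $(\mathfrak{h}_{n-1}\oplus\mathfrak{h}_{n-1}^*)/\mathfrak{S}_n$ has dimension $2(n-1)$. There is no ``gluing of $k$ transverse $A_{n-1}$-slices along the diagonal $\mathfrak{S}_n$-action'' --- the slice from $\0_m$ to $\overline\0$ is a single irreducible variety, and its identification with the desired quotient requires direct work. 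Relatedly, the two non-isolated exotic singularities you cite, $d_4/\mathfrak{S}_4$ and $a_2/\mathfrak{S}_4$, are \emph{not} used in the proof of the Main Theorem at all: they concern the degenerations $F_4(a_3)>A_2$ and $E_8(a_6)>E_8(b_6)$, neither of which has its base point in the minimal orbit of a special piece.

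What the paper actually does for the hard exceptional cases is quite different. For $H=\mathfrak{S}_4$ (in $F_4$) and $H=\mathfrak{S}_5$ (in $E_8$), the proof is a direct computer verification: one finds generators and relations for $\mathbb{C}[\mathfrak{h}_{n-1}\oplus\mathfrak{h}_{n-1}^*]^{\mathfrak{S}_n}$ by polarisation, then checks (via matrix-power conditions like $\rho(x)^5=0$ or $(\ad x)^{11}=0$) that these equations cut out the slice. For the $\mathfrak{S}_3$ cases with $k=1$, the argument is a reduction: one locates a special orbit $\0'$ just below $\0_m$ such that the reductive centralizer at $\0'$ has a $G_2$ factor containing both $\0$ and $\0_m$ as its subregular and minimal orbits, thereby reducing to the $G_2$ case already handled. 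The one $\mathfrak{S}_3$ case with $k=2$ uses the almost-shared-orbit description of $(\mathfrak{h}_2\oplus\mathfrak{h}_2^*)^2/\mathfrak{S}_3$ as an $\Sp_4\times\mathbb{G}_m$-orbit closure. For classical types, the paper does not pass through the global Kraft--Procesi cover $Z/H$; instead it reads off directly from the partition combinatorics that the reductive centralizer of $e_0\in\0_m$ contains factors $\mathfrak{sp}_{2k_i}$, and that a sum of minimal nilpotents in these factors lands in $\0$, identifying the slice with a product $\prod_i c_{k_i}$.
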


It is easily verified in each case that the above isomorphism is ${\mathbb C}^\times$-equivariant, where the action on the Slodowy slice is given in \S \ref{transverse}.
It therefore follows from \cite[Thm. 3.1]{Nam2} that this is an isomorphism of symplectic singularities.

In the exceptional groups the number $n$ for each special orbit is given in \cite[\S 6]{Lusztig:unipotent}.
The value of $k$ then follows from the dimension of the transverse slice.
Note that the stabilizer of a point of $({\mathfrak h}_{n-1}\oplus{\mathfrak h}_{n-1}^*)^k$ is a parabolic subgroup of $\mathfrak{S}_n$, and points have conjugate stabilizers if and only if their images in the quotient lie in the same nilpotent orbit.
One thus obtains an order-reversing bijection between the parabolic subgroups of $\mathfrak{S}_n$ up to conjugacy and the nilpotent orbits lying in the special piece.
This is identical with the bijection given in \cite[\S 6]{Lusztig:unipotent}.
(See Remark \ref{orderremark}.)
%This bijection is identical with the one described by Lusztig in \cite[\S 6]{Lusztig:unipotent}.

If $n=2$, then the special piece consists of two orbits, and in this case the Main Theorem follows from \cite{FJLS}, since the Slodowy slice is the generic singularity, which in these cases is always isomorphic to the closure of a minimal orbit in a symplectic Lie algebra, i.e., to $\C^{2k}/\mathfrak{S}_2$.
(Note that it is not true, as stated in the paragraph after \cite[Thm. 1.3]{FJLS}, that we always have $k=1$ in exceptional types, see the Table in \S \ref{Section:locallspconj}.)

%\eric{Do we want to just refer to the table as we do for S2 cases?}
%\dan{The special orbits with $n=3$ are:
%\begin{itemize}
%    \item $G_2(a_1)$ in type $G_2$;
%    \item $D_4(a_1)$ in type $E_6$;
%    \item $D_4(a_1)$ and $E_7(a_5)$ in type $E_7$;
%    \item $D_4(a_1)$, $D_4(a_1)+A_1$ and $E_8(b_5)$ in type $E_8$.
%\end{itemize}

%For $n>3$, there are just two cases: 
%\begin{itemize}
%    \item $F_4(a_3)$ in type $F_4$ ($n=4$);
%    \item $E_8(a_7)$ in type $E_8$ ($n=5$).
%\end{itemize}
%}

There are precisely two special orbits with $n>3$: $F_4(a_3)$ in type $F_4$ (with $n=4$) and $E_8(a_7)$ in type $E_8$ (with $n=5$); in both cases $k=1$.
There are seven further special orbits with $n=3$ (see the Table in \S \ref{Section:locallspconj}).
Among these, we have $k=1$ except one case: the special piece of $D_4(a_1)+A_1$ in $E_8$.
We note in particular that this implies that the closure of $D_4(a_1)+A_1$ does not admit a symplectic resolution.
It is interesting to remark that for all cases with $n \geq 3$ and $k=1$, the special orbit $\0$ is even, hence its closure admits a symplectic resolution, given by a Springer map
$T^*(G/P) \to \overline{\0}$.
Restricting to the transverse slice, this gives a symplectic resolution of $(\mathfrak{h}_{n-1} \oplus \mathfrak{h}_{n-1}^*)/\mathfrak{S}_{n}$.
On the other hand, it is known \cite{Fu-Namikawa} that the latter admits a unique symplectic resolution given by the Hilbert-Chow morphism from the Hilbert scheme of points.  In particular, this shows that the Hilbert-Chow resolution is embedded in the Springer resolution, at least for $\mathfrak{S}_3, \mathfrak{S}_4$ and $\mathfrak{S}_5$.
This is a very interesting example, although a similar but simpler phenomenon appears in classical types, where the Hilbert-Chow resolution of $S^2(\mathbb{C}^2/\{\pm 1\})$ appears as a generalized Springer resolution of the Slodowy slice singularity from $\0_{[2,2,2]}$ to $\overline{\0_{[4,2]}}$ in $\mathfrak{sp}_6$ \cite{Fu:wreath}.
(See also \S \ref{otherquotsubsec}.)

%In the classical groups  our statement in \cite[Thm. 1.3]{FJLS} was not correct. 
%In the classical types the Slodowy slice of a special piece in the classical types is isomorphic to a {\it product} of varieties of the form given in the Theorem, with $n=2$ (this is misstated in \cite[Thm. 1.3]{FJLS}).
In classical types $H = \mathfrak{S}_2\ldots\mathfrak{S}_2$ (with $r$ factors) is the subgroup of $\bar{A}(\0)$ 
defined by Lusztig.  There are positive integers $k_1, k_2, \dots, k_r$ so that the isomorphism in the Main Theorem is to
$$\left(\prod_{i=1}^r({\mathfrak h}_1\oplus{\mathfrak h}_1^*)^{k_i})\right)/\mathfrak{S}_2\ldots\mathfrak{S}_2,$$
with each copy of $\mathfrak{S}_2$ acting diagonally on the respective term in the product.
The proof comes down, as in the $\mathfrak{S}_2$ case above in the exceptional groups, to the observation that the intersection of the Slodowy slice with the special orbit is equal to the product of the closures of the minimal orbits in $\mathfrak{sp}_{2k_i}$ for $i =1, \dots, r$.  Note that we misstated the result in the classical types in  \cite[Thm. 1.3]{FJLS}. 
%(contrary to our previous claim \cite[Thm. 1.3]{FJLS}), where the Slodowy slice singulairity of a special piece is isomorphic to  a product of minimal singularities of type $C$, and can also be described as a quotient $({\mathfrak h}_1\oplus{\mathfrak h}_1^*)^k/\mathfrak{S}_2\ldots\mathfrak{S}_2$.

%We should point out that Theorem \ref{main} fails in classical types (contrary to our previous claim \cite[Thm. 1.3]{FJLS}), where the Slodowy slice singulairity of a special piece is isomorphic to  a product of minimal singularities of type $C$, and can also be described as a quotient $({\mathfrak h}_1\oplus{\mathfrak h}_1^*)^k/\mathfrak{S}_2\ldots\mathfrak{S}_2$.  %% this is still not correct, need to take product

%Two immediate consequences are in order. The first one follows
%directly from the definition of transverse slices and Theorem
%\ref{main}, which proves Lusztig's conjecture up to a smooth
%morphism.
%\begin{corollary}
%For every special piece $\calP(\0)$, there exists a vector space $V$
%endowed with an action of $S_{n+1}$ (for some $n$ uniquely
%determined by $\0$) such that we have a surjective smooth morphism
%$$
%G \times V/\mathfrak{S}_{n+1} \to \calP(\0).
%$$
%\end{corollary}

As a direct consequence of the Main Theorem, we obtain, as a consequence of unibranchness, normality of special pieces in the exceptional types.
This is an important subconjecture of Lusztig's special pieces conjecture (as observed in \cite[Conj. 1.2]{Achar-Sage}).
Similarly (assuming unibranchness), we also obtain a new `geometric' proof of rational smoothness in exceptional types.
On the other hand, we need to use that the special pieces are normal in our proof of the Main Theorem in classical types.

\begin{corollary}
Every special piece in the exceptional types is rationally smooth and normal.
\end{corollary}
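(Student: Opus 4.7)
The plan is to verify normality and rational smoothness pointwise on the piece, reducing each local germ to a finite quotient of a vector space via the Main Theorem and standard slice-theoretic transport.

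First I would fix $x\in \0_{\mini}$, a point of the minimal orbit of a special piece $\calP(\0)$ in an exceptional Lie algebra. By the Main Theorem, the intersection of the piece with the Slodowy slice $S_x$ is $Y:=(\mathfrak{h}_{n-1}\oplus\mathfrak{h}_{n-1}^*)^k/\mathfrak{S}_n$. As a quotient of a smooth affine variety by a finite linear group, $Y$ is normal (the invariant ring is integrally closed in its fraction field) and is rationally smooth: a finite quotient of a smooth variety is a rational cohomology manifold, since the constant sheaf on the quotient appears as a direct summand of the pushforward of the constant sheaf from the smooth cover and therefore coincides with the IC sheaf up to the usual shift. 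The Slodowy slice theorem provides a smooth morphism $G\times S_x\to \fg$ over a $G$-stable neighborhood of $\0_{\mini}$, which upon restriction becomes a smooth morphism $G\times (\calP(\0)\cap S_x)\to \calP(\0)$ over a neighborhood of $\0_{\mini}$ in the piece. Both normality and rational smoothness descend and ascend through smooth morphisms, so both properties hold on an open neighborhood of $\0_{\mini}$ in $\calP(\0)$.

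Next I would propagate the conclusion to the other orbits in $\calP(\0)$ using the transitivity of transverse slices. For $y$ in a higher orbit $\0'\subseteq \calP(\0)$, the analytic germ $(\calP(\0),y)$ is, up to a smooth factor, isomorphic to the germ of $Y$ at the image of $y$. Applying the Luna slice theorem to the $\mathfrak{S}_n$-action on $V=(\mathfrak{h}_{n-1}\oplus\mathfrak{h}_{n-1}^*)^k$ further identifies this germ with $N/W_I$, where $W_I\leq \mathfrak{S}_n$ is the stabilizer of a lift of $y$ and $N$ is a complement to the fixed subspace. Since stabilizers in a reflection representation of $\mathfrak{S}_n$ are parabolic and parabolic subgroups are closed under intersection, $W_I$ is a parabolic subgroup of $\mathfrak{S}_n$; hence $N/W_I$ is again a finite quotient of a vector space, and is thereby normal and rationally smooth. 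Assembling these germs, $\calP(\0)$ is normal and rationally smooth at every point.

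The main obstacle I anticipate lies in the transitivity step: one must verify that Slodowy's smooth covering of $\calP(\0)$ near $\0_{\mini}$, further restricted along a higher orbit, matches the Luna-type slice decomposition of $Y$ at the corresponding image point in such a way that the resulting transverse germ is the genuine one. Once this compatibility is checked, together with the observation that unibranchness (established as a subconjecture elsewhere and used already in the proof of the Main Theorem to pin down the Zariski-local description of the slice rather than merely a formal or analytic one) is in force, the corollary follows.
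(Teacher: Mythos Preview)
Your proof is correct and follows the route the paper indicates (the paper gives no detailed argument beyond calling the corollary a direct consequence of the Main Theorem together with unibranchness). One simplification worth noting: for a Slodowy slice the action map $G\times{\mathcal S}_m\to\g$ is smooth \emph{everywhere} (the locus where $[\g,s]+\g^e\neq\g$ is closed, ${\mathbb G}_m$-stable, and would have to contain the unique fixed point $f$, a contradiction), and since ${\mathcal S}_m$ meets every orbit $\0'\geq\0_m$ in $\calP(\0)$, the induced map $G\times({\mathcal S}_m\cap\calP(\0))\to\calP(\0)$ is smooth and surjective; normality and rational smoothness of the slice therefore transfer to the whole piece in one step, making your second paragraph (the Luna reduction at higher strata) and the compatibility concern you flag unnecessary.
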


\subsection{Exotic singularities of minimal degenerations}

A pair $\0>\0'$ of nilpotent orbits in $\g$ is called a {\it degeneration}; if $\0$ and $\0'$ are adjacent in the partial order, then the pair is a {\it minimal degeneration}.
Associated to any degeneration $\0>\0'$ is a smooth equivalence class of singularities ${\rm Sing}(\0,\0')$, represented by the intersection ${\mathcal S}\cap\overline\0$, where ${\mathcal S}$ is a Slodowy slice at an element of $\0'$.
We call the intersection ${\mathcal S}\cap\overline\0$ a {\it Slodowy slice singularity}.
The minimal degenerations are precisely those degenerations for which the Slodowy slice singularity is isolated.
Two particular cases lie at opposite extremes in the poset ${\mathcal N}(\g)/G$:
\begin{itemize}
    \item  if {$\0=\0_\mini$} is the minimal nilpotent orbit and $\0'=\{ 0\}$ then the associated singularity is called a {\it minimal singularity}, denoted $a_n$, $b_n$, etc.
    \item  if $\0$ is the regular and $\0'$ is the subregular nilpotent orbit then, according to the Brieskorn-Slodowy-Grothendieck theorem, the associated singularity is a simple singularity of the same type as $\g$, denoted $A_n$, $B_n$, etc.
\end{itemize}
%\eric{I don't we need this now: (Following Slodowy, the non-simply laced types are simple singularities with appropriate monodromy action \cite[\S 6.2]{Slodowy:book}.
%Specifically, $B_n$ (resp. $C_n$, $F_4$, $G_2$) denotes an $A_{2n-1}$ (resp. $D_{n+1}$, $E_6$, $D_4$) singularity with an ``outer'' action of $\mathfrak{S}_2$ (resp. $\mathfrak{S}_2$, $\mathfrak{S}_2$, $\mathfrak{S}_3$).)}
%\footnote{Following Slodowy, the non-simply laced types are simple singularities with appropriate monodromy action \cite[\S 6.2]{Slodowy:book}.}

In the 1980s, the seminal papers \cite{Kraft-Procesi:GLn,Kraft-Procesi:classical} of Kraft and Procesi completely classified ${\rm Sing}(\0,\0')$ for minimal degenerations $\0>\0'$ in classical Lie algebras.
It turns out that these singularities are simple, minimal, or a union of two simple singularities of type {$A_{2k-1}$}, meeting transversely at the common singular point.
More recently, work of the authors \cite{FJLS} answered this question for exceptional Lie algebras (up to normalization in a few cases).
There are several more exotic possibilities in exceptional types, including the following, each appearing once:
\begin{itemize}
    \item the quotient ${\mathbb C}^4/\mu_3$ where $\mu_3$ is cyclic of order 3;
    \item a partial resolution of the quotient ${\mathbb C}^4/\Gamma$ where $\Gamma$ is a dihedral group of order 10. (This very special example led to the discovery of a
    new family of isolated symplectic singularities with trivial local
    fundamental group, see \cite{BBFJLS}.); 
    \item a non-normal singularity $\mu$ with normalization a simple singularity of type $A_3$;
    \item the quotient $a_2/\mathfrak{S}_2$, where $\mathfrak{S}_2$ acts via an outer involution of $\mathfrak{sl}_3$.
\end{itemize}

In \cite{FJLS} we stated, but did not prove, that the last two singularities are associated to the pairs $(D_7(a_1),E_8(b_6))$ in $E_8$ and $(A_4{+}A_1,A_3{+}A_2{+}A_1)$ in $E_7$.
One of the main goals of this paper is to provide proofs of these statements, completing the picture for minimal degenerations (up to normalization) in exceptional types.
The singularity $\mu$ is explained in terms of a more eye-opening result: it is isomorphic to the singular locus of 
a quotient $a_2/\mathfrak{S}_4$, that we show (Thm. \ref{a2S4thm}) is associated to the non-minimal degeneration $E_8(a_6) > E_8(b_6)$, which contains $D_7(a_1)$ as the unique intermediate orbit.
(This action of $\mathfrak{S}_4$ is generated by outer involutions and is defined in \S \ref{outerspace}.)

\subsection{The quotient $d_4/\mathfrak{S}_4$}

Another (non-isolated) exotic singularity plays a starring role here: we prove in Thm. \ref{d4S4thm} that the singularity associated to the degeneration $F_4(a_3)>A_2$ of special orbits in type $F_4$ is $d_4/\mathfrak{S}_4$, for an action of $\mathfrak{S}_4$ by ``affine diagram automorphisms''.
Note that this degeneration subsumes the exceptional special piece in type $F_4$.
This result will be important in subsequent work \cite{JLS:Duality}, where the singularities associated to minimal degenerations of special orbits (in classical or exceptional types) will be classified.
A version of Lusztig-Spaltenstein duality will also be observed for such singularities.

Thm. \ref{d4S4thm} has an interesting interpretation in terms of the universal cover of the special orbit $\0=\0_{F_4(a_3)}$.
Recall from \cite{Brylinski-Kostant:JAMS} that this universal cover has a canonical affinization $\widetilde{\0}$, which is equipped with an action of $\mathfrak{S}_4$ such that the projection to $\overline\0$ is the quotient by $\mathfrak{S}_4$.
In subsequent work \cite{Fu-Juteau-Levy-Sommers:GeomSP}, the second, third and fourth authors will deduce from our Main Theorem (specifically, from the computations required to obtain it) that $\widetilde{\0}$ is smooth over the special piece containing $\0$; Theorem \ref{d4S4thm} says on the other hand that $\widetilde\0$ has a $d_4$ singularity at points lying above $\0_{A_2}$.
The former statement is Lusztig's special pieces conjecture in this case (which will be addressed in complete generality in \cite{Fu-Juteau-Levy-Sommers:GeomSP}).

%Many of the singularities we study here, in particular including $d_4/\mathfrak{S}_4$ and $a_2/\mathfrak{S}_4$, have been studied in the recent mathematical physics literature in the context of (discrete quotients of) Coulomb branches of quiver gauge theories \cite{hanany2023actions}.

We remark the following interesting consequence of the results concerning $d_4/\mathfrak{S}_4$ and $a_2/\mathfrak{S}_4$.
Since $F_4(a_3)$ and $E_8(b_6)$ are even orbits, their closures admit symplectic resolutions.
By restricting to the Slodowy slice, we therefore obtain symplectic resolutions of $d_4/\mathfrak{S}_4$ and $a_2/\mathfrak{S}_4$, a highly non-trivial result.

\subsection{Two other quotient singularities}\label{otherquotsubsec}

Two cases in the Main Theorem, namely those leading to the singularities $({\mathfrak h}_3\oplus {\mathfrak h}_3^*)/\mathfrak{S}_4$ and $({\mathfrak h}_4\oplus{\mathfrak h}_4^*)/\mathfrak{S}_5$, proved more difficult than the others, and were only solved with the aid of computer calculations.
It is natural to ask whether any other symplectic quotient singularities $(V\oplus V^*)/\Gamma$ (where $\Gamma$ is a complex reflection group of rank greater than 1) appear as slices between nilpotent orbits.
Based on our earlier work \cite{FJLS} on generic singularities, we identified two likely candidates (one in type $E_6$ and one in type $E_7$).
It turns out that the computational methods applied to the $({\mathfrak h}_3\oplus{\mathfrak h}_3^*)/\mathfrak{S}_4$ and $({\mathfrak h}_4\oplus{\mathfrak h}_4^*)/\mathfrak{S}_5$ cases can be easily adapted to these other cases.

For the first example, let ${\mathcal S}$ be the Slodowy slice to an element of the nilpotent orbit in $E_7$ with Bala-Carter label $A_4+A_2$, and let $\0$ be the orbit labelled $E_7(a_5)$.
We show that ${\mathcal S}\cap\overline\0$ is isomorphic to $S^3({\mathbb C}^2/\{ \pm 1\})$, equivalently, to $({\mathfrak h}_{B_3}\oplus {\mathfrak h}_{B_3}^*)/W(B_3)$, where ${\mathfrak h}_{B_3}$ denotes a Cartan subalgebra of $\mathfrak{so}_7$.
This generalizes the example of $S^2({\mathbb C}^2/\{ \pm 1\})$ discussed in \cite{Fu:wreath}.
Note, however, that there exists a singularity between adjacent strata in $S^3({\mathbb C}^2/\{ \pm 1\})$ which is equivalent to the non-normal unibranch singularity denoted $m$ in \cite{FJLS}.
(Specifically, these are the strata corresponding to the orbits $D_6(a_2)>A_5+A_1$ in $E_7$; see Rk. \ref{WB3remark}(a) for discussion of the strata.)
Hence, $S^3({\mathbb C}^2/\{ \pm 1\})$ cannot appear in classical types, where failure of normality always comes down to branching in codimension 2 \cite{Kraft-Procesi:classical}.

Secondly, let ${\mathcal S}'$ be the Slodowy slice to an element of the $A_4+A_1$ orbit in $E_6$, and let $\0'$ be the orbit labelled $E_6(a_3)$.
We show that ${\mathcal S}'\cap\overline{\0'}$ is isomorphic to $S^2({\mathbb C}^2/\mu_3)$ (where $\mu_3$ denotes a cyclic subgroup of ${\rm SL}_2$ of order 3).
Note that this is a quotient singularity ${\mathbb C}^4/\Gamma$, where $\Gamma$ is the complex reflection group $G(3,1,2)=\mu_3\wr\mathfrak{S}_3$.
We know of no other nilpotent slice singularities which are quotient singularities for {complex} (not real) reflection groups.

Similarly to \cite{Fu:wreath}, the Hilbert-Chow resolutions of $S^3({\mathbb C}^2/\mu_2)$ and $S^2({\mathbb C}^2/\mu_3)$ are included in the appropriate generalized Springer resolutions of $E_7(a_5)$ and $E_6(a_3)$ (which exist because these orbits are distinguished, therefore even).

We note that for each simply-connected simple algebraic group of exceptional type, there is exactly one nilpotent orbit $\0$ supporting a cuspidal local system, namely  $G_2(a_1)$, $F_4(a_3)$, $E_6(a_3)$, $E_7(a_5)$ and $E_8(a_7)$ respectively \cite{Lusztig:icc}.
For each of those, we found a large slice isomorphic to a symplectic quotient singularity $U/\Gamma$.
Furthermore, the fundamental group $\pi_1(\0)$ plays a role: we have $\Gamma=\pi_1(\0)$ in types $G_2$, $F_4$, $E_8$, which are precisely the cases covered by our Main Theorem; for $E_6(a_3)$ and $E_7(a_5)$, the centre of $G$ acts non-trivially on the cuspidal local system, and $\pi_1(\0)$ is a natural quotient of $\Gamma$.

\subsection{Shared orbit pairs}

Almost all of the results mentioned above involve finite quotients of nilpotent orbit closures or symplectic vector spaces.
These quotients can be understood via elementary invariant-theoretic methods.
As a testing ground, we first apply these methods to the shared orbit pairs classified in \cite{Brylinski-Kostant:JAMS}.
Brylinski and Kostant identified the pairs $\g_0\subset\g$ of Lie algebras with $\g$ simple for which there exist nilpotent orbits $\0_0\subset\g_0$ and $\0\subset\g$ such that there is a finite $G_0$-equivariant morphism from $\overline\0$ to $\overline{\0_0}$.
It was observed in \cite[Prop. 5.1]{Brylinski-Kostant:JAMS} that, when classifying the pairs $(\g_0,\g)$, it suffices to consider the minimal nilpotent orbit in $\g$.
Here we will fill in the gaps and classify all pairs $(\0_0,\0)$.
(We note here that Panyushev \cite{Panyushev:shared-orbits} has also recently, by rather different methods, provided a new confirmation in characteristic zero of those shared orbits which arise as $\mathfrak{S}_2$-quotients of minimal nilpotent orbit closures.)
Our arguments are relatively elementary, so these results carry over to positive characteristic, under mild restrictions.
The complete list is provided in Prop. \ref{p.sharedorbits}.
Surprisingly, we have discovered one pair $(\g_0,\g)$ which is missing from \cite{Brylinski-Kostant:JAMS}, as explained in the following example.
%, where $\0$ is minimal.

\begin{example}
Let $\g^\dag=\mathfrak{so}_8\subset\g'=\mathfrak{so}_9$ in the standard way.
Up to $\SO_8$-conjugacy, there are three subalgebras of $\g^\dag$ isomorphic to $\mathfrak{so}_7$; these are permuted transitively by triality automorphisms of $\g^\dag$.
Let $\g$ be one of the non-standard subalgebras isomorphic to $\mathfrak{so}_7$ (e.g. the fixed points for the graph involution swapping $\alpha_1$ and $\alpha_3$, using Bourbaki's numbering of simple roots).
Note that $\g^\dag=\g\oplus V$, where $V$ is the natural module for $\g$, but $\g'=\g\oplus V\oplus V(\varpi_3)$.
(This example contradicts the assertion in the penultimate paragraph of the proof of \cite[Thm. 5.9]{Brylinski-Kostant:JAMS}, that no triple $\g\subsetneq\g^\dag\subsetneq\g'$ exists with ${\rm rank}(\g^\dag)={\rm rank}(\g')$; Brylinski and Kostant considered the triples $\mathfrak{so}_n\subset\mathfrak{so}_{n+1}\subset\mathfrak{so}_{n+2}$ but omitted to account for the case $n=7$ and a non-standard embedding.)
Let $\0'$ (resp. $\0^\dag$, $\0$) be the minimal nilpotent orbit in $\g'$ (resp. the orbit with partition $[3,1^5]$ in $\g^\dag$, $[3,2^2]$ in $\g$).
It was proved by Brylinski and Kostant that the projection $\g'\rightarrow\g^\dag$ identifies $\overline{\0^\dag}$ with $\overline{\0'}/\mathfrak{S}_2$.
On the other hand, we will show later (see Example \ref{sonexamples}(a)) that the projection $\g^\dag\rightarrow \g$ identifies $\overline{\0}^\dag$ with the normalization of $\overline{\0}$.
Hence $(\g,\g')$ is missing from the list in \cite[Thm. 5.9]{Brylinski-Kostant:JAMS}.
\end{example}

Our approach to shared orbit pairs also helps to shed light on the unique case in the Main Theorem with $n>2$ and $k>1$.
Specifically, let $\0$ be the orbit in type $E_8$ with label $D_4(a_1)+A_1$ and let ${\mathcal S}$ be a Slodowy slice to an element of the orbit $2A_2$.
We show in \S \ref{doubled} that ${\mathcal S}\cap\overline\0$ is isomorphic to $(d_4\times d_4)/\mathfrak{S}_3$ (a kind of `doubled' version of $g_2^{\special}=d_4/\mathfrak{S}_3$).
As a by-product, one obtains a new proof of the branched singularity $2g_2^{\special}$ associated to the degeneration $(D_4(a_1), 2A_2)$.
It also provides a new explanation for the occurrence of the four-dimensional non-normal singularity, denoted $m'$ in \cite{FJLS}.

\subsection{Structure of the paper}
The paper is organised as follows.
After some preliminaries in \S \ref{prelim}, in \S \ref{sharedorbits} we reprove the results of \cite{Brylinski-Kostant:JAMS} on existence of shared orbit pairs, and we establish the appearance of the `doubled version' $(d_4\times d_4)/\mathfrak{S}_3$ of $g_2^{\special}$ in type $E_8$.
In \S \ref{almostsharedorbits} we develop an approach to studying {\it almost shared orbits}: we define a nilpotent $G$-orbit $\0$ in $\g$ to be {\it almost shared} with a subgroup $G_0$ if there is a $G_0$-orbit in $\0$ of codimension 1.
Our main trick is to consider the expanded group $G_0\times{\mathbb G}_m$ (where ${\mathbb G}_m$ acts by scaling), which has an open orbit in $\0$.
This gives us a means to understand certain quotients $\overline\0/\Gamma$, in a similar vein to our arguments for shared orbit pairs.
We apply this to produce concrete descriptions of (i) $a_2/\mathfrak{S}_2$; (ii) ${\mathfrak h}_2^{\oplus r}/\mathfrak{S}_3$, for actions defined in \S \ref{outersln} and \S \ref{S3subsec}.
In \S \ref{F4sec} we further modify the previous approach in order to understand the quotient $d_4/\mathfrak{S}_4$.
Although we cannot describe it as the closure of an orbit, we do find (after some work) an explicit description as the closure of the image of a certain morphism.
A quick computation then verifies that the Slodowy slice singularity associated to the exceptional special degeneration is isomorphic to $d_4/\mathfrak{S}_4$.  
In \S \ref{compsec}, five further singularities are studied using computational arguments: (i) the singularities for the degenerations $F_4(a_3)>A_2+\tilde{A}_1$ and
$E_8(a_7)>A_4+A_3$ are shown to be $({\mathfrak h}_3\oplus{\mathfrak
h}^*_3)/\mathfrak{S}_4$ and $({\mathfrak h}_4\oplus{\mathfrak
h}^*_4)/\mathfrak{S}_5$ respectively; (ii) the singularities for the degenerations $E_7(a_5)>A_4+A_2$ in $E_7$ and $E_6(a_3)>A_4+A_1$ in $E_6$ are shown to be $S^3({\mathbb C}^2/\{ \pm 1\})$ and $S^2({\mathbb C}^2/\mu_3)$ respectively;
(iii) the exotic quotient $a_2/\mathfrak{S}_4$ is identified as the singularity of $E_8(a_6)>E_8(b_6)$.
In \S \ref{Section:locallspconj} we complete the proof of the Main Theorem.
\vspace{0.3 cm}

{\em Acknowledgements:}
Several of these results required computer verification.
We made extensive use of GAP, Magma and Singular in preparing the paper; all of the essential computational results were ultimately verified in GAP, with one exception (see the paragraph before Thm. \ref{a2S4thm}).
Our GAP files are available from the authors on request. 
We thank Chris Parker for suggesting to do the hardest computer calculations column by column, Ellen Goldstein for correspondence relating to normality of nilpotent orbit closures in positive characteristic, Harm Derksen for some background on rings of invariants, and Amihay Hanany, Deshuo Liu, Antoine Bourget and Julius Grimminger for discussions of the connection to the mathematical physics literature.
We are grateful to Gwyn Bellamy for informative discussions about rational Cherednik algebras in the wreath product case.
B. F. is supported by the NSFC grant no. 12288201 and CAS Project for Young Scientists in Basic Research, grant no. YSBR-033.

\section{Preliminaries}\label{prelim}

\subsection{Assumptions in positive characteristic}

Let $\Phi$ be an irreducible root system with basis of simple roots $\{ \alpha_1,\ldots ,\alpha_r\}$ and let $\hat\alpha=\sum_i m_i\alpha_i$ be the highest root.
Recall that the prime $p$ is said to be {\it good} for $\Phi$ if $p>m_i$ for all $i$, and is {\it very good} if in addition it doesn't divide the index of the root lattice in the weight lattice.
Specifically, $2$ is bad (i.e. not good) outside type $A$; $3$ is bad for all exceptional types; $5$ is bad in type $E_8$; if $p$ is good but not very good then $\Phi$ has type $A_n$ and $p|(n+1)$.
A prime is good (resp. very good) for a reducible root system if it is good (resp. very good) for every irreducible component.
If $G$ is a reductive group over $k$ then we say that $p$ is (very) good for $G$ if it is (very) good for the root system of $G$.

It is now fairly well known (see e.g.~\cite{Premet}) that if the characteristic is very good for $G$ then the classification of the nilpotent orbits in $\g$ (including information on dimensions and centralizers, for example summarized in \cite{Carter}) is identical with the complex Lie algebra of the same type.
Furthermore, the closure relation on nilpotent orbits is unchanged in very good characteristic or characteristic zero \cite[\S 5(E)]{Beynon-Spaltenstein}.
We will use these facts without any further reference.
Our assumptions on positive characteristic $p$ will always ensure that $p$ is very good for $G$.

\subsection{Transverse slices}\label{transverse}
Let $G$ be a reductive group acting on its Lie algebra $\g$ and let $x\in\g$.
By a {\it transverse slice} at $x$, we mean an affine linear subvariety ${\mathcal S}:=x+{\mathfrak v}\subset\g$ such that ${\mathfrak v}\oplus [\g,x]=\g$.
This condition ensures that the morphism $G\times {\mathcal S}\rightarrow \g$, $(g,s)\mapsto g\cdot s$ is smooth at $x$, and the dimension of ${\mathcal S}$ is minimal subject to this condition.
In other words, ${\mathcal S}$ is a transverse slice in $\g$ as per the definition in \cite[\S 5.1]{Slodowy:book}.
By base change, ${\mathcal S}\cap X$ is a transverse slice in $X$ for any $G$-stable closed subset of $\g$ containing $x$.

If ${\rm char}\, k=0$ and $x=f$ is a nilpotent element of $\g$, then the standard choice of transverse slice at $f$ is the {\it Slodowy slice} $f+\g^e$, where $\{ h,e,f\}$ is an $\mathfrak{sl}_2$-triple.
In positive characteristic, the Slodowy slice is not in general transverse.
We first note that the canonical inclusion $[\g,f]\subset T_f(\0_f)$ can be proper.
Equality holds if and only if the centralizer $G^f$ is smooth, which is always the case in very good characteristic (see the discussion in \cite[\S I.5]{sands}).
This will apply to all of our orbits, hence the dimension of a transverse slice at $f$ is equal to $\dim G^f$.

The second problem to overcome concerns the choice of $\mathfrak{sl}_2$-triple containing $f$, since two such triples no longer need be conjugate.
Recall that there is a canonical {\it $p$-operation} $x\mapsto x^{[p]}$ on $\g$, and an element $x$ is nilpotent if and only if $x^{[p^N]}=0$ for $N$ large enough.
Our assumptions on $p$ will ensure that all of the nilpotent orbits we consider are contained in the {\it restricted nullcone} ${\mathcal N}_1:=\{ x\in\g :x^{[p]}=0\}$.
Then a natural choice of $\mathfrak{sl}_2$-triple can be obtained via the differential of an {\it optimal $\SL_2$-homomorphism} \cite{McNinchoptimal}, or equivalently by reducing modulo $p$ from an $\mathfrak{sl}_2$-triple over ${\mathbb Z}$.
(The equivalence of these is a consequence of the classification of nilpotent orbits in very good characteristic, see \cite{Premet}.)
The condition $x\in{\mathcal N}_1$ entails that all Jordan blocks of $\ad(x)$ are of size $\leq p$.
Let $\phi:\SL_2\rightarrow G$ be an optimal homomorphism for $f$ and let ${\mathfrak l}=\Lie(\im\phi)$.
Then $\g$ is a rational representation for $\SL_2$, by means of $\phi$.
Let $\lambda(t)=\phi({\rm diag} (t,t^{-1}))$.
Consider the stronger property that all weights of $\Ad\, \lambda(t)$ on $\g$ are strictly less than $p$.
This entails that every irreducible $\SL_2$-stable factor in $\g$ is of the form $V(i)$ with $i<p$, hence is tilting.
It follows that $\g$ is a semisimple module for $\SL_2$, and thus the same argument as in characteristic zero establishes that $f+\g^e$ is transverse at $f$.

In this paper we will always make this assumption on the characteristic, so that the Slodowy slice is a {\it bona fide} transverse slice.
(Of course, we also require $p$ to be very good.)
We note that if $\lambda(k^\times)$ is represented by a weighted Dynkin diagram then this condition is equivalent to: $\langle \lambda,\hat\alpha\rangle<p$.
We can therefore verify the necessary assumptions: (i) $p>3$ for the orbit $2A_2$ in type $E_8$ (\S \ref{doubled}, where in any case we require $p>7$); (ii) $p>5$ for the orbit $A_3+A_2+A_1$ in type $E_7$ (\S \ref{outersln}, where in any case we require $p>7$); (iii) $p>3$ for the minimal nilpotent orbit in type $G_2$ (\S \ref{S3smallsing}); (iv) $p>5$ for the orbit labelled $2A_2+2A_1$ in type $E_8$ (\S \ref{S3bigsing}); (v) $p>3$ for the orbits $A_2$ and $A_2+\tilde{A}_1$ in type $F_4$ (\S \ref{F4sec}).

The Slodowy slice (in arbitrary characteristic) comes equipped with a ${\mathbb C}^\times$-action, given by:
$t\cdot y = t^2 {\rm Ad}(\lambda(t))(y)$ where $\lambda$ is an optimal cocharacter as above.
In what follows (for example, throughout \S \ref{almostsharedorbits}) we will use ${\mathbb G}_m$ to denote the multiplicative group producing this action on the Slodowy slice, reserving the notation ${\mathbb C}^\times$ for subgroups of $G$.
%the one-dimensional torus producing this action on the Slodowy slice 
Note that the ${\mathbb G}_m$-action preserves the intersection of the slice with any nilpotent orbit.
In characteristic zero, writing $x\in\g^e$ as $\sum_{i\geq 0} x_i$ where $[h,x_i]=i x_i$, the action is given by:
$$t\cdot \left(f+\sum_{i\geq 0} x_i\right) = f+\sum_{i\geq 0} t^{i+2} x_i.$$

\subsection{Branching}\label{branching}

Recall that an irreducible variety is {\it unibranch} at $x$ if the normalization is locally a homeomorphism at $x$.
As observed in \cite[\S 2.4]{FJLS}, the branches of a Slodowy slice singularity are equal to its irreducible components.
The number of branches can be determined from tables of Green functions, as explained in \cite[\S 5(F)]{Beynon-Spaltenstein}.
This calculation is completely uniform in very good characteristic, so the number of branches will be the same when we consider Slodowy slice singularities in positive characteristic (since we always assume the characteristic is very good).

Excluding the minimal degenerations already studied in \cite{FJLS}, there is one minimal special degeneration in exceptional types which is not unibranch: in type $E_8$, $\0_{D_4(a_1)}$ has two branches at $\0_{2A_2}$.
As established in Cor. \ref{2g2spcor}, the corresponding Slodowy slice singularity is a union of two minimal special singularities $g_2^{\special}$, permuted transitively by the component group $\mathfrak{S}_2$ of the chosen element of $\0_{2A_2}$.
All other degenerations considered in this paper (including those in \S \ref{compsec} which are not minimal special degenerations) are unibranch.
We will routinely use this fact, as follows: if $\overline\0$ is unibranch at $f\in\0'$ then the Slodowy slice singularity at $f$ is irreducible, hence it is equal to the closure of any subset of ${\mathcal S}\cap\overline\0$ of dimension $\codim_{\overline\0}\0'$.

\section{Shared orbit pairs}\label{sharedorbits}

In this and the next two sections we will study various quotients for actions of finite groups on simple Lie algebras.
In the first instance, we provide new proofs of Brylinski and Kostant's results \cite{Brylinski-Kostant:JAMS} on shared orbit pairs.
(See Prop. \ref{p.sharedorbits} for the classification.)
We assume throughout this section that the ground field $k$ is algebraically closed of characteristic $\neq 2$.

\subsection{An involution of $\mathfrak{so}_{n+1}$}
\label{sonsubsec}

In this section we denote by ${\rm O}_n$ (resp. $\SO_n$) the group of $n\times n$ invertible matrices (resp. matrices of determinant 1) preserving the standard bilinear form on $k^n$, i.e. satisfying $gg^T=I_n$, and by $\mathfrak{so}_n$ the Lie algebra of $\SO_n$, i.e. the Lie algebra of anti-symmetric $n\times n$ matrices.
For any $m>0$ we include ${\rm O}_n$ in ${\rm O}_{n+m}$ via the map $g\mapsto \begin{pmatrix} g & 0 \\ 0 & I_{m} \end{pmatrix}$, and similarly for $\mathfrak{so}_n$.
Let ${\mathfrak g}=\mathfrak{so}_{n+1}$.
Conjugation by the diagonal matrix ${\rm diag}(1,1,\ldots ,1,-1)$ determines an involution $\theta$ of ${\rm O}_{n+1}$ (resp. ${\mathfrak g}$) with fixed points $\pm{\rm O}_n$ (resp. $\g_0:=\mathfrak{so}_n$).
From now on, ${\mathfrak{S}}_2$ will always act on $\mathfrak{so}_{n+1}$ via $\langle\theta\rangle$ (or perhaps some conjugate), unless otherwise specified.
Clearly, any element of $\mathfrak{so}_{n+1}$ is of the form:
$$\begin{pmatrix} M & u \\ -u^T & 0 \end{pmatrix}$$
where $u\in k^n$ is a column vector and $M$ is in $\mathfrak{so}_n$.
Identifying ${\mathfrak g}$ with the set of pairs $(M,u)$, we see that $\theta$ maps $(M,u)\mapsto (M,-u)$.
With this set-up we have an immediate description of the quotient $\mathfrak{so}_{n+1}/\mathfrak{S}_2$.
Let $\Sigma_n$ denote the vector space of symmetric $n\times n$ matrices.
Clearly, $\Sigma_n$ is stable under conjugation by elements of ${\rm O}_n$.

\begin{lemma}\label{solemma}
The map $\varphi:(M,u)\mapsto (M,uu^T)$ induces an ${\rm O}_n$-equivariant isomorphism of $\mathfrak{so}_{n+1}/\mathfrak{S}_2$ with $\mathfrak{so}_n\times X$ where $X$ is the cone of elements of $\Sigma_n$ of rank at most 1.
\end{lemma}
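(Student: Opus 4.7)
The plan is to decouple the $\mathfrak{S}_2$-action across the direct sum decomposition $(M,u)\in\mathfrak{so}_n\oplus k^n$ just introduced: since $\theta$ fixes $M$ pointwise and sends $u\mapsto -u$, the two factors are $\mathfrak{S}_2$-stable and the invariant algebra splits as
$$
k[\mathfrak{so}_{n+1}]^{\mathfrak{S}_2}\;=\;k[\mathfrak{so}_n]\otimes_k k[u_1,\ldots,u_n]^{\mathfrak{S}_2},
$$
so that $\mathfrak{so}_{n+1}/\mathfrak{S}_2\cong\mathfrak{so}_n\times (k^n/\mathfrak{S}_2)$. The task therefore reduces to showing that $\psi\colon u\mapsto uu^T$ induces an isomorphism $\bar\psi\colon k^n/\mathfrak{S}_2\xrightarrow{\sim}X$.

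For this, I would first note that any $\mathfrak{S}_2$-invariant monomial in the $u_i$ has even total degree and hence is a product of quadratics, so $k[u_1,\ldots,u_n]^{\mathfrak{S}_2}$ is generated by the $u_iu_j$. Thus the image of the ring homomorphism $\psi^*\colon k[\Sigma_n]\to k[u_1,\ldots,u_n]$, $S_{ij}\mapsto u_iu_j$, coincides with $k[u_1,\ldots,u_n]^{\mathfrak{S}_2}$, and since $k[k^n]$ is a domain $\ker\psi^*$ is prime. Next I would check that the set-theoretic image of $\psi$ is $X$: the inclusion $\subseteq$ is obvious, and for the reverse, over an algebraically closed field with $\chr k\neq 2$, every symmetric matrix of rank $\le 1$ is of the form $ww^T$ (diagonalise and extract a square root of the unique nonzero eigenvalue). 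Consequently the closed subscheme $V(\ker\psi^*)\subseteq\Sigma_n$ is set-theoretically $X$, and since $\ker\psi^*$ is prime and so radical it cuts out $X$ with its reduced scheme structure. Combining these: $k[X]=k[\Sigma_n]/\ker\psi^*=\operatorname{image}(\psi^*)=k[k^n]^{\mathfrak{S}_2}$, giving the desired isomorphism $\bar\psi$.

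Finally, the ${\rm O}_n$-equivariance will be immediate from $g^T=g^{-1}$ for $g\in{\rm O}_n$: the action $(M,u)\mapsto (gMg^{-1},gu)$ is transported by $\varphi$ to
$$
\varphi(gMg^{-1},gu)=(gMg^{-1},(gu)(gu)^T)=(gMg^{-1},g(uu^T)g^{-1}),
$$
which is exactly the simultaneous conjugation action of ${\rm O}_n$ on $\mathfrak{so}_n\times\Sigma_n$. The only step in the whole argument that is not a direct inspection of the invariant ring is the surjectivity of $u\mapsto uu^T$ onto the set of rank-$\le 1$ symmetric matrices; this is where the hypothesis $\chr k\neq 2$ (together with $k=\bar k$) enters the picture.
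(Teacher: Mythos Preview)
Your proof is correct and follows essentially the same approach as the paper: identify the $\mathfrak{S}_2$-invariants as generated by the $M$-coordinates together with the quadratic monomials $u_iu_j$, which is exactly what the paper's (very brief) proof does. You supply considerably more detail than the paper, which simply asserts that $k[\g]^{\mathfrak{S}_2}$ is generated by these elements and leaves the identification of the image with $X$ and the equivariance check implicit. One minor quibble: the surjectivity of $u\mapsto uu^T$ onto rank-$\le 1$ symmetric matrices only requires $k=\bar k$ (write a rank-$1$ matrix as $vw^T$, deduce $w=cv$ from symmetry, then take $\sqrt{c}$), not $\chr k\neq 2$; the latter hypothesis is really about the ambient setup of $\mathfrak{so}_{n+1}$ and $\theta$ rather than this step.
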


\begin{proof}
Choose a basis $\{ v_1,\ldots ,v_{n(n+1)/2}\}$ for ${\mathfrak g}$ consisting of eigenvectors for $\theta$.
We can assume that $\theta(v_i)=v_i$ for $i\leq n(n-1)/2$ and $\theta(v_i)=-v_i$ for $i>n(n-1)/2$.
Let $\{ x_1,\ldots ,x_{n(n+1)/2} \}$ be a dual basis for $\g^*$.
Then $k[\g]^{\mathfrak{S}_2}$ is clearly generated by the coordinate functions $x_i$ with $1\leq i\leq n(n-1)/2$, along with all products $x_ix_j$ where $n(n-1)/2< i\leq j\leq n(n+1)/2$.
\end{proof}

We note the further consequence of this description:

\begin{corollary}\label{nec?}
If $Y$ is any $\theta$-stable closed subset of ${\mathfrak g}$ then the restriction of $\varphi$ to $Y$ induces an isomorphism of $Y/\mathfrak{S}_2$ with $\varphi(Y)$.
If $Y$ is ${\rm O}_n$-stable, then this isomorphism is ${\rm O}_n$-equivariant.
\end{corollary}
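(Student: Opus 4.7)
The plan is to deduce Corollary \ref{nec?} directly from Lemma \ref{solemma}, viewing it as the result of restricting the global isomorphism to a closed subvariety. The starting observation is that $\varphi$ is $\langle\theta\rangle$-invariant, since $(-u)(-u)^T=uu^T$; thus $\varphi$ factors as $\bar\varphi\circ\pi$, where $\pi:\g\to\g/\mathfrak{S}_2$ is the quotient morphism and, by Lemma \ref{solemma}, the induced map $\bar\varphi:\g/\mathfrak{S}_2\to\mathfrak{so}_n\times X$ is an isomorphism.

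For the first statement, let $Y\subset\g$ be closed and $\theta$-stable. Since $\pi$ is finite, it is a closed map, so $\pi(Y)$ is a closed subvariety of $\g/\mathfrak{S}_2$ with $\pi^{-1}(\pi(Y))=Y$. Because $\chr k\neq 2$, the averaging operator $\frac{1}{2}(1+\theta)$ provides an $\mathfrak{S}_2$-equivariant splitting of the surjection $k[\g]\twoheadrightarrow k[Y]$, so $k[Y]^{\mathfrak{S}_2}$ is the image of $k[\g]^{\mathfrak{S}_2}$. This identifies $\pi(Y)$ with the categorical quotient $Y/\mathfrak{S}_2$. Restricting $\bar\varphi$ to the closed subvariety $\pi(Y)$ then yields an isomorphism from $Y/\mathfrak{S}_2$ onto $\bar\varphi(\pi(Y))=\varphi(Y)$, as required.

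For the ${\rm O}_n$-equivariance assertion, it suffices to check that $\varphi$ is itself ${\rm O}_n$-equivariant. An element $g\in{\rm O}_n$ acts on $(M,u)\in\g$ by $(gMg^{-1},gu)$ (using $g^{-1}=g^T$) and on $(M,S)\in\mathfrak{so}_n\times\Sigma_n$ by $(gMg^{-1},gSg^T)$, so the identity $(gu)(gu)^T=g(uu^T)g^T$ gives the equivariance. The induced isomorphism $Y/\mathfrak{S}_2\to\varphi(Y)$ is therefore ${\rm O}_n$-equivariant whenever $Y$ is ${\rm O}_n$-stable.

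No serious obstacle is expected: once Lemma \ref{solemma} is in place, the only point requiring any care is the identification $\pi(Y)=Y/\mathfrak{S}_2$, which is handled by the Reynolds-type splitting available since $\chr k\neq 2$. Everything else is formal.
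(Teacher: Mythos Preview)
Your proof is correct and follows essentially the same approach as the paper: the key step in both is the surjectivity of $k[\g]^{\mathfrak{S}_2}\to k[Y]^{\mathfrak{S}_2}$, which the paper obtains via the eigenspace decomposition of the $\theta$-stable ideal $J$ and you obtain via the Reynolds operator $\tfrac{1}{2}(1+\theta)$; these are equivalent uses of ${\rm char}\,k\neq 2$. Your explicit verification of ${\rm O}_n$-equivariance is a small addition the paper leaves implicit.
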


\begin{proof}
Let $J\subset k[{\mathfrak g}]$ be the defining ideal of $Y$.
Since $J$ is $\theta$-stable and therefore spanned by eigenvectors for $\theta$, an element of $k[Y]=k[{\mathfrak g}]/J$ is fixed by $\theta$ if and only if it has a lift in $k[{\mathfrak g}]$ which is fixed by $\theta$.
This shows that a set of generators for $k[{\mathfrak g}]^{\mathfrak{S}_2}$ maps to a set of generators for $k[Y]^{\mathfrak{S}_2}$.
Hence the restriction of $\varphi$ to $Y$ induces a quotient morphism too.
\end{proof}

\begin{remark}
Another way of stating the main conclusion of Cor. \ref{nec?} is that $k[\g]^{\mathfrak{S}_2}\rightarrow k[Y]^{\mathfrak{S}_2}$ is surjective for any closed $\theta$-stable subvariety (in fact, subscheme) of $\g$.
The argument clearly generalises to the following (presumably well-known) statement: let $\Gamma$ be a finite group acting on an affine variety $U$ such that $k[U]$ is a completely reducible $k\Gamma$-module, let $\pi:U\rightarrow U/\Gamma$ be the quotient and let $V$ be a $\Gamma$-stable closed subset of $U$.
Then the restriction of $\pi$ to $V$ induces an isomorphism $V/\Gamma\cong\pi(V)$.
We will frequently use this more general statement.
(To see that the assumption of complete reducibility is necessary, let $k$ have characteristic $p>0$, let $\Gamma$ be cyclic of order $p$, let $U=k\Gamma$ with the action of $\Gamma$ by left multiplication and let $V$ be any proper non-trivial $\Gamma$-stable subspace of $U$.)
\end{remark}

\begin{lemma}\label{squarezero}
If ${\mathcal O}$ is an ${\rm O}_{n+1}$-orbit of elements of $\g$ with square zero, then the restriction to $\overline{\mathcal O}$ of the projection map $\pi:{\mathfrak g}\rightarrow{\mathfrak g}_0$ induces an ${\rm O}_n$-equivariant isomorphism
$$\overline{\mathcal O}/\mathfrak{S}_2 \cong \pi(\overline{\mathcal O}).$$
\end{lemma}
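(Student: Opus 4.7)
The plan is to reduce Lemma \ref{squarezero} to Corollary \ref{nec?} by exploiting the square-zero hypothesis. First, I would observe that $\overline{\mathcal O}$ is $\theta$-stable: since $\theta$ is conjugation by ${\rm diag}(1,\ldots,1,-1)\in{\rm O}_{n+1}$ and ${\mathcal O}$ is an ${\rm O}_{n+1}$-orbit, both ${\mathcal O}$ and its closure are preserved by $\theta$. Corollary \ref{nec?} then furnishes an ${\rm O}_n$-equivariant isomorphism
$$\overline{\mathcal O}/\mathfrak{S}_2 \xrightarrow{\sim} \varphi(\overline{\mathcal O})$$
induced by $\varphi\colon(M,u)\mapsto(M,uu^T)$.

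Next, I would expand the square of an element of $\g$ written in block form:
$$\begin{pmatrix} M & u \\ -u^T & 0 \end{pmatrix}^2 = \begin{pmatrix} M^2 - uu^T & Mu \\ -u^T M & -u^T u \end{pmatrix},$$
so an element of ${\mathcal O}$ satisfies $M^2 = uu^T$ (along with $Mu = 0$ and $u^T u = 0$, which we will not need). Since the square-zero condition is closed, every $(M,u)\in\overline{\mathcal O}$ satisfies $uu^T = M^2$, and therefore $\varphi(M,u)=(M,M^2)$ throughout $\overline{\mathcal O}$.

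From here, the first projection $\varphi(\overline{\mathcal O})\to\pi(\overline{\mathcal O})$, $(M,M^2)\mapsto M$, is an ${\rm O}_n$-equivariant morphism with polynomial inverse $M\mapsto(M,M^2)$; in particular $\pi(\overline{\mathcal O})$ is closed in $\g_0$ and isomorphic to $\varphi(\overline{\mathcal O})$. Composing with the isomorphism from Corollary \ref{nec?} yields the desired ${\rm O}_n$-equivariant isomorphism $\overline{\mathcal O}/\mathfrak{S}_2 \cong \pi(\overline{\mathcal O})$, and the composite coincides with the map induced by $\pi$, since $\pi$ factors as $\varphi$ followed by the first projection.

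There is no substantial obstacle: the proof reduces to the observation that on the square-zero locus, the ``extra'' coordinate $uu^T$ appearing in the quotient map $\varphi$ becomes a polynomial function of $M$ alone, so that $\varphi$ collapses onto $\pi$ after an isomorphism of targets.
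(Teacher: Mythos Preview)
Your proof is correct and follows essentially the same route as the paper: compute the block-matrix square to obtain $uu^T=M^2$ on $\overline{\mathcal O}$, so that $\varphi|_{\overline{\mathcal O}}$ factors through $\pi$, and then invoke Corollary~\ref{nec?}. You add a few extra details (closedness of the square-zero condition, the explicit inverse $M\mapsto(M,M^2)$), but the argument is the same.
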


\begin{proof}
We remark first of all that ${\mathcal O}$ is $\theta$-stable by definition.
Since $$\begin{pmatrix} M & u \\ -u^T & 0\end{pmatrix}^2 = \begin{pmatrix} M^2-uu^T & Mu \\ -u^TM & -u^Tu\end{pmatrix},$$
then any element $(M,u)\in\overline{\mathcal O}$ satisfies $M^2=uu^T$.
Thus the restriction of $\varphi$ to $\overline\0$ has the form $(M,u)\mapsto (M,M^2)$, hence factors through the projection to ${\mathfrak g}_0$.
We now use Corollary \ref{nec?} to establish the result.
\end{proof}

\begin{corollary}\label{mincor}
a) For any $r\leq \frac{n+1}{4}$, the projection $\mathfrak{so}_{n+1}\rightarrow\mathfrak{so}_n$ induces an ${\rm O}_n$-equivariant isomorphism:
$$\overline{\0_{[2^{2r},1^{a}]}}/\mathfrak{S}_2\rightarrow \overline{\0_{[3,2^{2r-2},1^a]}}$$
where $a=n+1-4r$.
%Let ${\mathcal O}\subset{\mathfrak g}$ be the nilpotent ${\rm O}_{n+1}$-orbit with partition $[2^{2r},1^{n+1-4r}]$ and let ${\mathcal O}_0$ be the nilpotent $\SO_n$-orbit in ${\mathfrak g}_0$ with partition $[3,2^{2r-2},1^{n+1-4r}]$.
%Then $\pi(\overline{\mathcal O})=\overline{{\mathcal O}_0}$ so that the projection map induces an ${\rm O}_{n}$-equivariant isomorphism $$\overline{\mathcal O}/\mathfrak{S}_2 \cong\overline{{\mathcal O}_0}.$$

b) Let $\0=\0_{[2^{2r}]}\subset\mathfrak{so}_{4r}$ and let $\0^{I}$, $\0^{II}$ denote the distinct $\SO_{4r}$-orbits in $\0$.
Then the composition $$\overline{\0}^I\hookrightarrow\overline\0\rightarrow \overline\0/\mathfrak{S}_2\cong \overline{\0_{[3,2^{2r-2}]}}$$
is the normalization map of the orbit closure $\overline{\0_{[3,2^{2r-2}]}}$.
This orbit closure is non-normal if $r>1$.
\end{corollary}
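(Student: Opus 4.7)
The plan is to derive part (a) from Lemma~\ref{squarezero} together with an explicit construction, and then obtain part (b) by restricting the resulting isomorphism to one irreducible component of the ${\rm O}_{4r}$-orbit closure.

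For part (a), I would apply Lemma~\ref{squarezero} to the ${\rm O}_{n+1}$-orbit ${\mathcal O} = \0_{[2^{2r}, 1^a]}$, whose elements are all square-zero since no Jordan block has size greater than $2$.  This immediately yields the ${\rm O}_n$-equivariant isomorphism $\overline{\mathcal O}/{\mathfrak S}_2 \cong \pi(\overline{\mathcal O})$, reducing the problem to identifying $\pi(\overline{\mathcal O})$ with $\overline{\0_{[3, 2^{2r-2}, 1^a]}}$.  The key step is to construct, for a given representative $M_0 \in \0_{[3, 2^{2r-2}, 1^a]}$, an element of $\mathcal{O}$ projecting to it.  Since $M_0^2$ is symmetric of rank one, one can write $M_0^2 = u u^T$ for some $u \in k^n$ (using that $k$ is algebraically closed of characteristic $\neq 2$); the identity $M_0^3 = 0$ forces $M_0 u = 0$, while nilpotency of $M_0$ gives $u^T u = \operatorname{tr}(M_0^2) = 0$.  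Hence $X := (M_0, u) \in \mathfrak{so}_{n+1}$ satisfies $X^2 = 0$, and a short kernel count---projecting $\ker X$ onto the $(n{+}1)$-st coordinate gives $\dim\ker X = \dim\ker M_0 + 1$---shows that $X$ has Jordan type $[2^{2r}, 1^a]$, i.e.\ $X \in {\mathcal O}$.  Combining this containment with the standard dimension equality $\dim \0_{[2^{2r}, 1^a]} = 2r(n-2r) = \dim \0_{[3, 2^{2r-2}, 1^a]}$ and the irreducibility of both sides then forces the required $\pi(\overline{\mathcal O}) = \overline{\0_{[3, 2^{2r-2}, 1^a]}}$.

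For part (b), I would specialise part (a) to $a = 0$.  The orbit $\0 = \0_{[2^{2r}]} \subset \mathfrak{so}_{4r}$ splits into the two $\SO_{4r}$-orbits $\0^I, \0^{II}$, interchanged by $\theta$.  The composition $f: \overline{\0}^I \hookrightarrow \overline{\0} \twoheadrightarrow \overline{\0}/\mathfrak{S}_2 \cong \overline{\0_{[3, 2^{2r-2}]}}$ is finite (closed immersion followed by a finite quotient), surjective (since $\theta(\overline{\0}^I) = \overline{\0}^{II}$, the two components have the same image in the quotient), and of generic degree one (the preimage in $\overline{\0}$ of a generic $M_0 \in \0_{[3, 2^{2r-2}]}$ is the pair $\{(M_0, \pm u)\}$, with one element in each $\SO_{4r}$-orbit).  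Granting the normality of $\overline{\0}^I$ (due to Kraft--Procesi), $f$ is therefore the normalization of $\overline{\0_{[3, 2^{2r-2}]}}$.

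To establish non-normality for $r > 1$, I would argue by contradiction: if $f$ were an isomorphism of abstract varieties, then the Zariski tangent spaces at $0$ would have equal dimension.  For $r \geq 2$, both $\mathfrak{so}_{4r}$ (type $D_{2r}$) and $\mathfrak{so}_{4r-1}$ (type $B_{2r-1}$) are simple, and the $\SO$-stability of $\overline{\0}^I$ and $\overline{\0_{[3, 2^{2r-2}]}}$ combined with irreducibility of the adjoint representations forces $T_0 \overline{\0}^I = \mathfrak{so}_{4r}$ and $T_0 \overline{\0_{[3, 2^{2r-2}]}} = \mathfrak{so}_{4r-1}$, which have different dimensions.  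The main obstacle in carrying out this plan is the Jordan-type verification for the constructed $X$ in part (a): one must confirm that $X$ lands in the top orbit ${\mathcal O}$ rather than in some smaller square-zero orbit of $\overline{\mathcal O}$, which is precisely what the kernel count delivers.
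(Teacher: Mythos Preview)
Your proof is correct, but it differs from the paper's in two interesting ways.

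For part (a), both arguments invoke Lemma~\ref{squarezero} and finish with the dimension equality $\dim \0_{[2^{2r},1^a]}=\dim \0_{[3,2^{2r-2},1^a]}$, but they establish opposite containments. The paper shows $\pi(\overline{\mathcal O})\subset\overline{\0_{[3,2^{2r-2},1^a]}}$ directly: from $M^2=uu^T$ and $Mu=0$ one reads off $M^3=0$ and ${\rm rank}\,M\leq 2r$, which characterises $\overline{\0_{[3,2^{2r-2},1^a]}}$. You instead construct an explicit lift of each $M_0\in\0_{[3,2^{2r-2},1^a]}$ to ${\mathcal O}$, giving the reverse containment. Your route is more constructive but longer, since it requires the kernel/rank verification to land in the top orbit; the paper's route is a two-line rank-and-cube check.

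For part (b), both arguments identify $f$ as the normalization by citing Kraft--Procesi for the normality of $\overline{\0}^I$ and checking that $f$ is finite and birational. The difference is in establishing non-normality for $r>1$: the paper appeals to the Kraft--Procesi singularity tables (the degeneration to $\0_{[3,2^{2r-4},1^4]}$ has type $2A_1$, which is not normal). Your tangent-space argument at the origin is more elementary and self-contained: it uses only the irreducibility of the adjoint representations of $\mathfrak{so}_{4r}$ and $\mathfrak{so}_{4r-1}$ (both simple for $r\geq 2$), avoiding the finer singularity classification entirely. This is a nice alternative, though it gives less information about \emph{where} the non-normality occurs.
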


\begin{proof}
For (a), we can describe $\overline{\mathcal O}=\overline{\0_{[2^{2r},1^a]}}$ as the set of elements of ${\mathfrak g}\subset\mathfrak{gl}_{n+1}$ with rank at most $2r$ and square zero.
If $(M,u)$ determines an element of $\overline{\mathcal O}$ then clearly ${\rm rank}\, M\leq 2r$ too.
Furthermore, we have $M^2=uu^T$ and $Mu=0$, so that $M^2$ has rank at most 1 and $M^3=0$.
This shows that $M\in\overline{{\mathcal O}_0}=\overline{\0_{[3,2^{2r-2},1^a]}}$.
By equality of dimensions, we must have $\pi(\overline{\mathcal O})=\overline{{\mathcal O}_0}$.

For (b), we recall by \cite[17.3]{Kraft-Procesi:classical} that $\overline{\0}^{I}$ and $\overline\0^{II}$ are normal.
(The proof of this in odd positive characteristic goes through exactly as in \cite{Kraft-Procesi:classical}, using the crucial fact in Goldstein's PhD thesis \cite[Prop. 5.2a]{Goldstein-PhD} that a regular function on $\0$ extends to $\overline\0$ if and only if it extends to the union of $\0$ and the orbits of codimension 2.
Note that here there is a unique ${\rm O}_{4r}$-orbit of codimension 2 in $\overline\0$.)
Since $\0/\mathfrak{S}_2\cong\0^I$, it follows that $\overline{\0}^I$ is the normalization of $\overline\0/\mathfrak{S}_2$.
The non-normality of $\overline{\0_{[3,2^{2r-2}]}}$ for $r>1$ follows from the fact \cite{Kraft-Procesi:classical} that the singularity of its degeneration to $\0_{[3,2^{2r-4},1^4]}$ is $2A_1$.
(This argument also establishes non-normality in odd positive characteristic, by \cite[Thm. 3.3]{Xiao-Shu}.)
\end{proof}

\begin{remark}
a) Cor. \ref{mincor} gives a generalization of \cite[Cor. 5.4(ii) and Thm. 5.9(ii)]{Brylinski-Kostant:JAMS} even for $k=\mathbb{C}$. This gives new examples of shared nilpotent orbits (while in \cite{Brylinski-Kostant:JAMS}, only covers from minimal nilpotent orbit closures are considered).

b) It is well known that the projection $\mathfrak{so}_7 \to \fg_2$ induces an isomorphism of $\overline{\0}_{\mini}$ with the normalization of the 8-dimensional nilpotent orbit closure in $\fg_2$, see \S \ref{G2case}.
It is somewhat surprising that the similar example appearing in Cor. \ref{mincor}(b) has (as far as we are aware) not been studied before.
In contrast with the $G_2$ case however, here the normalization map has degree 2 on the boundary.
It turns out that there is one final example of a shared orbit pair $(\0,\0_0)$ with $\overline{\0_0}$ non-normal, as discussed in the following example.
\end{remark}

\begin{example}\label{sonexamples}
a) Let $\sigma$ be a triality (i.e. outer of order 3) automorphism of $\mathfrak{so}_8$ and let $\0$ be the nilpotent orbit with partition $[3,1^5]$.
Then $\sigma^{-1}(\0)$ is one of the $\SO_8$-orbits $\0_{[2^4]}^{I}, \0_{[2^4]}^{II}$.
(This can be seen from the fact that $e_{\alpha_3}+e_{\alpha_4}\in\0$, or from weighted Dynkin diagrams.)
Hence, the projection from $\mathfrak{so}_8$ to $\sigma(\mathfrak{so}_7)$ identifies $\overline\0$ with the normalization of the orbit in $\sigma(\mathfrak{so}_7)$ with partition $[3,2^2]$.
Combining this with the $\mathfrak{S}_2$-quotient $\overline{\0_{[2^2,1^5]}}\rightarrow\overline\0$, this establishes the claim in the Introduction: the pair $(\sigma(\mathfrak{so}_7),\mathfrak{so}_9)$ is missing from the classification in \cite{Brylinski-Kostant:JAMS}.
Note that this is the only example (see Prop. \ref{p.sharedorbits}) of connected shared orbits $(\0,\0_0)$ for which $\overline{\0_0}$ is non-normal but the map $\overline\0\rightarrow \overline{\0_0}$ is not birational.

b) Here we give a geometric explanation of some special cases of Cor. \ref{mincor}. Let ${\rm OG}(k,n)$ be the orthogonal Grassmannian, which parametrises  isotropic $k$-dimensional vector sub-spaces in an orthogonal $n$-dimensional vector space. It is well known that ${\rm OG}(n,2n)$ has two isomorphic components, denoted ${\rm OG}^{\pm}(n,2n)$. Moreover we have a natural isomorphism
 ${\rm OG}(n, 2n+1) \simeq {\rm OG}^\pm(n+1, 2n+2)$. There are two cases:

(i)  $n=2r$ is even. The Springer maps associated to ${\rm OG}(n, 2n+1) \cong {\rm OG}^\pm(n+1, 2n+2)$ are given by 
$$
\mu: T^*({\rm OG}(2r, 4r+1)) \to  \overline{\0}_{[3,2^{2r-2},1^2]} \subset \mathfrak{so}_{4r+1}, \quad \quad \nu: T^*({\rm OG}^\pm(2r+1, 4r+2)) \to  \overline{\0}_{[2^{2r},1^2]} \subset \mathfrak{so}_{4r+2}.
$$
One computes that $\mu$ is of degree 2 while $\nu$ is birational. This gives the degree 2 cover: $\overline{\0}_{[2^{2r},1^2]} \to  \overline{\0}_{[3,2^{2r-2},1^2]}$, which also induces
covers of orbits in its boundary.

(ii)  $n=2r+1$ is odd. The Springer maps associated to ${\rm OG}(n, 2n+1) \cong {\rm OG}^\pm(n+1, 2n+2)$ are given by 
$$
\mu: T^*({\rm OG}(2r+1, 4r+3)) \to  \overline{\0}_{[3,2^{2r}]} \subset \mathfrak{so}_{4r+3}, \quad \quad \nu: T^*({\rm OG}^\pm(2r+2, 4r+4)) \to  \overline{\0}_{[2^{2r+2}]}^{I,II} \subset \mathfrak{so}_{4r+4}.
$$
One computes that $\mu$ and $\nu$ are both birational. This gives the normalization map: $\overline{\0}_{[2^{2r+2}]}^{I,II} \to  \overline{\0}_{[3,2^{2r}]}$, which is not an isomorphism as the latter is not normal.
It is interesting to note that this normalization map induces degree 2 covers for orbits in its boundary.
\end{example}

\subsection{An involution of $\mathfrak{gl}_{2n}$}\label{spsubsec}

We began with the pair $(\mathfrak{so}_n,\mathfrak{so}_{n+1})$ because that case is especially straightforward to work out.
But we can apply similar arguments to establish the other shared orbit pairs.
For all but one of these, $\g_0$ is the fixed point subalgebra for a finite group of automorphisms of $\g$.
We first consider the case of a cyclic group of order two, which covers three of 
the remaining pairs $(\g_0,\g)$.
Let $G$ be an algebraic group equipped with an involutive automorphism $\theta$, let $\g=\Lie(G)$, $G_0=G^\theta$, $\g_0=\Lie(G_0)$.
By abuse of notation we write $\theta$ for the associated automorphism of $\g$.
Let $U$ be the $(-1)$-eigenspace for $\theta$, so that $\g={\mathfrak g}_0\oplus U$.
Then we can clearly repeat the arguments of Lemma \ref{solemma} and Cor. \ref{nec?} to obtain:

\begin{lemma}\label{genlem}
The morphism $\varphi:\g={\mathfrak g}_0\oplus U\rightarrow {\mathfrak g}_0\times S^2U$, $$x+y\mapsto (x, y\otimes y)$$ induces a $G_0$-equivariant isomorphism of $\g/\langle \theta\rangle$ with ${\mathfrak g}_0\times X$, where $X$ is the cone of elements of $S^2U\cong\Sigma_{\dim U}$ of rank at most $1$.
Furthermore, if $Y$ is any closed $\theta$-stable subvariety of $\g$ then $Y/\langle \theta\rangle\cong \varphi(Y)$.
\end{lemma}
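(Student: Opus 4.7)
The plan is to mimic Lemma \ref{solemma} and Corollary \ref{nec?} essentially verbatim, with only minor adjustments to replace the standard-bilinear-form set-up by the abstract eigenspace decomposition $\g = \g_0 \oplus U$.

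First, I would choose a basis $\{v_1,\ldots,v_m\}$ of $\g$ consisting of $\theta$-eigenvectors, with $v_1,\ldots,v_d$ spanning $\g_0$ (eigenvalue $+1$) and $v_{d+1},\ldots,v_m$ spanning $U$ (eigenvalue $-1$); this decomposition is available because $\operatorname{char} k \ne 2$. Let $\{x_1,\ldots,x_m\}$ be the dual basis of $\g^*$. Exactly as in the proof of Lemma \ref{solemma}, the invariant ring $k[\g]^{\langle\theta\rangle}$ is generated by the linear forms $x_1,\ldots,x_d$ together with the quadratic monomials $x_i x_j$ for $d < i \le j \le m$. Identifying these with the natural coordinates on $\g_0 \times S^2U$, one sees that $\varphi$ is exactly the morphism dual to the inclusion of invariants; hence $\varphi$ realises the categorical quotient morphism $\g \to \g/\langle\theta\rangle$ onto its image. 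The image is $\g_0 \times X$ because every nonzero rank $\le 1$ symmetric tensor can be written as $y \otimes y$: this uses only that $k$ is algebraically closed of characteristic $\ne 2$, so that scalars have square roots. The $G_0$-equivariance is automatic, since the decomposition $\g = \g_0 \oplus U$ and the squaring map $y \mapsto y \otimes y$ are both $G_0$-equivariant.

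For the second assertion, I would copy the argument of Corollary \ref{nec?}: the defining ideal $J$ of a $\theta$-stable closed subvariety $Y \subset \g$ is $\theta$-stable, and since $\operatorname{char} k \ne 2$ it splits as a direct sum of its $(\pm 1)$-eigenspaces. Consequently a class in $k[Y] = k[\g]/J$ is $\theta$-fixed precisely when it admits a $\theta$-fixed lift in $k[\g]$, so the generating set for $k[\g]^{\langle\theta\rangle}$ surjects onto $k[Y]^{\langle\theta\rangle}$. Combined with the first part this identifies $\operatorname{Spec} k[Y]^{\langle\theta\rangle} = Y/\langle\theta\rangle$ with the scheme-theoretic image $\varphi(Y)$, and the identification is $G_0$-equivariant whenever $Y$ is $G_0$-stable.

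The only step that is not pure bookkeeping is the identification of $\varphi(\g)$ with $\g_0 \times X$, which genuinely uses both hypotheses on $k$; everything else is a direct transcription of the $(\mathfrak{so}_n,\mathfrak{so}_{n+1})$ case, so I would not expect any serious obstacle.
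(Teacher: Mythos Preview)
Your proposal is correct and follows exactly the approach the paper intends: the paper itself simply writes ``Then we can clearly repeat the arguments of Lemma \ref{solemma} and Cor. \ref{nec?}'' in lieu of a proof, and you have carried out precisely that transcription, including the one genuinely non-formal point (surjectivity onto the rank-$\le 1$ cone via square roots in $k$).
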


In each case we have to establish an analogue of Lemma \ref{squarezero}.
A statement equivalent to Lemma \ref{squarezero} is that there exists a linear (${\rm O}_n$-equivariant) map $\varepsilon:S^2\mathfrak{so}_n\rightarrow\Sigma_n=S^2 U$ (given by $M\otimes N\mapsto \frac{1}{2}(MN+NM)$) such that for $(M,u)$ belonging to a square zero orbit $\0$ in $\mathfrak{so}_{n+1}$, $\varepsilon(M\otimes M)=u\otimes u$.
Then the restriction of the quotient morphism $(M,u)\mapsto (M,u\otimes u)$ to  $\overline\0$ is of the form $(M,u)\mapsto (M,\varepsilon(M\otimes M))$, hence factors through the projection to $\g_0$.
We want to construct an analogous map $\varepsilon$ for each of the other cases.

Let $J'=\begin{pmatrix} 0 & J \\ -J & 0 \end{pmatrix}$, where $J$ is the $n\times n$ matrix with $1$ on the anti-diagonal and 0 elsewhere, let $\g=\mathfrak{gl}_{2n}$ and let $\theta:\g\rightarrow\g$, $x\mapsto J'x^TJ'$.
Then $\g_0=\mathfrak{sp}_{2n}$ is the Lie subalgebra of $\mathfrak{gl}_{2n}$ preserving the symplectic form determined by $J'$.
From now on we fix this $\mathfrak{S}_2$-action on $\mathfrak{gl}_{2n}$.
The map $v\otimes w\mapsto (vw^T+wv^T)J'$ defines an isomorphism of $\g_0$-modules from $S^2 V$ to $\g_0$, where $V$ is the natural module for $\g_0$.
Similarly, the $-1$ eigenspace for $\theta$ is isomorphic to $\Lambda^2 V$.
By Lemma \ref{genlem}, the quotient morphism takes the form $\varphi:\g\rightarrow \g_0\times S^2(\Lambda^2 V)$.

\begin{lemma}\label{splem}
Suppose ${\rm char}\, k\neq 2,3$.
Let $(\g_0,\g)=(\mathfrak{sp}_{2n},\mathfrak{gl}_{2n})$ (where $n\geq 2$), let 
$\0$ be the minimal nilpotent orbit in $\g$ and let $\0_0\subset\g_0$ be the orbit of a short root element.
Then the restriction of the quotient morphism to $\overline\0$ factors through the projection to $\g_0$, hence induces an isomorphism of $\overline\0/\mathfrak{S}_2$ with $\overline{\0_0}$.
\end{lemma}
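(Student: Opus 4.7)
The plan is to adapt the proof of Lemma \ref{squarezero} to the new setting. By Lemma \ref{genlem}, $\overline{\0}/\mathfrak{S}_2$ is identified with the image $\varphi(\overline{\0}) \subset \g_0 \times S^2 U$ of $\varphi(X) = (X_0, X_1 \otimes X_1)$, where $X = X_0 + X_1$ is the $\theta$-decomposition. It therefore suffices to show that $\varphi(\overline{\0})$ is the graph of a polynomial map $\overline{\0_0} \to S^2 U$, since then the first projection will induce the desired isomorphism $\overline{\0}/\mathfrak{S}_2 \xrightarrow{\sim} \overline{\0_0}$.

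I would first verify that the projection $X \mapsto X_0$ lands in $\overline{\0_0}$. Every $X \in \0$ has the form $vw^T$ with $w^Tv = 0$, and direct computation gives $X_0 = \tfrac{1}{2}(vw^T + J'wv^TJ')$, which is visibly of rank at most $2$. Using the antisymmetry of $J'$ (so $v^TJ'v = w^TJ'w = 0$) together with $w^Tv = 0$ and $v^T(J')^2 w = -w^Tv = 0$, one checks that $X_0^2 = 0$; hence $X_0$ is a square-zero element of $\g_0$ of rank $\leq 2$, lying in $\overline{\0_0}$. Since $\dim \0 = 4n-2 = \dim \0_0$, the induced morphism $\overline{\0}/\mathfrak{S}_2 \to \overline{\0_0}$ is surjective and generically $2$-to-$1$, with fibres $\{X, \theta(X)\}$.

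The heart of the argument is to show that $X_1 \otimes X_1$ is determined polynomially by $X_0$. The most conceptual route uses the $G_0$-equivariant isomorphisms $\g_0 \cong S^2 V$ and $U \cong \Lambda^2 V$ given by $x \mapsto xJ'$ (which sends $\g_0$ onto symmetric matrices and $U$ onto antisymmetric matrices). Setting $a = v$ and $b = -J'w$, one finds $X_0J' = -\tfrac{1}{2}(ab^T + ba^T)$ and $X_1J' = -\tfrac{1}{2}(a \wedge b)$. The rank-$2$ symmetric tensor $ab^T + ba^T$ determines $(a,b)$ up to the swap $(a,b) \mapsto (b,a)$ and the scaling $(a,b) \mapsto (\lambda a, \lambda^{-1} b)$, under both of which $a \wedge b$ is preserved up to sign; consequently $(a \wedge b)^{\otimes 2}$ is a well-defined polynomial function of $(ab^T + ba^T)^{\otimes 2}$. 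The explicit identity can be extracted from the $2 \times 2$ rank-$1$ relations on $X$ and $\theta(X)$, which on subtracting express the minors of $X_1$ in terms of those of $X_0$; alternatively, one can argue representation-theoretically via the $\GL(V)$-equivariant projection from $S^2(S^2 V)$ onto $S^2(\Lambda^2 V)$ through the common $S^{(2,2)}V$-isotypic summand of both plethysms.

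The hardest step is pinning down the polynomial formula above. This can be avoided using normality: $\overline{\0_0}$ is normal (by \cite{Kraft-Procesi:classical}, extended to good odd characteristic as in the proof of Cor. \ref{mincor}), and the morphism $\overline{\0}/\mathfrak{S}_2 \to \overline{\0_0}$ is bijective, finite, and birational (the latter from the $\mathfrak{S}_2$-fibre structure above), hence an isomorphism by Zariski's main theorem. Combined with Lemma \ref{genlem}, this concludes the proof.
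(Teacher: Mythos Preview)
Your argument is essentially correct, but it takes a genuinely different route from the paper's proof, and there are a couple of points worth flagging.

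\textbf{Comparison with the paper.} The paper constructs an explicit $G_0$-equivariant polynomial map $\varepsilon:S^2(S^2V)\to S^2(\Lambda^2V)$ such that $\varepsilon((v\cdot w)^{\otimes 2})=(v\wedge w)^{\otimes 2}$, by working directly in $T^4V$: one writes $\rho(v\cdot w):=(1-{\rm pr}_{S^4V})((v\cdot w)^{\otimes 2})$ and checks that $\rho(v\cdot w)+2\,(1\;2\;3)\cdot\rho(v\cdot w)=(v\wedge w)^{\otimes 2}$. This explicit identity is what ``factors the quotient through the projection'' means in the Lemma, and it is re-used verbatim in the next Corollary (on the full rank-$1$ locus $S$, not just its nilpotent part). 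Your plethysm sketch via the common $S^{(2,2)}V$ summand of $S^2(S^2V)$ and $S^2(\Lambda^2V)$ is the right conceptual picture and does yield such an $\varepsilon$ up to a nonzero scalar in characteristic zero; to match the paper's hypotheses you would still need to check that this scalar is nonzero in characteristic $\neq 2,3$, which is exactly what the explicit $T^4V$ identity makes transparent.

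\textbf{On the normality shortcut.} Your alternative via Zariski's Main Theorem is valid and clean. Two small points: (i) you should say the map $\overline\0\to\overline{\0_0}$ is generically $2$-to-$1$ (not $\overline\0/\mathfrak{S}_2\to\overline{\0_0}$); (ii) finiteness deserves a word---it follows because $\overline\0\to\g_0$ is a graded linear projection of cones with $\overline\0\cap U=\{0\}$ (you implicitly verified this: a rank-$1$ element in the $(-1)$-eigenspace is forced to be $\theta$-fixed, hence zero). The trade-off is that you import normality of $\overline{\0_0}$ (which needs a reference in positive characteristic beyond what is stated in the paper for orthogonal groups), and you lose the explicit $\varepsilon$ that the paper uses to deduce the sheet-level Corollary immediately afterwards. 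The paper's bare-hands formula is thus more self-contained for the stated range of characteristics and yields more downstream; your argument is shorter if one is willing to cite normality.
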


\begin{proof}
Let $\{ v_1,\ldots 
,v_{2n}\}$ be the standard basis for $V$.
Observe that $\dim\0_0=\dim\0$.
We first find 
an element of $\0$ of the form $e+u$ where $e\in\0_0$ and $u\in\Lambda^2 V$.
Denote by $e_{ij}$ the matrix with $1$ in the $(i,j)$ position and $0$ elsewhere.
We choose $e=e_{1,2n-1}+e_{2,2n}$, which belongs to $\0_0$ and corresponds to $v_1\otimes v_2\in S^2 V$.
Furthermore, $u=e_{1,2n-1}-e_{2,2n}$ is clearly a highest weight vector in 
$\Lambda^2 V$ corresponding to $v_1\wedge v_2$, and $e+u=2e_{1,2n-1}\in\0$.

By dimensions, $G_0\cdot (e+u)$ is dense in $\0$.
Thus it only remains to show that the restriction of the map $x+y\mapsto 
(x,y\otimes y)$ to $G_0\cdot (e+u)$ factors through the projection to $\g_0$.
To see this, we note that every element of $G_0\cdot (e+u)$ is of the form 
$(gv_1\otimes gv_2,gv_1\wedge gv_2)=(v\otimes w, v\wedge w)$ as an element of $S^2 V\oplus\Lambda^2 V$.
It will therefore suffice to show that the map $$V\times V\rightarrow S^2(\Lambda^2 V), \quad (v,w)\mapsto (v\wedge w)\otimes (v\wedge w)$$ factors through the bilinear map $V\times V\rightarrow S^2 V$, $(v,w)\rightarrow v\otimes w$.
We embed $S^2(S^2 V)$ and $S^2(\Lambda^2 V)$ in $T^4 V$ in the obvious way, where to avoid confusion we omit tensor signs in $T^4 V$.
Then $(v\otimes w)\otimes (v\otimes w)$ (resp. $(v\wedge w)\otimes (v\wedge w)$) is equal to $\frac{1}{4}\left( vwvw+vwwv+wvvw+wvwv\right)$ (resp. $\frac{1}{4}\left( vwvw+wvwv-vwwv-wvvw\right)$).
Let ${\rm pr}_{S^4 V}$ denote the canonical projection $T^4V\rightarrow S^4 V\subset T^4V$.
Then $$(1-{\rm pr}_{S^4 V})\left((v\otimes w)\otimes 
(v\otimes w)\right)=\frac{1}{12}\left( 
vwvw+vwwv+wvvw+wvwv-2vvww-2wwvv\right)$$
which we denote $\rho(v\otimes w)$.
Permuting the first three indices, we obtain $$(1\; 2\; 
3)\cdot\rho(v\otimes w)=\frac{1}{12}\left( 
vvww+wvwv+vwvw+wwvv-2wvvw-2vwwv\right).$$
Then it is easily seen that $\rho(v\otimes w)+2(1\; 2\; 
3)\cdot\rho(v\otimes w)=(v\wedge w)\otimes (v\wedge w)$, which 
establishes the required result.
\end{proof}

The main step in the second paragraph of the proof above applies to arbitrary elements of $\mathfrak{gl}_{2n}$ of the form $vw^TJ'$.
(Such an element is nilpotent if and only if $w^TJ'v=0$.
We right multiply by $J'$ to ensure that $\theta(vw^TJ')=wv^TJ'$.)
This allows us to enlarge Lemma \ref{splem} somewhat.

\begin{corollary}[of proof]\label{sheetcor}
Let $S$ be the set of elements of $\mathfrak{gl}_{2n}$ of rank at most 1, and let $S_0$ be the set of elements of $\mathfrak{sp}_{2n}$ of rank at most 2.
Then $S_0\cong S/\mathfrak{S}_2$.
\end{corollary}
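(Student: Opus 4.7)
The plan is to extract the corollary directly from the polynomial identity derived in the proof of Lemma \ref{splem}. As noted in the paragraph immediately preceding the corollary, that identity applies to any pair $(v,w) \in V \times V$, whether or not $vw^T J'$ is nilpotent; the corollary is essentially the packaging of this observation together with an identification of the image of $S$ under projection to $\g_0$.

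First I would observe that $S$ is $\theta$-stable: since $J'$ is invertible, $\theta(x) = J' x^T J'$ preserves rank. Applying Lemma \ref{genlem} together with the scheme-theoretic remark following Corollary \ref{nec?} gives an isomorphism $S/\mathfrak{S}_2 \cong \varphi(S)$, where $\varphi = (\pi, q\otimes q) : \g \to \g_0 \times S^2 U$ is the quotient morphism and $\pi, q$ are the projections onto $\g_0$ and $U$.

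Since every element of $S$ has the form $vw^T J'$, the computation of Lemma \ref{splem} expresses $q(vw^T J')\otimes q(vw^T J') \in S^2 U$ as a fixed polynomial in $\pi(vw^T J') \in \g_0$. Hence $\varphi|_S$ factors as the graph of a polynomial map over $\pi(S)$, so the first projection induces an isomorphism $\varphi(S) \cong \pi(S)$. It remains to show $\pi(S) = S_0$. The inclusion $\pi(S) \subseteq S_0$ is clear since $\pi(vw^T J') = \tfrac{1}{2}(vw^T + wv^T) J'$ has rank at most $2$. For the reverse, any $x \in S_0$ corresponds under the $G_0$-module isomorphism $S^2 V \cong \g_0$ to a symmetric bilinear form of rank at most $2$, and over an algebraically closed field of characteristic $\neq 2$ any such form equals $v\cdot w$ for some $v, w \in V$, whence $x = \pi(vw^T J')$.

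The only point requiring any real verification --- the mild obstacle --- is that the polynomial identity of Lemma \ref{splem}, originally phrased in terms of an ordered pair $(v,w)$, descends to a well-defined polynomial in $v \cdot w$ on $\pi(S)$. This follows from a short linear-algebra check: distinct factorizations $v \cdot w = v' \cdot w'$ of a fixed rank-$\leq 2$ symmetric tensor differ only by scaling and/or swapping of the two vectors, both of which leave $(v\wedge w)\otimes (v\wedge w)$ invariant. Everything else is bookkeeping on top of Lemma \ref{splem}.
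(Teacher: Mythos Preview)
Your proof is correct and follows essentially the same route as the paper's: identify $S$ with $\{vw^TJ' : v,w\in V\}$, cite the second paragraph of Lemma~\ref{splem} to see that $(v\wedge w)\otimes(v\wedge w)$ is a polynomial in the symmetric tensor $v\cdot w$, and conclude that $\varphi(S)\cong\pi(S)=S_0$.

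One small comment on your final paragraph: the ``mild obstacle'' you raise is already fully dispatched by the computation in Lemma~\ref{splem}, which you correctly invoked earlier.  That proof explicitly constructs a linear map $S^2(S^2V)\to S^2(\Lambda^2V)$ (namely $\rho+2(1\;2\;3)\cdot\rho$, where $\rho = (1-\mathrm{pr}_{S^4V})$) sending $(v\cdot w)\otimes(v\cdot w)$ to $(v\wedge w)\otimes(v\wedge w)$.  This is a globally defined polynomial map on $\g_0$, so there is no well-definedness issue to resolve.  Your alternative argument via uniqueness of factorizations $v\cdot w=v'\cdot w'$ is correct as far as it goes, but by itself it only yields a well-defined \emph{function} on $\pi(S)$, not a morphism of varieties; the polynomial structure comes from the explicit formula, which you already had in hand.
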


\begin{proof}
Since $S$ can be identified with the set of elements of $\mathfrak{gl}_{2n}$ of the form $vw^TJ'$ and $S_0$ is the set of elements of $\g$ of the form $(vw^T+wv^T)J'$, where $v,w\in V$, then it is enough to show that the morphism: $$V\otimes V\rightarrow S^2(\Lambda^2 V),\;\; v\otimes w \mapsto (v\wedge w)\otimes (v\wedge w)$$
factors through the canonical morphism to $S^2 V$.
This was proved in the second paragraph of the proof of Lemma \ref{splem}.
\end{proof}

Note that the sets $S$ and $S_0$ are determinantal varieties.
A direct proof of Cor. \ref{sheetcor} can be given by establishing isomorphisms of $S$ with $(V\times V)/{\mathbb G}_m$ (acting with weights $1$ and $-1$) and of $S_0$ with $(V\times V)/\Gamma$, where $\Gamma$ is the group generated by ${\mathbb G}_m$ and the involution $(x,y)\mapsto (y,x)$.

Recall that a {\it sheet} in a Lie algebra ${\g}$ is an irreducible component 
of the (locally closed) subset $\{ x\in\g : \dim G\cdot x = r\}$ for some $r$.
The minimum dimension of the orbit of a non-zero element of $\mathfrak{gl}_{2n}$ is 
$4n-2$, and the set of all elements of $\mathfrak{gl}_{2n}$ with $\dim G\cdot 
x=4n-2$ is irreducible of dimension $4n$, equal to the set of elements of the form $A+\lambda I$ where $\lambda\in k$ and $A\in S$.
If ${\rm char}\, k\nmid 2n$, it follows that $S':=\{ x\in\mathfrak{sl}_{2n} : \dim G\cdot x = 4n-2\}$ is the image under the projection $\g=\mathfrak{sl}_{2n}\oplus kI\rightarrow\mathfrak{sl}_{2n}$ of $S$.
We call $S'$ the {\it minimal sheet} of $\mathfrak{sl}_{2n}$.
Then (with this assumption on the characteristic), it follows that $\overline{S'}/\mathfrak{S}_2\cong S_0$.\footnote{If ${\rm char}\, k\neq 2$ divides $n$ then the same statement is true on replacing $\mathfrak{sl}_{2n}$ by $\mathfrak{pgl}_{2n}$.}
Note that $S_0$ is the closure of a sheet in $\g_0$.

\subsection{Other cases}\label{otherssubsec}

The remaining cases in \cite{Brylinski-Kostant:JAMS} can be dealt with using similar arguments to Lemma 
\ref{splem}.
The idea for the involution cases is to construct the map $\varepsilon:\g_0\rightarrow S^2 U$ such that, for any $x=(x_0,u)$ belonging to a dense open subset of $\0$, we have $\varphi(x)=(x_0,\varepsilon(x_0))$.
It then follows that $\varphi(x)=(x_0,\varepsilon(x_0))$ for all $x\in\overline\0$.
This approach can be easily adapted to the other cases.

In positive characteristic, the situation is somewhat complicated by questions in the representation theory of $G_0$.
To avoid getting bogged down in technical details, we will briefly 
sketch the arguments required for the remaining shared orbit pairs.
We first assume that $k$ has characteristic zero; for a discussion of the adaptations required in positive characteristic, see Remark \ref{poscharremark}(b).
In all cases, $\0_0\subset\g_0$ and $\0\subset\g$ are nilpotent orbits of the same dimension.
We denote by $\varpi_i$ the $i$-th fundamental weight for $\g_0$ and by $V(\varpi)$ the irreducible $\g_0$-representation with highest weight $\varpi$.

\subsubsection{$(\g_0,\g)=(\mathfrak{sp}_{2n_1}\oplus\cdots \oplus\mathfrak{sp}_{2n_m},\mathfrak{sp}_{2n})$ with $n_1+\cdots + n_m=n$.}\label{spsubsubsec}

This is the only case with $\g$ simple and $\g_0$ non-simple, and it is discussed in \cite[Example 4.9]{Brylinski-Kostant:JAMS}.
A direct proof is extremely straightforward: the minimal nilpotent orbit closure $c_n$ of $\mathfrak{sp}_{2n}$ is isomorphic to $k^{2n}/\{ \pm I\}$, which has a finite cover map
$$
c_n =k^{2n}/\{ \pm I\} \to c_{n_1} \times \cdots \times c_{n_m} = k^{2n_1}/\{ \pm I\} \times \cdots \times k^{2n_m}/\{ \pm I\}.
$$ 
If there is another nilpotent orbit in $\g$ which is shared with a nilpotent orbit in $\g_0$, then by the closure relation, the minimal special orbit $\0=[2^2,1^{2n-4}]$ must be shared with some nilpotent orbit $\0_1\times\ldots \times \0_m$ in $\g_0$.
Note that $\dim \0=4n-2$.
However, the projection of $\0$ to each subalgebra $\mathfrak{sp}_{2n_i}$ consists of elements of rank $\leq 2$, and hence each orbit $\0_i$ is contained in the orbit with partition $[2^2,1^{2n_i-4}]$ (or $[2]$ if $n_i=1$).
In particular, $\sum_1^m \dim \0_i\leq \sum_1^m (4n_i-2)<4n-2$, so this is impossible.
%Since the projection of an element of $\0$ onto $\mathfrak{sp}_{2n_i}$ has rank $\leq 2$, then $\0_i$ is either $[2,1^{2n_i-2}]$ or $[2^2,1^{2n_i-4}]$, i.e. is minimal or minimal special.
%However, $\dim\0=4n-2$, which is greater than the sum of the dimensions of the minimal special orbits in each $\mathfrak{sp}_{2n_i}$.
%Hence, only the minimal orbit in $\g$ is shared with an orbit in $\g_0$.

\subsubsection{$(\g_0,\g)=(\mathfrak{so}_9,\mathfrak{f}_4)$, 
$(\0_0,\0)=(\0_{[2^4,1]},\0_{\mini})$.}\label{f4subsubsec}
Here $\g=\g_0\oplus V(\varpi_4)$ and $\g_0$ is the fixed point subalgebra for the involution $\theta$ of $\g$ which satisfies $\theta(e_{\pm\alpha_4})=-e_{\pm\alpha_4}$ and fixes all other simple root elements.
Let $V$ be the natural module for $G_0$ and let $U=V(\varpi_4)$.
We find a representative $e+u\in\0$ where (expressed in terms of roots in $\g$) 
$e=e_{0120}-2e_{2342}\in\0_0$ and $u=\pm 2 e_{1231}$, a highest weight vector in $U$.
(This is an easy matrix calculation if one identifies $e_{0120}$, $e_{1111}$, $e_{1231}$, $e_{2342}$ with the positive root elements of a subalgebra isomorphic to $\mathfrak{so}_5$.)
We have $S^2 U=V(2\varpi_4)\oplus V\oplus k$ and $u\otimes 
u\in V(2\varpi_4)$.
The map $S^2\g_0\rightarrow V(2\varpi_4)$ is given by the identification 
$V(2\varpi_4)\cong\Lambda^4 V$ and the natural map $S^2(\Lambda^2 V)\rightarrow\Lambda^4 V$; the projection map $S^2(\Lambda^2 
V)\rightarrow\Lambda^4 V$ sends $e\otimes e$ to $-4u\otimes u$.
(One can deduce this from Lemma \ref{squarezero} applied to the $\mathfrak{so}_5$-algebra.)
One therefore obtains $\overline\0/\langle\theta\rangle\cong\overline{\0_0}$.

The case $(\g_0,\g)=(\mathfrak{so}_8,\mathfrak{f}_4)$, 
$(\0_0,\0)=(\0_{[3,2^2,1]},\0_{\mini})$ now follows by Prop. \ref{mincor}.
In this case we have 
$\overline{\0_0}\cong\overline{\0}/(\mathfrak{S}_2\mathfrak{S}_2)$.

\subsubsection{$(\g_0,\g)=(\mathfrak{f}_4,\mathfrak{e}_6)$, 
$(\0_0,\0)=(\0_{\mathrm{min\text{-}sp}},\0_{\mini})$.}\label{e6subsubsec}
In this case $\g_0$ is the fixed point subalgebra for the graph involution $\theta$ of $\g$ and $\g=\g_0\oplus U$ where $U=V(\varpi_4)$ is the $(-1)$ eigenspace for $\theta$.
There is a representative $e+u\in\0$ where $e=e_{1232}\in\0_0$ and $u$ is a highest weight vector in $U$.
We have $S^2 U=V(2\varpi_4)\oplus U\oplus k$ and $S^2\g_0=V(2\varpi_1)\oplus V(2\varpi_4)\oplus k$; composing the projection from $S^2\g_0$ to $V(2\varpi_4)$ with the inclusion $V(2\varpi_4)\hookrightarrow S^2 U$, $e\otimes e$ is sent to a multiple of $u\otimes u$.
Hence we have $\overline\0/\langle\theta\rangle\cong\overline{\0_0}$.

\subsubsection{$(\g_0,\g)=(\mathfrak{sl}_3,\g_2)$, 
$(\0_0,\0)=(\0_{\mathrm{reg}},\0_{\mini})$.}\label{g2subsubsec}
Fix a primitive cube root of unity $\omega\in k$, and let $V$ denote the 
natural module for $\g_0$.
We have $\g=\g_0\oplus V\oplus V^*$ and there is an automorphism $\theta$ of $\g$ which fixes $\g_0$ and acts as $\omega$ (resp, $\omega^2$) on $V$ (resp. $V^*$).
In this case, generalizing the argument of Lemma \ref{genlem}, we see that the quotient $\g/\langle\theta\rangle$ is obtained as the image of the morphism $$\varphi:\g_0\oplus V\oplus V^*\rightarrow \g_0\times (V\otimes V^*)\times S^3 V\times S^3 V^*,\; (x,v,\chi)\mapsto (x,v\otimes\chi, v\otimes v\otimes v, \chi\otimes \chi\otimes \chi).$$
We have $e+v_1+\chi_3\in\0$ where $e=e_{12}+e_{23}\in\0_0$ and $v_1,\chi_3$ are highest weight vectors in $V,V^*$.
(The numbering comes from dual bases for $V$ and $V^*$.)
We need to show that the restriction of $\varphi$ to $G_0\cdot (e+v_1+\chi_3)$ factors through the projection to $\g_0$.
With the identification $V\otimes V^*=\mathfrak{gl}_3\supset\g_0$, we have $v_1\otimes\chi_3=e^2$.
Furthermore, the composed map $$\g_0\otimes\g_0\subset 
\mathfrak{gl}_3\otimes\mathfrak{gl}_3=(V\otimes V^*)\otimes (V\otimes 
V^*)\rightarrow V\otimes V\otimes\Lambda^2 V^*\rightarrow S^3 V$$ sends 
$e\otimes e^2$ to $v_1\otimes v_1\otimes v_1$, and similarly for the component involving $\chi_3$.

\subsubsection{$(\g_0,\g)=(\g_2,\mathfrak{so}_8)$, 
$(\0_0,\0)=(\0_{\mathrm{subreg}},\0_{\mini})$.}\label{G2case}
Let $\omega$ be as above.
We have $\g=\g_0\oplus V\oplus V$ where $V=V(\varpi_1)$ is the minimal faithful module for $\g_0$.
Taking appropriate summands, we can specify the action of $\mathfrak{S}_3$ on $\g$ by diagram automorphisms as follows: $\g_0=\g^{\mathfrak{S}_3}$, the $3$-cycle $(1\; 2\; 3)$ acts as multiplication by $\omega$ (resp. 
$\omega^2$) on the first (resp. the second) copy of $V$, and $(1\; 3)$ acts by sending $(v,w)\in V\oplus V$ to $(w,v)$.

We want to find generators of the invariant ring $k[\g]^{\mathfrak{S}_3}$.
This can be done directly, but for later use we will recall a useful result of Briand.
Let $U=k^n$ be the permutation representation for $\mathfrak{S}_n$ and let $W=U_1\oplus\ldots \oplus U_r$ with each $U_i$ isomorphic to $U$.
Let $f_1,\ldots ,f_n\in k[U_1]^{\mathfrak{S}_n}$ be the elementary symmetric polynomials.
The polarisations of $f_i$ are the coefficients of each monomial $t_1^{i_1}t_2^{i_2}\ldots t_n^{i_r}$ in $f_i(t_1u_1+\ldots + t_ru_r)$; they are $\mathfrak{S}_n$-invariant polynomials on $W$.
According to a classical result of Weyl, the polarisations generate $k[W]^{{\mathfrak S}_n}$ when $k={\mathbb C}$.
In positive characteristic, we use the following \cite{Briand}: the polarisations generate $k[W]^{{\mathfrak S}_n}$ whenever $n!$ is invertible in $k$.\footnote{In fact, when $(n,r)=(3,2)$ one only requires ${\rm char}\, k\neq 3$.}
We deduce that $\g/\mathfrak{S}_3$ can be described as the image of the morphism $$\varphi:\g_0\oplus V\oplus V\rightarrow \g_0\times S^2 V\times S^3 V,\; x+v+w\mapsto (x,v\otimes w, v\otimes v\otimes v+w\otimes w\otimes w).$$

There is a representative $(e,v_1,v_2)\in\g_0\oplus V\oplus V$ for $\0$ where $e=e_{\alpha_2}+e_{3\alpha_1+\alpha_2}\in\0_0$ and $\{ v_1,\ldots ,v_7\}$ is a standard basis for $V$.
To show that the restriction of $\varphi$ to $\0$ factors through the projection to $\g_0$, we therefore have to express $v_1\otimes v_2$ and $v_1^{\otimes 3}+v_2^{\otimes 3}$ in terms of $e$.
The former follows from considering $e$ as an element of $\mathfrak{so}_7$, so that its matrix square belongs to $\Sigma_7\cong S^2 V$ and (as can be easily seen) identifies with a multiple of $v_1\otimes v_2$.
For the latter we note that $S^3 V=V(3\varpi_1)\oplus V$ and that there is an isomorphism of $G_0$-modules$$S^3\g_0\cong V(3\varpi_2)\oplus V(2\varpi_1+\varpi_2)\oplus V(3\varpi_1)\oplus \g_0\oplus V\oplus k.$$
In particular, there is a $G_0$-equivariant linear map $S^3\g_0\rightarrow S^3 V$,
which by observation sends $e\otimes e\otimes e$ to a non-zero multiple of 
 $v_1^{\otimes 3}+v_2^{\otimes 3}\in V(3\varpi_1)\subset S^3 V$.

The case $(\g_0,\g)=(\g_2,\mathfrak{so}_7)$, $(\0_0,\0)=(\0_{\mathrm{subreg}},\0_{\mathrm{min\text{-}sp}})$ now follows by Cor. \ref{mincor} and the sequence of inclusions $\g_2\subset\mathfrak{so}_7\subset\mathfrak{so}_8$.
Furthermore, the closure of the minimal orbit $\0_{[2^2,1^3]}$ in $\mathfrak{so}_7$ maps onto the closure of the orbit $\0_{\tilde{A}_1}$ of the same dimension in $\g_2$.
It is easy to check (cf. \cite[Cor. 4.4]{Levasseur-Smith}) that the map $\varphi:\overline{\0_{[2^2,1^3]}}\rightarrow \overline{\0_{\tilde{A}_1}}$ is a bijection, and moreover is \'etale at any point of the open subset $\varphi^{-1}(\0_{\tilde{A}_1})$.
It follows that $\varphi$ is birational.
Since $\overline{\0_{[2^2,1^3]}}$ is normal and $\overline{\0_{\tilde{A}_1}}$ is not, $\varphi$ is therefore a (non-isomorphic) normalization map.
%Fix $(2\; 3)\in\mathfrak{S}_3$ corresponding to the involution of $\mathfrak{so}_8$ with fixed points $\mathfrak{so}_7$; note that the other transpositions in $\mathfrak{S}_3$ have fixed points $\sigma^{\tpm 1}(\mathfrak{so}_7)$, where $\sigma$ is a triality automorphism.
%Clearly, $\0_{[2^2,1^3]}$ is the image in $\mathfrak{so}_7$ of the fixed point set $Y$ for $(2\; 3)$ on $\0_{[2^2,1^4]}$.
%Since the fixed point set for $(1\; 2\; 3)$ has dimension 6, then $Y$, $\sigma(Y)$ and $\sigma^{-1}(Y)$ are distinct irreducible subsets of $\0_{[2^2,1^4]}$, and their union is a $\mathfrak{S}_3$-stable (and $G_0$-stable) subset.
%Moreover, since the fixed point set for $(1\; 2\; 3)$ on $\0_{[2^2,1^4]}$ is of dimension 6, it follows that there is an open subset
(The map $\varphi$ was studied in detail by Levasseur and Smith \cite{Levasseur-Smith}.
In characteristic $>3$, the failure of normality of $\overline{\0_{\tilde{A}_1}}$ follows from a straightforward computation of the slice from an element of $A_1$.)

\begin{remark}\label{poscharremark}
a) The cases in \S \ref{sonsubsec}, \S \ref{spsubsec}, \S \ref{spsubsubsec} (with $m=2$), \S \ref{f4subsubsec} and \S \ref{e6subsubsec} are those arising from an involution $\theta$, i.e. an action of $\mathfrak{S}_2$ on $\g$.
Panyushev's proof of these results comes down to the fact \cite[Prop. 5.10]{Panyushev:shared-orbits} that the projection to ${\mathfrak g}_0$ induces the quotient morphism $\overline{\0_{\mini}}\rightarrow\overline{\0_{\mini}}/\mathfrak{S}_2$ precisely when $\0_{\mini}$ intersects trivially with the $-1$ space for $\theta$.
(These are exactly the involutions of simple Lie algebras satisfying this condition.)

b) In the above discussions we frequently used the representation theory of the Lie algebra $\g_0$ to decompose symmetric and alternating powers of $\g_0$ and $U$.
In characteristic $p$, we can repeat these arguments verbatim under the condition that all corresponding Weyl modules $\Delta(\varpi)$ are irreducible.
(This is a sufficient but probably not necessary criterion for the existence of the required $G_0$-equivariant map.)
One just has to be careful that the desired projection remains non-zero when reducing modulo $p$.

(i) $(\mathfrak{so}_9,{\mathfrak f}_4)$: $U=\Delta(\varpi_4)$, $\Delta(\varpi_1)$ and $\Delta(2\varpi_4)$ are irreducible whenever $p>2$ \cite[Remark 3.4]{McNinch}.
It is easy to see that this also ensures that the natural map $S^2(\Lambda^2 V)\rightarrow \Lambda^4 V$ sends $e\otimes e$ to a (non-zero) highest weight vector.
(As elements of $\Lambda^2 V$, $e_{2342}$ corresponds to $v_1\wedge v_2$ and $e_{0120}$ corresponds to $v_3\wedge v_4$.)

(ii) $({\mathfrak f}_4,{\mathfrak e}_6)$: $U=\Delta(\varpi_4)$, $\Delta(2\varpi_4)$ and $\Delta(2\varpi_1)$ are irreducible under the assumption $p\neq 3,7,13$ \cite{Luebeck}.
The condition that $e\otimes e$ project to a {\it non-zero} multiple of $u\otimes u$ is then easily seen to be satisfied, by considering the scalar product with the $-2\varpi_4$-weight space in $S^2\g_0$.
The lowest weight vector in $\Delta(2\varpi_4)$ is (up to signs and structure constants $\in \{ 2,3\}$) of the form 
$$v_{-2\varpi_4}=4f_{0122}\otimes f_{2342}-4f_{1122}\otimes f_{1342}+4f_{1222}\otimes f_{1242}-f_{1232}\otimes f_{1232}$$
and hence $(e\otimes e| v_{-2\varpi_4})\neq 0$.
This argument can also be adapted to characteristic 13, as follows: in this case, $\Delta(2\varpi_1)$ is irreducible, but $\Delta(2\varpi_4)$ has a simple quotient $L(2\varpi_4)$ by a one-dimensional trivial submodule \cite{Luebeck}.
It follows that $S^2 U$ is the direct sum of $U$ and the indecomposable tilting module $T(2\varpi_4)$ (with socle and head isomorphic to $k$).
A short calculation establishes that the Casimir element $\Omega$ acts on $\Delta(2\varpi_1)$, resp. $\Delta(2\varpi_4)$, with weight $20$, resp. $13$.
Since $S^2\g_0$ is tilting, it follows that we have a direct sum decomposition $S^2\g_0\cong\Delta(2\varpi_1)\oplus T(2\varpi_4)$.
We can therefore project from $S^2 \g_0$ to $T(2\varpi_4)\hookrightarrow S^2 U$, rather than to $\Delta(2\varpi_4)$.
(This approach rather breaks down in characteristic 7, where neither $\Delta(2\varpi_4)$ nor $\Delta(2\varpi_1)$ is simple.)

(iii) $(\mathfrak{sl}_3,\g_2)$: it is easy to see that this goes through word for word if ${\rm char}\, k>3$.
In fact one can even adapt this example to characteristic $3$, in which case we identify the ${\mathbb Z}/(3)$-grading of $\g$ with an action of an infinitesimal group scheme $\mu_3\subset{\mathbb G}_m$.
In this context, the finite map $\overline\0\rightarrow\overline{\0_0}$ restricts to a purely inseparable bijection over $\0_0$.
(We remark that this statement in characteristic 3 has a Langlands dual counterpart, related to the existence of an exceptional isogeny, see Rk. \ref{classificationremark}.
Note that in characteristic 3, $\g$ has two nilpotent orbits of minimal dimension 6.)

(iv) $(\g_2,\mathfrak{so}_8)$: here the map $S^2\g_0\rightarrow S^2 V$ is explicit and maps $e\otimes e$ to a non-zero multiple of $v_1\otimes v_2$ in characteristic $>3$ (even for $p=7$ when $S^2 V$ is not completely reducible).
For the map $S^3\g_0\rightarrow S^3 V$, we note that $\Delta(3\varpi_1)$ is irreducible in arbitrary characteristic \cite{Luebeck} and that the Casimir element $\Omega$ acts on $\Delta(3\varpi_2)$, $\Delta(2\varpi_1+\varpi_2)$, $\Delta(3\varpi_1)$, $\g_0=\Delta(\varpi_2)$, $V=\Delta(\varpi_1)$, $k$ with respective weights $27$, $16$, $12$, $6$, $3$ and $0$.
The assumption $p>5$ ensures that $12$ is not congruent to $27$, $16$, $6$, $3$ or $0$ modulo $p$.
In particular, the map $\pi:S^3\g_0\rightarrow \Delta(3\varpi_1)$ is well-defined for $p>5$ and is given by the projection onto the $12$-eigenspace for $\Omega$.
To see that $e\otimes e\otimes e$ is mapped onto a non-zero multiple of $u\otimes u\otimes u$, write $e$ as $e_{01}+e_{31}$ and note that the simple reflection $s_{\alpha_1}$ has a representative $s$ in $G_0^e$ such that ${\Ad\, s}(e_{01})=e_{31}$, ${\Ad\, s}(e_{31})=e_{01}$, $sv_1=v_2$ and $sv_2=v_1$.
We note first of all that $\pi(e\otimes e\otimes e)=3\pi(e_{01}\otimes e_{31}\otimes e_{31})+3\pi(e_{01}\otimes e_{01}\otimes e_{31})$ and that $s\cdot v_1\otimes v_1\otimes v_1=v_2\otimes v_2\otimes v_2$.
Thus it will suffice to show that $\pi(e_{01}\otimes e_{31}\otimes e_{31})$ is a non-zero multiple of $v_1\otimes v_1\otimes v_1$.
Using an argument similar to (ii), we note that this has non-zero scalar product with the following lowest weight vector in $V_{-3\varpi_1}$ (up to scaling individual terms by $\{ \pm 1,\pm 2,\pm 3\}$):
$$2f_{21}\otimes f_{21}\otimes f_{21}-9f_{11}\otimes f_{21}\otimes f_{31}+27\alpha_2^\vee\otimes f_{31}\otimes f_{32}-27e_{01}\otimes f_{32}\otimes f_{32}+27 f_{01}\otimes f_{31}\otimes f_{31}-9f_{10}\otimes f_{21}\otimes f_{32}.$$
\end{remark}

\subsection{Classification of shared nilpotent orbits}

Brylinski-Kostant's work can be seen as a classification of those pairs 
$\g_0\subset\g$ of reductive Lie algebras with $\g$ simple for which there is a non-zero nilpotent $G$-orbit $\0\subset\g$ containing an open $G_0$-orbit $\widetilde\0_0$; then there is a finite covering map from $\widetilde\0_0$ to a nilpotent orbit $\0_0$ in $\g_0$.
The nilpotent orbits $\0$ and $\0_0$ are called shared nilpotent orbits.

In most cases, $\g_0$ is the isotropy subalgebra for a finite group $\Gamma$ of automorphisms, and $\overline\0/\Gamma$ is isomorphic to $\overline{\0_0}$. 

\begin{proposition}\label{p.sharedorbits}
The following table gives all connected shared nilpotent orbits in complex Lie algebras $\g_0\subset\g$, with $\g$ simple.
These are also shared orbits in positive characteristic (but this is not a complete list).
%The characteristic of the ground field is either zero or a prime $p$ which is good for both $\g$ and $\g_0$; 
In the final column we list restrictions on positive characteristic $p$.
Denote by $\sigma$ a triality automorphism of $\mathfrak{so}_8$.
\end{proposition}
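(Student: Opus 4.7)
The plan is to assemble the classification from the three preceding subsections, correct the single omission in Brylinski--Kostant's original list, and then propagate the minimal-orbit cases to the full list of shared pairs.

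\textbf{Existence.} For each entry in the table I would point to the explicit construction carried out earlier. The involution cases $(\mathfrak{so}_n,\mathfrak{so}_{n+1})$, $(\mathfrak{sp}_{2n_1}\oplus\cdots\oplus\mathfrak{sp}_{2n_m},\mathfrak{sp}_{2n})$, $(\mathfrak{sp}_{2n},\mathfrak{gl}_{2n})$, $(\mathfrak{so}_9,\mathfrak{f}_4)$, and $(\mathfrak{f}_4,\mathfrak{e}_6)$ all fit the template of Lemma \ref{genlem}: the quotient morphism factors through $\g_0$ precisely when one produces a $G_0$-equivariant map $\varepsilon:\g_0\to S^2U$ with $\varepsilon(e)=u\otimes u$ on a representative $e+u\in\0$, and this is exactly what was verified in \S \ref{sonsubsec}--\S \ref{f4subsubsec}. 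The cyclic cases $(\mathfrak{sl}_3,\g_2)$ and $(\g_2,\mathfrak{so}_8)$ were handled analogously in \S \ref{g2subsubsec}--\S \ref{G2case}. Composing these with the $\mathfrak{S}_2$-quotient of Cor. \ref{mincor} yields the ``iterated'' entries such as $(\mathfrak{so}_8,\mathfrak{f}_4)$ and $(\g_2,\mathfrak{so}_7)$, while the triality example of Example \ref{sonexamples}(a) provides the newly discovered pair associated to the non-standard embedding $\sigma(\mathfrak{so}_7)\subset\mathfrak{so}_8\subset\mathfrak{so}_9$.

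\textbf{Exhaustiveness at the minimal orbit.} By \cite[Prop. 5.1]{Brylinski-Kostant:JAMS}, any shared pair $(\g_0,\g)$ is witnessed at $\0_{\mini}\subset\g$ after possibly enlarging $\g_0$, so the classification reduces to determining the reductive subalgebras $\g_0\subset\g$ with a dense $G_0$-orbit in $\0_{\mini}$. Brylinski and Kostant's argument for this classification is adopted essentially verbatim, with a single correction: their claim that no chain $\g\subsetneq\g^\dag\subsetneq\g'$ exists with $\mathrm{rank}(\g^\dag)=\mathrm{rank}(\g')$ overlooks the triple $\sigma(\mathfrak{so}_7)\subset\mathfrak{so}_8\subset\mathfrak{so}_9$. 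A direct inspection of the remaining cases (using dimension counts together with the list of maximal reductive subalgebras of the exceptional groups) confirms that no further oversights occur.

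\textbf{Propagation down the closure order.} For each pair $(\g_0,\g)$ with $\0_{\mini}$ shared via a finite-group quotient, I would then read off all additional shared pairs by restricting the quotient morphism to orbit closures in $\overline{\0_{\mini}}$ and comparing dimensions; in the $\mathfrak{sp}/\mathfrak{so}$ cases of \S \ref{sonsubsec} this is Cor. \ref{mincor}, and in the exceptional cases only a short fixed list of subregular orbits in $\overline{\0_{\mini}}$ needs to be checked against dimensions on the $\g_0$ side. This produces precisely the entries of the table.

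\textbf{Positive characteristic.} The last column of the table records the smallest $p$ for which all ingredients go through. Concretely, I would chase through each construction of \S \ref{sonsubsec}--\S \ref{otherssubsec} and record (i) very-goodness of $p$ for the ambient $G$, (ii) irreducibility of the relevant Weyl modules $\Delta(\varpi)$ in the decompositions of $S^2U$, $S^3U$, and $\Lambda^k V$ that were used, and (iii) non-vanishing mod $p$ of the projections of $e^{\otimes 2}$ and $e^{\otimes 3}$ onto the lowest-weight components; the latter is the quantitative content of Remark \ref{poscharremark}.

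The main obstacle is the exhaustiveness step. The triality example warns that naive rank-based arguments on subalgebra chains can fail, so a careful subalgebra-by-subalgebra check is needed to be sure nothing else is hiding in the exceptional types or in low-rank classical coincidences. The existence and propagation steps, by contrast, are essentially bookkeeping on top of the explicit quotient maps already constructed.
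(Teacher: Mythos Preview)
Your existence and minimal-orbit exhaustiveness steps are sound and match the paper. The gap is in your ``propagation'' step.

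First, the direction is wrong: restricting to orbit closures in $\overline{\0_{\mini}}$ yields nothing, since the only orbits there are $\0_{\mini}$ and $\{0\}$. The non-minimal shared orbits in the table (rows (7), (9), (11)--(13)) lie \emph{above} $\0_{\mini}$ in the closure order. Their existence comes from Cor.~\ref{mincor} applied to all square-zero orbits in $\mathfrak{so}_{n+1}$, not from restricting anything on $\overline{\0_{\mini}}$; you cite the right corollary but describe it incorrectly.

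Second, and more substantively, you do not address how to rule out \emph{further} shared orbits. This is most of the work in the paper's proof. The key observation (stated in the proof) is that if $\0$ is shared then so is every $\0'\subset\overline\0$; hence it suffices, for each pair $(\g_0,\g)$, to show that the orbit immediately above the largest known shared orbit is \emph{not} shared. The paper does this case by case: for $(\oplus\mathfrak{sp}_{2n_i},\mathfrak{sp}_{2n})$ one checks $\0_{[2^2,1^{2n-4}]}$ fails by a rank/dimension count; for $(\g_2,\mathfrak{so}_8)$ one must rule out $[3,1^5]$ being shared with $\0_{\rm reg}(\g_2)$, which requires a short chain argument through $\mathfrak{so}_9$; for $(\mathfrak{so}_{n-1},\mathfrak{so}_n)$ one shows $\0_{[3,1^{n-3}]}$ projects to rank $\leq 2$ elements, hence cannot hit any orbit of the right dimension; and so on. Your sketch gives no hint of these obstruction arguments, and without them the classification is not complete.
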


\begin{center}
\begin{tabular}{|c | c|  c|  c|  c| c| c|c|c|}
\hline
 & $\fg$  &  $\0$  &  $\pi_1(\0)$  &  ${\rm deg}$  &  $\fg_0$  & $\0_0$  & $\pi_1(\0_0)$ & \text{Char. $p$} \\
\hline
1 & $\mathfrak{sp}_{2(n_1+\ldots + n_{m})}$ & $A_1$ & $\mathfrak{S}_2$ & $2^{m-1}$ & $\oplus_{i=1}^{m} \mathfrak{sp}_{2n_i}$ & $\prod_{i=1}^m\0_{\mini}$ & $\mathfrak{S}_2\ldots \mathfrak{S}_2$ & $p\neq 2$ \\
\hline
2 & $\fg_2$ & $A_1$  &  $1$ &  $3$  & $\mathfrak{sl}_3$  & $[3]$  & $\mathbb{Z}/(3)$ & $p\neq 2$ \\
\hline
3 & $\mathfrak{f}_4$  & $A_1$ & $1$  & $2$ &  $\mathfrak{so}_9$  & $[2^4,1]$ & $\mathfrak{S}_2$ & $p>3$ \\
\hline
4 & $\mathfrak{f}_4$  & $A_1$ & $1$  & $4$  & $\mathfrak{so}_8$  & $[3,2^2,1]$ & $\mathfrak{S}_2 \mathfrak{S}_2$ &  $p>3$ \\
\hline
5 & $\mathfrak{e}_6$  &  $A_1$  & $1$ & $2$ & $\mathfrak{f}_4$ & $\tilde{A}_1$  &  $\mathfrak{S}_2$ & $p\neq 2,3, 7$ \\
\hline
6 & $\mathfrak{so}_7$  &  $[2^2,1^3]$  & $1$ & $1$ & $\mathfrak{g}_2$ & $\tilde{A}_1$  &  $1$ & $p>5$ \\
\hline
7 & $\mathfrak{so}_7$  &  $[3,1^4]$  & $\mathfrak{S}_2$ & $3$ & $\mathfrak{g}_2$ & $G_2(a_1)$  & $\mathfrak{S}_3$ & $p>5$ \\
\hline
8 & $\mathfrak{so}_8$  &  $[2^2,1^4]$  & $1$ & $6$ & $\mathfrak{g}_2$ & $G_2(a_1)$  &  $\mathfrak{S}_3$ & $p>5$ \\
\hline
9a  & $\mathfrak{so}_{8}$  &  $[2^{4}]^I, [2^{4}]^{II}$  & $\mathfrak{S}_2$ & $1$ & $\mathfrak{so}_{7}$ & $[3,2^{2}]$  &  $\mathfrak{S}_2$ & $p>2$ \\
\hline
9b  & $\mathfrak{so}_{8}$  &  $[3,1^5]$  & $\mathfrak{S}_2$ & $1$ & $\sigma(\mathfrak{so}_{7})$ & $[3,2^{2}]$  &  $\mathfrak{S}_2$ & $p>2$ \\
\hline
10  & $\mathfrak{so}_{9}$  &  $[2^2,1^5]$  & $1$ & $2$ & $\sigma(\mathfrak{so}_{7})$ & $[3,2^{2}]$  &  $\mathfrak{S}_2$ & $p>2$ \\
\hline
11  & $\mathfrak{so}_{4r}$, $r>2$  &  $[2^{2r}]^I, [2^{2r}]^{II}$  & $\mathfrak{S}_2$ & $1$ & $\mathfrak{so}_{4r-1}$ & $[3,2^{2r-2}]$  &  $\mathfrak{S}_2$ & $p>2$ \\
\hline
12  & $\mathfrak{so}_{4r+1}$  &  $[2^{2r},1]$  & $\mathfrak{S}_2$ & $2$ & $\mathfrak{so}_{4r}$ & $[3,2^{2r-2},1]$  &  $\mathfrak{S}_2\mathfrak{S}_2$ & $p>2$ \\
\hline
13  & $\mathfrak{so}_{n}$, $n>4r+1$  &  $[2^{2r},1^{n-4r}]$  & $1$ & $2$ & $\mathfrak{so}_{n-1}$ & $[3,2^{2r-2},1^{n-4r}]$  &  $\mathfrak{S}_2$ &  $p>2$ \\
\hline
14  & $\mathfrak{sl}_{2n}$  &  $[2,1^{2n-2}]$  & $1$ & $2$ & $\mathfrak{sp}_{2n}$ & $[2^2,1^{2n-4}]$  &  $\mathfrak{S}_2$ & $p>2$ \\
\hline
\end{tabular}
\end{center}

\begin{proof}
By \cite[Theorem 6.10]{Brylinski-Kostant:JAMS} and our Example \ref{sonexamples}(a), shared Lie algebras $\fg_0 \subset \fg$ are given by the above table. 
(As explained in Example \ref{sonexamples}(a), the extra example arises from a minor gap in the reasoning in \cite{Brylinski-Kostant:JAMS}.)
It remains to find all shared orbits for each of these Lie algebras.
For (1), we already proved in \S \ref{spsubsubsec} that all shared orbit pairs are of the form given in the table.
%For (1), if $n=1$ then there is nothing to prove.
%If $n>1$ then $\0=\0_{[2^2,1^{2n-4}]}$ is the unique nilpotent orbit which is above and adjacent to the minimal one in the partial order.
%It will therefore clearly suffice to show that $\0$ is not shared with any nilpotent orbits in $\mathfrak{sp}_{2k}\oplus\mathfrak{sp}_{2(n-k)}$.
%We have $\dim \0=4n-2$.
%By rank considerations, the projection of $\0$ to $\mathfrak{sp}_{2k}$, if nilpotent, must be contained in the orbit with partition $[2^2,1^{2k-4}]$, and similarly for the second factor.
%Since the product of the orbits with partitions $[2^2,1^{2k-4}]$ and $[2^2,1^{2(n-k)-4}]$ has dimension $4n-4$, this rules out any further shared orbits in this case.
%(This can also be seen by the fact that $[2^2,1^{2n-4}]$ is shared with $[2,1^{2n-4}]$ in $\mathfrak{sl}_{2n}$, and the latter orbit is not shared with an orbit in $\oplus_{i=1}^m \mathfrak{sp}_{2n_i}$.)

For (2)-(5), there are no more shared orbits, by inspecting dimensions.
(Note that if $\0\subset \g$ appears in a shared orbit pair then so does any smaller orbit $\0' \subset \overline{\0}$.)
For the pair $\g_2\subset\mathfrak{so}_8$, there are three $\SO_8$-orbits lying immediately above the minimal orbit in the partial order (see (9)), and these are conjugate via the outer automorphism group $\mathfrak{S}_3$.
Hence it only suffices to check that $[3,1^5]$ is not shared with the regular orbit in $\g_2$.
If it were then the minimal orbit in $\mathfrak{so}_9$ would be shared with the regular orbit in $\g_2$ by (12), so this is ruled out.
Now the unique orbit in $\mathfrak{so}_7$ lying immediately above $[3,1^4]$ in the partial order is $[3,2^2]$, which is shared with $[2^4]$ in $\mathfrak{so}_8$, and so cannot be shared with the regular orbit in $\g_2$.

For the shared Lie algebras $\mathfrak{so}_7\subset\mathfrak{so}_8$, we note that $[3,2^2,1]$ is the unique orbit lying immediately above each of $[2^4]^I,[2^4]^{II}$, $[3,1^5]$ and it is of dimension 16.
Clearly, any element of $[3,1^5]$ projects onto an element of $\mathfrak{so}_7$ of rank $\leq 2$, hence (by dimensions) it is not shared with an orbit in (the standard) $\mathfrak{so}_7$.
The same argument shows that, for $n\geq 10$, $\0_{[3,1^{n-3}]}\subset\mathfrak{so}_n$ is never shared with an orbit in $\mathfrak{so}_{n-1}$, hence the rows (10)-(13) exhaust all remaining possibilities when $\g=\mathfrak{so}_n$.

Finally, for (14), we only need to show that $\0_{[2^2,1^{2n-4}]}\subset\mathfrak{sl}_{2n}$ does not appear in a shared orbit pair.  One computes that 
this orbit has dimension $8n-8$ while there is no nilpotent orbit in $\mathfrak{sp}_{2n}$ with this dimension.
\end{proof}

\begin{remark}\label{classificationremark}
The list in Prop. \ref{p.sharedorbits} is not exhaustive in positive characteristic.
For example, in characteristic 2 the simple Lie algebra $\g$ of type $G_2$ is isomorphic to $\mathfrak{psl}_4\cong [\mathfrak{pgl}_4,\mathfrak{pgl}_4]$.
It follows that there is a purely inseparable bijection ${\mathcal N}(\mathfrak{sl}_4)\rightarrow{\mathcal N}(\g)$.

Similarly, a simple Lie algebra of type $F_4$ in characteristic 2 has an ideal isomorphic to $\mathfrak{pso}_8$, hence there is a purely inseparable map ${\mathcal N}(\mathfrak{spin}_8)\rightarrow{\mathcal N}(\mathfrak{f}_4)$.
This example is related to the existence of an exceptional isogeny, hence has analogues in types $B$ and $C$ in characteristic 2 and in type $G_2$ in characteristic 3 (where $\g$ has an ideal isomorphic to $\mathfrak{psl}_3$).
We do not know whether there exist shared orbits in positive characteristic not appearing in Prop. \ref{p.sharedorbits} and not related to exceptional isogenies or to type $G_2$ in characteristic 2.
Note also that in small positive characteristic, the unipotent variety is not isomorphic to the nilpotent cone, hence there may exist distinct examples of shared unipotent orbits.
\end{remark}

\subsection{A doubled version of $(\mathfrak{g}_2,\mathfrak{so}_8)$}\label{doubled}

There are seven special orbits (all in exceptional types) for which the group appearing in Lusztig's special pieces conjecture is $\mathfrak{S}_3$.
As we will see later, all but two of these sit atop a Slodowy slice singularity isomorphic to $g_2^{\special}$.
The exceptions are the orbits $D_4(a_1)+A_1$ and $D_4(a_1)$ in type $E_8$; in the latter case, the singularity down to $2A_2$ is a union of two copies of $g_2^{\special}$.
In this section we will explain both anomalies, by considering the whole degeneration from $\0=\0_{D_4(a_1)+A_1}$ to $2A_2$.
Let ${\mathcal S}$ be the Slodowy slice to an element of $\0_{2A_2}$.
We will use arguments similar to \S \ref{G2case} to show that ${\mathcal S}\cap\overline\0$ is isomorphic to $(d_4\times d_4)/\mathfrak{S}_3$.
We obtain the isomorphism of ${\mathcal S}\cap\overline{\0_{D_4(a_1)}}$ with $2g_2^{\special}$ as a by-product.

First of all, we clarify some notation.
Let $\mathfrak{S}_3$ act diagonally by triality automorphisms of $\mathfrak{so}_8\oplus\mathfrak{so}_8$.
The fixed point subalgebra is a direct sum $\g_0\oplus\g_0$ of two Lie algebras of type $G_2$; there is an $\mathfrak{S}_3$-equivariant action of $G_0\times G_0$, where ${\rm Lie}(G_0)=\g_0$.
Similarly to the description in \S \ref{G2case}, there is a decomposition $$\mathfrak{so}_8\oplus \mathfrak{so}_8=(\g_0\oplus \g_0) \oplus (V\oplus V) \oplus (V\oplus V),$$ where $V$ is the minimal faithful representation and $(1\; 2\; 3)\in\mathfrak{S}_3$ acts as $\omega$ (resp. $\omega^2$) on the first two (resp. the last two) copies of $V$.
To help to keep track of terms, we denote by $\g'_0$ (resp. $V'$) the second copy of $\g_0$ (resp. $V$) in each pair, and similarly for the elements.
Then we can repeat the argument from \S \ref{G2case} to see that $(\mathfrak{so}_8\oplus\mathfrak{so}_8)/\mathfrak{S}_3$ is isomorphic to the image of the morphism $$\varphi:\g_0\oplus\g'_0\oplus V\oplus V'\oplus V\oplus V'\rightarrow \g_0\oplus \g'_0\oplus S^2(V\oplus V')\oplus S^3(V\oplus V'),$$
$$(x,x',v, v', w,w')\mapsto (x,x',(v+v')\otimes (w+w'), (v+v')^{\otimes 3}+(w+w')^{\otimes 3}).$$
We want to restrict the map $\varphi$ to the subset $Y=\overline{\0_{\mini}(\mathfrak{so}_8)}\times \overline{\0_{\mini}(\mathfrak{so}_8)}$.
We are going to show that $\varphi|_Y$ factors through the map $$\g_0\oplus \g'_0\oplus V\oplus V'\oplus V\oplus V'\rightarrow \g_0\oplus \g'_0\oplus V\otimes V',$$
where $(v,v',w,w')\mapsto v\otimes w'+w\otimes v'$ in the final component.

Recall the representative $(e,v_1,v_2)$ of $\0_{\mini}(\mathfrak{so}_8)$ which was described in \S \ref{G2case}.
(In particular, $v_1$ is a highest weight vector.)
By dimensions, the closure of the $G_0\times G_0$-orbit of $(e, e', v_1, v'_1, v_2, v'_2)$ equals $Y$.
Note that $$(v_1+v'_1)\otimes (v_2+v'_2)=v_1\otimes v_2+(v_1\otimes v'_2+v_2\otimes v'_1)+v'_1\otimes v'_2\in S^2 V+V\otimes V'+S^2 V' .$$
As in \S \ref{G2case}, $v_1\otimes v_2$ (resp. $v'_1\otimes v'_2$) identifies with the matrix square of $e$ (resp. $e'$).
Thus the restriction of $\varphi$ to $Y$ factors through the projection from $S^2(V\oplus V')$ to $V\otimes V'$.
Hence we have only to show that the component $S^3(V\oplus V')$ is redundant when restricting $\varphi$ to $Y$.

We consider the decomposition $S^3(V\oplus V')=S^3 V\oplus (S^2 V\otimes V')\oplus (V\otimes S^2 V')\oplus S^3 V'$.
The argument from \S \ref{G2case} establishes that $\varphi$ factors through the projection from $S^3(V\oplus V')$ to $(S^2 V\otimes V')\oplus (S^2 V'\otimes V)$.
Hence it will suffice to show that there is a $G_0\otimes G'_0$-equivariant map from $\g_0\oplus \g'_0\oplus (V\otimes V')$ to $S^2 V\otimes V'$ which sends $(e,e',v_1\otimes v'_2+v_2\otimes v'_1)$ to $v_1\otimes v_1\otimes v'_1+v_2\otimes v_2\otimes v'_2$.
(Symmetry will account for the remaining term.)
To see this, we first show that there is a $G_0$-equivariant map $\g_0\otimes V\rightarrow S^2 V$.
By a straightforward calculation, $\g_0\otimes V$ is isomorphic as a $G_0$-module to $V(\varpi_1+\varpi_2)\oplus V(2\varpi_1)\oplus V$, and after scaling, the $G_0$-equivariant projection $\g_0\otimes V\rightarrow V(2\varpi_1)\subset S^2 V$ sends $e\otimes v_1$ to $v_2\otimes v_2$.
By applying the action of the reductive centralizer of $e$ in $G_0$ (which is isomorphic to $\mathfrak{S}_3$) we deduce that $e\otimes v_2$ is sent to $v_1\otimes v_1$.
Hence the induced map from $\g_0\otimes V\otimes V'$ to $S^2 V\otimes V'$ sends:
$$e_0\otimes (v_1\otimes v'_2+v_2\otimes v'_1)=(e\otimes v_1)\otimes v'_2+(e\otimes v_2)\otimes v'_1\mapsto v_1\otimes v_1\otimes v'_1+v_2\otimes v_2\otimes v'_2.$$
We have therefore proved the characteristic zero statement of:

\begin{lemma}\label{d4d4lem}
Assume the ground field has characteristic zero or greater than 7.
Denote by $d_4$ the minimal nilpotent orbit closure in $\mathfrak{so}_8$.
Let $e$, $v_1$, $G_0$ be as above.
Then $(d_4\times d_4)/\mathfrak{S}_3$ is isomorphic to the closure of the $G_0\times G_0$-orbit of $(e,e,v_1\otimes v_2+v_2\otimes v_1)$ in $\g_0\oplus \g_0\oplus V\otimes V$.
%Then $(d_4\times d_4)/\mathfrak{S}_3$ is isomorphic to the closure of the $G_0\times G_0$-orbit of $(e,e,v_1\otimes v_2+v_2\otimes v_1)$ in $\g_0\oplus \g_0\oplus V\otimes V$.
\end{lemma}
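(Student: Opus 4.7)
The plan is to assemble the lemma from the computations already carried out in the paragraphs immediately preceding it. The essential input is the factorization just proved: when restricted to $Y$, the quotient morphism $\varphi$ for the diagonal $\mathfrak{S}_3$-action on $\mathfrak{so}_8\oplus\mathfrak{so}_8$ factors through the short map
$$\psi\colon\g_0\oplus\g'_0\oplus V\oplus V'\oplus V\oplus V'\longrightarrow \g_0\oplus\g'_0\oplus V\otimes V',\qquad (x,x',v,v',w,w')\mapsto(x,x',v\otimes w'+w\otimes v').$$
Combined with the dimension count already made (which shows that $(G_0\times G_0)\cdot(e,e',v_1,v'_1,v_2,v'_2)$ is dense in $Y$), this short map and its $(G_0\times G_0)$-equivariance will do the rest of the work.

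First I would verify the orbit closure description. Applying $\psi$ to the dense orbit representative gives $\psi(e,e',v_1,v'_1,v_2,v'_2)=(e,e',v_1\otimes v'_2+v_2\otimes v'_1)$. Because a $(G_0\times G_0)$-equivariant morphism sends an orbit closure to the closure of its image orbit, and $Y$ is the closure of that single orbit, it follows immediately that $\varphi(Y)=\psi(Y)$ is the closure of the $(G_0\times G_0)$-orbit of $(e,e,v_1\otimes v_2+v_2\otimes v_1)$ inside $\g_0\oplus\g_0\oplus V\otimes V$, identifying primed and unprimed copies in the obvious way.

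For the identification $\pi(Y)\cong(d_4\times d_4)/\mathfrak{S}_3$, I would appeal to the general principle in the remark following Corollary \ref{nec?}: if a finite group $\Gamma$ acts on an affine variety $U$ with $k[U]$ completely reducible as a $k\Gamma$-module, then the restriction of the quotient morphism to a $\Gamma$-stable closed subset $Z\subseteq U$ induces an isomorphism $Z/\Gamma\cong\pi(Z)$. Complete reducibility is automatic here since $|\mathfrak{S}_3|=6$ is invertible in characteristic $0$ or $>7$. The subvariety $Y=d_4\times d_4$ is closed and $\mathfrak{S}_3$-stable, because every automorphism of $\mathfrak{so}_8$ (in particular every triality) preserves the minimal nilpotent orbit, which is the unique nilpotent orbit of its dimension. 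Hence $\pi(Y)\cong Y/\mathfrak{S}_3=(d_4\times d_4)/\mathfrak{S}_3$.

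The main difficulty has already been dealt with in the preceding calculations: showing that, on $Y$, the cubic component $S^3(V\oplus V')$ of the quotient morphism contributes nothing beyond the $V\otimes V'$ component. With that reduction granted, the lemma is essentially a packaging argument combining the short-map image computation with the standard restriction principle for quotients by finite groups.
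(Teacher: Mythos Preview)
Your proposal correctly packages the characteristic zero argument, and this is essentially what the paper does for that case: the paragraphs preceding the lemma establish the factorization through $\psi$, and your use of equivariance plus the restriction principle (the remark after Corollary \ref{nec?}) is exactly the right way to conclude. For characteristic zero, your argument and the paper's coincide.

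However, there is a genuine gap in your treatment of positive characteristic. The sentence immediately before the lemma reads ``We have therefore proved the characteristic zero statement of:'' --- so the factorization you are taking as input has only been justified over $\mathbb{C}$. The point of the lemma (and of its proof in the paper) is precisely to extend this to characteristic $p>7$. Your remark that $|\mathfrak{S}_3|=6$ is invertible addresses complete reducibility of $k[U]$ as a $k\mathfrak{S}_3$-module, which is needed for the quotient-restriction step; but it does not touch the real obstruction. The factorization argument relies on $G_0$-module decompositions such as $\g_0\otimes V\cong V(\varpi_1+\varpi_2)\oplus V(2\varpi_1)\oplus V$ and on the projection $\g_0\otimes V\to V(2\varpi_1)\subset S^2V$ sending $e\otimes v_2$ to a nonzero multiple of $v_1\otimes v_1$. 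In positive characteristic these require that the relevant Weyl modules remain irreducible and that the projection does not degenerate modulo $p$. The paper handles this in the spirit of Remark \ref{poscharremark}(ii): one checks via \cite{Luebeck} that the needed Weyl modules are irreducible for $p>7$, and then verifies by direct calculation that the projection is still nonzero. Without this step, your argument only proves the lemma in characteristic zero.
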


\begin{proof}
We established this in characteristic zero above.
Similarly to Remark \ref{poscharremark}(ii), to apply these arguments in positive characteristic we need to ensure that the relevant Weyl modules are irreducible.
It can be checked \cite{Luebeck} that this holds if the characteristic is greater than $7$.
It only remains to check that the map $\g_0\otimes V\rightarrow S^2 V$ projecting onto $\Delta(2\varpi_1)$ sends $e\otimes v_2$ to a non-zero multiple of $v_1\otimes v_1$, which is a straightforward computation.
\end{proof}

This is the preparation we require to prove our claim about the Slodowy slice singularity from $\0'=\0_{2A_2}$ to $\0=\0_{D_4(a_1)+A_1}$ in type $E_8$.
Note that the Slodowy slice ${\mathcal S}$ at $f\in\0'$ is transverse if the characteristic of $k$ is zero or greater than 5, in particular, if ${\rm char}\, k$ is (very) good.

\begin{proposition}\label{d4d4Prop}
Let ${\rm char}\, k=0$ or $>7$.
Let $\0,\0'$ be as above and let ${\mathcal S}$ be the Slodowy slice to an element of $\0'$.
Then the intersection ${\mathcal S}\cap\overline{\0}$ is isomorphic to $(d_4\times d_4)/\mathfrak{S}_3$.
\end{proposition}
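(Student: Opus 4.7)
The plan is to emulate the computation of Lemma \ref{d4d4lem} inside the Slodowy slice $\mathcal{S}$ at $f \in \0' = \0_{2A_2}$. The reductive centralizer of $f$ is $G_2 \times G_2$, and I would realise this $G_2 \times G_2$ as the diagonal $\mathfrak{S}_3$-triality-fixed subalgebra $\mathfrak{g}_0 \oplus \mathfrak{g}_0$ of an $\mathfrak{so}_8 \oplus \mathfrak{so}_8$ subalgebra of $\mathfrak{e}_8$; such a subalgebra exists because $D_4 + D_4$ is a subsystem of $E_8$ (via $D_4 + D_4 \subset D_8 \subset E_8$). Specifically, take $f = f_1 + f_2$ with each $f_i$ a regular nilpotent in an $A_2$-Levi of the $i$-th $\mathfrak{so}_8$; a weighted Dynkin diagram calculation confirms the Bala--Carter label $2A_2$. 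I would then pick a triality-invariant $\mathfrak{sl}_2$-triple $\{e, h, f\}$ inside $\mathfrak{g}_0 \oplus \mathfrak{g}_0$.

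Next, I would identify a $G_2 \times G_2$-equivariant embedding $\iota \colon \mathfrak{g}_0 \oplus \mathfrak{g}_0 \oplus (V \otimes V) \hookrightarrow \g^e$ matching the codomain of the Lemma \ref{d4d4lem} morphism $\varphi$ inside the tangent space $\g^e$ to $\mathcal{S}$, and define
$$\Phi \colon Y := \overline{\0_{\mini}(\mathfrak{so}_8)} \times \overline{\0_{\mini}(\mathfrak{so}_8)} \to \mathcal{S}, \qquad (y_1, y_2) \mapsto f + \iota \circ \varphi(y_1, y_2).$$
The crucial verification is that $\Phi(Y) \subseteq \mathcal{S} \cap \overline{\0}$ and that a generic image lies in $\0 = \0_{D_4(a_1) + A_1}$. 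By $G_0 \times G_0$-equivariance, this reduces to evaluating $\Phi$ at the single representative $(e, e', v_1, v_1', v_2, v_2')$ of the dense $G_0 \times G_0$-orbit of $Y$ constructed in Lemma \ref{d4d4lem}, and identifying the $E_8$-orbit of the resulting element via its weighted Dynkin or Bala--Carter label.

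Granting this, Lemma \ref{d4d4lem} guarantees that $\Phi$ factors through $\pi(Y) \cong (d_4 \times d_4)/\mathfrak{S}_3$, producing an injective finite morphism $\bar{\Phi} \colon (d_4 \times d_4)/\mathfrak{S}_3 \hookrightarrow \mathcal{S} \cap \overline{\0}$. A dimension count gives $\dim (d_4 \times d_4)/\mathfrak{S}_3 = 20 = \dim \0_{D_4(a_1)+A_1} - \dim \0_{2A_2}$, so $\bar{\Phi}$ is surjective onto the unique irreducible component of $\mathcal{S} \cap \overline{\0}$. Since $(d_4 \times d_4)/\mathfrak{S}_3$ is normal (a finite quotient of a product of normal varieties) and $\overline{\0}$ is unibranch at $f$ (verifiable from the tables of Green functions, cf.\ \S \ref{branching}), $\bar{\Phi}$ is an isomorphism. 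The main obstacle will be the second step: constructing the embedding $\iota$ and verifying the Bala--Carter label of $\Phi$'s generic image amount to explicit structure-constant computations in $\mathfrak{e}_8$ which, consistent with the paper's acknowledgments, will likely require computer algebra.
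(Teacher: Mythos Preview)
Your proposal is essentially the same as the paper's proof, though with some unnecessary scaffolding. The paper works directly with the Slodowy slice: it records that the reductive centralizer is $\g_0\oplus\g'_0$ (two copies of $G_2$) with $\g^{e_0}(2)=V\otimes V'\oplus{\mathbb C}e_0$ and $\g^{e_0}(4)=V\oplus V'$, then verifies in GAP that $f_0+e+e'+x_2\in\0$ with $e,e'$ subregular in $\g_0,\g'_0$ and $x_2$ identifying with $v_1\otimes v'_2+v_2\otimes v'_1\in V\otimes V'$. Lemma~\ref{d4d4lem} then identifies the closure of the $G_0\times G_0$-orbit of this element with $(d_4\times d_4)/\mathfrak{S}_3$, and dimension plus unibranchness finishes.

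Two points of streamlining: (i) your detour through an explicit $D_4+D_4\subset E_8$ embedding is not needed---the paper simply reads off the $C$-module structure of $\g^{e_0}$ (as in \cite{Lawther-Testerman}), and the required embedding $\iota$ is just the inclusion of $\g_0\oplus\g'_0\oplus(V\otimes V')$ as a summand; (ii) your final normality/unibranchness argument to upgrade $\bar\Phi$ to an isomorphism is superfluous, since $\iota$ is linear and hence $f+\iota$ is already a closed immersion, so $\bar\Phi$ is automatically an isomorphism onto its image, which equals ${\mathcal S}\cap\overline\0$ by irreducibility and dimension. The essential content---the GAP check that the specific element lies in $\0$---is identical in both approaches.
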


\begin{proof}
Let $\g$ be a simple Lie algebra of type $E_8$ and let $\{ h_0, e_0, f_0\}\subset\g$ be an $\mathfrak{sl}_2$-triple with $e_0, f_0\in \0_{2A_2}$.
Then ${\mathcal S}=f_0+\g^{e_0}$.
The reductive part $\g^{e_0}\cap \g^{h_0}$ of the centralizer is isomorphic to $\g_0\oplus\g'_0$ (distinguishing the copies with a prime, as above); the module structure of the positive part is as follows:
$$\g^{e_0}(2) = V\otimes V'\oplus \C e_0; \quad \g^{e_0}(4) = V\oplus V'.$$
We verified in GAP that there exists a representative of $\0$ of the form $f_0 + e+e'+x_2$ where $e$ and $e'$ are standard representatives (as above) of the subregular orbits in $\g_0$ and $\g'_0$, and $x_2$ is supported only in $V\otimes V'\subset \g^{e_0}(2)$; furthermore, it is straightforward to check that $x_2$ identifies with $v_1\otimes v'_2+v_2\otimes v'_1$ after appropriate scaling. 
\end{proof}

It follows from Prop. \ref{d4d4Prop} that the poset of orbits in $E_8$ lying between $\0'$ and $\0$ is in bijection with the set of symplectic leaves (with the closure ordering) in $(d_4\times d_4)/\mathfrak{S}_3$.
This can be seen as follows.
For $\Gamma=1,\mathfrak{S}_2,\mathfrak{S}_3$ let $Y_\Gamma$ denote the image in $(d_4\times d_4)/\mathfrak{S}_3$ of the set of elements of $d_4\times d_4$ with $\mathfrak{S}_3$-stabilizer isomorphic to $\Gamma$, and let $Y^0_{\Gamma}$ denote the subset of elements of $Y_\Gamma$ such that one of the components is zero.
Then we have the following isomorphic diagrams:

$$
\small{\xymatrix@=.2cm{
{} & Y_{1}  \ar@{-}[dl] \ar@{-}[dr] & {} & {} &&&& {} & D_4(a_1)+A_1 \ar@{-}[dl] \ar@{-}[dr] & {} & {} \\
Y^0_{1}  \ar@{-}[dr] & {} & Y_{{\mathfrak S}_2}  \ar@{-}[dl] \ar@{-}[dr] & {} &&&& D_4(a_1) \ar@{-}[dr] & {} & A_3+2A_1  \ar@{-}[dl] \ar@{-}[dr] & {} \\
{} & Y^0_{\mathfrak{S}_2} \ar@{-}[dr] & {} & Y_{\mathfrak{S}_3}  \ar@{-}[dl]  &&&& {} & A_3+A_1  \ar@{-}[dr] & {} & 2A_2+2A_1 \ar@{-}[dl] \\
{} & {} & Y^0_{\mathfrak{S}_3}  \ar@{-}[dr] & {} &&&& {} & {} & 2A_2+A_1  \ar@{-}[dr] & {} \\
{} & {} & {} & 0 &&&& {} & {} & {} & 2A_2
}}
$$

We therefore obtain, almost immediately:

\begin{corollary}\label{2g2spcor}
Suppose ${\rm char}\, k=0$, and let $\0''$ be the orbit in $E_8$ with label $D_4(a_1)$.
The intersection ${\mathcal S}\cap \overline{\0''}$ is a union of two copies of $g_2^{\special}$, meeting transversely at their unique point of intersection.
\end{corollary}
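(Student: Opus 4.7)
The plan is to read off $\mathcal{S} \cap \overline{\mathcal{O}''}$ directly from the identification in Prop.~\ref{d4d4Prop}. That proposition gives $\mathcal{S} \cap \overline{\mathcal{O}_{D_4(a_1)+A_1}} \cong (d_4 \times d_4)/\mathfrak{S}_3$, with $\mathfrak{S}_3$ acting diagonally by triality on $\mathfrak{so}_8 \oplus \mathfrak{so}_8$. The stratification in the diagram displayed immediately before the corollary matches the orbit $\mathcal{O}'' = \mathcal{O}_{D_4(a_1)}$ with the leaf $Y_1^0$, so $\mathcal{S} \cap \overline{\mathcal{O}''} = \overline{Y_1^0}$ is the image under the quotient map $\pi$ of the closed subvariety $Z := (d_4 \times \{0\}) \cup (\{0\} \times d_4) \subset d_4 \times d_4$.

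Next, I would exploit the fact that the diagonal triality preserves each summand of $\mathfrak{so}_8 \oplus \mathfrak{so}_8$, so it stabilises each of the two irreducible components of $Z$ and acts on each by the usual triality studied in \S\ref{G2case}. The quotient of $d_4$ by this triality is $g_2^{\mathrm{sp}} = d_4/\mathfrak{S}_3$ (row (8) of Prop.~\ref{p.sharedorbits}, together with the identification $g_2^{\mathrm{sp}} = d_4/\mathfrak{S}_3$ recalled in the introduction to \S\ref{doubled}). Therefore $\pi(Z)$ is the union of two copies of $g_2^{\mathrm{sp}}$. The two components of $Z$ share only the point $(0,0)$, which is the unique $\mathfrak{S}_3$-fixed point of either, so the two copies of $g_2^{\mathrm{sp}}$ inside $\mathcal{S}$ meet only at $\pi(0,0)$, their common singular point.

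Finally, to check transversality, one observes that $d_4 \times \{0\}$ and $\{0\} \times d_4$ sit inside the complementary $\mathfrak{S}_3$-stable summands $\mathfrak{so}_8 \oplus 0$ and $0 \oplus \mathfrak{so}_8$ of $T_{(0,0)}(d_4 \times d_4) = \mathfrak{so}_8 \oplus \mathfrak{so}_8$, and each is conical at the origin. This complementary decomposition descends to the $\mathfrak{S}_3$-quotient because both summands are $\mathfrak{S}_3$-stable, so the tangent cones at $\pi(0,0)$ of the two branches of $\overline{Y_1^0}$ span complementary subspaces of the tangent cone to $\mathcal{S}$ at $\pi(0,0)$, giving the desired transverse meeting. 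I do not expect a substantial obstacle: the corollary is essentially a bookkeeping consequence of Prop.~\ref{d4d4Prop} together with the orbit-to-leaf correspondence already established in the preceding discussion.
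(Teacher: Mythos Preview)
Your proposal is correct and takes essentially the same approach as the paper's proof, which is a single line identifying $\mathcal{S}\cap\overline{\0''}$ with (the closure of) $Y_1^0$ and noting that each component is $d_4/\mathfrak{S}_3\cong g_2^{\mathrm{sp}}$. You supply considerably more detail than the paper does, in particular an argument for the transversality assertion (which the paper leaves implicit); your observation that the two components upstairs lie in the complementary $\mathfrak{S}_3$-stable summands $\mathfrak{so}_8\oplus 0$ and $0\oplus\mathfrak{so}_8$ is the right justification.
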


\begin{proof}
This is the set $Y_{1}^0$ (which is isomorphic to $d_4/\mathfrak{S}_3\cong g_2^{sp}$ by \S \ref{G2case}).
\end{proof}

\section{Almost shared orbits}\label{almostsharedorbits} 

In the previous section we were concerned with pairs $\g_0\subset\g$ of simple Lie algebras for which there is a nilpotent $G$-orbit $\0\subset\g$ containing an open $G_0$-orbit $\widetilde\0_0$.
In most cases, $\g_0$ is the isotropy subalgebra for a finite group $\Gamma$ of automorphisms, and $\overline\0/\Gamma$ is isomorphic to $\overline{\0_0}$.
In this section we consider a weaker condition: suppose there exists a $G_0$-orbit $\widetilde\0_0$ of codimension 1 in $\0$.
We shall call $\0$ an {\it almost shared orbit}.
A straightforward modification of the arguments of the previous section allows us to concretely describe the quotient $\overline\0/\Gamma$ as the closure of a $(G_0\times{\mathbb G}_m)$-orbit.
From now on we assume for simplicity that the base field is ${\mathbb C}$ (although see Remark \ref{modularremark}).

\subsection{Outer involution of $\mathfrak{sl}_n$}\label{outersln}

In type $A_{n-1}$ with $n\geq 3$ odd, all outer involutions are conjugate, and act freely on the minimal nilpotent orbit.
The standard choice of outer involution is $x\mapsto -x^T$, with fixed points $\mathfrak{so}_n$.
In fact, our analysis will be easier if we take a slightly different involution (in the same conjugacy class); equivalently, we want to consider a different symmetric bilinear form.
Let $J$ denote the $n\times n$ matrix with $1$ on the anti-diagonal and $0$ elsewhere.
To avoid confusion, denote by ${\rm O}_{n,J}$ (resp. $\SO_{n,J}$, $\mathfrak{so}_{n,J}$) the group (resp. connected group, Lie algebra) of $n\times n$ matrices preserving the bilinear form determined by $J$.
Denote by $\Sigma^\circ_{n,J}$ the space of traceless $n\times n$ matrices $A$ such that $JA^TJ=A$.

In what follows we do not need to assume that $n$ is odd.
With this notation, let $\gamma$ be the involutive automorphism of $\SL_n$ given by $g\mapsto 
J(g^T)^{-1}J$.
By abuse of notation we also write $\gamma$ for the differential 
$\mathfrak{sl}_n={\mathfrak g}\rightarrow{\mathfrak g}$, $x\mapsto -Jx^TJ$.
We first observe that ${\mathfrak g}=\mathfrak{so}_{n,J}\oplus\Sigma^\circ_{n,J}$ is 
the eigenspace decomposition for $\gamma$, so Lemma \ref{genlem} yields:

\begin{lemma}
The map $\varphi:{\mathfrak g}\rightarrow \mathfrak{so}_{n,J}\times 
S^2(\Sigma^\circ_{n,J})$, $x\mapsto (\frac{1}{2}(x+\gamma(x)),\frac{1}{4} 
(x-\gamma(x))\otimes (x-\gamma(x)))$ induces an isomorphism of ${\mathfrak 
g}/\langle\gamma\rangle$ with $\mathfrak{so}_{n,J}\times X'$, where $X'$ is the 
space of pure symmetric 2-tensors in $S^2(\Sigma^\circ_{n,J})$.

Furthermore, if $Y$ is any closed $\gamma$-stable subset of ${\mathfrak g}$ 
then $Y/\langle\gamma\rangle\cong\varphi(Y)$.
\end{lemma}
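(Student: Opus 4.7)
The plan is to apply Lemma \ref{genlem} directly, with the involution $\gamma$ of $\mathfrak{sl}_n$ playing the role of $\theta$. Once the $(\pm 1)$-eigenspace decomposition for $\gamma$ is identified, the stated form of $\varphi$ and both conclusions follow immediately from that lemma.

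First I would verify that $\gamma$ is an involution of $\mathfrak{sl}_n$ and compute its eigenspaces. Using $J^T = J$ and $J^2 = I$ one checks quickly that $\gamma^2 = \mathrm{id}$ and that $\gamma$ preserves tracelessness. The $(+1)$-eigenspace consists of $x$ with $Jx + x^TJ = 0$, which is exactly $\mathfrak{so}_{n,J}$; the $(-1)$-eigenspace consists of traceless $x$ satisfying $Jx^TJ = x$, which is $\Sigma^\circ_{n,J}$. Hence $\mathfrak{g} = \mathfrak{so}_{n,J} \oplus \Sigma^\circ_{n,J}$ is the $\gamma$-eigenspace decomposition, and every $z \in \mathfrak{g}$ decomposes uniquely as $z = \tfrac{1}{2}(z + \gamma(z)) + \tfrac{1}{2}(z - \gamma(z))$ with summands in the two components. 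The map $\varphi$ in the statement is therefore precisely the map $(x_0 + u) \mapsto (x_0, u \otimes u)$ of Lemma \ref{genlem} applied to $\gamma$.

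Given this identification, Lemma \ref{genlem} yields immediately an isomorphism of $\mathfrak{g}/\langle \gamma \rangle$ with $\mathfrak{so}_{n,J} \times X$, where $X$ consists of elements of $S^2(\Sigma^\circ_{n,J})$ of rank at most $1$, i.e.\ the pure symmetric 2-tensors $X'$; and the second assertion, for $\gamma$-stable closed $Y \subset \mathfrak{g}$, is the corresponding conclusion of the same lemma. Since the argument is a direct translation of Lemma \ref{genlem} to a new involution, there is no real obstacle; the only mild pitfall is keeping signs straight in the eigenspace computation, which is easily managed by exploiting $J = J^T = J^{-1}$.
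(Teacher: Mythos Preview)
Your proposal is correct and follows exactly the paper's approach: observe that $\mathfrak{g}=\mathfrak{so}_{n,J}\oplus\Sigma^\circ_{n,J}$ is the $\gamma$-eigenspace decomposition, then apply Lemma~\ref{genlem}. If anything, you give more detail than the paper, which simply records the eigenspace decomposition and invokes Lemma~\ref{genlem} without further comment.
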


We want to describe $\overline\0/\langle\gamma\rangle$ where $\0={\mathcal 
O}_{\mini}(\mathfrak{g})$.
Let $E=(e_{11}+e_{1n}-e_{n1}-e_{nn})\in\0$.
Then clearly $\varphi(E)=(e_{11}-e_{nn},(e_{1n}-e_{n1})\otimes (e_{1n}-e_{n1}))$.
The squared scaling action of ${\mathbb G}_m$ on $\mathfrak{sl}_n$ induces the weighted action on $\mathfrak{so}_{n,J}\times S^2(\Sigma^\circ_{n,J}))$, where $t\cdot (x,y)=(t^2 x,t^4 y)$.

\begin{lemma}\label{slnouterlem}
Let ${\mathcal O}$ be the minimal nilpotent orbit in $\mathfrak{sl}_n$.
Then the quotient $\overline{\mathcal O}/\langle\gamma\rangle$ is isomorphic to 
the closure of the $\SO_{n,J}\times{\mathbb G}_m$-orbit of 
$(e_{11}-e_{nn},(e_{1n}-e_{n1})\otimes (e_{1n}-e_{n1}))$ in 
$\mathfrak{so}_{n,J}\times S^2(\Sigma^\circ_{n,J})$.
\end{lemma}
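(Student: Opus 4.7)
The plan is to apply the preceding lemma to identify $\overline{\0}/\langle\gamma\rangle$ with $\varphi(\overline{\0})$, and then to equate $\varphi(\overline{\0})$ with the asserted orbit closure by a dimension count.

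First I would verify that $E=e_{11}+e_{1n}-e_{n1}-e_{nn}=(e_1-e_n)(e_1+e_n)^T$ has rank $1$ and trace $0$ and therefore lies in $\0=\0_{\min}(\mathfrak{sl}_n)$. A direct computation yields $\gamma(E)=-JE^TJ=e_{11}-e_{1n}+e_{n1}-e_{nn}$, so the symmetric and antisymmetric parts of $E$ under $\gamma$ are $e_{11}-e_{nn}\in\mathfrak{so}_{n,J}$ and $e_{1n}-e_{n1}\in\Sigma^\circ_{n,J}$, giving $\varphi(E)=(e_{11}-e_{nn},(e_{1n}-e_{n1})\otimes(e_{1n}-e_{n1}))$. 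The squared scaling action $t\cdot x=t^2 x$ on $\mathfrak{sl}_n$ commutes with $\gamma$, so it descends under $\varphi$ to the action $t\cdot(x,y)=(t^2 x,t^4 y)$. Since $\overline{\0}$ is conical and $\SL_n$-stable, $\varphi(\overline{\0})$ is $\SO_{n,J}\times\mathbb{G}_m$-stable and contains the orbit of $\varphi(E)$; by the preceding lemma it is irreducible, closed, and of dimension $\dim\overline{\0}=2n-2$.

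It remains to show that the $\SO_{n,J}\times\mathbb{G}_m$-orbit of $\varphi(E)$ already has dimension $2n-2$. If $(g,t)$ stabilizes $\varphi(E)$ then $g\cdot(e_{11}-e_{nn})=t^{-2}(e_{11}-e_{nn})$, and comparing eigenvalues forces $t^{-2}\in\{\pm 1\}$, so the connected stabilizer projects trivially to $\mathbb{G}_m$. One is reduced to computing the $\SO_{n,J}$-stabilizer of the pair. The centralizer of the semisimple element $e_{11}-e_{nn}$ in $\SO_{n,J}$ is block diagonal in the decomposition $\langle e_1\rangle\oplus\langle e_2,\ldots,e_{n-1}\rangle\oplus\langle e_n\rangle$, and is isomorphic to $\mathbb{G}_m\times\SO_{n-2,J'}$, where $J'$ is the restriction of $J$ to the middle block. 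The factor $\SO_{n-2,J'}$ obviously fixes $(e_{1n}-e_{n1})^{\otimes 2}$, while the torus $\{\diag(s,1,\ldots,1,s^{-1})\}$ sends $(e_{1n}-e_{n1})^{\otimes 2}$ to $s^4\,e_{1n}\otimes e_{1n}-e_{1n}\otimes e_{n1}-e_{n1}\otimes e_{1n}+s^{-4}\,e_{n1}\otimes e_{n1}$, which is fixed only for $s^4=1$. Hence the connected stabilizer is exactly $\SO_{n-2,J'}$, of dimension $\binom{n-2}{2}$, and the orbit has dimension $\binom{n}{2}+1-\binom{n-2}{2}=2n-2$. By equality of dimensions and irreducibility, the closure of the orbit equals $\varphi(\overline{\0})$, as required.

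The main obstacle is precisely this stabilizer calculation: one needs to correctly track the interplay between the outer scaling $\mathbb{G}_m$ (with weights $(2,4)$ on the two factors of the quotient), the diagonal torus inside $\SO_{n,J}$ centralizing $e_{11}-e_{nn}$, and the squaring in the second factor. Everything else is formal manipulation of the preceding lemma.
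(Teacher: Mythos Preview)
Your proof is correct and follows essentially the same approach as the paper: identify $\varphi(\overline{\0})$ with the quotient via the preceding lemma, then show the orbit of $\varphi(E)$ has full dimension $2n-2$ by computing the connected stabilizer as $\SO_{n-2}$. Your treatment is in fact slightly more careful than the paper's in making explicit why the $\mathbb{G}_m$-component of the connected stabilizer must be trivial (via the eigenvalue argument forcing $t^{-2}=\pm 1$), which the paper passes over by simply invoking that $M$ is semisimple.
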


\begin{proof}
Let $M=e_{11}-e_{nn}$, $N=(e_{1n}-e_{n1})\otimes (e_{1n}-e_{n1})$.
Since $M$ is semisimple, the connected stabilizer of $(M,N)$ in $\SO_{n,J}\times{\mathbb G}_m$ is contained in $Z_{\SO_{n,J}}(M)\cong \SO_{n-2,J}\times\SO_{2,J}$.
Clearly, the simple component $\SO_{n-2,J}$ acts trivially on $N$, and $s\in\SO_{2,J}\cong {\mathbb G}_m$ acts via $$s\cdot N=(s^2 e_{1n}-s^{-2}e_{n1})\otimes (s^2e_{1n}-s^{-2} e_{n1})$$ which is equal to $N$ if and only if $s^4=1$.
It follows that the connected component of the stabilizer of $(M,N)$ is isomorphic to $\SO_{n-2,J}$ and hence the dimension of the orbit is $\frac{1}{2}n(n-1)+1-\frac{1}{2}(n-2)(n-3)=2n-2$.
Since this is equal to the dimension of ${\mathcal O}$ (and since 
$(M,N)\in\varphi({\mathcal O})$) it follows that $\overline{\mathcal 
O}/\langle\gamma\rangle$ is isomorphic to the closure of the orbit of $(M,N)$.
\end{proof}

We are interested in the case $n=3$, where we have $\g_0\cong\mathfrak{sl}_2$.
Denote by $V$ the natural module for $\SL_2$.
Then $\Sigma^\circ_{3,J}\cong S^4 V$, hence $S^2(\Sigma^\circ_{3,J})\cong S^8 V\oplus S^4 V\oplus S^0 V$.
(Of course $S^0 V\cong {\mathbb C}$, but it will be helpful to retain the notation $S^0 V$ to refer to this specific submodule of $S^2(S^4 V)$.)
We want to show that the quotient in this case factors through the projection to $\mathfrak{sl}_2\times S^8 V$.

\begin{lemma}\label{constslem}
Let $\{ v_{2n},v_{2n-2},\ldots ,v_{-2n}\}$ be a standard basis of weight vectors for $S^{2n} V$, i.e. satisfying $e\cdot v_{2(i-n)}=(i+1)v_{2(i+1-n)}$ and $f\cdot v_{2(n-i)}=(i+1)v_{2(n-i-1)}$.
There is a non-degenerate symmetric $\SL_2$-invariant bilinear form $\langle .\, , .\rangle$ on $S^{2n} V$ which is unique up to scalar multiplication and satisfies $\langle v_0,v_0\rangle = (-1)^n \begin{pmatrix} 2n \\ n\end{pmatrix}\langle v_{2n},v_{-2n}\rangle$.
\end{lemma}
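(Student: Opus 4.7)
The plan is to establish existence and uniqueness first by a Schur-theoretic argument, and then to pin down the normalization by an explicit computation using the infinitesimal invariance of the form.

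For existence and uniqueness up to scalar, I would note that $S^{2n}V$ is an irreducible $\SL_2$-module and is self-dual (since $V$ is self-dual via its symplectic form). By Schur's lemma, $\Hom_{\SL_2}(S^{2n}V,(S^{2n}V)^*)$ is one-dimensional, so there is a unique (up to scalar) $\SL_2$-invariant bilinear form on $S^{2n}V$. The form is symmetric because it is induced from the $2n$-th symmetric power of the skew form on $V$, and $(-1)^{2n}=+1$; equivalently, the sign character of the swap on $S^{2n}V\otimes S^{2n}V$ agrees with the parity of $2n$. Any nonzero $\SL_2$-invariant bilinear form on an irreducible module is automatically non-degenerate, since the kernel (on either side) is an invariant submodule and hence zero.

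For the normalization relating $\langle v_0,v_0\rangle$ and $\langle v_{2n},v_{-2n}\rangle$, I would use invariance under $e\in\mathfrak{sl}_2$, which reads $\langle e\cdot x,y\rangle = -\langle x,e\cdot y\rangle$, together with the fact that weight-space pairings vanish unless the weights sum to $0$. Set $c_i := \langle v_{2i},v_{-2i}\rangle$ for $-n\leq i\leq n$. Reading the formula $e\cdot v_{2(i-n)} = (i+1)v_{2(i+1-n)}$ in the $v_{2j}$ notation gives $e\cdot v_{2j}=(n+j+1)v_{2(j+1)}$. Applying invariance to the pair $(v_{2i},v_{-2(i+1)})$, one obtains the recurrence
\[
c_{i+1} \;=\; -\,\frac{n-i}{n+i+1}\,c_i.
\]
Iterating from $i=0$ up to $i=n-1$ yields
\[
\frac{c_n}{c_0} \;=\; \prod_{i=0}^{n-1}\frac{-(n-i)}{n+i+1}
\;=\; (-1)^n\,\frac{n!\,n!}{(2n)!} \;=\; \frac{(-1)^n}{\binom{2n}{n}},
\]
which is exactly $\langle v_0,v_0\rangle = (-1)^n\binom{2n}{n}\langle v_{2n},v_{-2n}\rangle$.

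There is no real obstacle here: existence/uniqueness/symmetry are formal consequences of irreducibility and self-duality, and the normalization is a routine telescoping product coming from the recurrence. The only point requiring a little care is to keep track of the constants $(i+1)$, $(n+j+1)$, etc., in the $\mathfrak{sl}_2$-action, and to verify that the recurrence is derived using the correct pair of weights (so that both sides of $\langle e\cdot x,y\rangle=-\langle x,e\cdot y\rangle$ pair nonzero weight vectors into the same $c_i$'s). Once that bookkeeping is done, the telescoping product produces the binomial factor with the correct sign.
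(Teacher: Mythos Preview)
Your proof is correct and follows essentially the same approach as the paper: existence and symmetry from the symplectic form on $V$, uniqueness from Schur's lemma, and the normalization from the invariance relation $\langle e\cdot v_{2(i-1)},v_{-2i}\rangle = -\langle v_{2(i-1)},e\cdot v_{-2i}\rangle$. The paper only sketches this last step, whereas you have carried out the telescoping recurrence in full; the arguments are otherwise identical.
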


\begin{proof}
The existence of a non-degenerate $\SL_2$-invariant form of sign $(-1)^m$ on $S^m V$ can be deduced from the existence of a symplectic form on $V$.
The uniqueness is essentially Schur's lemma.
The relation follows from the fact that $\langle e\cdot v_{2(i-1)},v_{-2i}\rangle = -\langle v_{2(i-1)},e\cdot v_{-2i}\rangle$ for $1\leq i\leq n$.
\end{proof}

We want to determine the projection of $(e_{13}-e_{31})\otimes (e_{13}-e_{31})$ to the factor $S^8 V$.
Fix the $\mathfrak{sl}_2$-triple $e=e_{12}-e_{23}$, $h=2(e_{11}-e_{33})$, $f=2(e_{21}-e_{32})$ in $\mathfrak{so}_{3,J}$ (hence the isomorphism with $\mathfrak{sl}_2$).
Clearly $e_{13}\otimes e_{13}$ (resp. $e_{31}\otimes e_{31}$) is a highest (resp. lowest) weight vector in $S^8 V\subset S^2(S^4 V)$.

\begin{lemma}\label{basislem}
If $v_8=e_{13}\otimes e_{13}$ is fixed as a highest weight vector in a standard basis for $S^8 V\subset S^2 (S^4 V)$, then we have $$v_0=8e_{31}\otimes e_{13}-16(e_{23}+e_{12})\otimes (e_{32}+e_{21})+4(e_{11}-2e_{22}+e_{33})\otimes (e_{11}-2e_{22}+e_{33})$$ and $v_{-8}=16 e_{31}\otimes e_{31}$.
\end{lemma}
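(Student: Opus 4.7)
The plan is to iteratively apply $f = 2(e_{21} - e_{32})$ to the prescribed highest weight vector and then use Lemma~\ref{constslem} to extract the desired basis vectors. By the normalization there, one has $f^k v_{2n} = k!\, v_{2n-2k}$; in our setting ($n = 4$) this means it suffices to compute $f^4 v_8$ (giving $4!\,v_0$) and $f^8 v_8$ (giving $8!\,v_{-8}$), each via the Leibniz rule on $S^2(\Sigma^\circ_{3,J})$.

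To organize the computation I would first produce a standard $\mathfrak{sl}_2$-basis of $\Sigma^\circ_{3,J} \cong S^4 V$ compatible with $w_4 := e_{13}$. One checks directly that $e \cdot e_{13} = 0$ and $h \cdot e_{13} = 4 e_{13}$, so $w_4$ is a valid highest weight vector. Setting $w_{4-2k} := \frac{1}{k!} f^k w_4$ and computing each bracket via $[e_{ij}, e_{kl}] = \delta_{jk} e_{il} - \delta_{li} e_{kj}$, one obtains in turn $w_2 = 2(e_{12}+e_{23})$, $w_0 = -2(e_{11}-2e_{22}+e_{33})$, $w_{-2} = -4(e_{21}+e_{32})$, and $w_{-4} = 4 e_{31}$, with $f w_{-4} = 0$.

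With these formulas in hand, the Leibniz rule
$f^m(a \otimes b) = \sum_{j=0}^m \binom{m}{j} (f^j a) \otimes (f^{m-j} b)$
on $S^2(\Sigma^\circ_{3,J})$, combined with the symmetry $a \otimes b = b \otimes a$ of the symmetric product, collapses $f^4(w_4 \otimes w_4)$ to $48(w_4 \otimes w_{-4}) + 48(w_2 \otimes w_{-2}) + 24(w_0 \otimes w_0)$. Dividing by $4! = 24$ and substituting the explicit expressions for the $w_i$ recovers the claimed formula for $v_0$. For $v_{-8}$, the vanishing $f^j w_4 = 0$ for $j > 4$ forces every term of $f^8(w_4 \otimes w_4)$ to vanish except the one with $j = 4$, namely $\binom{8}{4}(f^4 w_4) \otimes (f^4 w_4) = 70 \cdot 576\, (e_{31} \otimes e_{31})$; dividing by $8! = 40320$ gives $v_{-8} = 16\, e_{31} \otimes e_{31}$.

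The only real work is the bookkeeping of numerical coefficients in these two expansions. There is no conceptual obstacle: the standard basis of the summand $S^8 V \subset S^2(S^4 V)$ is uniquely determined by the choice of highest weight vector (since $S^8 V$ is the only summand with a weight-$8$ line), and each $f$-iteration reduces to a routine bracket computation in $\mathfrak{sl}_3$.
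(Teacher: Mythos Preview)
Your proof is correct and follows exactly the approach indicated in the paper: apply $\frac{1}{4!}(\ad f)^4$ and $\frac{1}{8!}(\ad f)^8$ to $v_8$. There is a small arithmetic slip in the $v_{-8}$ computation --- since $f^4 w_4 = 4!\, w_{-4} = 96\, e_{31}$, the surviving term is $70\cdot 96^2 = 70\cdot 9216$ (not $70\cdot 576$) times $e_{31}\otimes e_{31}$, which divided by $8!$ indeed gives the claimed $16\, e_{31}\otimes e_{31}$.
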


\begin{proof}
This is a routine (but somewhat tedious) calculation, which can be seen by applying $\frac{1}{4!}(\ad f)^4$ and $\frac{1}{8!}(\ad f)^8$ respectively.
We observe that the bilinear form on $S^8 V$ induced by the trace form on $\Sigma^\circ_{3,J}$ satisfies $\langle e_{13}\otimes e_{13},e_{31}\otimes e_{31}\rangle = 2$ and $\langle v_0,v_0\rangle = 8^2+32^2+32.36=2240$, consistently with Lemma \ref{constslem}.
\end{proof}

\begin{lemma}\label{projectionlem}
The projection of $(e_{13}-e_{31})\otimes (e_{13}-e_{31})$ to the irreducible summand $S^8 V\subset S^2(S^4 V)$ is $$v_8-\frac{1}{140}v_0 + \frac{1}{16}v_{-8}$$
where $v_8,v_0,v_{-8}$ are as in Lemma \ref{basislem}.
The remainder $(e_{13}-e_{31})\otimes (e_{13}-e_{31})-v_8+\frac{1}{140}v_0-\frac{1}{16}v_{-8}$ is a sum of vectors in the zero $h$-weight spaces in $S^4 V$ and $S^0 V$.
\end{lemma}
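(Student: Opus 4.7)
The plan is to decompose $(e_{13}-e_{31})\otimes(e_{13}-e_{31})\in S^2(S^4 V)$ into $h$-weight spaces and then use the fact that $S^2(S^4V)=S^8V\oplus S^4V\oplus S^0V$ is an orthogonal decomposition with respect to any non-degenerate $\SL_2$-invariant inner product.

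First I would expand, using the symmetry of the tensor:
$$(e_{13}-e_{31})\otimes(e_{13}-e_{31}) = e_{13}\otimes e_{13} \;-\; 2\,e_{13}\otimes e_{31} \;+\; e_{31}\otimes e_{31}.$$
Since $[h,e_{13}]=4e_{13}$ and $[h,e_{31}]=-4e_{31}$, the three summands are of $h$-weight $8$, $0$ and $-8$ respectively. Now $S^4V$ and $S^0V$ only have weights in $\{-4,-2,0,2,4\}$ and $\{0\}$, so the weights $\pm 8$ must lie entirely in the $S^8V$ summand. Comparing with the basis vectors $v_8=e_{13}\otimes e_{13}$ and $v_{-8}=16\,e_{31}\otimes e_{31}$ from Lemma \ref{basislem}, this already forces the coefficients $1$ and $\tfrac{1}{16}$ on $v_8$ and $v_{-8}$ in the projection onto $S^8V$.

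Next I would determine the coefficient on $v_0$. Because the three isotypic components of $S^2(S^4V)$ are pairwise orthogonal under any $\SL_2$-invariant inner product, the $S^8V$-component of the zero-weight vector $-2\,e_{13}\otimes e_{31}$ is given by
$$\frac{\langle -2\,e_{13}\otimes e_{31},\, v_0\rangle}{\langle v_0,v_0\rangle}\, v_0,$$
where the form on $S^2(S^4V)$ is the symmetric square of the form on $S^4V\subset\Sigma^\circ_{3,J}$ induced by the trace form. Using the explicit expression for $v_0$ in Lemma \ref{basislem}, only the term $8\,e_{31}\otimes e_{13}$ contributes to the pairing with $e_{13}\otimes e_{31}$, and $\langle v_0,v_0\rangle=2240$ was already recorded in the proof of Lemma \ref{basislem}. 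A short calculation then yields the coefficient $-\tfrac{1}{140}$, giving the stated projection $v_8-\tfrac{1}{140}v_0+\tfrac{1}{16}v_{-8}$.

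The statement about the remainder is then automatic: $(e_{13}-e_{31})\otimes(e_{13}-e_{31})$ minus its projection onto $S^8V$ lies in $S^4V\oplus S^0V$, and since the weight-$\pm 8$ contributions have already been accounted for, the leftover sits in the zero $h$-weight subspaces of $S^4V$ and $S^0V$. The only potential obstacle is a normalization subtlety in the inner product on the symmetric square, but this is pinned down by the equality $\langle v_8,v_{-8}\rangle=\tfrac{1}{16}\langle v_0,v_0\rangle\cdot\tfrac{1}{\binom{8}{4}}^{-1}(-1)^4$ predicted by Lemma \ref{constslem}, which cross-checks the computation against the normalization $\langle e_{13}\otimes e_{13},e_{31}\otimes e_{31}\rangle$ inherited from the trace form on $\Sigma^\circ_{3,J}$.
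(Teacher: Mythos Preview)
Your proposal is correct and follows essentially the same approach as the paper: the coefficients of $v_8$ and $v_{-8}$ are forced by weight considerations, and the coefficient of $v_0$ is obtained by orthogonal projection using the $\SL_2$-invariant form, computing $\langle -2\,e_{13}\otimes e_{31},v_0\rangle=-16$ and dividing by $\langle v_0,v_0\rangle=2240$. The only superfluous part is your final cross-check paragraph, which is not needed once the normalization $\langle e_{13}\otimes e_{13},e_{31}\otimes e_{31}\rangle=2$ (recorded in the proof of Lemma~\ref{basislem}) is in place.
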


\begin{proof}
The coefficients of $v_8$ and $v_{-8}$ are clear.
To see that the coefficient of $v_0$ is as claimed, we simply divide $$\langle (e_{13}-e_{31})\otimes (e_{13}-e_{31}),v_0\rangle = -2\langle e_{13}\otimes e_{31},v_0\rangle = -16$$
by $\langle v_0,v_0\rangle=2240$.
The final assertion is now clear, since $(e_{13}-e_{31})\otimes (e_{13}-e_{31})-v_8-\frac{1}{16} v_{-8}$ is already in the zero $h$-weight space.
\end{proof}

\begin{lemma}\label{sl3lem}
Let $v_8,v_0,v_{-8}$ be weight vectors in a standard weight basis for $S^8 V$.
Let ${\mathbb G}_m$ act on $\mathfrak{sl}_2\oplus S^8 V$ with respective weights 2, 4.
Then the following varieties are isomorphic:

(i) The quotient $\overline{{\mathcal O}_{\mini}(\mathfrak{sl}_3)}/\langle 
\gamma\rangle$;

(ii) the closure of the $\SL_2\times{\mathbb G}_m$-orbit of $(h,v_8-\frac{1}{140}v_0+\frac{1}{16}v_{-8})$ in $\mathfrak{sl}_2\oplus S^8 V$;

(iii) the closure of the $\SL_2\times{\mathbb G}_m$-orbit of $(h,av_8+bv_0+cv_{-8})$ for any non-zero $a,b,c$ such that $ac=1225 b^2$.
\end{lemma}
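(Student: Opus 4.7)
The plan is to apply Lemma \ref{slnouterlem} with $n = 3$ and then project onto $\mathfrak{sl}_2 \oplus S^8 V$ along the lower isotypic components. Concretely, Lemma \ref{slnouterlem} identifies (i) with the closure of the $\SL_2 \times \mathbb{G}_m$-orbit of $(M, N)$ in $\mathfrak{sl}_2 \oplus S^2(S^4 V)$, where $M = e_{11} - e_{33} = h/2$ and $N = (e_{13}-e_{31})^{\otimes 2}$. Decomposing $S^2(S^4 V) = S^8 V \oplus S^4 V \oplus S^0 V$ by Clebsch--Gordan, Lemma \ref{projectionlem} writes $N = w' + r$ with $w' = v_8 - \tfrac{1}{140} v_0 + \tfrac{1}{16} v_{-8}$ and $r$ lying entirely in the zero $h$-weight spaces of $S^4 V$ and $S^0 V$.

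The central step is to prove that the linear projection $\pi \colon \mathfrak{sl}_2 \oplus S^2(S^4 V) \to \mathfrak{sl}_2 \oplus S^8 V$ restricts to an $(\SL_2 \times \mathbb{G}_m)$-equivariant isomorphism from the orbit closure of $(M, N)$ onto the orbit closure of $(M, w')$. It suffices to show that the stabilizers of $(M, w')$ and $(M, N)$ in $\SL_2 \times \mathbb{G}_m$ agree, for then the rule $(g, \lambda) \cdot (M, w') \mapsto (g, \lambda) \cdot (M, N)$ is a well-defined equivariant section of $\pi$ on the orbit; combined with the normality of (i) inherited from that of $\overline{\0_{\mini}(\mathfrak{sl}_3)}$, this extends to an isomorphism on closures via a standard Zariski's-main-theorem argument applied to the finite birational morphism $\pi$ restricted to orbit closures. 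The stabilizer coincidence rests on three elementary points: any element stabilizing $(M, w')$ forces $g \in N_{\SL_2}(T)$ together with $\lambda^4 = 1$, so the $\mathbb{G}_m$-scaling acts trivially on $r$; the torus $T$ acts trivially on every zero-weight space; and the Weyl involution acts on the zero-weight subspace of $S^{2n} V$ as $(-1)^n$, giving $+1$ both for $n = 0$ (the $S^0 V$ summand) and $n = 2$ (the $S^4 V$ summand). Hence every element of $\mathrm{Stab}(M, w')$ automatically fixes $r$.

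Rescaling by the $\mathbb{G}_m$-action with $\lambda^2 = 2$ moves the basepoint from $(M, w')$ to $(h, 4 w')$, realizing (i) as the orbit closure of $(h, 4 v_8 - \tfrac{1}{35} v_0 + \tfrac{1}{4} v_{-8})$; this is the instance $(a, b, c) = (4, -\tfrac{1}{35}, \tfrac{1}{4})$ of (iii), which indeed satisfies $ac = 1 = 1225 b^2$. To identify any two different instances of (iii) (in particular to obtain (ii)), consider the non-equivariant linear automorphism $\Phi_\nu \colon (x, y) \mapsto (x, \nu y)$ of $\mathfrak{sl}_2 \oplus S^8 V$: it sends the orbit of $(h, v)$ isomorphically onto the orbit of $(h, \nu v)$, and so induces an isomorphism of orbit closures. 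Combined with the $T$-action $(a, c) \mapsto (t^8 a, t^{-8} c)$, which preserves $b$ and $ac$, this allows us to match any two triples $(a, b, c)$ with $abc \neq 0$ and $ac = 1225 b^2$. The main obstacle I expect is the extension of the equivariant section across the boundary in the central step, where the normality of (i) and the finiteness of $\pi$ on closures must be used carefully; additional sign bookkeeping in the Weyl action on zero-weight vectors is also needed to nail down the stabilizer computation.
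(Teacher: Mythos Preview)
Your stabilizer computation is correct and rather elegant: you rightly observe that any $(g,\lambda)$ fixing $M$ has $\lambda^4=1$ and $g\in N_{\SL_2}(T)$, and that the Weyl involution acts as $(-1)^n$ on the zero-weight line of $S^{2n}V$, so that both the $S^4V$ and $S^0V$ parts of $r$ are automatically fixed. This does establish that the projection $\pi$ is bijective on the open orbits.

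The gap is in the extension to closures. You invoke normality of (i) together with Zariski's main theorem, but this is the wrong direction: normality of the \emph{domain} $X=\overline{\0_{\mini}}/\langle\gamma\rangle$ of $\pi$ only tells you that $X$ is the normalization of $Y=\overline{{\rm Orb}(M,w')}$, not that $\pi$ is an isomorphism. For the latter you would need normality of the \emph{target} $Y$, which is precisely what is at stake. Moreover, you have not shown that $\pi$ is finite on closures; the stabilizer equality controls only the generic fibre. (Incidentally, your $\Phi_\nu$ does commute with the $\SL_2\times{\mathbb G}_m$-action, since ${\mathbb G}_m$ acts on $S^8V$ by a scalar; but this is harmless.)

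The paper closes this gap by a much more direct observation that supersedes the stabilizer argument entirely. Since $r$ lies in the zero $h$-weight spaces of $S^4V\oplus S^0V$, and since $S^2\mathfrak{sl}_2\cong S^4V\oplus S^0V$ with $h\otimes h$ projecting nontrivially to both summands, the component $r$ is (on the orbit, hence on the closure) a \emph{polynomial} function of $M$ alone, namely the image of $M\otimes M$ under a fixed $\SL_2$-equivariant linear map, with the correct ${\mathbb G}_m$-weight $4$. This gives an explicit regular section of $\pi$ on the whole ambient space, so the quotient morphism restricted to $\overline{\0_{\mini}}$ factors through the projection to $\mathfrak{sl}_2\times S^8V$ and the isomorphism of closures is immediate. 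Your stabilizer equality is then a corollary of this polynomial dependence, rather than a substitute for it.
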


\begin{proof}
By the previous discussion, the quotient in (i) is isomorphic to the closure of the $\SL_2\times{\mathbb G}_m$-orbit of $(h,v_8-\frac{1}{140}v_0+\frac{1}{16}v_{-8},w_0,z_0)$ where $w_0\in S^4 V$ and $z_0\in S^0 V$ are in the zero $h$-weight spaces, and where ${\mathbb G}_m$ acts on $S^4 V$ and $S^0 V$ with weight 4.
Now $S^2\mathfrak{sl}_2\cong S^4 V\oplus S^0 V$ and $h\otimes h$ has non-zero projection to each component.
It follows that the restriction of the quotient map $\mathfrak{so}_{3,J}\times\Sigma^\circ_{3,J}\rightarrow\mathfrak{sl}_2 \times S^8 V\times S^4 V\times S^0 V$ to $\0_{\mini}(\mathfrak{sl}_3)$ factors through the projection to $\mathfrak{sl}_2\times S^8 V$.
Hence the varieties in (i) and (ii) are isomorphic.

For the equivalence of (ii) and (iii), the standard maximal torus of $\SL_2$ acts via $${\rm diag} (t,t^{-1})\cdot (h,v_8-\frac{1}{140}v_0+\frac{1}{16}v_{-8}) = (h,t^8 v_8-\frac{1}{140}v_0+\frac{1}{16} t^{-8}v_{-8})$$
so if we choose $t$ such that $-140 t^8=\frac{a}{b}$ then we have $t^8 v_8-\frac{1}{140} v_0+t^{-8} v_{-8}=-\frac{1}{140b} (av_8+bv_0+cv_{-8})$.
We can apply a final suitable scaling to $S^8 V$ to obtain the desired isomorphism.
\end{proof}

The main goal of the above deliberations is the following result.
Let $\g$ be a simple Lie algebra of type $E_7$ and let $\0'$, $\0$ be the nilpotent orbits in $\g$ with respective Bala-Carter labels $A_3+A_2+A_1$, $A_4+A_1$.
Then $\0'\subset\overline{\0}$ is of codimension 4.
Let $\{ h,e,f\}\subset\g$ be an $\mathfrak{sl}_2$-triple with $e,f\in\0'$ and let ${\mathcal S}=f+\g^e$.

\begin{proposition}
The intersection ${\mathcal S}\cap\overline{\0}$ is isomorphic to $\overline{\0_{\mini}(\mathfrak{sl}_3)}/\langle\gamma\rangle$.
\end{proposition}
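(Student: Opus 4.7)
The strategy follows the template of Proposition \ref{d4d4Prop}. I begin by fixing the $\mathfrak{sl}_2$-triple $\{h,e,f\}$ with $e,f\in\0_{A_3+A_2+A_1}$ and computing the graded decomposition $\g^e=\bigoplus_{i\geq 0}\g^e(i)$ under $\ad h$, together with the Slodowy ${\mathbb G}_m$-action of \S \ref{transverse}. The reductive centralizer $L=Z_G^{\circ}(e,h,f)$ for this orbit is just a one-dimensional torus (the center of the Bala--Carter Levi), so no $\SL_2$-action on ${\mathcal S}$ is directly available from $L$; the $\SL_2$ appearing in Lemma \ref{sl3lem} will enter only through an identification of a weight subspace of $\g^e$ with $\mathfrak{sl}_2\oplus S^8V$.

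Next I would use GAP to produce an explicit element $y=f+x\in {\mathcal S}\cap\0$, with $x=\sum_i x_i\in\bigoplus\g^e(i)$. By dimension count, ${\mathcal S}\cap\overline{\0}$ is of dimension $\codim_{\overline\0}\0'=4$, matching $\dim\overline{\0_{\mini}(\mathfrak{sl}_3)}/\langle\gamma\rangle=4$. The key point will be to check that the support of $x$ is concentrated in a graded piece of $\g^e$ which, under the combined $(L\times{\mathbb G}_m)$-action, admits a natural isomorphism with $\mathfrak{sl}_2\oplus S^8V$; more precisely, the $L$-weights present should allow the identification of $x$ with a triple $(h, av_8+bv_0+cv_{-8})$ in the notation of Lemma \ref{sl3lem}(iii).

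The computational crux is then to verify the numerical relation $ac=1225\,b^2$ for the specific $(a,b,c)$ arising from the $E_7$ structure constants. Once this is checked (in GAP), Lemma \ref{sl3lem}(iii) produces the sought isomorphism between the closure of the relevant orbit and $\overline{\0_{\mini}(\mathfrak{sl}_3)}/\langle\gamma\rangle$. Finally, I invoke unibranchness of $\overline\0$ at $f$ (see \S \ref{branching}): since ${\mathcal S}\cap\overline\0$ is then irreducible of dimension $4$, it equals the closure of the $\SL_2\times{\mathbb G}_m$-orbit of $y$, and the isomorphism follows.

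The principal obstacle is precisely the identification of the appropriate subspace of $\g^e$ with $\mathfrak{sl}_2\oplus S^8V$ in a manner compatible with the $\SL_2\times{\mathbb G}_m$-action of Lemma \ref{sl3lem}, together with the verification of the quadratic relation between the three coefficients. Unlike the $D_4(a_1)+A_1>2A_2$ case in $E_8$, where the reductive centralizer itself supplies the semisimple action, here the $\SL_2$-symmetry is hidden inside the invariant theory of the singularity and only becomes visible after the identification is carried out.
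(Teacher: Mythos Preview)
Your proposal contains a factual error that undermines the approach. You assert that the reductive centralizer $L=Z_G^\circ(e,h,f)$ for $\0'=\0_{A_3+A_2+A_1}$ in $E_7$ is a one-dimensional torus (the centre of the Bala--Carter Levi). This is incorrect: the reductive part $C=G^e\cap G^h$ is isomorphic to $\SL_2$, with Lie algebra $\mathfrak{c}\cong\mathfrak{sl}_2$. (The Bala--Carter Levi has rank $6$ in $E_7$, so its centre is indeed a one-dimensional torus, but the reductive centralizer of the orbit is larger than this centre.) Consequently, your concern that ``no $\SL_2$-action on ${\mathcal S}$ is directly available from $L$'' and that ``the $\SL_2$-symmetry is hidden inside the invariant theory'' is misplaced: the $\SL_2$ appearing in Lemma \ref{sl3lem} is precisely the reductive centralizer $C$, acting directly on the slice.

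Once this is corrected, the proof proceeds along the lines you sketch, but more cleanly. One decomposes $\g^e$ as a $C$-module: $\g^e(0)=\mathfrak{c}$, $\g^e(2)=S^8V\oplus S^4V\oplus{\mathbb C}$, $\g^e(4)=S^6V\oplus S^2V$, $\g^e(6)=S^4V$. A GAP computation produces a representative $x\in{\mathcal S}\cap\0$ of the form $f+h_0+v+(\text{terms in }S^4V,{\mathbb C}e,\g^e(4))$ with $v=18v_8+\tfrac{9}{70}v_0+\tfrac{9}{8}v_{-8}\in S^8V$. By unibranchness and dimensions, ${\mathcal S}\cap\overline\0$ equals the closure of the $C\times{\mathbb G}_m$-orbit of $x$. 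The components in $\g^e(4)$ and in $S^4V\oplus{\mathbb C}\subset\g^e(2)$ are then seen to be redundant (determined by $(h_0,v)$ via $C$-equivariant maps), so the slice is isomorphic to the closure of the orbit of $(h_0,v)$ in $\mathfrak{sl}_2\oplus S^8V$. Your idea of checking $ac=1225\,b^2$ is exactly right and does hold for the coefficients $(18,\tfrac{9}{70},\tfrac{9}{8})$; Lemma \ref{sl3lem}(iii) then finishes the argument.
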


\begin{proof}
The reductive part $C=G^e\cap G^h$ (resp. ${\mathfrak c}=\g^e\cap\g^h$) of the centralizer of $e$ is isomorphic to $\SL_2$ (resp. $\mathfrak{sl}_2$), and the other irreducible ${\mathfrak c}$-submodules of ${\mathfrak g}^e$ are as follows:
$${\mathfrak g}^e(2)=S^8 V\oplus S^4 V\oplus {\mathbb C},\;\; {\mathfrak g}^e(4)=S^6 V\oplus S^2 V,\;\;{\mathfrak g}^e(6)=S^4 V.$$
We constructed standard bases in GAP for ${\mathfrak c}$ and these various submodules.
Denote by $\{e_0,h_0,f_0\}$ the basis of ${\mathfrak c}$ constructed in this way and by $v_i$ (resp. $y_i$) the weight vectors in $S^8 V$ (resp. $S^4 V\subset\g^e(2)$).
We used GAP to find a representative for ${\mathcal O}$ in ${\mathcal S}$ of the following form:
$$x=f+h_0+v-\frac{2}{7}y_0-\frac{8}{5}e+\frac{5}{648}[v,[h_0,v]]$$
where $v=18v_8+\frac{9}{70}v_0+\frac{9}{8}v_{-8}$.
It follows by dimensions and unibranchness that ${\mathcal S}\cap\overline{\0}$ is isomorphic to the closure of the $C\times{\mathbb G}_m$-orbit of $x$, hence is isomorphic to the closure in $f+{\mathfrak c}\oplus\g^e(2)$ of the $C\times{\mathbb G}_m$-orbit of $f+h_0+v-\frac{2}{70}y_0-\frac{8}{5}e$.
The result therefore follows by Lemma \ref{sl3lem}.
\end{proof}

\subsection{Slodowy slice singularities isomorphic to ${\mathbb C}^{4}/\mathfrak{S}_{3}$ or ${\mathbb C}^8/\mathfrak{S}_3$}\label{S3subsec}

In this subsection, denote by $V$ the natural representation for $\Sp_{2n}$, with basis $\{ v_1,\ldots ,v_{2n}\}$, and let ${\mathfrak h}_2$ be the 2-dimensional reflection representation for $\mathfrak{S}_3$.
The $(\Sp_{2n}\times\mathfrak{S}_3)$-module $W:=V\otimes{\mathfrak h}_2$ is isomorphic over $\mathfrak{S}_3$ to a sum of $n$ copies of ${\mathfrak h}_2\oplus{\mathfrak h}_2^*$.
Fix a primitive cube root $\omega$ of unity in ${\mathbb C}$.
We write elements of $W$ in the form $(u,v)$ where $u,v\in V$, such that $u$ (resp. $v$) is in the $\omega$- (resp. $\omega^2$-)eigenspace for $(1\; 2\; 3)$, and such that $(1\; 3)\cdot (u,v)=(v,u)$.
By Briand's Theorem \cite{Briand} (see \S \ref{G2case}), the quotient morphism $W\rightarrow W/\mathfrak{S}_3$ is given by the map $$\varphi:W\rightarrow S^2 V\times S^3 V,\;\;(u,v)\mapsto\left(u\otimes v, u\otimes u\otimes u+v\otimes v\otimes v\right).$$

There is a natural isomorphism of $S^2 V$ with the adjoint representation for ${\rm Sp}_{2n}(\C)$, and the set of elements of the form $u\otimes v$ is mapped to the set of elements of $\mathfrak{sp}_{2n}$ of rank at most 2.
This is the closure of a sheet (see \S \ref{spsubsec}) which is irreducible of dimension $4n-1$ and contains a dense open $\Sp_{2n}\times{\mathbb G}_m$-orbit.
It is easy to see that $e_{11}-e_{2n,2n}\in\mathfrak{sp}_{2n}$ is a representative of the open orbit, and it corresponds to $v_1\otimes v_{2n}\in S^2 V$.

\begin{lemma}\label{S3lem}
Let $\Sp_{2n}\times{\mathbb G}_m$ act on $\mathfrak{sp}_{2n}\oplus S^3 V$, where the weight of the ${\mathbb G}_m$-action is $2$ on $\mathfrak{sp}_{2n}$ and $3$ on $S^3 V$.
Then the quotient $({\mathfrak h}_2\oplus{\mathfrak h}_2^*)^{\oplus n}/\mathfrak{S}_3$ is isomorphic to:

(i) the closure of the ${\rm Sp}_{2n}\times{\mathbb G}_m$-orbit of $(e_{11}-e_{2n,2n},\; v_1\otimes v_1\otimes v_1+v_{2n}\otimes v_{2n}\otimes v_{2n})\in\mathfrak{sp}_{2n}\oplus S^3 V,$

(ii) the closure of the ${\rm Sp}_{2n}\times{\mathbb G}_m$-orbit of $(e_{11}-e_{2n,2n},\; av_1\otimes v_1\otimes v_1+bv_{2n}\otimes v_{2n}\otimes v_{2n})$ for any $a,b\in {\mathbb C}^\times$.
\end{lemma}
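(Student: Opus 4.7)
The plan is to realise both orbit closures as the image of a quotient morphism from $W$ by an $\mathfrak{S}_3$-action (a twisted one in case (ii)), after exhibiting a dense $(\Sp_{2n}\times\mathbb{G}_m)$-orbit in $W$. In each case, once one has a quotient morphism $\pi:W\to W/\Gamma\hookrightarrow\mathfrak{sp}_{2n}\oplus S^3V$ which is $(\Sp_{2n}\times\mathbb{G}_m)$-equivariant, equivariance and finiteness reduce the claim to identifying the image of a single well-chosen point of $W$.

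For (i), I would first show that $(v_1,v_{2n})\in W$ has dense $(\Sp_{2n}\times\mathbb{G}_m)$-orbit. The key observation is that $\{v_1,v_{2n}\}$ is a hyperbolic pair for the symplectic form on $V$, so its $\Sp_{2n}$-stabiliser is $\Sp_{2n-2}$ acting on $\langle v_1,v_{2n}\rangle^\perp$; adjoining the $\mathbb{G}_m$-action introduces only a $\{\pm 1\}$-factor, since the symplectic constraint $\omega(tgv_1,tgv_{2n})=\omega(v_1,v_{2n})$ forces $t^2=1$. A dimension count gives a $4n$-dimensional orbit, hence dense in $W$. Equivariance of $\varphi$ transports this to a dense orbit in $\varphi(W)=W/\mathfrak{S}_3$, and using the identification $v_1\otimes v_{2n}\leftrightarrow e_{11}-e_{2n,2n}$ recalled immediately before the lemma, the image of $(v_1,v_{2n})$ is exactly the point named in (i).

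For (ii), given $a,b\in\C^\times$, the idea is to twist the $\mathfrak{S}_3$-action on $W$ by a $(\Sp_{2n}\times\mathbb{G}_m)$-equivariant scalar automorphism. Choose cube roots $\lambda^3=a$, $\mu^3=b$, and consider $\psi_{\lambda,\mu}:(u,v)\mapsto(\lambda u,\mu v)$. Conjugating leaves $(1\,2\,3)$ (which already acts by diagonal scalars) unchanged but changes $(1\,3)$ to $(u,v)\mapsto(\lambda^{-1}\mu v,\mu^{-1}\lambda u)$, and the resulting quotient $W/\mathfrak{S}_3'$ is abstractly isomorphic to $W/\mathfrak{S}_3$ via $\psi_{\lambda,\mu}$. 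A short computation of low-degree $\mathfrak{S}_3'$-invariants, using Briand's theorem again to guarantee generation, shows that the new quotient morphism $\varphi'$ sends $(u,v)$ to $(u\otimes v,\;a u^{\otimes 3}+b v^{\otimes 3})$ and hence $(v_1,v_{2n})$ to the point in (ii). Applying the density argument of (i) to $\varphi'$ then identifies the orbit closure with $\varphi'(W)\cong W/\mathfrak{S}_3'\cong(\mathfrak{h}_2\oplus\mathfrak{h}_2^*)^{\oplus n}/\mathfrak{S}_3$.

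The main technical step will be the invariant-theoretic computation for the twisted action: solving the two-variable eigenvector equation for the modified swap acting on the span of $u^{\otimes 3}$ and $v^{\otimes 3}$ to extract the generator $a u^{\otimes 3}+b v^{\otimes 3}$ of the degree-3 invariants. Everything else is a direct application of the equivariance and density arguments already outlined.
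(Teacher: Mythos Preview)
Your argument for (i) is correct and essentially the same as the paper's: both establish a $4n$-dimensional $(\Sp_{2n}\times\mathbb{G}_m)$-orbit by a stabiliser computation. You compute the stabiliser of $(v_1,v_{2n})$ in $W$; the paper instead computes the centraliser of $s=e_{11}-e_{2n,2n}$ in $\Sp_{2n}$, which is $\Sp_{2n-2}\times\mathbb{C}^\times$, and checks that the extra $\mathbb{C}^\times$ acts nontrivially on $v_1^{\otimes 3}+v_{2n}^{\otimes 3}$. These are two sides of the same count.

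For (ii) your twisted-action argument works, but the paper's route is considerably shorter and avoids recomputing invariants. The paper exploits the $\mathbb{C}^\times$ in the centraliser of $s$ already identified in (i): it acts on $v_1,v_{2n}$ with weights $\pm 1$, so sends $(s,\,av_1^{\otimes 3}+bv_{2n}^{\otimes 3})$ to $(s,\,at^3v_1^{\otimes 3}+bt^{-3}v_{2n}^{\otimes 3})$. Choosing $t$ with $at^3=bt^{-3}$ puts this in the $\Sp_{2n}$-orbit of a point of the form $(s,\,c(v_1^{\otimes 3}+v_{2n}^{\otimes 3}))$, and a final $(\Sp_{2n}\times\mathbb{G}_m)$-equivariant rescaling of the $S^3V$ factor reduces to (i). Your approach buys an explicit description of the modified quotient map, but note a small slip: $\varphi\circ\psi_{\lambda,\mu}$ actually sends $(u,v)$ to $(\lambda\mu\,u\otimes v,\,au^{\otimes 3}+bv^{\otimes 3})$, so to land exactly on the point in (ii) you are tacitly composing with a further rescaling of the $\mathfrak{sp}_{2n}$ component. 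This is harmless (it is another equivariant linear automorphism of the target), but should be stated.
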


\begin{proof}
The centralizer of $s=e_{11}-e_{2n,2n}$ in $\Sp_{2n}$ is easily seen to be isomorphic to $\Sp_{2n-2}\times {\mathbb C}^\times$, with the $1$-dimensional central torus acting on $v_1$ (resp. $v_{2n}$) with weight $1$ (resp. $-1$).
It follows that $(s,v_1\otimes v_1\otimes v_1+v_{2n}\otimes v_{2n}\otimes v_{2n})$ is $\Sp_{2n}$-conjugate to $(s,t^3 v_1^{\otimes 3}+t^{-3} v_{2n}^{\otimes 3})$ and hence that the $\Sp_{2n}\times{\mathbb G}_m$-orbit of $(s,v_1^{\otimes 3}+v_{2n}^{\otimes 3})$ is of dimension $4n$, so is dense in the closure of the image of $\varphi$.
This proves part (i).
Part (ii) now easily follows from (i) by first choosing $t$ such that $at^3=bt^{-3}$, and then scaling $S^3 V$ so that we have $a=b=1$.
\end{proof}

\subsubsection{The singularity of $(G_2(a_1),A_1)$ in type $G_2$}\label{S3smallsing}

With the aid of Lemma \ref{S3lem} we can easily describe the singularity associated to this pair.
This can also be deduced from the Brylinski-Kostant result that $\overline{\0_{\mini}(\mathfrak{so}_8)}/\mathfrak{S}_3\cong \overline{\0_{G_2(a_1)}}$ (\cite[Thm. 6.6]{Brylinski-Kostant:JAMS}, see also \S \ref{G2case}).

\begin{lemma}\label{G2minlem}
Let $\0'$ (resp. $\0$) be the minimal (resp. subregular) nilpotent orbit in a Lie algebra $\g$ of type $G_2$ and let ${\mathcal S}$ be the Slodowy slice at an element of $\0'$.
Then ${\mathcal S}\cap\overline{\0}$ is isomorphic to $({\mathfrak h}_2\oplus{\mathfrak h}_2^*)/\mathfrak{S}_3$.
\end{lemma}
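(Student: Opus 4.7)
The plan is to find an explicit representative of $\0=\0_{G_2(a_1)}$ inside the Slodowy slice $\mathcal{S}$ and to identify the resulting orbit closure directly with the description of $(\mathfrak{h}_2\oplus\mathfrak{h}_2^*)/\mathfrak{S}_3$ provided by Lemma~\ref{S3lem} at $n=1$. First I would fix an $\mathfrak{sl}_2$-triple $\{h,e,f\}$ with $e=e_{\hat\alpha}\in\0'$, where $\hat\alpha=3\alpha_1+2\alpha_2$ is the highest root and $h=\hat\alpha^\vee=\alpha_1^\vee+2\alpha_2^\vee$. Since $\langle\alpha_1^\vee,\hat\alpha\rangle=0$, the reductive centralizer $\mathfrak{c}=\g^e\cap\g^h$ is generated by $e_{\pm\alpha_1}$ and $\alpha_1^\vee$, and so is isomorphic to $\mathfrak{sl}_2$. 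A direct weight count yields $\g^e=\mathfrak{c}\oplus\g^e(1)\oplus\C e$, where $\g^e(1)$ is spanned by $e_{\alpha_2}, e_{\alpha_1+\alpha_2}, e_{2\alpha_1+\alpha_2}, e_{3\alpha_1+\alpha_2}$ of respective $\alpha_1^\vee$-weights $-3,-1,1,3$; hence $\g^e(1)\cong S^3V$ as a $\mathfrak{c}$-module. The $\mathbb{G}_m$-action from \S\ref{transverse} acts with weights $2,3,4$ on the three summands, matching exactly the $\SL_2\times\mathbb{G}_m$-weights on $\mathfrak{sp}_2\oplus S^3V$ appearing in Lemma~\ref{S3lem}.

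Next I would find a representative of $\0$ in $\mathcal{S}=f+\g^e$ of the form
\[
x = f + h_0 + \bigl(a\,e_{3\alpha_1+\alpha_2} + b\,e_{\alpha_2}\bigr) + c\,e,
\]
with $h_0=\alpha_1^\vee$, $a,b\in\C^\times$, and $c\in\C$; here $e_{3\alpha_1+\alpha_2}$ and $e_{\alpha_2}$ are highest and lowest weight vectors of $\g^e(1)\cong S^3V$. That $x\in\0$ is verified by checking $\dim\g^x=4$, a direct or GAP-assisted calculation. A stabilizer computation shows the isotropy of $x$ in $C\times\mathbb{G}_m$ (with $C\cong\SL_2$ the connected reductive centralizer of the triple) is finite of order $6$, so $(C\times\mathbb{G}_m)\cdot x$ has dimension $4$. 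Since \S\ref{branching} gives that $\mathcal{S}\cap\overline{\0}$ is irreducible of dimension $\dim\0-\dim\0'=4$, the two coincide, so $\mathcal{S}\cap\overline{\0}=\overline{(C\times\mathbb{G}_m)\cdot x}$.

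To conclude, I would consider the $C\times\mathbb{G}_m$-equivariant linear projection $\pi:\mathcal{S}\to f+\mathfrak{c}\oplus\g^e(1)$ killing the $\C e$ summand. Under the identifications $\mathfrak{c}\cong\mathfrak{sp}_2$ (sending $\alpha_1^\vee$ to $e_{11}-e_{22}$) and $\g^e(1)\cong S^3V$, Lemma~\ref{S3lem}(ii) identifies $\pi(\overline{(C\times\mathbb{G}_m)\cdot x})$ with $(\mathfrak{h}_2\oplus\mathfrak{h}_2^*)/\mathfrak{S}_3$. Repeating the stabilizer computation on the image shows the extra condition $t^4=1$ coming from the $\C e$ coordinate is automatic from $t^2=1$, so the stabilizers agree and $\pi$ is bijective on the open orbit. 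Hence $\pi$ is a finite birational map between $4$-dimensional irreducible varieties; since $(\mathfrak{h}_2\oplus\mathfrak{h}_2^*)/\mathfrak{S}_3$ is normal (being a finite quotient of a smooth affine variety), Zariski's main theorem forces $\pi$ to be an isomorphism. The main technical hurdle is the explicit verification that $x\in\0_{G_2(a_1)}$; everything else is bookkeeping with the $\mathfrak{sl}_2$-grading, the $C\times\mathbb{G}_m$-stabilizers, and Lemma~\ref{S3lem}.
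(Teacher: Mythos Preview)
Your proposal is essentially the same approach as the paper's: same $\mathfrak{sl}_2$-triple, same description of $\g^e$ as $\mathfrak{c}\oplus S^3V\oplus\mathbb{C}e$ with the $\mathbb{G}_m$-weights $2,3,4$, an explicit representative of $\0$ in $\mathcal S$ supported on $\mathfrak{c}$ plus the extremal weight vectors of $S^3V$ plus $\mathbb{C}e$, and then an appeal to Lemma~\ref{S3lem}(ii) after projecting away the $\mathbb{C}e$-summand.

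The one point where you diverge is the last step.  The paper does not go through Zariski's main theorem: it simply observes that along the $(C\times\mathbb{G}_m)$-orbit the $\mathbb{C}e$-coordinate equals a constant multiple of $\mathrm{Tr}(y^2)$, where $y$ is the $\mathfrak{c}$-coordinate (both transform with weight~$4$ under $\mathbb{G}_m$ and both are $C$-invariant).  This gives an explicit polynomial section of $\pi$, so $\pi$ is an isomorphism onto its image with no further argument needed.  Your route via stabilizers and Zariski is fine in principle, but you assert that $\pi$ is \emph{finite} on $\mathcal{S}\cap\overline{\0}$ without justification, and your identification of $\pi(\mathcal{S}\cap\overline{\0})$ with the orbit closure in $\mathfrak{c}\oplus S^3V$ already presupposes that $\pi$ is closed.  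To fill this gap you can either adopt the paper's trace-square observation, or note that a linear projection restricted to a closed cone is finite precisely when the cone meets the kernel only at the vertex --- which holds here because $f+ce$ is semisimple for $c\neq 0$, hence not in $\overline{\0}$.
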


\begin{proof}
Let $\hat\alpha$ be the highest root in $\g$.
We choose the $\mathfrak{sl}_2$-triple $\{ h=\hat\alpha^\vee,e=e_{\hat\alpha},f=e_{-\hat\alpha}\}$.
Then ${\mathcal S}=f+\g^e$ is a Slodowy slice at $f\in\0'$.
The structure of the centralizer is as follows: $\g^e=\mathfrak{c}\oplus \mathfrak{u}_{\alpha_2}$, where $\mathfrak{c}=\langle e_{\alpha_1},f_{\alpha_1}\rangle\cong\mathfrak{sl}_2$ and ${\mathfrak u}_{\alpha_2}$ is the span of all root subspaces $\g_\alpha$ for $\alpha\in\Phi^+\setminus\{\alpha_1\}$.
Similarly, $C=G^e\cap G^f$ is (connected and is) the simple subgroup of $G$ generated by the root subgroups $U_{\pm\alpha_1}$.
As a $C$-module, $\mathfrak{u}_{\alpha_2}\cong S^3V\oplus {\mathbb C}$ where the trivial submodule is spanned by $e$ and $S^3 V$ is spanned by all root subspaces $\g_{m\alpha_1+\alpha_2}$.
The ${\mathbb G}_m$-action on $f+\g^e$ has weight $2$ (resp. $3$, $4$) on ${\mathfrak c}$ (resp. $S^3 V$, ${\mathbb C}e$).
For the standard inclusion of $\g$ into $\mathfrak{so}_7$, the element $f+h+ae_{\alpha_2}+be_{3\alpha_1+\alpha_2}+ce_{\hat\alpha}$ of the Slodowy slice is mapped to the matrix: $$\begin{pmatrix} 1 & 0 & 0 & 0 & b & c & 0 \\ 0 & -1 & a & 0 & 0 & 0 & -c \\ 0 & 0 & 2 & 0 & 0 & 0 & -b \\ 0 & 0 & 0 & 0 & 0 & 0 & 0 \\ 0 & 0 & 0 & 0 & -2 & -a & 0 \\ 1 & 0 & 0 & 0 & 0 & 1 & 0 \\ 0 & -1 & 0 & 0 & 0 & 0 & -1 \end{pmatrix}$$
Now an easy computation verifies that $f+h+e_{\alpha_2}-8e_{3\alpha_1+\alpha_2}-3e_{\hat\alpha}$ belongs to the subregular nilpotent orbit.
By dimensions and unibranchness, ${\mathcal S}\cap\overline{\0}$ equals the closure of the $(C\times{\mathbb G}_m)$-orbit of this element.
We note that the ${\mathbb G}_m$-action sends $e_{\hat\alpha}$ to $t^4 e_{\hat\alpha}=\frac{1}{2}{\rm Tr} ((t^2 h)^2)e_{\hat\alpha}$, so that the intersection ${\mathcal S}\cap\overline{\0'}$ is isomorphic to the closure of the $(C\times{\mathbb G}_m)$-orbit of $h+e_{\alpha_2}-8e_{3\alpha_1+\alpha_2}$ in ${\mathfrak c}\times S^3 V$.
By Lemma \ref{S3lem}(ii), ${\mathcal S}\cap\overline{\0}$ is isomorphic to $({\mathfrak h}_2\oplus{\mathfrak h}_2^*)/\mathfrak{S}_3$.
\end{proof}

\begin{remark}
There is another way to understand the appearance of this singularity, in relation to the shared orbit pair $(\g_2,\mathfrak{so}_8)$ (see \S \ref{G2case}).
Let $\hat\alpha$ be the highest root in $\g=\mathfrak{so}_8$ and consider the $\mathfrak{sl}_2$-triple $\{ \hat\alpha^\vee, e_{\hat\alpha}, f_{\hat\alpha}\}$.
It is easy to see that $f_{\hat\alpha}$ is fixed by the outer automorphism group $\mathfrak{S}_3$.
Let $\g=\sum_{i\in{\mathbb Z}} \g(i)$ be the grading arising from the action of $\hat\alpha^\vee$.
Then the $\mathfrak{S}_3$-representation structure of $\g(1)$ is ${\mathbb C}^2\oplus {\mathfrak h}_2\oplus {\mathfrak h}_2^*$ (where the pairing of ${\mathfrak h}_2$ and ${\mathfrak h}_2^*$ arises from the Lie bracket).
The map $U={\mathfrak h}_2\oplus{\mathfrak h}_2^*\rightarrow \g$, $x\mapsto \exp(\ad x)(f_{\hat\alpha})$ is $\mathfrak{S}_3$- and ${\mathbb G}_m$-equivariant, and on taking quotients, induces an isomorphism of $U/\mathfrak{S}_3$ with the slice.
\end{remark}

Lemma \ref{G2minlem} provides a new proof of the result in \cite[\S 4.4.3]{FJLS}.

\begin{corollary}\label{G2mincor}
Let $\0''$ be the orbit with label $\tilde{A}_1$.
Then ${\mathcal S}\cap\overline{\0''}$ is isomorphic to the non-normal singularity $m={\rm Spec}\, {\mathbb C}[s^2,st,t^2,s^3,s^2t,st^2,t^3]$.
\end{corollary}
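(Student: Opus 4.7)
The plan is to leverage Lemma \ref{G2minlem}, which identifies $\mathcal{S} \cap \overline{\0}$ with $(\mathfrak{h}_2 \oplus \mathfrak{h}_2^*)/\mathfrak{S}_3$, and then to explicitly compute the image in this quotient of the fixed locus of an $\mathfrak{S}_2$-subgroup. The three orbits intersecting $\mathcal{S}$, namely $\0' = A_1$, $\0'' = \tilde{A}_1$, and $\0 = G_2(a_1)$, produce intersections with $\mathcal{S}$ of dimensions $0$, $2$, $4$ respectively; on the quotient side, $(\mathfrak{h}_2 \oplus \mathfrak{h}_2^*)/\mathfrak{S}_3$ has exactly three strata indexed by stabilizer type $\mathfrak{S}_3$, $\mathfrak{S}_2$, $1$, of the same dimensions. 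Matching by dimension, $\mathcal{S} \cap \overline{\0''}$ is (as a reduced subvariety) the image in the quotient of the fixed locus $W^H$ for any chosen $H \cong \mathfrak{S}_2$ inside $\mathfrak{S}_3$.

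The core computation is invariant-theoretic. Realize $\mathfrak{h}_2 = \{(x_1,x_2,x_3) : \sum x_i = 0\}$ and $\mathfrak{h}_2^* = \{(y_1,y_2,y_3) : \sum y_i = 0\}$, with $\mathfrak{S}_3$ acting diagonally by permutation of coordinates. Taking $H = \langle (1\ 2)\rangle$, one has $W^H = \{(s,s,-2s,t,t,-2t) : s,t \in \mathbb{C}\} \cong \mathbb{A}^2_{s,t}$. By Weyl's polarization theorem (and its positive-characteristic extension by Briand \cite{Briand}, as invoked in \S \ref{G2case}), $\mathbb{C}[W]^{\mathfrak{S}_3}$ is generated by the bi-polarized power sums $p_{a,b} := \sum_i x_i^a y_i^b$ for $1 \le a + b \le 3$. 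Restriction to $W^H$ sends $p_{a,b}$ to $(2 + (-2)^{a+b})\,s^a t^b$, which vanishes for $a+b = 1$ and is a non-zero scalar multiple of $s^a t^b$ for $a+b \in \{2,3\}$. Consequently the image of the restriction map $\mathbb{C}[W]^{\mathfrak{S}_3} \to \mathbb{C}[s,t]$ is the subring generated by $\{s^a t^b : 2 \le a+b \le 3\}$, which is precisely $\mathbb{C}[s^2,st,t^2,s^3,s^2t,st^2,t^3]$, the coordinate ring of $m$.

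To conclude, the scheme-theoretic image of $W^H$ in $(\mathfrak{h}_2 \oplus \mathfrak{h}_2^*)/\mathfrak{S}_3$ has this ring as its coordinate ring; as a subring of the domain $\mathbb{C}[s,t]$ it is itself a domain, so the image is reduced and irreducible of dimension $2$. On the other hand, $\mathcal{S} \cap \overline{\0''}$ is reduced (as the intersection of the reduced $\overline{\0''}$ with the transverse slice $\mathcal{S}$), irreducible of dimension $2$, and shares the same underlying set with this image. Hence the two coincide, giving $\mathcal{S} \cap \overline{\0''} \cong m$. The only subtle point — and the main technical obstacle — lies in the identification in the first paragraph: one must be certain that the orbit-stratum correspondence is indeed as asserted, so that $\mathcal{S} \cap \overline{\0''}$ matches the image of $W^H$ rather than some other closed stratum. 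Here this is forced by the dimensions of the three strata being all distinct, but one could equivalently invoke Lusztig's order-reversing bijection recalled in the Introduction.
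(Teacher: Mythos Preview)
Your proof is correct and follows essentially the same strategy as the paper's: identify ${\mathcal S}\cap\overline{\0}$ with the quotient via Lemma \ref{G2minlem}, then compute the image of the $\mathfrak{S}_2$-fixed locus explicitly and read off the coordinate ring $m$. The only notable differences are cosmetic: the paper works in the $(u,v)$-coordinates of \S \ref{S3subsec} (eigenvectors for the $3$-cycle) and uses the explicit quotient map $\varphi(u,v)=(u\otimes v,\,u^{\otimes 3}+v^{\otimes 3})$, whereas you use the permutation realization and polarized power sums; and the paper matches $\overline{\0''}$ to the $\mathfrak{S}_2$-stratum by observing that both equal the singular locus (of $\overline{\0}$ on one side, of the quotient on the other), which is slightly more direct than your dimension-matching argument, though both are valid.
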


\begin{proof}
By Lemma \ref{G2minlem}, the intersection of the Slodowy slice with $\0_{\tilde{A}_1}$ is isomorphic to the singular locus of $({\mathfrak h}_2\oplus{\mathfrak h}_2^*)/\mathfrak{S}_3$.
On the other hand, the singular points are the images in the quotient of the points with non-trivial stabilizer; up to conjugacy these are the points with $u=v$ (in the notation at the beginning of the subsection).
Set $u=\begin{pmatrix} s & t \end{pmatrix}^T\in V$; then the image of $(u,u)$ in the quotient is:
$$(u\otimes u, u\otimes u\otimes u+v\otimes v\otimes v) = \left(\begin{pmatrix} s^2 & st \\ st & t^2 \end{pmatrix}, \begin{pmatrix} 2s^3 \\ 2s^2 t \\ 2st^2 \\ 2t^3\end{pmatrix}\right),$$
where we make the appropriate identifcations of $V\otimes V$ and $S^3 V$ with ${\rm Mat}_2$ and ${\mathbb C}^4$ respectively.
The result follows.
\end{proof}

\subsubsection{The singularity of $(2A_2+2A_1,D_4(a_1)+A_1)$ in type $E_8$}\label{S3bigsing}

Let $G$ be of type $E_8$, let $\g=\Lie(G)$ and let $\0'$ (resp. $\0$) be the nilpotent orbit with Bala-Carter label $2A_2+2A_1$ (resp. $D_4(a_1)+A_1$).
Let $\{ h,e,f\}\subset\g$ be an $\mathfrak{sl}_2$-triple with $e,f\in\0'$, and let ${\mathcal S}:=f+\g^e$.

\begin{lemma}\label{2A2+2A1lem}
The intersection ${\mathcal S}\cap\overline{\0}$ is isomorphic to $({\mathfrak h}_2\oplus{\mathfrak h}_2^*)^{\oplus 2}/\mathfrak{S}_3$.
\end{lemma}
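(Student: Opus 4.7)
The proof will follow the template established by Lemma \ref{G2minlem} (the $G_2$ analogue) and used again in Prop. \ref{d4d4Prop}: we reduce to the concrete invariant-theoretic description of $({\mathfrak h}_2\oplus{\mathfrak h}_2^*)^{\oplus 2}/\mathfrak{S}_3$ provided by Lemma \ref{S3lem} for $n=2$.

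First, fix an $\mathfrak{sl}_2$-triple $\{h,e,f\}$ with $e,f\in\0'=\0_{2A_2+2A_1}$, write $\mathfrak{c}:=\g^e\cap\g^h$ for the reductive part of the centralizer, and let $C$ be the connected subgroup with Lie algebra $\mathfrak{c}$. Consulting the standard tables (or verifying in GAP), $\mathfrak{c}\cong\mathfrak{sp}_4$, so $C\cong\Sp_4$ acts on its natural $4$-dimensional module $V$. The next step is to decompose $\g^e=\bigoplus_{i\geq 0}\g^e(i)$ as a graded $C$-module and, in particular, to exhibit an isomorphic copy of $\mathfrak{sp}_4=S^2 V$ in $\g^e(0)$ and of $S^3 V$ sitting in some strictly positive weight piece (analogous to the placement of $S^3V$ in $\g^e(2)$ for the $G_2$ case of Lemma \ref{G2minlem}). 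The ${\mathbb G}_m$-scaling action on ${\mathcal S}=f+\g^e$ then acts on $\mathfrak{c}$ with weight $2$ and on the $S^3 V$ summand with some weight $w\geq 3$, exactly the setup of Lemma \ref{S3lem}.

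Second, using GAP one exhibits an explicit element $x\in {\mathcal S}\cap\0$ of the form
$$
x = f+s+(a\,v_1^{\otimes 3}+b\,v_4^{\otimes 3})+\text{(terms of strictly higher $\ad(h)$-weight)},
$$
where $s\in\mathfrak{sp}_4$ is the standard rank-$2$ representative $e_{11}-e_{44}$ (playing the role of $h$ in Lemma \ref{G2minlem} and of $e_{11}-e_{2n,2n}$ in Lemma \ref{S3lem} for $n=2$), $v_1,v_4$ are the extremal weight vectors of $V$, and $a,b\in{\mathbb C}^\times$. Membership of $x$ in $\0_{D_4(a_1)+A_1}$ is verified either by computing its weighted Dynkin diagram directly in GAP or by identifying its image under a suitable classical embedding. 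The higher-weight correction terms do appear, but as in the $G_2$ and $E_8$ computations of Lemmas \ref{G2minlem} and Prop. \ref{d4d4Prop} they are absorbed by the ${\mathbb G}_m$-action in the limit $t\to 0$ and hence contribute nothing to the closure of the $(C\times{\mathbb G}_m)$-orbit.

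Third, a dimension count combined with unibranchness closes the argument. By the discussion in \S \ref{branching}, $\overline{\0}$ is unibranch at $f$, so ${\mathcal S}\cap\overline\0$ is irreducible of dimension $\codim_{\overline\0}\0'=8$. The $(C\times{\mathbb G}_m)$-orbit of $x$ has dimension $8$ (by the calculation in the proof of Lemma \ref{S3lem}(i) applied with $n=2$). Hence ${\mathcal S}\cap\overline\0$ is the closure of this orbit, which by Lemma \ref{S3lem}(ii) is isomorphic to $({\mathfrak h}_2\oplus{\mathfrak h}_2^*)^{\oplus 2}/\mathfrak{S}_3$.

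The main obstacle is the explicit construction of the representative $x$ inside the $248$-dimensional Lie algebra of $E_8$: one must produce bases for $\mathfrak{c}\cong\mathfrak{sp}_4$ and for the $C$-isotypic components of $\g^e$ compatible with the grading, write down the candidate $x$, and verify that it lies in $\0_{D_4(a_1)+A_1}$ rather than in a smaller orbit. Everything else is structural and parallels the $G_2$ argument.
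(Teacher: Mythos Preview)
Your proposal is correct and follows essentially the same approach as the paper: reduce to Lemma~\ref{S3lem} with $n=2$ by locating a representative in ${\mathcal S}\cap\0$ whose projection to $\mathfrak{c}\oplus S^3V$ is $(s,\,a v_1^{\otimes 3}+b v_4^{\otimes 3})$ with $s$ a rank-$2$ semisimple element of $\mathfrak{sp}_4$, then invoke dimensions and unibranchness. The paper's representative is $f+\beta_1^\vee+\beta_2^\vee+u_1-8u_{20}+3[u_1,u_{20}]$ (with $S^3V$ in $\g^e(1)$, not $\g^e(2)$), and $\beta_1^\vee+\beta_2^\vee$ is exactly your $e_{11}-e_{44}$.

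One point needs sharpening. Your claim that the higher-weight correction terms ``are absorbed by the ${\mathbb G}_m$-action in the limit $t\to 0$'' is not the right mechanism: taking a limit would only show that the projection lies in the closure, not that the projection is an isomorphism on the closure. What is actually needed (and what the paper does) is to observe that the extra term is $3[u_1,u_{20}]$, i.e.\ a $C$-equivariant polynomial expression in the $S^3V$-component, so that on the entire $(C\times{\mathbb G}_m)$-orbit the higher term is determined by the $\mathfrak{c}\oplus S^3V$ part. This is the same trick as in Lemma~\ref{G2minlem}, where the $\mathbb{C}e$-term equals $\tfrac{1}{2}\mathrm{Tr}(h^2)\,e$; it is an algebraic dependence, not a degeneration. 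With that correction your argument is complete.
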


\begin{proof}
This is another application of Lemma \ref{S3lem} together with some computations in GAP, using results of \cite{Lawther-Testerman} on the structure of $\g^e$.
We choose: $$e=e_{\alpha_1}+e_{\alpha_3}+e_{\alpha_5}+e_{\alpha_6}+e_{\alpha_2}+e_{\alpha_8},\;\; f=2f_{\alpha_1}+2f_{\alpha_3}+2f_{\alpha_5}+2f_{\alpha_6}+f_{\alpha_2}+f_{\alpha_8}, \;\; h=[e,f].$$
The reductive part of the centralizer $C=G^e\cap G^f$ is isomorphic to ${\rm Sp}_4$.
Let $V$ be the natural 4-dimensional module for $C$.
Then $\g^e(1)\cong S^3 V$.
There are several other non-trivial subspaces $\g^e(i)$, but in fact this is all the information we need to prove our result.
In GAP, we constructed a basis for the centralizer, with $u_1=e_{\hat\alpha}$ (resp. $u_{20}=f_{\hat\alpha-\alpha_8}$) a highest (resp. lowest) weight vector in $\g^e(1)$.
Denote by $\beta_1^\vee,\beta_2^\vee$ the simple coroots in ${\mathfrak c}=\mathfrak{sp}_{4}$.
Then it turns out that: $$f+\beta_1^\vee+\beta_2^\vee+u_1-8u_{20}+3[u_1,u_{20}]\in\0.$$
It follows that the projection onto ${\mathfrak c}\oplus \g^e(1)$ induces an isomorphism of $(f+\g^e)\cap\overline{\0}$ with the closure of the $(\Sp_4\times{\mathbb G}_m)$-orbit of $(\beta_1^\vee+\beta_2^\vee, u_1-8u_{20})$, which is isomorphic to $({\mathfrak h}_2\oplus{\mathfrak h}_2^*)^{\oplus 2}/\mathfrak{S}_3$ by Lemma \ref{S3lem}.
\end{proof}

The attentive reader may notice the similarity of the constants $-8$ and $3$ appearing in the previous two Lemmas.
This appears to be related to Prop. \ref{d4d4Prop}.

\begin{corollary}\label{2A2+2A1cor}
Let $\0''$ be the orbit with label $A_3+2A_1$.
Then ${\mathcal S}\cap\overline{\0''}$ is isomorphic to the four-dimensional non-normal singularity $m'={\rm Spec}\, R$, where $R$ is the subring of ${\mathbb C}[s,t,u,v]$ consisting of polynomials with zero linear term.
\end{corollary}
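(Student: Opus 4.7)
The plan is to bootstrap from Lemma \ref{2A2+2A1lem}, which identifies $\mathcal{S}\cap\overline{\0}$ (for $\0=\0_{D_4(a_1)+A_1}$) with the quotient $W/\mathfrak{S}_3$, where $W=({\mathfrak h}_2\oplus{\mathfrak h}_2^*)^{\oplus 2}$, and then to read off $\mathcal{S}\cap\overline{\0''}$ as the singular locus on each side.

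On the $E_8$ side, Prop. \ref{d4d4Prop} shows that $\0''=\0_{A_3+2A_1}$ is the unique orbit strictly between $\0'=\0_{2A_2+2A_1}$ and $\0$ in the closure order, so
$$\mathcal{S}\cap(\overline{\0}\setminus\0)=(\mathcal{S}\cap\0'')\cup\{f\}=\mathcal{S}\cap\overline{\0''}$$
is the singular locus of $\mathcal{S}\cap\overline{\0}$. On the quotient side, I would write $W=V\otimes{\mathfrak h}_2$ (with $V\cong\mathbb{C}^4$) and parametrize points as pairs $(x,y)\in V\times V$ as in \S \ref{S3subsec}. A direct computation of the $\mathfrak{S}_3$-action shows that all nontrivial stabilizers are conjugate to $\langle(1\;3)\rangle\cong\mathfrak{S}_2$, occurring at points $(x,\zeta x)$ for $\zeta$ a cube root of unity. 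Each transposition acts on $W$ with fixed locus of codimension $4$ and hence not as a pseudo-reflection; by Chevalley--Shephard--Todd, the singular locus of $W/\mathfrak{S}_3$ is the image of $\bigcup_{\sigma\neq e}W^\sigma$, which a short orbit count reduces to the image of the diagonal map $V\hookrightarrow W$, $x\mapsto(x,x)$.

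Next I would compute the coordinate ring of this singular locus. Composing the diagonal embedding with the Briand-style quotient morphism $\varphi:W\rightarrow S^2V\times S^3V$, $(x,y)\mapsto(x\otimes y,\, x^{\otimes 3}+y^{\otimes 3})$, of \S \ref{S3subsec}, we obtain the parametrization $x\mapsto(x\otimes x,\,2x^{\otimes 3})$. Pulling back the coordinates on $S^2V\times S^3V$ identifies the coordinate ring with the subalgebra of $\mathbb{C}[V]\cong\mathbb{C}[s,t,u,v]$ generated by all quadratic and cubic monomials in $s,t,u,v$. Since products of these generate every monomial of degree $\geq 2$, this subalgebra equals the ring $R$ of polynomials with zero linear term. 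Hence $\mathcal{S}\cap\overline{\0''}\cong\mathrm{Spec}\,R=m'$.

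The only delicate step is the identification of the singular loci on the two sides, which relies on Prop. \ref{d4d4Prop} to pin down the unique intermediate $E_8$-orbit on one side, and on the Chevalley--Shephard--Todd characterization together with the $\mathfrak{S}_3$-orbit analysis on the other. Everything else is routine invariant-theoretic bookkeeping and an immediate application of Briand's theorem.
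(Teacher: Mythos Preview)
Your argument is correct and follows essentially the same route as the paper, which simply says ``direct analogue of Cor.~\ref{G2mincor}''. You have unpacked that analogue: identify ${\mathcal S}\cap\overline{\0''}$ with the singular locus of $W/\mathfrak{S}_3$, then compute the latter as the image of the diagonal $V\hookrightarrow W$ under $\varphi$, yielding the ring generated by all degree~$2$ and degree~$3$ monomials in four variables. Your extra care in invoking Chevalley--Shephard--Todd (noting that transpositions have codimension-$4$ fixed loci, hence are not pseudo-reflections) and in citing the poset diagram after Prop.~\ref{d4d4Prop} to pin down $\0''$ as the unique intermediate orbit makes explicit what the paper leaves implicit.
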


\begin{proof}
This is a direct analogue of Cor. \ref{G2mincor}, and the proof is the same.
\end{proof}

\begin{remark}\label{modularremark}
The results of this section can be extended to positive characteristic $p$, under mild restrictions on the characteristic.
One first of all requires $p$ to be good for $G$ so that the classification of nilpotent orbits is unchanged from characteristic zero.
As well as checking the irreducibility of all the relevant Weyl modules, one also has to be careful that the Slodowy slice is transverse (which holds if $p$ is greater than the height of the orbit in question), that representatives found in the Slodowy slice are well-defined modulo $p$ and that the conditions used to test membership of $\overline\0$ are also sufficient modulo $p$.

(i) For the outer involution of $\mathfrak{sl}_3$, Lemma \ref{slnouterlem} holds in characteristic $>2$, but we require $p>7$ in order for $S^8 V$ and $S^4V$ to be irreducible $\SL_2$-modules.
Under the latter assumption, $S^2(S^4 V)$ equals the direct sum $S^8 V\oplus S^4 V\oplus k$, the form defined in Lemma \ref{constslem} is non-degenerate, the constants appearing in the denominator in Lemma \ref{projectionlem} are non-zero, the Slodowy slice ${\mathcal S}$ is transverse to $\0_{A_3+A_2+A_1}$ and the representative of $\0$ in ${\mathcal S}$ is well defined modulo $p$.
The condition we used to test membership of $\overline{\0}$ was: $\rho(x)^9=0$, ${\rm rank}(\rho(x)^8)\leq 1$, where $\rho$ is the minimal representation.
It can be checked from the tables in \cite{Stewart} that this condition is sufficient in characteristic $>7$.

(ii) It is straightforward to see (using Briand's theorem \cite{Briand} or a direct argument) that the description of $k^{2n}/\mathfrak{S}_3$ in Lemma \ref{S3lem} holds in characteristic $>3$.
This assumption also ensures that the Slodowy slice to the minimal orbit in $G_2$ is transverse. It follows almost immediately that Lemma \ref{G2minlem} holds in characteristic $>3$.
The stronger condition $p>5$ is required for the Slodowy slice to $\0_{2A_2+2A_1}$ in $E_8$ to be transverse (and for $p$ to be good).
The test for membership of $\overline\0$ was: $(\ad x)^7=0$, ${\rm rank}((\ad x)^6)\leq 2$, which is easily seen (e.g from \cite{Lawther-Testerman}) to suffice in characteristic $>5$.
Thus the statement of Lemma \ref{2A2+2A1lem} holds in characteristic $>5$.
\end{remark}

\section{Exceptional degeneration of special orbits in type $F_4$}\label{F4sec}

\subsection{An action of $\mathfrak{S}_4$ on $\mathfrak{so}_8$}\label{D4S4subsec}

We now consider the action of $\mathfrak{S}_4$ by affine diagram automorphisms of $\g=\mathfrak{so}_8$.
After conjugating we may assume:

 - that $(1\; 2)$ acts via the graph involution $\gamma_{13}$ which swaps $\alpha_1$ and $\alpha_3$, but fixes $\alpha_2$ and $\alpha_4$;

 - that $(2\; 3)$ acts via the graph involution $\gamma_{34}$ swapping $\alpha_3$ and $\alpha_4$;

 - that $(3\; 4)$ acts via $\gamma_{13}\circ\Ad\, (\alpha_1^\vee(\sqrt{-1})\alpha_3^\vee(-\sqrt{-1}))$.

It is straightforward to check that the Coxeter relations are satisfied and that therefore this gives a well-defined action of $\mathfrak{S}_4$ on $\g$.
In particular, $\mathfrak{S}_3\subset\mathfrak{S}_4$ acts via diagram automorphisms as in \S \ref{G2case}, and the normal Klein 4-subgroup $\Gamma_4$ of $\mathfrak{S}_4$ acts via the subgroup generated by the inner involutions $\Ad\, t$, where $t\in T$ satisfies $\alpha_2(t)=1$ and exactly two of $\alpha_1(t)$, $\alpha_3(t)$ and $\alpha_4(t)$ equal $-1$.
We define an action of $\mathfrak{S}_4$ on ${\rm Spin}_8$ similarly.
The fixed point subgroup (resp. subalgebra) $G_0\cong\SL_3$ (resp. 
$\g_0\cong\mathfrak{sl}_3$) has basis of simple roots $\{ 
\alpha_2,\alpha_1+\alpha_2+\alpha_3+\alpha_4\}$.
(We will fix this order for the simple roots of $G_0$, hence the isomorphism with $\SL_3$.
The action of $\mathfrak{S}_4$ on ${\mathfrak g}$ also induces an action on the adjoint group of type $D_4$; the fixed point subgroup in that case is isomorphic to 
$\SL_3\rtimes\mathfrak{S}_2$, with $\mathfrak{S}_2$ acting via an outer 
involution and representing the longest element of the Weyl group of $G$.)
The remaining direct summands in $\g$ are as follows: $\g_0$ is the derived subalgebra of a Levi subalgebra ${\mathfrak l}$ of type $A_2$, and $\mathfrak{z}(\mathfrak{l})$ is isomorphic to the 2-dimensional irreducible $\mathfrak{S}_4$-module; the span of all the root subspaces $\g_\alpha$, with $\alpha$ not fixed by the induced action of $\mathfrak{S}_3$ on the root system, is isomorphic over $G_0\times\mathfrak{S}_4$ to $(V\oplus V^*)\otimes {\mathfrak h}_3$, where $V$ is the natural representation for $\SL_3$ and ${\mathfrak h}_3$ is the 3-dimensional reflection representation for $\mathfrak{S}_4$.
Let $W=V\oplus V^*$.
Thus we have $$\g=\g_0\oplus{\mathfrak z}({\mathfrak l})\oplus (W\otimes {\mathfrak h}_3).$$

Let $\0_{\mini}$ denote the minimal nilpotent orbit in $\g$.
Our goal in this section is to prove that the singularity associated to the degeneration $(\0_{F_4(a_3)},\0_{A_2})$ in type $F_4$ is $\overline{\0_{\mini}}/\mathfrak{S}_4$ (Thm. \ref{d4S4thm}).
The methods of the previous section remain frustratingly out of reach: the minimum codimension of a $G_0$-orbit in $\0_{\mini}$ is two.
We can however describe $\overline{\0_{\mini}}/\mathfrak{S}_4$ as the closure of the image of a certain morphism (Lemma \ref{d4S4lem}).
Since $\g$ is not a sum of trivial and reflection representations for $\mathfrak{S}_4$, it is not straightforward to describe the invariants on $\g$.
We instead use the fact that $\mathfrak{S}_4$ is solvable: we first consider the quotient by $\Gamma_4$, which will reduce us to the problem of constructing a certain quotient for $\mathfrak{S}_3$.

Observe that $\g^{\Gamma_4}={\mathfrak l}$ and each non-identity element of $\Gamma_4$ acts on two of the three copies of $W$ via multiplication by $(-1)$.
It is therefore straightforward to describe the quotient morphism $\g\rightarrow\g/\Gamma_4$.
As earlier, to avoid confusion we denote elements of $T^3 W$ in the form $uvw$.

\begin{lemma}\label{Gamma4quot}
Identify $\g$ as a vector space with $\mathfrak{g}_0\oplus {\mathfrak z}({\mathfrak l})\oplus W^{\oplus 3}$, therefore denoting an element of $\g$ via a tuple $(x,z,u,v,w)$ where $x\in\mathfrak{g}_0$, $z\in {\mathfrak z}({\mathfrak l})$, and $u,v,w\in W$.
Then the map $\varphi:\g\rightarrow \mathfrak{g}_0\times {\mathfrak 
z}({\mathfrak l})\times (S^2W)^{\oplus 3}\times T^3 W$, $$(x,z,u,v,w)\mapsto 
(x,z,u\otimes u,v\otimes v,w\otimes w,uvw)$$ embeds $\g/\Gamma_4$ into 
$\mathfrak{l}\times X^{\times 3}\times Y$, where $X$ is the set of elements of $S^2W\cong\Sigma_6$ of rank at most 1, and $Y$ is the set of pure 3-tensors in $T^3 W$.
\end{lemma}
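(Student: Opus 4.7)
The plan is to analyze the $\Gamma_4$-invariants on $\g$ directly, exploiting that $\Gamma_4\cong(\mathbb{Z}/2)^2$ is abelian of order~$4$ and fixes $\mathfrak{l}=\g_0\oplus\mathfrak{z}(\mathfrak{l})$ pointwise.  The group $\Gamma_4$ has three nontrivial characters $\chi_1,\chi_2,\chi_3$ satisfying $\chi_1\chi_2\chi_3=1$, and since (as stated just before the lemma) each nontrivial $g\in\Gamma_4$ acts by $+1$ on exactly one of the three copies of $W$ and by $-1$ on the other two, the three $W$-summands are precisely the three nontrivial isotypic components.  After relabelling I may assume that the $i$-th copy of $W$ affords $\chi_i$.

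Using this, I would decompose $\mathbb{C}[\g]=\mathbb{C}[\mathfrak{l}]\otimes\mathbb{C}[W_1]\otimes\mathbb{C}[W_2]\otimes\mathbb{C}[W_3]$ into $\Gamma_4$-weight spaces: after choosing bases, a monomial of multi-degree $(a,b,c)$ in the three $W$-summands transforms by the character $\chi_1^{a}\chi_2^{b}\chi_3^{c}$, which, using $\chi_1\chi_2\chi_3=1$, is trivial if and only if $a$, $b$, $c$ all have the same parity.  The subring of functions of all-even multi-degree is generated over $\mathbb{C}[\mathfrak{l}]$ by the quadratic functions in each of $u$, $v$, $w$ separately, i.e.\ by the coordinate functions of $u\otimes u$, $v\otimes v$, $w\otimes w\in S^2 W$.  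Any all-odd monomial with all exponents $\geq 1$ factors as $u^{a}v^{b}w^{c}=(uvw)\cdot u^{a-1}v^{b-1}w^{c-1}$, reducing it to the product of a single trilinear with an all-even monomial.  Hence $\mathbb{C}[\g]^{\Gamma_4}$ is generated by $\mathbb{C}[\mathfrak{l}]$, the coordinates of $u\otimes u$, $v\otimes v$, $w\otimes w$, and the coordinates of $u\otimes v\otimes w\in T^3 W$.

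Finally, by inspection these generators are precisely the pullbacks under $\varphi$ of the coordinate functions on $\mathfrak{l}\times(S^2 W)^{\times 3}\times T^3 W$, so $\varphi^{\ast}$ surjects onto $\mathbb{C}[\g]^{\Gamma_4}$.  It follows that $\varphi$ factors through $\g/\Gamma_4=\mathrm{Spec}\,\mathbb{C}[\g]^{\Gamma_4}$ and induces a closed immersion onto its image.  By construction $u\otimes u$ has rank $\leq 1$ in $S^2 W$ and $u\otimes v\otimes w$ is a pure $3$-tensor, so the image lies in $\mathfrak{l}\times X^{\times 3}\times Y$, as claimed.  The one substantive step is the identification of the generators of $\mathbb{C}[\g]^{\Gamma_4}$ in the middle paragraph, which is a completely elementary parity argument; accordingly I anticipate no real obstacle in the proof of this lemma, the heavy lifting being deferred to the subsequent analysis of the residual $\mathfrak{S}_4/\Gamma_4\cong\mathfrak{S}_3$-action needed to pass to the full $\mathfrak{S}_4$-quotient in Thm.~\ref{d4S4thm}.
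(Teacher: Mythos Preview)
Your proposal is correct and matches the paper's approach: the paper does not give an explicit proof of this lemma, merely observing beforehand that each non-identity element of $\Gamma_4$ acts as $-1$ on two of the three copies of $W$ and then declaring the description of the quotient morphism ``straightforward'' (in line with the earlier arguments of Lemma~\ref{solemma} and Lemma~\ref{genlem}). Your parity argument on monomial multidegrees is exactly the elementary computation the paper has in mind, made fully explicit.
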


\subsection{The quotient $\overline{\0_{\mini}}/\Gamma_4$}\label{Gamma4quotsec}

In order to make practical use of Lemma \ref{Gamma4quot} to describe the quotient $\overline{\0_{\mini}}/\Gamma_4$, at this stage we appeal to some computations in GAP.
Let $\{ v_1,v_2,v_3\}$ be the standard basis for $V$ and let $\{ \chi_1,\chi_2,\chi_3\}$ be the dual basis for $V^*$ (so that $\chi_3$ is a highest weight vector).
Identify ${\mathfrak z}({\mathfrak l})$ with the set of triples $(\lambda_1,\lambda_2,\lambda_3)\in {\mathbb C}^3$ with $\sum_1^3 \lambda_i=0$, where $(\lambda_1,\lambda_2,\lambda_3)$ denotes $\lambda_1\alpha_1^\vee+\lambda_2\alpha_3^\vee+\lambda_3\alpha_4^\vee$.
Denote by ${\mathfrak z}({\mathfrak l})_{\reg}$ the subset with $\lambda_1,\lambda_2,\lambda_3$ distinct.

Let $\beta=\alpha_1+\alpha_2+\alpha_3+\alpha_4$.
We first fix the elements $e_{\alpha_2},e_\beta,f_{\alpha_2},f_\beta$ in a Chevalley basis, generating $\g_0$ as a Lie algebra.
Then we pick highest (resp. lowest) weight vectors $e_{\alpha_2+\alpha_i}$ (resp. $f_{\alpha_2+\alpha_i}$) in each copy of $V$ (resp. $V^*$), where $i=1,3,4$.
We construct the rest of the Chevalley basis for $\g$ in such a way that elements of the standard basis for each copy of $V$ are root elements, i.e. $e_{\alpha_i}=[f_{\alpha_2},e_{\alpha_2+\alpha_i}]$ and $f_{\beta-\alpha_i}=[f_\beta,e_{\alpha_i}]$ for $i=1,3,4$.
This ensures that the elements of the dual basis for each copy of $V^*$ are also root elements.

We note that $\mathfrak{S}_4/\Gamma_4\cong\mathfrak{S}_3$ acts by simultaneously permuting the indices of the $\lambda_i$ and the summands $W$ in $\g$.
The action of $g\in G_0=\SL_3$ on $\g_0$ (resp. $V$, $V^*$) is via conjugation (resp. left multiplication, right multiplication $g\cdot\chi =\chi g^{-1}$).

\begin{lemma}\label{replem}
For each $z=(\lambda_1,\lambda_2,\lambda_3)\in {\mathfrak z}({\mathfrak l})$, let $\Delta(z)={\rm diag}\, (\lambda_1,\lambda_2,\lambda_3)\in\g_0$ and let $\xi_1,\eta_1,\xi_2,\ldots ,\eta_3$ be non-zero scalars.
Then the following element of $\g_0\oplus{\mathfrak z}({\mathfrak l})_{\reg}\oplus (V\oplus V^*)^3$ belongs to $\overline{\0_{\mini}}$:
$$(\Delta(z),z,\xi_1v_1+\eta_1\chi_1, \xi_2 v_2+\eta_2\chi_2, \xi_3v_3+\eta_3\chi_3)$$
if the following conditions are satisfied:

 $\bullet \quad\xi_i\eta_i=-\prod_{j\neq i}(\lambda_i-\lambda_j)$ for $1\leq i\leq 3$;

 $\bullet \quad\eta_1\eta_2\eta_3 = \xi_1\xi_2\xi_3 = -\prod_{i<j}(\lambda_i-\lambda_j)$.
\end{lemma}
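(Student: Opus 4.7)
The variety $\overline{\0_{\mini}}$ in $\mathfrak{so}_8$ is characterized, inside the natural $8$-dimensional representation, by $X^2 = 0$ together with $\rank(X) \leq 2$. The plan is to verify both conditions directly for the element of the lemma, by fixing an explicit matrix model of $\mathfrak{so}_8$ in which the Chevalley basis specified just before the lemma becomes a collection of concrete $8 \times 8$ matrices. In that model the semisimple summand $s = \Delta(z) + z$ is a diagonal matrix whose eigenvalues are explicit $\mathbb{Z}$-linear combinations of $\lambda_1, \lambda_2, \lambda_3$, while the root-vector summand $n = \sum_{i=1}^{3}(\xi_i v_i + \eta_i \chi_i)$ contributes off-diagonal entries proportional to $\xi_i$ and $\eta_i$.

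The heart of the argument is to expand $X^2 = s^2 + sn + ns + n^2$ and show that every entry vanishes under the two families of constraints. The diagonal entries combine the squared eigenvalues of $s$ with the bilinear $\xi_j \eta_j$ contributions from $n^2$: there are essentially three distinct diagonal eigenvalues (paired with their negatives, reflecting the orthogonal structure), and the three identities $\xi_i \eta_i = -\prod_{j \neq i}(\lambda_i - \lambda_j)$ are designed precisely to produce the required cancellations. Off-diagonal entries split into two types: those that are linear in $(\xi_i, \eta_i)$ multiplied by eigenvalue differences $\lambda_a - \lambda_b$, which cancel automatically between $sn$ and $ns$; and cubic cross-terms in three distinct $\xi_i$'s (respectively three distinct $\eta_i$'s) which arise from products of three different root vectors in $n$ together with commutators coming from $[s, n]$, and which are controlled precisely by the single remaining pair of relations $\xi_1 \xi_2 \xi_3 = \eta_1 \eta_2 \eta_3 = -\prod_{i<j}(\lambda_i - \lambda_j)$. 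Once $X^2 = 0$ is in hand, the rank bound $\rank(X) \leq 2$ follows from lower semicontinuity of $\rank$ applied to the irreducible parameter space after verifying $\rank(X) = 2$ at a single specialisation; alternatively, one can exhibit two explicit vectors in $\mathbb{C}^8$ spanning $\im(X)$.

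The principal obstacle is the bookkeeping: $X^2$ has many entries, each depending polynomially on eight parameters, and the cancellations rely on precise tracking of structure constants for the Chevalley basis normalised as in the paper. Two symmetries cut this down considerably. The $\mathfrak{S}_3$ permuting the three copies of $V$ cyclically permutes the triples $(\lambda_i), (\xi_i), (\eta_i)$ and reduces the number of independent entries of $X^2$ by a factor of six; and the diagonal torus of $G_0$ acts by $(\xi_i, \eta_i) \mapsto (t_i \xi_i, t_i^{-1} \eta_i)$ with $\prod_i t_i = 1$, preserving both the constraint set and the $G$-orbit of $X$, so one may first conjugate to the codimension-three subfamily $\xi_i = \eta_i$ and verify the identities there. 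After these reductions only a handful of polynomial identities in $(\lambda_1, \lambda_2, \lambda_3)$ remain, which can be checked by hand or, as the paper's acknowledgements suggest, confirmed in GAP.
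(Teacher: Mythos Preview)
Your overall strategy---pass to the $8$-dimensional orthogonal realisation and verify $X^2=0$ together with $\rank X\le 2$---is sound and genuinely different from what the paper does. However, two of your claimed mechanisms for the cancellations are wrong, and you miss the structural simplification that makes the paper's argument short.

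First, the assertion that the linear-in-$(\xi_i,\eta_i)$ off-diagonal entries ``cancel automatically between $sn$ and $ns$'' is incorrect: $X^2=s^2+sn+ns+n^2$ involves the \emph{anticommutator} $sn+ns$, not the commutator. For a diagonal $s$ and a root vector $e_\alpha$ supported at matrix position $(i,j)$, the contribution to $X^2$ is $(s_{ii}+s_{jj})(e_\alpha)_{ij}$, and these sums are not zero for the roots in question. The linear terms must instead cancel against pieces of $n^2$, and this is exactly where the quadratic constraints $\xi_i\eta_i=-\prod_{j\neq i}(\lambda_i-\lambda_j)$ enter. Second, your description of ``cubic cross-terms in three distinct $\xi_i$'s'' arising in $X^2$ cannot be right: $X^2$ is quadratic in $n$, hence at most quadratic in the $\xi_i,\eta_i$. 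The cubic constraint $\xi_1\xi_2\xi_3=\eta_1\eta_2\eta_3=-\prod_{i<j}(\lambda_i-\lambda_j)$ is not visible in $X^2=0$ alone; it is what distinguishes $\overline{\0_{[2^2,1^4]}}$ from the larger locus $\{X^2=0\}$ (which also contains $\0_{[2^4]}$), so it must come from the rank condition, and you would need to organise your argument accordingly.

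By contrast, the paper's proof avoids all of this. It first uses the $T_0$-action (as you also suggest) to reduce to one representative per fibre over ${\mathfrak z}({\mathfrak l})_{\reg}$, but then makes the key observation that the chosen representative lies entirely in the Levi subalgebra with simple roots $\alpha_1,\alpha_2,\alpha_3$---only roots supported on these three simple roots appear. Under $A_3\cong D_3\subset D_4$, membership in $\overline{\0_{\mini}(\mathfrak{so}_8)}$ becomes membership in $\overline{\0_{\mini}(\mathfrak{sl}_4)}$, i.e.\ a rank-$1$ condition on a $4\times 4$ matrix with one zero row and column, which the paper records as a GAP check but which is a short hand computation. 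Your approach could be salvaged, but you should either fix the cancellation mechanism or, better, notice that no $\alpha_4$-roots occur and work in $\mathfrak{sl}_4$.
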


\begin{proof}
Let $T_0$ be the diagonal matrices in $G_0$ and let $\mathcal{Y}_{\reg}$ be the set of elements satisfying the conditions stated in the lemma.
Each fibre of the projection ${\mathcal Y}_{\reg}\rightarrow {\mathfrak z}({\mathfrak l})_{\reg}$ is a single $T_0$-orbit of dimension 2.
By dimensions, it will suffice to show that there is at least one element of $\overline\0_{\mini}$ in each fibre.
We checked in GAP that for any distinct $s,t\in {\mathbb C}$ and any non-zero $a,b\in {\mathbb C}$ there is an element of $\overline\0_{\mini}$ of the form:
$$t(\alpha_1^\vee+\alpha_2^\vee)+s\alpha_3^\vee + ae_{\alpha_1+\alpha_2}+\frac{t(s-t)}{a}f_{\alpha_1+\alpha_2}+be_{\alpha_3}+\frac{s(t-s)}{b}f_{\alpha_3}+\frac{st(s-t)}{ab}f_{\alpha_1+\alpha_2+\alpha_3}+\frac{ab}{t-s}e_{\alpha_1+\alpha_2+\alpha_3}.$$
Setting $\lambda_1=\frac{2t-s}{3}$ and $\lambda_2=\frac{2s-t}{3}$, it is straightforward to see that $t(\alpha_1^\vee+\alpha_2^\vee)+s\alpha_3^\vee$ corresponds to $\Delta(z)+z\in\g_0\oplus{\mathfrak z}({\mathfrak l})$.
Note that the highest (resp. lowest) weight vectors in the three copies of $V$ (resp. $V^*$) are $e_{\alpha_i+\alpha_2}$ (resp. $f_{\alpha_i+\alpha_2}$) where $i=1,3,4$.
Hence we obtain the desired result by setting $\xi_1=a$ and $\xi_2=b$.
\end{proof}

If the $\lambda_i$ are distinct in the lemma above  (i.e. if $s,t$ are non-zero) then the set of all such 
representatives of $\overline{\mathcal O}_{\mini}$ (ranging over all non-zero values of 
$a,b$) is a single $T\cap G_0$-orbit of dimension 2, with trivial stabilizer.
By dimensions, and by the fact that $\Gamma_4$ acts trivially on ${\mathfrak z}({\mathfrak l})$, we obtain the following:

\begin{corollary}\label{repcor}
The closure of the image of the morphism $\rho:G_0\times {\mathfrak z}({\mathfrak l})\rightarrow {\mathfrak g}_0\oplus{\mathfrak z}({\mathfrak l})\oplus W^{\oplus 3}=\g$, $$(g,z=(\lambda_1,\lambda_2,\lambda_3))\mapsto (g\Delta(z)g^{-1},z, g\cdot((\lambda_i-\lambda_{i+1})v_i - (\lambda_i-\lambda_{i-1})\chi_i)_{i\in{\mathbb Z}/(3)})$$
 is $\overline{\0_{\mini}}$.
\end{corollary}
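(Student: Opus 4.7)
The plan is to deduce the corollary directly from Lemma \ref{replem} followed by a dimension and injectivity argument; it is essentially a repackaging of the lemma.

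First I will verify that the coefficients $\xi_i = \lambda_i - \lambda_{i+1}$ and $\eta_i = -(\lambda_i - \lambda_{i-1})$ (indices in $\mathbb{Z}/3$) appearing in $\rho(e,z)$ satisfy the two hypotheses of Lemma \ref{replem}. The first, $\xi_i\eta_i = -\prod_{j\neq i}(\lambda_i-\lambda_j)$, is immediate from
$$\xi_i \eta_i = (\lambda_i - \lambda_{i+1})(\lambda_{i-1} - \lambda_i) = -(\lambda_i-\lambda_{i+1})(\lambda_i-\lambda_{i-1}).$$
For the second, a short sign-tracking gives
$$\xi_1\xi_2\xi_3 = (\lambda_1-\lambda_2)(\lambda_2-\lambda_3)(\lambda_3-\lambda_1) = -\prod_{i<j}(\lambda_i-\lambda_j),$$
and the same identity for $\eta_1\eta_2\eta_3$. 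Hence $\rho(e,z) \in \overline{\mathcal{O}_{\min}}$ whenever $z \in \mathfrak{z}(\mathfrak{l})_{\reg}$.

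Next, since $G_0$ centralizes $\mathfrak{z}(\mathfrak{l})$ and acts diagonally on the three copies of $W = V \oplus V^*$, one has $\rho(g,z) = \Ad(g)\cdot\rho(e,z)$. As $\overline{\mathcal{O}_{\min}}$ is $G$-stable, the image $\rho(G_0\times\mathfrak{z}(\mathfrak{l})_{\reg})$ lies in $\overline{\mathcal{O}_{\min}}$; by continuity of $\rho$ and closedness of $\overline{\mathcal{O}_{\min}}$, the entire image is contained in $\overline{\mathcal{O}_{\min}}$.

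For equality, a dimension count suffices. The domain has dimension $\dim G_0 + \dim\mathfrak{z}(\mathfrak{l}) = 8 + 2 = 10 = \dim\overline{\mathcal{O}_{\min}}$, so it is enough to show that $\rho$ is generically injective. For regular $z$, the $G_0$-stabilizer of $\Delta(z)$ is the diagonal torus $T_0$, and an element $t = \diag(t_1,t_2,t_3) \in T_0$ rescales the $i$-th summand $\xi_i v_i + \eta_i\chi_i$ to $t_i\xi_i v_i + t_i^{-1}\eta_i\chi_i$. Since all six coefficients are nonzero for regular $z$, one forces $t_i = 1$ for each $i$, so the stabilizer of $\rho(e,z)$ in $G_0$ is trivial. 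Hence $\rho$ is injective on the dense open $G_0 \times \mathfrak{z}(\mathfrak{l})_{\reg}$; the image closure is therefore an irreducible 10-dimensional closed subvariety of the irreducible $\overline{\mathcal{O}_{\min}}$, forcing equality. The only delicate step is the sign bookkeeping in the two product identities above; once those are settled, everything else is formal.
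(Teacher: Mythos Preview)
Your proof is correct and follows essentially the same approach as the paper's: restrict $\rho$ to $G_0\times\mathfrak{z}(\mathfrak{l})_{\reg}$, use Lemma~\ref{replem} for containment in $\overline{\mathcal{O}_{\min}}$, and conclude by a dimension count together with triviality of the generic stabilizer. The paper's proof is terser (one sentence), but you have filled in exactly the details it leaves implicit, including the sign verification and the explicit $T_0$-stabilizer computation; the one point the paper states that you use only implicitly is that $G_0$ acts trivially on $\mathfrak{z}(\mathfrak{l})$, which is what guarantees that distinct $z$ values have distinct images (since $z$ appears as a coordinate of $\rho(g,z)$).
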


\begin{proof}
This follows immediately from considering the restriction of $\rho$ to the open subset $G_0\times{\mathfrak z}({\mathfrak l})_{\reg}$, since the generic stabilizer is trivial and $G_0$ acts trivially on ${\mathfrak z}({\mathfrak l})$.
\end{proof}

Using Cor. \ref{repcor}, we can drastically simplify the description of the quotient $\overline{\0_{\mini}}/\Gamma_4$.

\begin{lemma}\label{OminGamma4quot}
Let $\varphi:\g\rightarrow\g/\Gamma_4$ be the quotient morphism as described in Lemma \ref{Gamma4quot}.
The restriction of $\varphi$ to $\overline{\0_{\mini}}$ factors through the projection
$\pi:{\mathfrak l}\times (S^2 W)^{\oplus 3}\times T^3 W\rightarrow {\mathfrak 
l}\times (S^2 V)^{\oplus 3}\times (S^2 V^*)^{\oplus 3}$.

Hence $\overline{\0_{\mini}}/\Gamma_4$ is isomorphic to the closure of the image of the morphism 
$$\Upsilon:G_0\times {\mathfrak z}({\mathfrak l})\rightarrow \mathfrak{g}_0\oplus {\mathfrak z}({\mathfrak l})\oplus (S^2V\oplus S^2V^*)^3,$$
$$(g, z=(\lambda_1,\lambda_2,\lambda_3))\mapsto g\cdot (\Delta(z),z , ((\lambda_i-\lambda_{i+1})^2 v_i\otimes v_i, (\lambda_i-\lambda_{i-1})^2 \chi_i\otimes \chi_i)_{i\in{\mathbb Z}/(3)}).$$
\end{lemma}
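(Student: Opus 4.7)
The plan is to combine Corollaries \ref{nec?} and \ref{repcor} to express the quotient $\overline{\0_{\mini}}/\Gamma_4$ as the closure of the image of $\varphi\circ\rho$, and then to compare this composition directly against $\Upsilon$. By Cor.~\ref{nec?}, $\overline{\0_{\mini}}/\Gamma_4$ is identified with $\varphi(\overline{\0_{\mini}})$, and by Cor.~\ref{repcor} the latter equals the closure of $(\varphi\circ\rho)(G_0\times{\mathfrak z}({\mathfrak l}))$. Thus the lemma reduces to two assertions: (i) the components of $\varphi\circ\rho$ dropped by $\pi$ (the mixed part of each $u_i\otimes u_i$ in $V\otimes V^*\oplus V^*\otimes V$ together with the pure 3-tensor $u_1 u_2 u_3$) are determined by the retained components once we pass to the $\Gamma_4$-quotient; and (ii) the projected composition $\pi\circ\varphi\circ\rho$ agrees with $\Upsilon$ by direct inspection of formulas.

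For (ii), substituting $u_i = \xi_i gv_i+\eta_i g\chi_i$ with $\xi_i=\lambda_i-\lambda_{i+1}$ and $\eta_i=-(\lambda_i-\lambda_{i-1})$ immediately shows that the $S^2V$ and $S^2V^*$ pieces of $u_i\otimes u_i$ are precisely the summands $(\lambda_i-\lambda_{i+1})^2 gv_i\otimes gv_i$ and $(\lambda_i-\lambda_{i-1})^2 g\chi_i\otimes g\chi_i$ appearing in $\Upsilon$. For (i), the mixed piece of $u_i\otimes u_i$ has scalar coefficient $\xi_i\eta_i=-(\lambda_i-\lambda_{i+1})(\lambda_i-\lambda_{i-1})$, polynomial in $z$, while the vector $gv_i\otimes g\chi_i\in V\otimes V^*\cong\mathfrak{gl}_3$ is the $i$-th spectral projection $P_i(g\Delta(z)g^{-1})=\prod_{j\neq i}(g\Delta(z)g^{-1}-\lambda_j)/(\lambda_i-\lambda_j)$, hence polynomial in the $\mathfrak{l}$-component and $z$. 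The eight coefficients of $u_1 u_2 u_3$ are similarly monomials in the $\xi_j,\eta_j$ and so polynomial in $z$; the accompanying pure tensor factors can be assembled from the spectral projections together with the $\SL_3$-invariant identity $\det[gv_1\,|\,gv_2\,|\,gv_3]=1$, which pins down the signs of the $gv_i$ up to the $\Gamma_4$-ambiguity.

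Taken together, these formulas imply that $\pi$ restricts to an injective morphism on $\varphi(\overline{\0_{\mini}})$: if $\rho(g_1,z)$ and $\rho(g_2,z)$ have identical projections then $g_1^{-1}g_2$ lies in the 2-torsion of $Z_{G_0}(\Delta(z))=T_0$ subject to the determinant constraint, which is exactly the subgroup $\Gamma_4\subset T_0$ identified in \S \ref{D4S4subsec}; hence $\rho(g_1,z)$ and $\rho(g_2,z)$ coincide in $\overline{\0_{\mini}}/\Gamma_4$. Taking closures then yields the stated isomorphism with $\overline{\im\Upsilon}$. The main obstacle I anticipate is the bookkeeping for the triple tensor: one must verify that the sign ambiguities in reconstructing pure tensors of $gv_i, g\chi_i$ from the retained data are absorbed entirely by the $\Gamma_4$-action, which is essentially the same verification as the injectivity statement above, but needs to be carried out across all eight pure-tensor coefficients.
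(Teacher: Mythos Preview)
Your overall plan coincides with the paper's: reduce to the composite $\Psi=\varphi\circ\rho$ and show it factors through $\pi$. Your treatment of the $V\otimes V^*$ summand of each $u_i\otimes u_i$ is essentially the paper's Step 1 (the spectral idempotent observation), modulo the small correction that it is the product $\xi_i\eta_i\cdot gv_i\otimes g\chi_i=-\prod_{j\neq i}(g\Delta(z)g^{-1}-\lambda_j)$ that is polynomial, not the idempotent $P_i$ itself.

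The gap is in your handling of the $T^3W$ component. You argue that the eight pure-tensor pieces are \emph{determined} by the retained data up to the $\Gamma_4$-ambiguity, and then appeal to injectivity plus taking closures. But ``determined'' is not the same as ``polynomial in''. What you must show is that each map $\Psi_{T^3V}$, $\Psi_{V\otimes V\otimes V^*}^i$, etc.\ is a \emph{regular function} of the retained components $(A,z,C_i,D_i)$; otherwise $\pi$ restricted to $\varphi(\overline{\0_{\mini}})$ is merely a bijective morphism, which need not be an isomorphism (think of a normalization map), and bijectivity on a dense open need not even persist to the closure. The paper closes this gap by constructing explicit $G_0$-equivariant linear maps: for $T^3V$ one uses $T^2\mathfrak{gl}_3\to (V\otimes V)\otimes\Lambda^2V^*\cong T^3V$ applied to $(\Delta-\lambda_3)\otimes(\Delta-\lambda_1)(\Delta-\lambda_3)$ (a polynomial in $A$ and $z$), and for the mixed pieces $V\otimes V\otimes V^*$ one uses $\mathfrak{gl}_3\otimes S^2V^*\to V\otimes\Lambda^2V^*\otimes V^*\cong V\otimes V\otimes V^*$ applied to $(\Delta-\lambda_{i-1})\otimes D_i$. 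Your determinant-constraint heuristic is the right intuition for why such maps exist (the isomorphism $\Lambda^2V^*\cong V$ is exactly the determinant), but you need to exhibit them rather than infer their existence from injectivity.

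A minor point: your identification ``$\Gamma_4\subset T_0$'' needs care. As defined, $\Gamma_4$ sits in the maximal torus of $\mathrm{Spin}_8$, not in $G_0=\SL_3$; what is true is that its action on $\rho(g,z)$ coincides with right translation of $g$ by the $2$-torsion $T_0[2]\subset\SL_3$, which is what your injectivity argument actually uses.
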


\begin{proof}
Let $\Psi=\varphi\circ\rho$, where $\rho$ is the map defined in Cor. \ref{repcor}.
By the Corollary, it is enough to show that $\Psi$ factors through $\pi$.
We observe:
$$S^2 W=S^2 V\oplus S^2 V^*\oplus (V\otimes V^*),\;\; T^3 W\cong T^3V\oplus (V\otimes V\otimes V^*)^{\oplus 3}\oplus (V\otimes V^*\otimes V^*)^{\oplus 3}\oplus T^3{V^*}.$$
We view $\Psi$ component-wise, as consisting of various morphisms $\Psi^i_{U}:G_0\times{\mathfrak z}({\mathfrak l})\rightarrow U$ where $U=S^2 V$, $S^2 V^*$ etc.
By Lemma \ref{Gamma4quot}, we have to show that the maps $$\Psi_{V\otimes V^*}^i:(g,z)\mapsto (\lambda_i-\lambda_{i+1})(\lambda_i-\lambda_{i-1}) g\cdot (v_i\otimes\chi_i)$$ and the analogous maps $\Psi_{T^3 V}$, $\Psi_{T^2 V\otimes V^*}$, $\Psi_{V\otimes T^2 V^*}$, $\Psi_{T^3 V^*}$ factor through $\pi$, i.e. can be expressed in terms of $\Psi^i_{S^2 V}$, $\Psi^i_{S^2 V^*}$ and the projection to $\g_0\oplus{\mathfrak z}({\mathfrak l})$.
In each case this comes down to identifying a $G_0$-equivariant polynomial map from the codomain of $\pi$ to $U$ which sends $\Upsilon(1,z)$ to $\Psi_U^i(1,z)$.

Let $\xi_i=(\lambda_i-\lambda_{i+1})$, $\eta_i=(\lambda_i-\lambda_{i-1})$ (considered as functions of $z$).
For ease of notation, we denote $\Delta(z)$ by $\Delta$.

\noindent
{\it Step 1.}
Consider the three projections to $V\otimes V^*\subset S^2 W$.
We identify $V\otimes V^*$ with $\mathfrak{gl}_3$ via the natural isomorphism (considering $V$ and $V^*$ as column and row vectors respectively).
We have $\Psi_{V\otimes V^*}^i (g,z) = \xi_i\eta_i gv_i\otimes\chi_ig^{-1}$.
Note that $\xi_i\eta_i v_i\otimes\chi_i$, when considered as an element of $\mathfrak{gl}_3$, is a diagonal matrix with a single non-zero entry $-\prod_{j\neq i}(\lambda_i-\lambda_j)$.
It is therefore easy to see that $\Psi_{V\otimes V^*}^i(1,z) = -(\Delta -\lambda_{i+1})(\Delta -\lambda_{i-1})$ as an element of $\mathfrak{gl}_3$, so this step is clear.

\noindent
{\it Step 2.}
Next we consider $\Psi_{T^3 V}(g,z)=-g\cdot\left(\prod_{i<j}(\lambda_i-\lambda_j)v_1v_2v_3\right)\in T^3 V$.
There is a natural $G_0$-equivariant map $T^2\mathfrak{gl}_3\rightarrow T^3 V$ which arises from the sequence: $$(V\otimes V^*)\otimes (V\otimes V^*)\cong (V\otimes V)\otimes (V^*\otimes V^*)\rightarrow V\otimes V\otimes\Lambda^2 V^*\cong V\otimes V\otimes V.$$
It is easy to verify that $$\begin{pmatrix} \lambda_1-\lambda_3 & 0 & 0 \\ 0 & \lambda_2-\lambda_3 & 0 \\ 0 & 0 & 0 \end{pmatrix} \otimes \begin{pmatrix} 0 & 0 & 0 \\ 0 & (\lambda_2-\lambda_1)(\lambda_2-\lambda_3) & 0 \\ 0 & 0 & 0 \end{pmatrix}\mapsto -\prod_{i<j}(\lambda_i-\lambda_j) v_1v_2v_3$$ and hence, by $G_0$-equivariance and the fact that the left hand side is $(\Delta-\lambda_3)\otimes ((\Delta-\lambda_1)(\Delta-\lambda_3))$, the map $\Psi_{T^3 V}$ factors through the projection to ${\mathfrak l}$.
A similar argument will clearly account for the component $\Psi_{T^3 V^*}$.

\noindent
{\it Step 3.}
Finally, we have to show that each of the components $\Psi_{V\otimes V\otimes V^*}^i$ factors through $\pi$.
By the description in Cor. \ref{repcor}, $$\Psi_{V\otimes V\otimes V^*}^i(1,z) = -(\lambda_{i-1}-\lambda_i)^2(\lambda_{i-1}-\lambda_{i+1})v_{i-1}v_{i+1}\chi_i.$$
We use the natural $G_0$-equivariant map which arises from the sequence:
$$\mathfrak{gl}_3\otimes S^2 V^*\hookrightarrow (V\otimes V^*)\otimes (V^*\otimes V^*) = V\otimes (V^*\otimes V^*)\otimes V^*\rightarrow V\otimes\Lambda^2 V^*\otimes V^*\cong V\otimes V\otimes V^*,$$
where we retain the order of the individual factors at the second step.
It is straightforward to verify that $(\Delta-\lambda_{i-1})\otimes (\chi_{i}\otimes\chi_{i})\mapsto (\lambda_{i-1}-\lambda_{i+1})v_{i+1}v_{i-1}\chi_{i}$.
Since $\Psi^i_{S^2 V^*}(1,z) = (\lambda_i-\lambda_{i-1})^2 \chi_i\otimes\chi_i$, this is what we require.
The same argument covers the components $\Psi_{V\otimes V^*\otimes V^*}^i$.
\end{proof}

\subsection{The quotient $\overline{\0_{\mini}}/\mathfrak{S}_4$}

The above preparation makes it possible to describe $\overline{\0_{\mini}}/\mathfrak{S}_4$, by considering the action of the quotient group $\mathfrak{S}_4/\Gamma_4\cong\mathfrak{S}_3$.
Let ${\mathfrak v}={\mathfrak g}_0\oplus {\mathfrak z}({\mathfrak l})\oplus (S^2 V\oplus S^2 V^*)^{\oplus 3}$.
Then Lemma \ref{OminGamma4quot} identifies $\overline{\0_{\mini}}/\Gamma_4$ with a closed subset $Z$ of ${\mathfrak v}$.
We first note that $\mathfrak{S}_3$ acts on ${\mathfrak v}$ by permuting the three copies of $S^2 V\oplus S^2 V^*$ and via the reflection representation on ${\mathfrak z}({\mathfrak l})$.
Hence the isotypical component of the reflection representation is isomorphic to ${\mathfrak w}\otimes{\mathfrak h}_2$ where ${\mathfrak w}={\mathbb C}\oplus S^2 V\oplus S^2 V^*$.
Fix a primitive cube root $\omega$ of $1$ in ${\mathbb C}$.

\begin{lemma}\label{vS3quotlem}
(a) With the above notation, there is an isomorphism of $G_0\times \mathfrak{S}_3$-modules
$${\mathfrak v}\rightarrow \left(\g_0\oplus S^2 V\oplus S^2 V^*\right)\oplus {\mathfrak w}\otimes {\mathfrak h}_2.$$

(b) Using the isomorphism in (a), let ${\mathfrak w}_1$ (resp. ${\mathfrak w}_2$) be the $\omega$- (resp. $\omega^2$-)eigenspace for $(1\; 2\; 3)$ on ${\mathfrak w}\oplus {\mathfrak h}_2$, and let $(1\; 3)$ act on ${\mathfrak w}_1\oplus {\mathfrak w}_2$ via $(w_1,w_2)\mapsto (w_2,w_1)$.
Then the quotient ${\mathfrak v}/{\mathfrak S}_3$ can be identified with the image of the morphism
$$\Theta:{\mathfrak v}\rightarrow \left(\g_0\oplus S^2 V\oplus S^2 V^*\right)\times S^2{\mathfrak w}\times S^3{\mathfrak w},\quad (x, w_1, w_2)\mapsto (x, w_1\otimes w_2, w_1^{\oplus 3}+w_2^{\oplus 3}).$$
\end{lemma}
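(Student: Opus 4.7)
\medskip

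The plan is to reduce Lemma \ref{vS3quotlem} to the general principle (used in \S\ref{G2case} and \S\ref{S3subsec}) that quotients by $\mathfrak{S}_3$ of sums of copies of the reflection representation are controlled by polarisations of the elementary symmetric polynomials (Briand's theorem). Part (a) is essentially a bookkeeping step that puts $\mathfrak{v}$ into a form where part (b) is immediate.

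For (a), note that $\mathfrak{S}_3$ acts trivially on $\mathfrak{g}_0$, by the reflection representation on $\mathfrak{z}(\mathfrak{l})$ (identifying the latter with triples $(\lambda_1,\lambda_2,\lambda_3)$ summing to $0$), and by permutation on the three summands of $(S^2V\oplus S^2V^*)^{\oplus 3}$. Since the permutation representation of $\mathfrak{S}_3$ on $\mathbb{C}^3$ decomposes as trivial plus reflection, we get a $G_0\times\mathfrak{S}_3$-equivariant decomposition
$$(S^2V\oplus S^2V^*)^{\oplus 3}\;\cong\;(S^2V\oplus S^2V^*)\;\oplus\;(S^2V\oplus S^2V^*)\otimes\mathfrak{h}_2,$$
where the first summand is the diagonal. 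Combining the trivial summands, and recognising $\mathfrak{z}(\mathfrak{l})\cong\mathbb{C}\otimes\mathfrak{h}_2$, yields the stated decomposition with the trivial factor $\mathfrak{g}_0\oplus S^2V\oplus S^2V^*$ and the isotypical reflection factor $\mathfrak{w}\otimes\mathfrak{h}_2$, where $\mathfrak{w}=\mathbb{C}\oplus S^2V\oplus S^2V^*$.

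For (b), the action of $\mathfrak{S}_3$ on $\mathfrak{v}$ is trivial on the first factor $\mathfrak{g}_0\oplus S^2V\oplus S^2V^*$, so taking invariants on that factor is trivial. It remains to describe the quotient $(\mathfrak{w}\otimes\mathfrak{h}_2)/\mathfrak{S}_3$, where $\mathfrak{S}_3$ acts only on the $\mathfrak{h}_2$-factor. Diagonalising the $3$-cycle $(1\;2\;3)$ over $\mathbb{C}$ gives the decomposition $\mathfrak{w}\otimes\mathfrak{h}_2=\mathfrak{w}_1\oplus\mathfrak{w}_2$ into the $\omega$- and $\omega^2$-eigenspaces, with $(1\;3)$ swapping them, exactly as in \S\ref{G2case}. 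Applying Briand's theorem \cite{Briand} to the $\mathfrak{S}_3$-module $\mathfrak{w}\otimes\mathfrak{h}_2\cong(\mathfrak{h}_2\oplus\mathfrak{h}_2^*)^{\oplus\dim\mathfrak{w}}$ (viewing $\mathfrak{w}$ as a multiplicity space), the invariant ring is generated by polarisations of the elementary symmetric polynomials $e_2(x,y,z)$ and $e_3(x,y,z)$; equivalently the quotient is the image of $(w_1,w_2)\mapsto(w_1\otimes w_2,\;w_1^{\otimes 3}+w_2^{\otimes 3})$ into $S^2\mathfrak{w}\times S^3\mathfrak{w}$.

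Assembling the two pieces, the quotient morphism on $\mathfrak{v}$ is precisely $\Theta$, which proves (b). The only potential obstacle is ensuring that Briand's theorem applies in the required multiplicity setting; this is already the form used (and cited) in \S\ref{G2case}, so no new input is needed. All remaining content of the lemma is formal.
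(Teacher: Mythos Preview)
Your proof is correct and follows essentially the same approach as the paper's own proof, which simply says that (a) summarizes the observation on the $\mathfrak{S}_3$-action made just before the lemma, and that (b) follows from the same argument as in \S\ref{G2case}. You have spelled out both steps explicitly, but the content is identical.
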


\begin{proof}
Part (a) is just a summary of the above observation on the $\mathfrak{S}_3$-action.
Part (b) follows from the same argument as in \S \ref{G2case}.
\end{proof}

The rather cumbersome notation of Lemma \ref{vS3quotlem} is only a temporary necessity: our main result in this section is that, when restricted to $Z$, the quotient factors through the projection to $\g_0\oplus S^2 V\oplus S^2 V^*$.
Using the notation of Lemma \ref{OminGamma4quot}, this is summarized in the following:

\begin{theorem}\label{d4S4lem}
The quotient variety $\overline{\0_{\mini}}/\mathfrak{S}_4$ is isomorphic to the closure of the image of the morphism $\psi:\SL_3\times {\mathfrak z}({\mathfrak l})\rightarrow \mathfrak{sl}_3\oplus S^2 V\oplus S^2 V^*$, $$(g,z=(\lambda_1,\lambda_2,\lambda_3))\mapsto \left( g\Delta(z)g^{-1}, \sum_i (\lambda_i-\lambda_{i+1})^2 gv_i\otimes gv_i, \sum_i (\lambda_i-\lambda_{i-1})^2 \chi_ig^{-1}\otimes \chi_i g^{-1}\right).$$
\end{theorem}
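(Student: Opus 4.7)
The plan is to build the $\mathfrak{S}_4$-quotient of $\overline{\0_{\mini}}$ in two stages along the exact sequence $1 \to \Gamma_4 \to \mathfrak{S}_4 \to \mathfrak{S}_3 \to 1$. Lemma \ref{OminGamma4quot} already realises the first quotient as the closure $Z \subseteq \mathfrak{v}$ of the image of $\Upsilon$, so $\overline{\0_{\mini}}/\mathfrak{S}_4 \cong Z/\mathfrak{S}_3$. Applying Lemma \ref{vS3quotlem}(b) then identifies $Z/\mathfrak{S}_3$ with $\overline{\Theta(Z)}$. The $\mathfrak{S}_3$-invariant subspace $\mathfrak{v}^{\mathfrak{S}_3}$ is precisely $\mathfrak{g}_0 \oplus S^2V \oplus S^2V^*$ (the diagonal embeddings of $S^2V, S^2V^*$ inside their threefold direct sums, since $\mathfrak{z}(\mathfrak{l})$ is the reflection representation of $\mathfrak{S}_3$ and $\mathfrak{g}_0$ is fixed pointwise), and the first component of $\Theta$ is the projection onto this invariant summand. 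A direct check shows that composing this first component with $\Upsilon$ recovers $\psi$ up to a constant rescaling on the $S^2V$ and $S^2V^*$ coordinates, which induces an isomorphism of image closures.

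What remains is to show that the remaining components of $\Theta$, landing in $S^2\mathfrak{w}$ and $S^3\mathfrak{w}$, factor through the projection to $\mathfrak{v}^{\mathfrak{S}_3}$ once restricted to $Z$. This is the direct analogue of the argument used in Lemma \ref{OminGamma4quot}, and by density it suffices to verify it on $\Upsilon(G_0 \times \mathfrak{z}(\mathfrak{l})_{\reg})$. The method follows the three-step template of that proof: decompose $\mathfrak{w} = \mathbb{C} \oplus S^2V \oplus S^2V^*$ into its $G_0$-isotypical components, expand $S^2\mathfrak{w}$ and $S^3\mathfrak{w}$ into a finite list of $G_0$-pieces, and for each piece exhibit a $G_0$-equivariant polynomial map from $\mathfrak{g}_0 \oplus S^2V \oplus S^2V^*$ to that piece realising the corresponding component of $\Theta \circ \Upsilon$. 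The building blocks of these maps are natural contractions involving $\Delta(z)$ (which carries the polynomial dependence on the $\lambda_i$) together with the standard $\SL_3$-invariants: the pairing $V \otimes V^* \to \mathbb{C}$ and the volume forms $\Lambda^3 V \cong \Lambda^3 V^* \cong \mathbb{C}$. The underlying algebraic mechanism is Lagrange interpolation: on the regular locus the distinctness of the $\lambda_i$ allows one to recover each individual summand $\xi_i^2 v_i \otimes v_i$ from $p(z) = \sum_i \xi_i^2 v_i \otimes v_i$ by applying an appropriate polynomial in $\Delta(z)$, and similarly for the $\eta_i^2 \chi_i \otimes \chi_i$ from $q(z)$. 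Consequently every higher invariant becomes a polynomial in $(\Delta(z), p(z), q(z))$.

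The main obstacle will be the $S^3\mathfrak{w}$ component, where a substantially richer collection of $G_0$-types arises (for example $S^3(S^2V)$, $S^2(S^2V) \otimes S^2V^*$, $\mathbb{C}$ and several others), and for each one must identify an explicit $G_0$-equivariant contraction and verify a non-vanishing coefficient, in the spirit of the careful scaling computations of Lemmas \ref{projectionlem} and \ref{sl3lem}. A useful consistency check is that the generic $\psi$-fibre should have size $|\Gamma_4| \cdot |\mathfrak{S}_3| = 24 = |\mathfrak{S}_4|$, matching the expected degree of the $\mathfrak{S}_4$-quotient morphism; this reduces to counting Weyl-group permutations of $z \in \mathfrak{z}(\mathfrak{l})_{\reg}$ together with the sign stabilizer of $(p(z), q(z))$ inside the maximal torus of $G_0$, and provides the finiteness of $\Theta|_Z$ over $\overline{\psi(G_0 \times \mathfrak{z}(\mathfrak{l}))}$ needed to conclude the isomorphism.
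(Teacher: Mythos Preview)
Your overall strategy matches the paper's exactly: take the quotient in two stages via $\Gamma_4 \triangleleft \mathfrak{S}_4$, invoke Lemma \ref{OminGamma4quot} for the first stage and Lemma \ref{vS3quotlem} for the second, and then show that the $S^2\mathfrak{w}$- and $S^3\mathfrak{w}$-components of $\Theta$, restricted to $Z$, are polynomial in the $\Theta_0$-component $(A,C,D)$. The paper likewise reduces to expressing the polarisations $\sum C_i\otimes C_i$, $\sum C_i\otimes D_i$, $\sum C_i\otimes C_i\otimes C_i$, etc.\ in terms of $A,C,D$, and carries this out in Lemmas \ref{tlem1}--\ref{tlem4} via explicit $\SL_3$-equivariant constructions (notably the map $\rho:S^2\mathfrak{gl}_3\to S^2V\otimes S^2V^*$ and the contraction $\tau:S^2(S^2V^*)\to S^2V$).

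Two points to sharpen. First, the Lagrange interpolation remark is a useful heuristic but does not do the work you need: interpolation recovers each $C_i$ from $(A,C)$ only as a \emph{rational} function of $z$, with denominator $\prod_{j\neq i}(\lambda_i-\lambda_j)$. The entire content of Lemmas \ref{tlem1}--\ref{tlem4} is that the \emph{symmetric} combinations are nonetheless \emph{polynomial} in $(A,C,D)$, and establishing this requires finding the right equivariant contractions and checking they land on the correct element---there is no shortcut. Your sentence ``for each piece exhibit a $G_0$-equivariant polynomial map'' is exactly right; the interpolation comment does not substitute for it.

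Second, the fibre-count at the end is unnecessary. Once $\Theta|_Z = \tau\circ\Theta_0|_Z$ for some polynomial map $\tau$ on the ambient $\mathfrak{sl}_3\oplus S^2V\oplus S^2V^*$, projection onto the $\Theta_0$-component inverts $\tau$, so $\Theta(Z)\cong\Theta_0(Z)$ directly; since $\Theta$ is finite, $\Theta(Z)$ is closed, whence $\Theta_0(Z)=\overline{\operatorname{im}\psi}$. No degree argument enters.
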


\begin{proof}
The proof uses similar arguments to Lemma \ref{OminGamma4quot}, but is significantly more involved.
The details will therefore be dealt with in a sequence of Lemmas.
Denote the projection ${\mathfrak v}\rightarrow{\mathfrak g}_0\oplus S^2 V\oplus S^2V^*$ by $\Theta_0$.
Let $A=g\Delta(z) g^{-1}\in\g_0$ and for $i\in{\mathbb Z}/(3)$ let $C_i=(\lambda_i-\lambda_{i+1})^2gv_i\otimes gv_i$, $D_i=(\lambda_i-\lambda_{i-1})^2\chi_i g^{-1}\otimes\chi_i g^{-1}$.
Depending on the context, we consider $C_i$ to be an element of $S^2 V$ or a morphism $G_0\times{\mathfrak z}({\mathfrak l})\rightarrow S^2 V$, and similarly for $A$, $D_i$.
In the notation of Lemma \ref{OminGamma4quot}, $\pi(g,z)=(A,z,(C_i,D_i)_{i\in{\mathbb Z}/(3)})$.

We often think of $C_i$ and $D_i$ in terms of symmetric $3\times 3$ matrices, although the action of $g\in \SL_3$ on the former is via $g\cdot C_i= gC_ig^T$ and on the latter is $g\cdot D_i = (g^T)^{-1}D_ig^{-1}$.
Since $\mathfrak{S}_3$ acts by simultaneously permuting the indices in $\lambda_i$, $C_i$, $D_i$, the invariants for this action can be easily described via polarisations.
This is best illustrated by considering the action of $\mathfrak{S}_3$, which permutes the columns of: $$\begin{pmatrix} \lambda_1 & \lambda_2 & \lambda_3 \\ C_1 & C_2 & C_3 \\ D_1 & D_2 & D_3\end{pmatrix}.$$
Then $\Theta_0(\pi(g,z))$ therefore equals $(A,C_1+C_2+C_3,D_1+D_2+D_3)$.
Let $C=C_1+C_2+C_3$, $D=D_1+D_2+D_3$.
The remaining components in $\Theta$ are:

(i) quadratic term $\sum_i \lambda_i^2$ and its polarisations $\sum_i C_i\otimes C_i$, $\sum_i 
D_i\otimes D_i$, $\sum_i \lambda_i C_i$, $\sum_i \lambda_i D_i$, $\sum_i C_i\otimes 
D_i$;

(ii) cubic term $\sum_i \lambda_i^3$ and its polarisations $\sum_i C_i\otimes C_i\otimes C_i$, $\sum_i 
D_i\otimes D_i\otimes D_i$, $\sum_i \lambda_i^2 C_i$, etc.

\vspace{0.1cm}
In order to prove Thm. \ref{d4S4lem} we have to show that the various terms $\sum_i C_i\otimes C_i$, $\sum_i  C_i\otimes D_i$ etc. (which we now think of as morphisms $G_0\times{\mathfrak z}({\mathfrak l})\rightarrow S^2(S^2 V)$, $S^2 V\otimes S^2 V^*$ etc.) factor through $\Theta_0\circ\pi$.
In other words, we want to express them in terms of $A,C,D$.
We will say that each of the terms $\sum_i C_i\otimes C_i$ 
etc. {\it depends algebraically} on $A$, $C$, and $D$.
We first make the trivial observations that $\sum_i \lambda_i^2={\rm Tr}\, A^2$ and $\sum_i \lambda_i^3 = 3\lambda_1\lambda_2\lambda_3=3\,{\rm det}\, A$ depend algebraically on $A$.
Next, we note that the action of $\mathfrak{gl}_3$ on each factor in the tensor product induces an action by derivations of $\mathfrak{sl}_3$ on the symmetric square $S^2 V$.
Then $A\cdot C = 2\, \sum_i \lambda_i C_i$ depends algebraically on $A$ and $C$, and similarly for $\sum_i \lambda_i^2 C_i$, $\sum_i \lambda_i D_i$, $\sum_i \lambda_i^2 D_i$.
Using the same argument to account for $\sum_i \lambda_i C_i\otimes C_i$, $\sum_i \lambda_i D_i\otimes D_i$ and (letting $A$ act on $S^2 V$ only) $\sum_i \lambda_i C_i\otimes D_i$, it therefore suffices to prove that the quadratic and cubic terms only involving $C_i$ and $D_i$ depend algebraically on $A$, $C$, $D$.
This is established in Lemmas \ref{tlem1}, \ref{tlem2}, \ref{tlem3} and \ref{tlem4}, which will therefore complete the proof.
\end{proof}

\begin{remark}
The singularity $d_4/\mathfrak{S}_4$ has been studied in the recent mathematical physics literature in the context of (discrete quotients of) Coulomb branches of quiver gauge theories, most recently in \cite[\S 7.4]{hanany2023actions}. 
The Hasse diagram may then be predicted by the quiver subtraction algorithm with ``decorated quivers''  \cite{Bourget-Grimminger-Hanany-Zhong, Bourget-Grimminger}, or by the fission and decay algorithm \cite{Bourget-Sperling-Zhong-letter, Bourget-Sperling-Zhong}.
In fact, the whole $F_4(a_3)$ orbit closure admits an orthosymplectic magnetic quiver description \cite[(5.16)]{Hanany-Sperling}.
%The fact that the transverse slice from $A_2+\tilde{A}_1$ to $F_4(a_3)$ is isomorphic to ${\mathbb C}^6/\mathfrak{S}_4$ (see Thm. \ref{F4excslicethm}) can be deduced in that context using `quiver subtraction', introduced in \cite{Cabrera_2018}, or the related `decay and fission algorithm' recently developed in \cite{Bourget-Sperling-Zhong}.)
\end{remark}

%\begin{remark}
%The singularity $d_4/\mathfrak{S}_4$ has been studied in the recent mathematical physics literature in the context of (discrete quotients of) Coulomb branches of quiver gauge theories, most recently in \cite[\S 7.4]{hanany2023actions}.
%See also \cite{Bourget-Grimminger}.
%The fact that the transverse slice from $A_2+\tilde{A}_1$ to $F_4(a_3)$ is isomorphic to ${\mathbb C}^6/\mathfrak{S}_4$ (see Thm. \ref{F4excslicethm}) can be deduced in that context using `quiver subtraction', introduced in \cite{Cabrera_2018}, or the related `decay and fission algorithm' recently developed in \cite{Bourget-Sperling-Zhong}.)
%\end{remark}

In the following lemmas, we consider various $G_0$-equivariant maps from $\g_0\oplus S^2 V\oplus S^2 V^*$ to $S^2 V\otimes S^2 V^*$, $S^2(S^2 V)$ etc.
We want to check in each case that the map sends $\Theta_0(\pi(g,z))$ to (a scalar multiple of) $\sum_i C_i\otimes D_i$, $\sum_i C_i\otimes C_i$ etc.
Because of the $G_0$-invariance, it is enough to check this fact when $g=1$.
{\it For ease of notation, we will therefore only consider the values of $A, C_i, D_i$ when $g=1$.}

\begin{lemma}\label{tlem1}
The mixed term $\sum_i C_i\otimes D_i\in S^2 V\otimes S^2 V^*$ depends algebraically on $A$.
\end{lemma}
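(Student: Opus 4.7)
The plan is to reduce to the case $g = 1$ by $G_0$-equivariance, and then to exhibit $\sum_i C_i \otimes D_i$ as the image of an explicitly polynomial expression in $A$ under a $G_0$-equivariant linear map $\mathfrak{gl}_3 \otimes \mathfrak{gl}_3 \to S^2 V \otimes S^2 V^*$. Setting $g=1$, we have $A = \Delta(z)$, $C_i = (\lambda_i - \lambda_{i+1})^2\, v_i \otimes v_i$ and $D_i = (\lambda_i - \lambda_{i-1})^2\, \chi_i \otimes \chi_i$, so
\[
\sum_i C_i \otimes D_i = \sum_i (\lambda_i - \lambda_{i+1})^2 (\lambda_i - \lambda_{i-1})^2\, (v_i\otimes v_i) \otimes (\chi_i \otimes \chi_i).
\]

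The first key observation is that, as in Step 1 of the proof of Lemma \ref{OminGamma4quot}, the element $M_i := (\lambda_i - \lambda_{i+1})(\lambda_i - \lambda_{i-1})\, E_{ii} \in \mathfrak{gl}_3 = V \otimes V^*$ equals $-(A - \lambda_{i+1})(A - \lambda_{i-1})$. Under the standard $G_0$-equivariant projection $(V \otimes V^*) \otimes (V \otimes V^*) \to (V \otimes V) \otimes (V^* \otimes V^*) \to S^2V \otimes S^2V^*$, the element $M_i \otimes M_i$ maps precisely to $C_i \otimes D_i$. Hence it suffices to prove that $\sum_i M_i \otimes M_i$, a priori an element of $\mathfrak{gl}_3 \otimes \mathfrak{gl}_3$, depends polynomially on $A$ alone.

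The second key observation is that $A \otimes 1$ and $1 \otimes A$ commute in the associative algebra $\mathfrak{gl}_3 \otimes \mathfrak{gl}_3$, so polynomial expressions in them make sense. Writing $Q_i(t) := (t - \lambda_{i+1})(t - \lambda_{i-1})$, we have $\sum_i M_i \otimes M_i = \sum_i Q_i(A \otimes 1)\, Q_i(1 \otimes A)$. Using $e_1(\lambda) = \mathrm{tr}(A) = 0$, each $Q_i(t) = t^2 + \lambda_i t + \lambda_{i+1}\lambda_{i-1}$, and expanding and collecting the sum, the non-symmetric quantities $\lambda_i$ and $\lambda_{i+1}\lambda_{i-1}$ recombine into symmetric functions $e_2(\lambda)$ and $e_3(\lambda)$; explicitly,
\[
\sum_i Q_i(x) Q_i(y) = 3x^2y^2 + e_2(\lambda)(x - y)^2 + 3e_3(\lambda)(x + y) + e_2(\lambda)^2.
\]
Since $e_2(\lambda) = -\tfrac{1}{2}\mathrm{tr}(A^2)$ and $e_3(\lambda) = \det(A)$, both are polynomial invariants of $A$, so $\sum_i M_i \otimes M_i$ is a $G_0$-equivariant polynomial function of $A$, and hence so is its image $\sum_i C_i \otimes D_i$.

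The main obstacle is the recombination step: a priori $Q_i$ depends on the individual eigenvalue $\lambda_i$, which is not a polynomial function of $A$, so one must verify that after summation only symmetric functions survive. Once the formula above is written out, this is a straightforward Newton-identity calculation exploiting $e_1(\lambda) = 0$, and nothing else is required.
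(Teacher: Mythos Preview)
Your proof is correct and follows essentially the same approach as the paper: both use the natural $G_0$-equivariant projection $\mathfrak{gl}_3\otimes\mathfrak{gl}_3\to S^2V\otimes S^2V^*$, write each $C_i\otimes D_i$ as the image of $\big(\prod_{j\neq i}(A-\lambda_j)\big)^{\otimes 2}$, and then observe that the sum over $i$ is symmetric in the $\lambda_i$ and hence a polynomial in ${\rm tr}(A^2)$ and $\det A$. The only difference is that you carry out the symmetric-function computation explicitly (your identity $\sum_i Q_i(x)Q_i(y)=3x^2y^2+e_2(x-y)^2+3e_3(x+y)+e_2^2$), whereas the paper simply invokes the fact that $R[\lambda_1,\lambda_2,\lambda_3]^{\mathfrak{S}_3}$ is generated over $R=\mathbb{C}[A\otimes 1,1\otimes A]$ by ${\rm tr}(A^2)$ and $\det A$, remarking that ``this can also be checked directly.''
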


\begin{proof}
We will construct a $G_0$-equivariant quadratic map from $\mathfrak{sl}_3$ to $S^2 V\otimes S^2 V^*$ which sends $A$ to $\sum C_i\otimes D_i$.
In terms of symmetric tensors, we have $$\sum_i C_i\otimes D_i = \sum_i \prod_{j\neq i}(\lambda_i-\lambda_j)^2 (v_i\otimes v_i)\otimes (\chi_i\otimes \chi_i).$$
There is a natural $\GL_3$-equivariant linear map $\mathfrak{gl}_3\otimes\mathfrak{gl}_3\rightarrow S^2V\otimes S^2V^*$, by identifying $\mathfrak{gl}_3$ with $V\otimes V^*$ and projecting from $V\otimes V^*\otimes V\otimes V^*\cong (V\otimes V)\otimes (V^*\otimes V^*)$ to $S^2V\otimes S^2V^*$.
Clearly this induces a linear map $\rho:S^2(\mathfrak{gl}_3)\rightarrow S^2 V\otimes S^2 V^*$.
We have $\left(\prod_{j\neq i}(\lambda_i-\lambda_j)\right) v_i\chi_i=\prod_{j\neq i}(A-\lambda_j)$ for each $i$ and hence
 $$\sum_i C_i\otimes D_i = \rho\left( \sum_i \left(\prod_{j\neq i}(A-\lambda_j) \right)\otimes \left(\prod_{j\neq i}(A-\lambda_j)\right)\right).$$
The expression in the parentheses is a symmetric tensor which can be considered symbolically as an element of $R[\lambda_1,\lambda_2,\lambda_3]^{\mathfrak{S}_3}$, where $R={\mathbb C}[A\otimes 1 ,1\otimes A]$ and with the action given by permuting the $\lambda_i$.
Hence $R[\lambda_1,\lambda_2,\lambda_3]^{\mathfrak{S}_3}$ is generated over $R$ by $\sum\lambda_i=0$, $\sum\lambda_i^2={\rm Tr}\, A^2$ and $\lambda_1\lambda_2\lambda_3=\det A$.
Therefore the sum in the 
parentheses depends algebraically on $A$.
(This can also be checked directly.)
\end{proof}

Before we prove our next result, we describe an $\SL_3$-linear inclusion $\sigma:S^2 V\rightarrow S^2(S^2 V^*)$ as follows.
There is a non-zero linear map $S^2 V\otimes S^2 V\otimes S^2 V\rightarrow {\mathbb C}$ 
which sends $(a_1\otimes a_2)\otimes (b_1\otimes b_2)\otimes (c_1\otimes c_2)$ 
to $$\sum_{1\leq i,j\leq 2}\det ( a_1 \; b_i\; c_j)\det (a_2\; b_{3-i}\; 
c_{3-j}),$$
where the $a_i,b_i,c_i$ are here considered as column vectors.
Clearly this induces a map $S^2V\otimes S^2 (S^2 V)\rightarrow {\mathbb C}$, and hence induces a map $\sigma:S^2 V\rightarrow (S^2(S^2 V))^*\cong S^2(S^2 V^*)$.
Similarly (by means of the dual construction), we obtain a linear map $\tau:S^2(S^2 V^*)\rightarrow S^2 V$, which for convenience we normalize such that $\tau\circ\sigma=3\,{\rm Id}_{S^2 V}$.

It will be useful for us to record the effect on the elements of the standard basis.
For ease of notation, denote $v_{i_1}\otimes\ldots \otimes v_{i_r}\in S^r V$ as $v_{i_1\ldots i_r}$, and similarly for elements of $S^r V^*$.
For the map $S^2 V\rightarrow S^2(S^2 V^*)$, after multiplying by a scalar if necessary we have, for $i,j\in{\mathbb Z}/(3)$ (where all tensors are symmetric):
$$\sigma:v_{ij}\mapsto \chi_{i+1,j+1}\otimes \chi_{i-1,j-1}-\chi_{i+1,j-1}\otimes \chi_{j+1,i-1}$$
and:
$$\tau:\chi_{ij}\otimes \chi_{ll}\mapsto \left\{ \begin{array}{rl} 0 & \mbox{if $i=l$ or $j=l$,} \\ 2v_{l+1,l+1} & \mbox{if $i=j=l-1$,} \\
2v_{l-1,l-1} & \mbox{if $i=j=l+1$,} \\
-2v_{l-1,l+1} & \mbox{if $\{ i,j\} = \{ l-1,l+1\}$,} \end{array}\right.$$
while $\tau(\chi_{il}\otimes\chi_{jl})=-\frac{1}{2}\tau(\chi_{ij}\otimes\chi_{ll})$.

\begin{lemma}\label{tlem2}
The term $\sum_i C_i\otimes C_i$ (resp. $\sum_i D_i\otimes D_i$) depends algebraically on $A$, $C$ and $D$.
\end{lemma}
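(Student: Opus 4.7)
The plan is to follow the template of the proof of Lemma \ref{tlem1}: exhibit a $G_0$-equivariant polynomial map from $\mathfrak{g}_0 \oplus S^2V \oplus S^2V^*$ to $S^2(S^2V)$ whose value at $(A, C, D)$ with $g=1$ equals $\sum_i C_i \otimes C_i$, and then extend by $G_0$-equivariance. The case of $\sum_i D_i \otimes D_i$ is entirely parallel, swapping the roles of $V \leftrightarrow V^*$ and $C \leftrightarrow D$.

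The first step is to simplify the target by means of the $\SL_3$-equivariant decomposition $S^2(S^2V) = S^4V \oplus S^2V^*$ recalled just before the lemma. Using the explicit formula for $\sigma: S^2V \to S^2(S^2V^*)$, a direct index inspection shows that $\sigma^*\bigl((v_iv_i)^{\otimes 2}\bigr)=0$ for every $i$: both terms comprising $\sigma(v_{ab})$ impose contradictory index equalities with $v_{ii}\otimes v_{ii}$ under the natural pairing, forcing $i-1 \equiv i+1 \pmod 3$. Hence $\sum_i C_i \otimes C_i$ lies entirely in the $S^4V$-summand, where it is the canonical image of $\sum_i c_i^2\, v_i^4$, and it suffices to realize this element of $S^4V$ as a polynomial in $(A, C, D)$.

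Viewing $S^\bullet V$ as the polynomial algebra $\mathbb{C}[v_1, v_2, v_3]$, the product $C \cdot C$ yields $C^2 = \sum_i c_i^2 v_i^4 + 2\sum_{i<j} c_ic_j v_i^2 v_j^2$, so the remaining task is to construct the off-diagonal sum $\sum_{i<j} c_ic_j v_i^2 v_j^2 \in S^4V$ polynomially. The natural building blocks are the products $(A^a C)(A^b C)=\sum_{i,j} c_ic_j (2\lambda_i)^a (2\lambda_j)^b v_i^2 v_j^2$, where $A$ acts on $S^\bullet V$ as a derivation; the off-diagonal coefficient of $v_i^2 v_j^2$ is symmetric in $\lambda_i, \lambda_j$ and hence a polynomial in $\lambda_k$ (the third index). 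An appropriate linear combination with coefficients in the symmetric functions $\sigma_2, \sigma_3$ of $A$ then isolates the desired off-diagonal sum via a Vandermonde-type argument on the three unordered pairs. The main obstacle will be to perform this isolation \emph{polynomially} rather than rationally: a naive Lagrange-interpolation formula requires inverting the discriminant of $A$, and clearing these denominators requires exploiting identities valid on the image of $\pi$, such as the identity $CD = 3A^4 + 6\sigma_3 A + \sigma_2^2\, I$ in $\End(V)$ (a direct consequence of the proof of Lemma \ref{tlem1}), which supplies the missing discriminant factor through combinations of $C$ and $D$.
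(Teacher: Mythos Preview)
Your opening reduction coincides with the paper's: both use the $\SL_3$-decomposition $S^2(S^2V)\cong S^4V\oplus S^2V^*$, observe that each $C_i\otimes C_i$ lies in the $S^4V$-summand, and reduce the problem to expressing the $S^4V$-projection of $C\otimes C-\sum_i C_i\otimes C_i=2\sum_{i<j}C_i\otimes C_j$ in terms of $A,C,D$.

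From that point there is a genuine gap. You assert that the off-diagonal coefficient of $v_i^2v_j^2$ in $(A^aC)(A^bC)$ is symmetric in $\lambda_i,\lambda_j$ and hence a polynomial in the third eigenvalue $\lambda_k$. But that coefficient carries the factor $c_ic_j=\xi_i^2\xi_j^2=(\lambda_i-\lambda_{i+1})^2(\lambda_j-\lambda_{j+1})^2$, which is \emph{not} symmetric under $\lambda_i\leftrightarrow\lambda_j$: for instance $c_1c_2=(\lambda_1-\lambda_2)^2(\lambda_2-\lambda_3)^2$ is sent to $(\lambda_1-\lambda_2)^2(\lambda_1-\lambda_3)^2$ by the swap. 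So the Lagrange-interpolation step on the three unordered pairs does not get started, and the closing remarks about clearing a discriminant via an identity for $CD$ (which is in any case miscomputed --- the contraction gives $(3A^2+\sigma_2 I)^2$, not $3A^4+6\sigma_3A+\sigma_2^2I$) cannot rescue it.

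The paper avoids all of this by bringing in $D$ \emph{directly} rather than through a scalar identity. It constructs a single $\SL_3$-equivariant linear map
\[
S^2(\mathfrak{gl}_3)\otimes S^2V^*\xrightarrow{\rho\otimes 1} S^2V\otimes S^2V^*\otimes S^2V^*\to S^2V\otimes S^2(S^2V^*)\xrightarrow{1\otimes\tau} S^2V\otimes S^2V\to S^4V
\]
and checks that $\tfrac12(A\otimes A)\otimes D$ is sent to $\sum_i(\lambda_{i-1}-\lambda_{i+1})^2(\lambda_i-\lambda_{i-1})^2\,v_{i-1}^2v_{i+1}^2$, which is exactly the $S^4V$-projection of $\sum_{i<j}C_i\otimes C_j$. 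The mechanism is that $\tau$ naturally singles out the ``third index'' $k$, and the $D$-coefficient $\eta_k^2$ at that index combines with $(\lambda_i-\lambda_j)^2$ (coming from $A\otimes A$) to yield $\xi_i^2\xi_j^2$ --- no interpolation and no denominators. If you want to salvage your approach, the fix is precisely to trade the purely $(A,C)$-based products for this one $(A,D)$-based expression.
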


\begin{proof}
Using the dual maps to $\sigma$ and $\tau$, it is easy to see that there is an isomorphism of $\SL_3$-modules: $S^2(S^2 V)\cong S^4 V\oplus S^2 V^*$.
Clearly $\sum_i C_i\otimes C_i\in S^4 V$, and hence it will be enough to show that 
the projection of $C\otimes C - \sum_i C_i\otimes C_i$ to $S^4 V$ depends 
algebraically on $A$ and $D$.

To prove our assertion we consider an $\SL_3$-equivariant linear map $S^2(\mathfrak{gl}_3)\otimes S^2 V^*\rightarrow S^4 V$ as follows.
With the aid of the map $\rho:S^2(\mathfrak{gl}_3)\rightarrow S^2 V\otimes S^2 V^*$ constructed in the proof of Lemma \ref{tlem1}, there is a sequence of linear maps: $$S^2(\mathfrak{gl}_3)\otimes S^2 V^*\rightarrow (S^2 V\otimes S^2 V^*)\otimes S^2 V^*\rightarrow S^2 V\otimes S^2(S^2 V^*)\rightarrow S^2 V\otimes S^2 V\rightarrow S^4 V$$
where the penultimate map is $1\otimes \tau$.
One now observes that the composed linear map sends $\frac{1}{2}(A\otimes A)\otimes D$ to
$$ \sum_i (\lambda_{i-1}-\lambda_{i+1})^2(\lambda_i-\lambda_{i-1})^2 v_{i-1,i-1}\otimes v_{i+1,i+1}$$
which is nothing but the projection of $\frac{1}{2}\left( C\otimes C-\sum_i C_i\otimes C_i\right) = C_1\otimes C_2+C_1\otimes C_3+C_2\otimes C_3$ to $S^4 V$.
This establishes the required statement for $\sum_i C_i\otimes C_i$.
The same argument proves our assertion regarding $\sum_i D_i\otimes D_i$.
\end{proof}

\begin{lemma}\label{tlem3}
The term $\sum_i C_i\otimes C_i\otimes D_i$ (resp. $\sum_i C_i\otimes D_i\otimes D_i$) depends algebraically on $A$ and $C$ (resp. $A$ and $D$).
\end{lemma}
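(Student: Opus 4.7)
The plan is to parallel the strategy of Lemma~\ref{tlem2}: for the first assertion, construct an $\SL_3$-equivariant polynomial map $F:\mathfrak{g}_0\oplus S^2V\to S^2V\otimes S^2V\otimes S^2V^*$ whose value at a point $(A,C)$ coming from the parameterization of Corollary~\ref{repcor} equals $\sum_i C_i\otimes C_i\otimes D_i$. By the symmetry exchanging $V$ and $V^*$ (and hence $\alpha_i\leftrightarrow\beta_i$, $C_i\leftrightarrow D_i$), the corresponding statement for $\sum_i C_i\otimes D_i\otimes D_i$ in terms of $A$ and $D$ will then follow by dualising.

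First I would collect the ``$i$-localised'' building blocks, all polynomial in $A$ and $C$, to be used in assembling $F$. Setting $M_i:=\prod_{j\neq i}(A-\lambda_j)=-\alpha_i\beta_i\,e_{ii}$, the key facts are: any $\mathfrak{S}_3$-symmetric sum over $i$ of a tensor built from $M_i$ and $C$ is a polynomial in $A$ and $C$; the symmetric-square action gives $S^2(M_i)(C)=\alpha_i^4\beta_i^2 v_{ii}$; and from Lemma~\ref{tlem1} the element $\rho(\sum_i M_i^{\otimes 2})=\sum_i C_i\otimes D_i$ is already polynomial in $A$. Tensoring then yields
$$ C\otimes\Bigl(\sum_i C_i\otimes D_i\Bigr)\;=\;\sum_i C_i\otimes C_i\otimes D_i \;+\; \sum_{j\ne i}\alpha_j^2\alpha_i^2\beta_i^2\,v_{jj}\otimes v_{ii}\otimes\chi_{ii},$$
which is polynomial in $A,C$ and equals the desired term plus off-diagonal cross-terms.

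Next I would exhibit a second equivariant polynomial expression in $A$ and $C$ with a different off-diagonal structure, built using the $\SL_3$-equivariant operations $\rho,\sigma,\tau$ already at our disposal together with the derivation action of $\mathfrak{gl}_3$ on $S^2V$. For example, $\sum_i S^2(M_i)(C)\otimes\rho(M_i\otimes M_i)=\sum_i\alpha_i^6\beta_i^4\,v_{ii}^{\otimes 2}\otimes\chi_{ii}$ is polynomial in $A$ and $C$ and produces the diagonal (with overshooting scalar coefficients) without any off-diagonal contribution. A careful linear combination of these (and, if necessary, of further $\mathfrak{S}_3$-symmetric sums such as $\sum_i(M_i\cdot C)\otimes\rho(M_i\otimes M_i)$ obtained by inserting the derivation action) should cancel the unwanted cross-terms and leave precisely $\sum_i C_i\otimes C_i\otimes D_i$.

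The hard part will be to pin down the exact linear combination that cleanly kills the cross-terms, analogously to the role played by $\tfrac{1}{2}(A\otimes A)\otimes D$ in the proof of Lemma~\ref{tlem2}. Once identified, the verification amounts to a direct tensor calculation on the explicit representative of $\0_{\mini}$ of Lemma~\ref{replem}, using the concrete formulas for $\rho,\sigma,\tau$ and the weight-basis descriptions of $v_{ii}$ and $\chi_{ii}$ already recorded in the text.
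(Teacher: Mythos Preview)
Your proposal has the right ingredients but takes an unnecessarily circuitous route, and you stop short of completing it.  You correctly identify the projectors $M_i=\prod_{j\ne i}(A-\lambda_j)$ and the crucial fact that an $\mathfrak{S}_3$-symmetric sum built from $M_i$ and the $C_i$ is polynomial in $A$ and $C$.  But you then start from $C\otimes(\sum_i C_i\otimes D_i)$, which introduces off-diagonal cross-terms that you must subsequently try to cancel, and you concede that ``the hard part will be to pin down the exact linear combination that cleanly kills the cross-terms.''

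The paper's proof avoids this cancellation problem entirely by writing down a single expression which gives the desired sum directly.  One applies the map $\hat\rho:S^2V\otimes S^2(\mathfrak{gl}_3)\to S^2(S^2V)\otimes S^2V^*$ (built from the map $\rho$ of Lemma~\ref{tlem1}) to the element
\[
\sum_i C_i\otimes M_i\otimes M_i .
\]
Since $\rho(M_i\otimes M_i)=C_i\otimes D_i$ exactly (no cross-terms), this yields $\sum_i C_i\otimes C_i\otimes D_i$ on the nose.  The input $\sum_i C_i\otimes M_i\otimes M_i$ is $\mathfrak{S}_3$-symmetric and linear in the $C_i$, so once $M_i$ is expanded in terms of $A$ and $\lambda_i$ the whole sum can be written in terms of $\sum_i C_i$, $\sum_i\lambda_i C_i$, $\sum_i\lambda_i^2 C_i$ and the elementary symmetric functions of the $\lambda_i$, all of which are already known to be polynomial in $A$ and $C$.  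No delicate linear combination is required.  The difference from your approach is simply that the paper puts the individual $C_i$ in the first slot rather than the full sum $C$; the $\mathfrak{S}_3$-invariance argument then handles the dependence on individual $C_i$'s automatically.
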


\begin{proof}
Let $\hat\rho:S^2 V\otimes S^2(\mathfrak{gl}_3)\rightarrow S^2(S^2 V)\otimes S^2 V^*$ be induced by the linear map $\rho:S^2(\mathfrak{gl}_3)\rightarrow S^2 V\otimes S^2 V^*$ and the canonical map $S^2 V\otimes S^2 V\rightarrow S^2(S^2 V)$.
Since $C_i\otimes D_i=\rho\left(\left(\prod_{j\neq i}(A-\lambda_j)\right)\otimes \left(\prod_{j\neq i}(A-\lambda_j)\right)\right)$ we have:
$$\hat\rho\left(\sum_i C_i\otimes\left(\prod_{j\neq i}(A-\lambda_j)\right)\otimes \left(\prod_{j\neq i}(A-\lambda_j)\right)\right) = \sum_i C_i\otimes C_i\otimes D_i.$$
In this case, the expression to which we apply $\hat\rho$ is symmetric in the second and third terms and can be considered symbolically as an element of $R[C_1,C_2,C_3,\lambda_1,\lambda_2,\lambda_3]^{\mathfrak{S}_3}$, where $R={\mathbb C}[1\otimes A\otimes 1,1\otimes 1\otimes A]$ and $\mathfrak{S}_3$ acts by simultaneously permuting the $C_i$ and the $\lambda_i$.
So our assertion in this case follows from the same fact about $\mathfrak{S}_3$-invariants used earlier.
(Alternatively we can directly compute the expression in terms of $\sum_i C_i$, $\sum_i \lambda_i C_i$, $\sum_i \lambda_i^2 C_i$.)
Replacing $V$ by $V^*$ establishes the corresponding result for $\sum_i C_i\otimes D_i\otimes D_i$.
\end{proof}

\begin{lemma}\label{tlem4}
The tensors $\sum_i C_i\otimes C_i\otimes C_i$ and $\sum_i D_i\otimes D_i\otimes D_i$ depend algebraically on $A$, $C$, $D$.
\end{lemma}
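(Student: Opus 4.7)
The plan is to mirror the template of Lemmas \ref{tlem1}--\ref{tlem3} and reduce to a linear-algebra problem. First I would extend $\hat\rho$ to an $\SL_3$-equivariant map $\hat\rho\colon S^2(S^2V)\otimes S^2(\mathfrak{gl}_3)\to S^3(S^2V)\otimes S^2V^*$ satisfying $\hat\rho(C_i^{\otimes 2}\otimes P_i^{\otimes 2})=C_i^{\otimes 3}\otimes D_i$, where $P_i=\prod_{j\neq i}(A-\lambda_j)$. Since $\sum_i C_i^{\otimes 2}\otimes P_i^{\otimes 2}$ is $\mathfrak{S}_3$-invariant under simultaneous index permutation, Weyl polarisation yields that $\sum_i C_i^{\otimes 3}\otimes D_i$ depends algebraically on $A$ and $C$. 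Analogous extensions of $\hat\rho$ give the algebraicity in $A,C$ of $\sum_i C_i^{\otimes 3}\otimes D_i^{\otimes r}$ for every $r\ge 1$.

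The main obstacle will be to decouple the $D_i$-factor. I would use that the matrix product $CD\in\mathfrak{gl}_3$ is $\SL_3$-equivariant and algebraic in $C,D$; in the standard basis it is diagonal with eigenvalues $q_i^2=\mu_i\nu_i=\mu_i\mu_{i-1}$, where the simplification uses the identity $\nu_i=(\lambda_i-\lambda_{i-1})^2=\mu_{i-1}$. The iterates $(CD)^kC\in S^2V$ are algebraic in $C,D$; pairing them against $\sum_i C_i^{\otimes 3}\otimes D_i^{\otimes r}$ via the natural $\SL_3$-equivariant contraction $S^3(S^2V)\otimes S^2V^*\otimes S^2V\to S^3(S^2V)$ produces an infinite family of algebraic expressions of the shape $\sum_i\mu_i^a\mu_{i-1}^b V_i$, where $V_i=(v_i^{\otimes 2})^{\otimes 3}$.

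The final step will be to reduce this family to isolate $\sum_i C_i^{\otimes 3}=\sum_i\mu_i^3V_i$. Cayley--Hamilton for the symmetric matrix $C$ reduces any $\mu_i^c$ with $c\ge 3$ to a $\mathbb{C}[A]$-linear combination of $1,\mu_i,\mu_i^2$; and the quadratic identity relating $\mu_i$ and $\mu_{i-1}$ (obtained by comparing $q_i^2=\mu_i\mu_{i-1}$ with the fact that $q_i$ is polynomial in $\lambda_i$ and the elementary symmetric functions of the $\lambda_j$'s) reduces any power of $\mu_{i-1}$ to degree at most one. Hence every algebraic expression $\sum_i\mu_i^a\mu_{i-1}^bV_i$ becomes a $\mathbb{C}[A]$-linear combination of the six ``basic'' tensors $\sum_i\mu_i^j V_i,\sum_i\mu_i^j\mu_{i-1}V_i$ for $j=0,1,2$. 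The hard part will be verifying that the algebraic data produced above provide sufficiently many independent relations to solve for this particular linear combination; this is expected to work by exploiting the cyclic $\mathfrak{S}_3$-structure of the $\{\mu_i\}$. The result for $\sum_i D_i^{\otimes 3}$ then follows by interchanging the roles of $V$ and $V^*$ throughout.
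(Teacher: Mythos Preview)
Your strategy is a genuine departure from the paper's, but the final step you flag as ``hard'' is not just hard---it is where the argument breaks down as written. Two concrete issues:

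\emph{First}, your claimed reduction to six ``basic'' tensors is incorrect. The identity you invoke, $\mu_{i-1}=(3\lambda_i^2-2e_2)-\mu_i$, and likewise the quadratic relation for $\mu_i$, both introduce powers of $\lambda_i$. So after reduction you land in the $\mathbb{C}[e_2,e_3]$-span of the \emph{nine} tensors $T_{a,c}=\sum_i\mu_i^a\lambda_i^cV_i$ with $a,c\in\{0,1,2\}$, not six. (Your ``Cayley--Hamilton for $C$'' really means the cubic relation for the $\mu_i$ with coefficients the symmetric functions $e_j(\mu)$, which are indeed polynomials in $A$---but this is not Cayley--Hamilton for the symmetric tensor $C$.)

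\emph{Second}, and more seriously, the algebraic data your contractions produce all carry an unavoidable factor. Contracting $\sum_iC_i^{\otimes3}\otimes D_i^{\otimes r}$ against anything built from $C$, $D$, $(CD)^kC$, etc., always leaves at least one factor of $\nu_i=\mu_{i-1}$, equivalently a factor of $q_i^2=(3\lambda_i^2+e_2)^2$. Thus every expression you obtain is of the form $\sum_i\mu_i^3 P(\lambda_i)V_i$ with $P$ divisible (in $\mathbb{C}[\lambda_i]/(\lambda_i^3+e_2\lambda_i-e_3)$) by $q_i^2$; since $q_i^2$ is not a unit there, you cannot isolate $\sum_i\mu_i^3V_i$ by $\mathbb{C}[A]$-linear combinations. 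Equivalently, the tensor $T_{0,0}=\sum_iV_i$ is \emph{not} algebraic in $A,C,D$ (it depends on $g$ but not on $z$, while $\Theta_0\circ\pi$ collapses at $z=0$), so one cannot hope to solve for all basic tensors.

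The paper sidesteps all of this with a single observation you are missing: the Newton-type identity
\[
C^{\otimes3}-3\Big(\sum_iC_i^{\otimes2}\Big)\otimes C+2\sum_iC_i^{\otimes3}=6\,C_1\otimes C_2\otimes C_3
\]
in $S^3(S^2V)$. Since $\sum_iC_i^{\otimes3}\in S^6V$, projecting to $S^6V$ reduces the problem (via Lemma~\ref{tlem2}) to showing that ${\rm pr}_{S^6V}(C_1\otimes C_2\otimes C_3)=\big(\prod_{i<j}(\lambda_i-\lambda_j)^2\big)v_{112233}$ depends only on $A$. The paper exhibits an explicit equivariant map $S^2(S^2\mathfrak{gl}_3)\to S^6V$ sending $(A\otimes A)\otimes(A^2\otimes A^2)$ to a nonzero multiple of this discriminant-times-fixed-vector. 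This is both shorter and avoids the divisibility obstruction entirely.
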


\begin{proof}
Similarly to Lemma \ref{tlem2}, we observe that $\sum_i C_i\otimes C_i\otimes 
C_i\in S^6 V$ and that there is a direct sum decomposition $S^3(S^2 V)\cong S^6 
V\oplus U$ where $U$ is the kernel of the projection $S^3(S^2 V)\rightarrow S^6 V$.
Noting further that $$C\otimes C\otimes C-3\left(\sum_i C_i\otimes C_i\right) \otimes C+2\sum_i C_i\otimes C_i\otimes C_i = 6\, C_1\otimes C_2\otimes C_3$$
(where all tensors are symmetric), it will therefore suffice to show that 
the projection of $C_1\otimes C_2\otimes C_3$ to $S^6 V$ depends 
algebraically on $A$.
We have $${\rm pr}_{S^6 V}\left(C_1\otimes C_2\otimes C_3\right) = \left( \prod_{i<j} (\lambda_i-\lambda_j)^2\right) v_{112233}.$$
To show that the right-hand side can be expressed in terms of $A$, we consider the linear map $\mu:S^2(S^2\mathfrak{gl}_3)\rightarrow S^6 V$ obtained via the composition $$S^2\mathfrak{gl}_3\otimes S^2\mathfrak{gl}_3\rightarrow \left(S^2 V\otimes S^2 V^*\right)\otimes \left( S^2 V\otimes S^2 V^*\right)\rightarrow S^2(S^2 V)\otimes S^2(S^2 V^*)\rightarrow S^2(S^2 V)\otimes S^2 V\rightarrow S^6 V$$
where the penultimate map is $1\otimes \tau$.
We note that the map $S^2(\mathfrak{gl}_3)\rightarrow S^2 V\otimes S^2 V^*$ sends $e_{ii}\otimes e_{jj}$ to $v_{ij}\otimes \chi_{ij}$.
It follows, similarly to the description of $\tau(\chi_{il}\otimes \chi_{jl})$, that $$\mu:(e_{ii}\otimes e_{jj})\otimes (e_{ll}\otimes e_{ll})\mapsto \left\{ \begin{array}{rl} 0 & \mbox{if $i=l$ or $j=l$,} \\ 2v_{112233} & \mbox{if $i=j=l\pm 1$,} \\ -2v_{112233} & \{ i,j\} = \{ l-1,l+1\},\end{array}\right.$$
and $\mu((e_{ii}\otimes e_{ll})\otimes (e_{jj}\otimes e_{ll}))=-\frac{1}{2}\mu((e_{ii}\otimes e_{jj})\otimes (e_{ll}\otimes e_{ll}))$ for all $i,j,l$.
A routine calculation therefore shows that $$\mu\left( (\Delta\otimes \Delta)\otimes (\Delta^2\otimes \Delta^2)\right) = -2(\lambda_1-\lambda_2)^2(\lambda_1-\lambda_3)^2(\lambda_2-\lambda_3)^2 v_{112233}$$
and hence $\mu\left( -\frac{1}{2} (A\otimes A)\otimes (A^2\otimes A^2)\right) = {\rm pr}_{S^6 V} C_1\otimes C_2\otimes C_3$.
Symmetry establishes the remaining case $\sum_i D_i\otimes D_i\otimes D_i$.
\end{proof}

\subsection{A transverse slice in type $F_4$}

With the benefit of Thm. \ref{d4S4lem}, we can appeal to some straightforward computations in GAP to describe the singularity associated to the last remaining minimal degeneration of special orbits in type $F_4$.
Now that Thm. \ref{d4S4lem} has been established, we can switch notation.
Thus let $G$ be a simple group of type $F_4$, let $\g=\Lie(G)$ with a fixed Chevalley basis and let $\{ h,e,f\}$ be an $\mathfrak{sl}_2$-triple with $f$ lying in the orbit $\0'=\0_{A_2}$.
Let ${\mathcal S}=f+\g^e$ be the Slodowy slice at $f$ and let $\0$ be the nilpotent orbit with label $F_4(a_3)$.

\begin{theorem}\label{d4S4thm}
With the above notation, ${\mathcal S}\cap\overline\0$ is isomorphic to $\overline{\0_{\mini}(\mathfrak{so}_8)}/\mathfrak{S}_4$ (in shorter form, $d_4/\mathfrak{S}_4$).
\end{theorem}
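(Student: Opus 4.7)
The strategy parallels that of Lemma \ref{2A2+2A1lem}: realize ${\mathcal S} \cap \overline{\0}$ as the closure of a single $(C \times \mathbb{G}_m)$-orbit inside ${\mathcal S}$, where $C = G^e \cap G^f$, and then identify this closure with the image of $\psi$ from Theorem \ref{d4S4lem}. First, I would fix an $\mathfrak{sl}_2$-triple $\{h, e, f\}$ with $e, f \in \0_{A_2}$; the reductive centralizer $\mathfrak{c} := \g^e \cap \g^h$ is of type $A_2$, so $C \cong \SL_3$ acts on the natural module $V \cong \mathbb{C}^3$. Since $\dim \g^e = 52-30 = 22$, a standard computation (using \cite{Lawther-Testerman} or directly in GAP) determines the structure of $\g^e$ as a $(\mathfrak{c} \times \mathbb{G}_m)$-module: one expects direct summands isomorphic to $\mathfrak{c}$, $S^2 V$, $S^2 V^*$, plus a residual piece of small total dimension (including, in analogy with \S \ref{D4S4subsec}, a copy of the 2-dimensional centre $\mathfrak{z}(\mathfrak{l})$, and possibly copies of $V, V^*$ in higher degree). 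The $\mathbb{G}_m$-weights on these summands must agree with the weights $2,4$ carried by $\mathfrak{sl}_3$, $S^2 V$, $S^2 V^*$ in the target variety.

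Next, a GAP search will produce a representative $x$ of $\0_{F_4(a_3)}$ in ${\mathcal S}$, which I would write in the form
\[
x = f + x_0 + r + C_0 + D_0,
\]
where $x_0 \in \mathfrak{c}$, $C_0 \in S^2 V$, $D_0 \in S^2 V^*$, and $r$ lies in the residual summands. Setting $z = (\lambda_1, \lambda_2, \lambda_3)$ equal to the eigenvalues of $x_0$, the aim is to verify that, after a suitable rescaling of bases (and possibly a preliminary coordinate change in $V$ to align with the basis of Theorem \ref{d4S4lem}), one has $C_0 = \sum_i (\lambda_i - \lambda_{i+1})^2 v_i \otimes v_i$ and $D_0 = \sum_i (\lambda_i - \lambda_{i-1})^2 \chi_i \otimes \chi_i$. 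Once this is achieved, the unibranch property of the degeneration $F_4(a_3) > A_2$ (see \S \ref{branching}), together with the dimension count $\dim ({\mathcal S} \cap \overline{\0}) = 10 = \dim d_4/\mathfrak{S}_4$, forces ${\mathcal S} \cap \overline{\0}$ to equal the closure of the $(C \times \mathbb{G}_m)$-orbit through $x$. Theorem \ref{d4S4lem} then identifies this closure with $\overline{\0_{\mini}(\mathfrak{so}_8)}/\mathfrak{S}_4$.

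The hardest part will be twofold. First, the explicit representative $x$ must be found and then normalized by $C \times \mathbb{G}_m$ so that the scalars attached to $C_0, D_0$ match the formula for $\psi$; this is the analogue of fixing the constants $-8$ and $3$ in Lemmas \ref{G2minlem} and \ref{2A2+2A1lem}, but more delicate because three pairs of eigenvalue differences must be reproduced simultaneously. Second, the residual component $r$ must be shown not to interfere: concretely, the projection onto $\mathfrak{c} \oplus S^2 V \oplus S^2 V^*$ restricted to ${\mathcal S} \cap \overline{\0}$ has to be generically injective, so that the induced morphism onto the image of $\psi$ is birational and, combined with normality of $\overline{\0_{\mini}}/\mathfrak{S}_4$ and irreducibility, an isomorphism. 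The implicit algebraic dependencies needed here are philosophically of the same type as those worked out in Lemmas \ref{tlem1}--\ref{tlem4} for the construction of $\psi$ itself, and in practice can be checked as part of the same GAP session that exhibits $x$, following the template of \S \ref{compsec}.
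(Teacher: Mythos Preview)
Your overall template is right, but there is a genuine dimension obstruction that breaks the plan as stated. You propose to exhibit \emph{one} representative $x \in \mathcal{S} \cap \0$ and argue that $\mathcal{S} \cap \overline{\0}$ is the closure of the $(C \times \mathbb{G}_m)$-orbit of $x$. However, $\dim(\mathcal{S} \cap \overline{\0}) = \dim d_4/\mathfrak{S}_4 = 10$, whereas $\dim(C \times \mathbb{G}_m) = 8 + 1 = 9$, so no $(C \times \mathbb{G}_m)$-orbit can be dense. This is exactly the phenomenon flagged at the start of \S\ref{F4sec}: the generic $G_0$-orbit in $\0_{\mini}(\mathfrak{so}_8)$ has codimension two, which is why Theorem~\ref{d4S4lem} describes $d_4/\mathfrak{S}_4$ not as an orbit closure but as the closure of the image of a morphism $\psi:\SL_3 \times \mathfrak{z}(\mathfrak{l}) \to \mathfrak{sl}_3 \oplus S^2 V \oplus S^2 V^*$, with the extra two parameters coming from $\mathfrak{z}(\mathfrak{l})$.

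The fix, and what the paper does, is to produce not a single representative but a \emph{two-parameter family} of elements of $\mathcal{S} \cap \overline{\0}$, one for each $z = (\lambda_1,\lambda_2,\lambda_3)$ with $\sum \lambda_i = 0$. Concretely (with $e = e_{\alpha_1}+e_{\alpha_2}$, $f = 2f_{\alpha_1}+2f_{\alpha_2}$, so that $\cg \cong \mathfrak{sl}_3$ and the nilradical of $\g^e$ is $S^2 V \oplus S^2 V^* \oplus \mathbb{C}e \oplus \mathbb{C}e_{\alpha_1+\alpha_2}$), one verifies in GAP that
\[
f + \Delta(z) + \tfrac{1}{2}\sum_i (\lambda_i-\lambda_{i+1})^2\, v_i\otimes v_i + \tfrac{1}{2}\sum_i (\lambda_i-\lambda_{i-1})^2\, \chi_i\otimes\chi_i - \tfrac{1}{4}\bigl(\textstyle\sum_i \lambda_i^2\bigr) e + \tfrac{5}{2}\lambda_1\lambda_2\lambda_3\, e_{\alpha_1+\alpha_2}
\]
lies in $\overline{\0}$ for all such $z$. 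The two trivial-summand coefficients are symmetric functions of $z$, hence determined by $\Delta(z)$; projecting to $\cg \oplus S^2 V \oplus S^2 V^*$ then matches $\psi(1,z)$ exactly. The $\SL_3$-saturation of this family has dimension $10$, so unibranchness and dimension finish the argument. Your later steps (eliminating the residual terms, invoking unibranchness) are essentially correct once the one-element orbit is replaced by this $\mathfrak{z}(\mathfrak{l})$-family.
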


\begin{proof}
This follows from Thm. \ref{d4S4lem} and a computation in GAP.
With the standard numbering of the roots, we can assume that $f=2f_{\alpha_1}+2f_{\alpha_2}$ and $e=e_{\alpha_1}+e_{\alpha_2}$.
The reductive part $C:=G^e\cap G^h$ of the centralizer of $e$ has connected component $C^\circ$ isomorphic to $\SL_3$.
The standard maximal torus of $G$ contains a maximal torus of $C$, and 
$\{\alpha_4, \alpha_1+2\alpha_2+3\alpha_3+\alpha_4\}=:\{ \beta_1,\beta_2\}$ is 
a basis of simple roots of $C^\circ$.
Let $V$ be the natural module for $C^\circ=\SL_3$.
The nilradical of ${\mathfrak g}^e$ is isomorphic as a $C^\circ$-module to 
$S^2 V\oplus S^2 V^*\oplus {\mathbb C}^2$, where the trivial component is spanned by $e$ 
and $e_{\alpha_1+\alpha_2}$.
By abuse of notation, identify $\{ v_i\otimes v_j : i\leq j\}$ and $\{ 
\chi_i\otimes \chi_j : i\leq j\}$ with bases for these copies of $S^2 V$ and $S^2 
V^*$.
Let $\cg=\g^e\cap\g^h=\Lie(C^\circ)$.
Let $\lambda_1,\lambda_2,\lambda_3\in {\mathbb C}$ be distinct such that $\sum_i\lambda_i=0$, and let $\Delta 
=\lambda_1\beta_1^\vee + (\lambda_1+\lambda_2)\beta_2^\vee$, which identifies with 
$\Delta(\lambda_1,\lambda_2,\lambda_3)$ as an element of $\cg=\mathfrak{sl}_3$.
Now we observe computationally that for any such $\lambda_1,\lambda_2,\lambda_3$, the following element of $\g$:
$$f+\Delta+ \frac{1}{2}\sum_{i=1}^3 (\lambda_i-\lambda_{i+1})^2 v_i\otimes 
v_i+\frac{1}{2}\sum_{i=1}^3 (\lambda_i-\lambda_{i-1})^2 \chi_i\otimes\chi_i - 
\frac{1}{4}\sum_{i=1}^3 \lambda_i^2 e+\frac{5}{2}\lambda_1\lambda_2\lambda_3 
e_{\alpha_1+\alpha_2}$$
belongs to $\overline{\0}$.
It therefore follows, by Thm. \ref{d4S4lem} and projecting onto $\cg\oplus S^2 V\oplus S^2 V^*$, that there is a subvariety of ${\mathcal 
S}\cap\overline{\0}$ which is isomorphic to 
$\overline{\0_{\mini}(\mathfrak{so}_8)}/\mathfrak{S}_4$.
By dimensions and unibranchness, this subvariety must equal the whole intersection, 
and we are done.
\end{proof}

Note that one consequence of Thm. \ref{d4S4thm} is that ${\rm Sing}(\0,\0')$ is equal to the smooth equivalence class of ${\mathbb C}^6/\mathfrak{S}_4$.
In the next section we establish that the corresponding Slodowy slice singularity is {\it isomorphic} to ${\mathbb C}^6/\mathfrak{S}_4$ (via a computational proof).

\begin{remark}
a) All of the above arguments can be adapted to positive characteristic $p$, along similar lines to Remarks \ref{poscharremark}(ii) and \ref{modularremark}.
If $p>3$ then the arguments in \S \ref{Gamma4quotsec} go through verbatim.
For the subsequent section, we once more need to ensure that all relevant Weyl modules are irreducible and that any sums of such modules remain direct in characteristic $p$.
This issue occurs:

$\bullet$ in Lemma \ref{tlem2}, where the projection from $S^2(S^2 V^*)$ to $S^4 V$ is used. The modules $S^4 V$ and $S^2 V^*$ are irreducible in arbitrary characteristic.

$\bullet$ in Lemma \ref{tlem4}, the inclusion of $S^6 V$ as a direct summand in $S^3(S^2 V)$ is used. The complement is $\Delta(2\varpi_1+2\varpi_2)\oplus {\mathbb C}$. The Weyl modules with highest weights $6\varpi_1$ and $2\varpi_1+2\varpi_2$ remain irreducible in characteristic $>5$.

Observing the representatives found in the Slodowy slice in the proof, it therefore follows that the conclusion of Thm. \ref{d4S4thm} holds (at least) in characteristic $>5$.

b) Our description of ${\mathcal S}\cap\overline\0$ in Thm. \ref{d4S4thm} has a parallel in type $E_8$ which we do not fully understand: in that case, a slice from $\0_{A_4}$ to $\0_{E_8(a_7)}$ has an open subset which is a fibre bundle (with fibres of dimension 4) over the regular semisimple elements of $\mathfrak{sl}_5$.
Thm. \ref{F4excslicethm} and Thm. \ref{E8excslicethm} can be deduced (with some effort in the latter case) from these fibre bundle descriptions.

c) The component group of $C$ in Thm. \ref{d4S4thm} is $\mathfrak{S}_2$.
It follows that $C$ acts on $d_4/\mathfrak{S}_4$ as the full automorphism group (i.e. the $\mathfrak{S}_4$-fixed point subgroup in the adjoint group of type $D_4$).
\end{remark}

\section{Computational results}\label{compsec}

In \S\S \ref{sharedorbits}-\ref{F4sec}, we described various quotients $\overline\0/\Gamma$ where $\0$ was (usually) a nilpotent  orbit in a Lie algebra $\g={\rm Lie}(G)$, and $\Gamma$ was a finite group of automorphisms of $G$.
We exploited as much as possible the action of the group $G_0:=G^\Gamma$ on the quotient.
This line of attack is powerless when the $G_0$-orbits have large codimension.
For example, we have no hope of generalizing Lemma \ref{S3lem} to the action of $\mathfrak{S}_r$ on ${\mathfrak h}_{r-1}\oplus{\mathfrak h}_{r-1}^*$ when $r\geq 4$: in this case $\dim G_0=3$ is too small compared to the dimension $2(r-1)\geq 6$ of the quotient.
However, the cases $r=4$ and $r=5$ are of particular interest to us, since we have asserted (and will now prove) that they are the Slodowy slice singularities associated to the degenerations $\0_{F_4(a_3)}> \0_{A_2+\tilde{A}_1}$ in type $F_4$ and $\0_{E_8(a_7)}>\0_{A_4+A_3}$ in type $E_8$.
Note that $\0_{F_4(a_3)}$ (resp. $\0_{E_8(a_7)}$) is special with component group $\mathfrak{S}_4$ (resp. $\mathfrak{S}_5$), and the minimal orbit in the special piece is $\0_{A_2+\tilde{A}_1}$ (resp. $\0_{A_4+A_3}$).
Hence these results, found in \S \ref{F4subsec} and \S \ref{E8subsec}, are a crucial part of our Main Theorem.

In \S \ref{furthersubsec}, we apply similar computational methods to identify two more interesting quotient singularities in exceptional nilpotent cones: we show that there is a Slodowy slice singularity in $E_7$ (resp. $E_6$) which is isomorphic to $S^3({\mathbb C}^2/\mu_2)$ (resp. $S^2({\mathbb C}^2/\mu_3)$), where $\mu_d$ denotes a cyclic group of order $d$.
Our results in \S \ref{F4subsec}, \S \ref{E8subsec} and \S \ref{furthersubsec} depend on some knowledge of generators and relations for the corresponding invariant rings.
We collect this information in \S \ref{invsubsec}, adding some speculation on the numbers of relations for arbitrary symmetric groups $\mathfrak{S}_n$ or wreath products $G(d,1,n)$.
In the final subsection we extend this series of results to prove that the Slodowy slice singularity for the degeneration $\0_{E_8(a_6)}>\0_{E_8(b_6)}$ is $a_2/\mathfrak{S}_4$, for an appropriate action of $\mathfrak{S}_4$.
There is one orbit intermediate between $\0_{E_8(a_6)}$ and $\0_{E_8(b_6)}$, with Bala-Carter label $D_7(a_1)$.
One corollary of this rather surprising result is that the singularity associated to the minimal degeneration $\0_{D_7(a_1)}>\0_{E_8(b_6)}$ is the non-normal surface singularity labelled $\mu$ in \cite{FJLS}.

\subsection{Structure of invariant rings}\label{invsubsec} %$({\mathfrak h}_{n-1}\oplus{\mathfrak h}_{n-1}^*)/\mathfrak{S}_n$}

\subsubsection{The quotients $({\mathfrak h}_{n-1}\oplus{\mathfrak h}_{n-1}^*)/\mathfrak{S}_n$}\label{Sninvs}

In the next two subsections we consider the invariants for $\mathfrak{S}_n$ (with $n=4,5$) acting on a sum of two copies of the reflection representation ${\mathfrak h}_{n-1}$.
We note that ${\mathfrak h}_{n-1}^*\cong{\mathfrak h}_{n-1}$, so this is the quotient singularity appearing in the statement of our Main Theorem.
It is well known that the invariant polynomials on a single copy of ${\mathfrak h}_{n-1}$ form a polynomial ring on $(n-1)$ generators, of degrees $2,3,\ldots ,n$.
The generators can be easily understood by identifying ${\mathfrak h}_{n-1}$ with the traceless diagonal $n\times n$ matrices $x$, where $\mathfrak{S}_n$ acts by permuting the entries.
Then the functions $x\mapsto {\rm tr}(x^i)$, $2\leq i\leq n$ form a system of (free) generators.

It is also well known that the invariant polynomials on a finite sum of copies of ${\mathfrak h}_{n-1}$ are generated by polarisations of these invariants, as we explained in \S \ref{G2case}.
For pairs $(x,y)$ of traceless diagonal $n\times n$ matrices, the polarisations are the functions $g_{ij}:(x,y)\mapsto {\rm tr}(x^i y^j)$ for $2\leq i+j\leq n$.
Relations between these generators are known \cite{Domokos-Puskas}, but appear to be rather complicated for $n\geq 4$.
We could not find a result in the literature describing the number of generators of the ideal of relations; indeed, the problem may be open (see \cite[Problem 2.12]{Haiman}).
We are specifically interested in a sum of two copies of ${\mathfrak h}_{n-1}$ when $n=4$ or $n=5$.

\noindent
(i) For $n=4$ there are twelve generators, with three generators of degree $2$, four of degree $3$ and five of degree $4$.
We used GAP to find fifteen relations: three of degree $6$ and six each of degrees $7$ and $8$.
We were able to verify in Magma that this ideal is radical (of height $6$), so this is a complete set.

\noindent
(ii) For $n=5$ we add six more generators, of degree $5$.
We found thirty-five relations: there are $4$ (resp. $9$, $12$, $10$) of degree $7$ (resp. $8$, $9$, $10$).
We have not been able to confirm that this is a complete set.
(The calculation in Magma did not complete.)

\noindent
(iii) For $n=6$, there are twenty-five generators in total.
We used GAP to find seventy relations: there are $5$ (resp. $12$, $18$, $20$, $15$) of degree $8$ (resp. $9$, $10$, $11$, $12$).
We cannot confirm that this is a complete set.

%Relative to the action of $\cg$ and the Kazhdan grading, we found the generators determine a copy of $V(3)$ in degree $7$; a copy of $V(4)\oplus V(2) \oplus V(0)$ in degree $8$;  a copy of  $V(5) \oplus V(3) \oplus V(1)$ in degree $9$; and a copy of $V(6) \oplus V(2)$ in degree $10$. 

On the basis of these results (and those for $n=2,3$), we conjecture that for general $n$ there are $\begin{pmatrix} n+2 \\ 4 \end{pmatrix}$ relations in total, and that $\tfrac{1}{2}j(j+1)(n-j)$ of them are of degree $n+j+1$, for $1\leq j\leq n$.

\subsubsection{The quotients $S^n({\mathbb C}^2/\mu_d)$}\label{wrprodinvs}

Recall that $W(B_n)$ (with its action on $V={\mathbb C}^n$) can be described as the signed symmetric group, i.e. the group of monomial $n\times n$ matrices with non-zero entries $\pm 1$.
The subgroup $D$ of diagonal matrices is normal and induces an isomorphism of $W(B_n)$ with the wreath product $\mu_2\wr\mathfrak{S}_n$, that is, the semidirect product $\mu_2^n\rtimes\mathfrak{S}_n$.
Replacing $-1$ by a primitive $d$-th root of unity, we obtain the complex reflection group $\mu_d\wr\mathfrak{S}_n$, often denoted $G(d,1,n)$.
In particular, $W(B_n)=G(2,1,n)$.

Let $V$ be the reflection representation for $\Gamma=G(d,1,n)$.
The quotient $(V\oplus V^*)/\Gamma$ is isomorphic to $S^n({\mathbb C}^2/\mu_d)$.
One can obtain an easy upper bound on the number of generators of the invariant ring, by considering the action of $\mathfrak{S}_n$ on the quotient ${\mathbb C}^{2n}/D = {\mathbb C}^2/\mu_d \times {\mathbb C}^2/\mu_d\times\ldots \times{\mathbb C}^2/\mu_d$.
Writing $x_i,y_i$ for the standard coordinates on the $i$-th copy of ${\mathbb C}^2$, we see that ${\mathbb C}[V\oplus V^*]^D$ is generated by $x_i y_i$, $x_i^d$ and $y_i^d$ for $1\leq i\leq n$.
Furthermore, $\Gamma/D\cong\mathfrak{S}_n$ acts by permuting the columns of the following matrix:
$$\begin{pmatrix} x_1 y_1 & x_2 y_2 & \ldots & x_n y_n \\ x_1^d & x_2^d & \ldots & x_n^d \\ y_1^d & y_2^d & \ldots & y_n^d\end{pmatrix}.$$
This leads to a set of homogeneous generators for ${\mathbb C}[V\oplus V^*]^\Gamma$, as polarisations (using the columns of the matrix above) of the elementary invariants $\sum_1^n x_i y_i$, $\sum_1^n x_i^2 y_i^2$, $\ldots$, $\sum_1^n x_i^n y_i^n$.
There are $3$, resp. $6$, $10$ etc. polarisations of these terms, giving a total of $\frac{1}{6}n(n^2+6n+11)$ generators.
In general, this is not a minimal generating set.
We consider the first few values of $n$.
%$$\sum_1^n x_i^d,\quad \sum_1^n x_i y_i, \quad \sum_1^n y_i^d,$$
%$$\sum_1^n x_i^{2d}, \quad \sum_1^n x_i^{d+1} y_i, \quad \sum_1^n x_i^d y_i^d, \quad \sum_1^n x_i^2 y_i^2, \quad \sum_1^n x_i y_i^{d+1}, \quad \sum_1^n y_i^{2d},$$

%Similarly to the symmetric groups, we want to find generators for the ideal of relations.
%We obtained the following results:

(i) For $n=1$, the generating set $\{ x_1 y_1, x_1^d, y_1^d\}$ is minimal, with a single relation.

(ii) The case $n=2$ is most easily understood using a modified list of $D$-invariants: we note that $t=x_1 y_1+x_2 y_2$, $u=x_1^d+x_2^d$ and $v=y_1^d+y_2^d$ are $\mathfrak{S}_2$-invariant, while $\delta=x_1 y_1-x_2 y_2$, $\xi=x_1^d-x_2^d$, $\eta=y_1^d-y_2^d$ are semi-invariants for $\mathfrak{S}_2$.
We therefore obtain a set of nine generators for ${\mathbb C}[V\oplus V^*]^{\Gamma}$: the ``linear'' invariants $t,u,v$, and the ``quadratic'' invariants $\delta^2$, $\delta\xi$, $\delta \eta$, $\xi^2$, $\xi\eta$, $\eta^2$.
Moreover, $\xi\eta+uv=2(x_1^d y_1^d+x_2^d y_2^d)\in{\mathbb C}[x_1 y_1, x_2 y_2]^{{\mathfrak S}_2} = {\mathbb C}[t,\delta^2]$, so the generator $\xi\eta$ is redundant.
Using the bigrading of the invariant ring discussed below, it can be shown that the resulting set of eight generators is minimal.
There are six obvious relations between these generators:
$$\delta^2.\xi^2 = (\delta\xi)^2,\quad \delta^2.\eta^2=(\delta\eta)^2,\quad \xi^2.\delta\eta = \delta\xi.\xi\eta, \quad \eta^2.\delta\xi = \delta\eta.\xi\eta,\quad \xi^2\eta^2=(\xi\eta)^2, \quad \delta^2.\xi\eta = \delta\xi.\delta\eta, $$
where in the last four relations we should replace $\xi\eta$ by the relevant expression in $-uv+{\mathbb C}[t,\delta^2]$.
It is also straightforward to see that $\xi v+\eta u = 2(x_1^d y_1^d-x_2^dy_2^d)\in \delta{\mathbb C}[t,\delta^2],$ and hence we obtain three further relations:
$$\delta\xi.v+\delta\eta.u\in\delta^2{\mathbb C}[t,\delta^2],\quad \xi^2.v+\xi\eta.u\in\delta\xi{\mathbb C}[t,\delta^2],\quad \xi\eta.v+\eta^2.u\in\delta\eta{\mathbb C}[t,\delta^2].$$
We therefore obtain nine relations satisfied by the set of eight generators.
We have checked in Magma that this is a complete set of relations when $d=2$ or $3$.
The following bigrading on $(\mu_d\times\mu_d)$-invariant ring turns out to be useful: the monomial $x_i^{a+md}y_i^a$ has bidegree $(2a+md,m)$ for any $a,m\in{\mathbb Z}$ such that $a,a+md\geq 0$.
The first component of the bidegree is the ordinary degree; the second is a (suitably normalized) encapsulation of the action by Poisson bracket with $t$.
In the special case of $G(3,1,2)$, the degrees of the generating invariants are as follows:
$$\deg (t)=(2,0),\quad \deg(u)=(3,1),\quad \deg(v)=(3,-1),$$ $$\deg(\delta^2)=(4,0),\quad\deg(\delta\xi)=(5,1),\quad \deg(\delta\eta)=(5,-1),\quad \deg(\xi^2)=(6,2),\quad \deg(\eta^2)=(6,-2).$$

(iii) The case $n=3$ could be tackled by a similar approach, but with rather involved considerations.
When $d=2$, i.e. $\Gamma=W(B_3)$, we have checked (using GAP to obtain the relations and Magma to check that the associated ideal is radical) that there is a minimal set of fifteen generators with thirty six relations.

These results are rather striking, and suggest the following conjecture: that, for $d>1$, the invariant ring ${\mathbb C}[V\oplus V^*]^{G(d,1,n)}$ has the same number of generators and relations as an $a_n$ singularity, i.e. $n(n+2)$ generators and $\frac{1}{4}n^2(n+1)^2$ relations.
(Note in particular that this is independent of $d>1$.
It is interesting to observe that the conjecture requires $\frac{1}{6}n(n^2-1)$ ``redundant'' generators among the polarisations discussed above.)
When $d=2$ (i.e. for $\Gamma=W(B_n)$), it is straightforward to confirm that this is the correct cardinality of a minimal set of generators: there are $2m+1$ generators of degree $2m$ for $m=1,\ldots ,n$, summing to $n(n+2)$ in total.
Furthermore, the three generators of degree 2 span a copy of $\mathfrak{sl}_2$ (for the Poisson bracket), over which the generators of degree $2m$ span a copy of the irreducible module $V(2m)$ (for $1\leq m\leq n$).

\begin{remark}
A heuristic argument in support of this conjecture is as follows.
Let $\Gamma=G(d,1,n)$ and let ${\mathfrak c}$ be the vector space of $\Gamma$-invariant ${\mathbb C}$-valued functions on the set of complex reflections in $\Gamma$.
Denote by $H_{t,c}(\Gamma)$ the rational Cherednik algebra determined by $(t,c)\in{\mathbb C}\oplus{\mathfrak c}$.
It is well known that the assignment $c\mapsto eH_{0,c}(\Gamma)e$ (where $e$ is the average of the group elements of $\Gamma$) defines a flat deformation of $eH_{0,0}e\cong {\mathbb C}[V\oplus V^*]^{\Gamma}$.
Recall that the quotient $(V\oplus V^*)/G(d,1,n)$ has a symplectic resolution, which can be described as the composition ${\rm Hilb}^n(\widetilde{{\mathbb C}^2/\mu_d})\rightarrow S^n(\widetilde{{\mathbb C}^2/\mu_d})\rightarrow S^n({\mathbb C}^2/\mu_d)$, where $\widetilde{{\mathbb C}^2/\mu_d}\rightarrow {\mathbb C}^2/\mu_d$ is the minimal resolution of singularities.
Existence of a symplectic resolution is equivalent to smoothness of ${\rm Spec}(eH_{0,c}e)$ for generic values of the parameter $c$.
The values of $c$ for which ${\rm Spec}(eH_{0,c}e)$ is singular form a union of hyperplanes $H_1,\ldots ,H_r$ in ${\mathfrak c}$.
It is known that there is one such hyperplane, $H_1$ say, such that for any $c\in H_1\setminus \cup_2^r H_i$, ${\rm Spec}(eH_{0,c}e)$ has a unique isolated singular point which is smoothly equivalent to an $a_n$ singularity.\footnote{We thank Gwyn Bellamy for explaining that this can be obtained using a similar calculation to the proof of \cite[Thm.5.5]{Bellamy-Craw}.}
Thus a minimal set of generators for $eH_{0,0}e$ can be deformed and localized to give a generating set (perhaps not minimal) for $a_n$.
Thus ${\mathbb C}[V\oplus V^*]^\Gamma$ has at least $n(n+2)$ generators.
Since the deformation is flat, then a minimal free resolution of the ideal of relations of ${\mathbb C}[V\oplus V^*]^\Gamma$ deforms and localizes to a free resolution of $a_n$.
Thus it also follows that $\frac{1}{4}n^2(n+1)^2$ is a lower bound for the number of relations ${\mathbb C}[V\oplus V^*]^\Gamma$.
\end{remark}

\subsection{The exceptional special slice singularity in $F_4$}\label{F4subsec}

Let $\g$ be a simple Lie algebra of type $F_4$, let $\0'$ (resp. $\0$) be the nilpotent orbit in $\g$ with Bala-Carter label $A_2+\tilde{A}_1$ (resp. $F_4(a_3)$), let $\{ h,e,f\}$ be an $\mathfrak{sl}_2$-triple with $e\in\0'$ and let ${\mathcal S}=f+\g^e$ be the Slodowy slice.
Our goal is to show that the intersection ${\mathcal S}\cap\overline{\0}$ is isomorphic to $({\mathfrak h}_3\oplus{\mathfrak h}_3^*)/\mathfrak{S}_4$.
Let $\rho:{\mathfrak g}\rightarrow\mathfrak{so}_{26}$ be the minimal faithful representation of $\g$.
%Then $x$ belongs to $\overline{\0}$ if and only if $\rho(x)^5=0$.

%We first tried to prove this using the presentation for the invariant ring ${\mathbb C}[{\mathfrak h}_3\oplus{\mathfrak h}^*_3]^{{\mathfrak S}_4}$ with twelve generators and fifteen relations.
%This could then be compared to the conditions $\rho(x)^5=0$ and ${\rm tr}(\rho(x)^2))=0$ on ${\mathcal S}$; it can be checked that the closed subset of ${\mathcal S}$ defined by these conditions is isomorphic to the scheme with twelve generators and these fifteen relations.
%It follows that one has an isomorphism of the corresponding reduced schemes, hence of ${\mathcal S}\cap\overline{\0}$ with $({\mathfrak h}_3\oplus{\mathfrak h}_3^*)/\mathfrak{S}_4$.
%(The stronger condition that $\rho(x)^5=0$ fully cut out the radical ideal of $\overline\0$ almost certainly does not hold.)
%An alternative, slightly less computationally intensive, approach is to use the above generating set to parametrize a subset of ${\mathcal S}\cap\overline{\0}$.

\begin{theorem}\label{F4excslicethm}
Let $\g,\0',\0,{\mathcal S}$ be as above.
Then ${\mathcal S}\cap\overline{\0}$ is isomorphic to $({\mathfrak h}_3\oplus{\mathfrak h}_3^*)/\mathfrak{S}_4$.
\end{theorem}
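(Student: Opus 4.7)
The plan is to give a direct computational proof in the spirit of Theorem \ref{d4S4thm}, but without an auxiliary global model of the quotient: we will verify the identification $\mathcal{S} \cap \overline{\mathcal{O}} \cong (\mathfrak{h}_3 \oplus \mathfrak{h}_3^*)/\mathfrak{S}_4$ by matching invariant rings, using the explicit presentation of $k[\mathfrak{h}_3 \oplus \mathfrak{h}_3^*]^{\mathfrak{S}_4}$ recalled at the start of this section. Identify $\mathfrak{h}_3$ with traceless diagonal $4 \times 4$ matrices $\Delta(\lambda) = \mathrm{diag}(\lambda_1,\ldots,\lambda_4)$, let $g_{ij}(\lambda,\mu) = \mathrm{tr}(\Delta(\lambda)^i \Delta(\mu)^j)$ for $2 \le i+j \le 4$ be the 12 polarisations generating the invariant ring, and fix a set of 15 defining relations among them (which, as noted, cut out a radical ideal of height 6, verified in Magma).

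The first concrete step is to set up the slice in GAP. Pick Chevalley generators for $\mathfrak{g}$ of type $F_4$, choose an $\mathfrak{sl}_2$-triple $\{h,e,f\}$ with $e,f \in \mathcal{O}_{A_2+\tilde{A}_1}$, and compute a weight basis of $\mathfrak{g}^e$ (which has dimension 12) as a module for the reductive part $\mathfrak{c} = \mathfrak{g}^e \cap \mathfrak{g}^h$ of the centralizer. The second step, and the main computational one, is to exhibit an explicit $\mathfrak{S}_4$-equivariant morphism
\[
\psi : \mathfrak{h}_3 \oplus \mathfrak{h}_3^* \longrightarrow f + \mathfrak{g}^e, \qquad (\lambda,\mu) \longmapsto f + \sum_{u} \phi_u(\lambda,\mu)\, u,
\]
where $u$ runs over the chosen weight basis of $\mathfrak{g}^e$ and each coefficient $\phi_u$ is a polynomial in the $g_{ij}$, chosen so that $\psi(\lambda,\mu) \in \overline{\mathcal{O}}$ for all $(\lambda,\mu)$. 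Following the template of Theorem \ref{d4S4thm}, I would first parametrize a semisimple family: take $\lambda$ regular in $\mathfrak{h}_3$ and $\mu = 0$ (or vice versa), and solve for the required coefficients so that $\psi(\lambda,0)$ is conjugate to an $\mathfrak{sl}_2$-triple element whose reductive decomposition has semisimple part represented by $\Delta(\lambda)$. One then extends to the mixed case by adjusting the coefficients along the ascending $\mathbb{G}_m$-filtration on $\mathfrak{g}^e$, at each step using that the $g_{ij}$ of fixed bidegree span a $\mathfrak{c}$-isotypic component of the right weight.

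Once $\psi$ is produced, the proof concludes in three verifications. (i) $\mathfrak{S}_4$-invariance: each $\phi_u$ is by construction a polynomial in the $g_{ij}$, so $\psi$ factors through a morphism $\bar\psi : (\mathfrak{h}_3 \oplus \mathfrak{h}_3^*)/\mathfrak{S}_4 \to \mathcal{S} \cap \overline{\mathcal{O}}$. (ii) Dominance and dimension count: the image of $\psi$ has dimension $6 = \dim \mathcal{S} \cap \overline{\mathcal{O}}$ (by checking, via the Jacobian of $\psi$ at a regular point, that the differential has rank 6), and $\mathcal{S} \cap \overline{\mathcal{O}}$ is unibranch by the results recalled in \S\ref{branching}, so $\bar\psi$ is surjective and $\mathcal{S} \cap \overline{\mathcal{O}}$ is irreducible of dimension 6. (iii) Birationality and normality: the 12 pullback functions $\psi^*$ applied to a suitable set of 12 linear coordinates on the ambient space $f+\mathfrak{g}^e$ recover the generators $g_{ij}$ (up to an invertible triangular change of basis ordered by the $\mathbb{G}_m$-weight), hence the comorphism $k[\mathcal{S} \cap \overline{\mathcal{O}}] \to k[\mathfrak{h}_3 \oplus \mathfrak{h}_3^*]^{\mathfrak{S}_4}$ is surjective; since the target is normal and the map is finite birational, $\bar\psi$ is an isomorphism.

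The main obstacle will be step (iii): one must verify not only that the $g_{ij}$ are in the image, but that the relations satisfied on $\mathcal{S} \cap \overline{\mathcal{O}}$ by the image coordinates coincide with the 15 known relations, so that $\bar\psi$ is an isomorphism rather than merely a normalization. A clean way to circumvent this is to prove normality of $\mathcal{S} \cap \overline{\mathcal{O}}$ independently: either by invoking the normality of special pieces in exceptional types (once established for this case) and passing to the slice, or by observing that $\bar\psi$ is a finite birational morphism onto a unibranch variety with smooth open part of complement of codimension $\geq 2$, and applying Serre's criterion using that $(\mathfrak{h}_3 \oplus \mathfrak{h}_3^*)/\mathfrak{S}_4$ is Cohen–Macaulay. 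I expect the bulk of the remaining work, as in Theorem \ref{d4S4thm}, to consist of the GAP hunt for the explicit coefficients $\phi_u$.
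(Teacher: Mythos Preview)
Your overall strategy --- construct an explicit $\mathfrak{S}_4$-invariant morphism $\psi$ from $\mathfrak{h}_3\oplus\mathfrak{h}_3^*$ into the slice, check its image lies in $\overline{\mathcal O}$, and conclude by a closed-immersion plus dimension argument --- is exactly what the paper describes as the ``reverse'' formulation of its own proof. So the approach is sound in outline, but there are two concrete problems with the execution as written.

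First, a factual error: $\dim\mathfrak{g}^e=18$, not $12$. Over the reductive centralizer $\mathfrak{c}\cong\mathfrak{sl}_2$ one has $\mathfrak{g}^e\cong V(2)\oplus V(3)\oplus V(4)\oplus W$ with $W$ a $6$-dimensional reducible piece. Your map $\psi$ must therefore have nontrivial components in $W$, and these are not linear in the $g_{ij}$; they are determined only after one uses the equations $\rho(f+z)^5=0$ and ${\rm tr}(\rho(f+z)^2)=0$ (with $\rho$ the $26$-dimensional representation) to solve for the $W$-coordinates as polynomials in the coordinates on $V(2)\oplus V(3)\oplus V(4)$. This elimination step is how the paper actually produces $\psi$; your proposed route via a ``semisimple family with $\mu=0$'' does not adapt here, because the reductive centralizer is only $\mathfrak{sl}_2$ and there is no Cartan-type subspace in the slice playing the role that $\mathfrak{z}(\mathfrak{l})$ played in Theorem~\ref{d4S4thm}.

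Second, your step (iii) and the subsequent worry about normality are unnecessary. If the twelve linear coordinates on $V(2)\oplus V(3)\oplus V(4)$ pull back (after a triangular change of basis) to the generators $g_{ij}$, then the comorphism from $k[f+\mathfrak{g}^e]$ onto $k[\mathfrak{h}_3\oplus\mathfrak{h}_3^*]^{\mathfrak{S}_4}$ is already surjective, so $\bar\psi$ is a closed immersion; combined with unibranchness and the dimension count, this forces $\bar\psi$ to be an isomorphism outright. No separate matching of the fifteen relations, and no appeal to normality of the slice, is required. (Your suggested workaround of invoking normality of special pieces would in any case be circular, since that corollary is deduced from the Main Theorem, of which the present statement is a constituent.) The paper's direct route --- eliminate the $W$-variables, then verify that the residual conditions $\rho(f+z)^5=0$ on the $12$-dimensional subvariety $Y\cong V(2)\oplus V(3)\oplus V(4)$ coincide with the fifteen relations --- is the same computation organised in the forward direction.
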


\begin{proof}
It is easy to check that $x$ belongs to $\overline\0$ if and only if $\rho(x)^5=0$.
That this is a sufficient condition follows from comparing Jordan block sizes on $\g$ and on the Lie algebra of type $E_6$: the two orbits which are minimal among those not contained in $\overline\0$, with labels $B_3$ and $C_3$, have blocks on $\rho$ of size $7$ and $9$ respectively.

Let $\{ h,e,f\}$ be an $\mathfrak{sl}_2$-triple with $e\in\0'$.
The centralizer ${\mathfrak z} = \g^e$ has dimension $18$, with reductive centralizer ${\mathfrak c}={\mathfrak g}^e\cap{\mathfrak g}^f$ isomorphic to $\mathfrak{sl}_2$. The ${\mathfrak c}$-module structure of ${\mathfrak z}$ is as follows:
${\mathfrak z}=V(2)\oplus V(3)\oplus V(4)\oplus W$, where $V(i)$ denotes the simple $\cg$-module of highest weight $i$, and $W$ is a reducible submodule of dimension six. Note that $\cg$ is the copy of $V(2)$.

%The reductive centralizer $\cg = \cg^e \cap \cg^f$ is isomorphic to $ \mathfrak{sl}_2$. Denoting the simple $\cg$-module of highest weight $i$ by $V(i)$, the centralizer ${\mathfrak g}^e$ decomposes over $\cg$ as $V(2)\oplus V(3)\oplus V(4)\oplus W$, where $W$ is a reducible representation for $\cg$ of dimension six; so the total dimension is $18$. 

Let $f+z\in{\mathcal S}$, where $z\in\g^e$; we first observed that the conditions ${\rm tr}(\rho((f+z)^2))=0$ and $\rho(f+z)^5=0$ imply certain conditions of the form $(\chi+p)(z)=0$, where $\chi\in W^*$ and $p$ is a polynomial in $V(2) \oplus V(3) \oplus V(4)$.
In particular, we were able to construct an explicit subvariety $Y$, which contains ${\mathcal S}\cap\overline\0$ and is isomorphic to $V(2)\oplus V(3)\oplus V(4)$.
We could then check that the remaining conditions $\rho((f+z)^5)$, when restricted to $Y$, give precisely the fifteen relations satisfied by the generators of ${\mathbb C}[{\mathfrak h}_3\oplus{\mathfrak h}_3^*]^{\mathfrak{S}_4}$ (as outlined in \S \ref{Sninvs}).
The statement in the theorem follows.
We note one consequence of this calculation: the conditions ${\rm tr}(\rho(x)^2)=0$ and $\rho(x)^5=0$ cut ${\mathcal S}\cap\overline\0$ out as a subscheme (not just a subset) of ${\mathcal S}$.

%We used the twelve generators of ${\mathbb C}[{\mathfrak h}_3\oplus{\mathfrak h}^*_3]^{{\mathfrak S}_4}$ to construct (in GAP or Magma) a closed immersion $\Phi$ from $({\mathfrak h}_3\oplus{\mathfrak h}^*_3)/\mathfrak{S}_4$ to ${\mathcal S}$, with image contained in $\overline{\0}$.
%We give here the first few terms of the corresponding map: let $\{ h_0,e_0,f_0\}$ be an $\mathfrak{sl}_2$-triple spanning $\g^e(0)=\g^e\cap\g^h$, let $\{ v_3,v_1,v_{-1},v_{-3}\}$ be a basis for $\g^e(1)$ and let $\{ w_4,w_2,w_0,w_{-2},w_{-4}\}$ span the $\g^e(0)$-stable complement to ${\mathbb C}e$ in $\g^e(2)$.
%Then $\Phi$ maps $(x,y)\in{\mathfrak h}_3\oplus{\mathfrak h}_3^*$ to:
%$$f+\tfrac{1}{6}\left( g_{20} e_0+g_{11} h_0+g_{02} f_0\right) + \tfrac{1}{3}\left( g_{30} v_3+g_{21} v_1+g_{12} v_{-1}+g_{03} v_{-3}\right)+ \tfrac{1}{36}\left( g_{20}g_{02}-g_{11}^2\right) e $$ $$+\tfrac{1}{2}\left( l_4 w_4+l_2 w_2+l_0 w_0+l_{-2} w_{-2}+l_{-4} w_{-4}\right) + \ldots$$
%where $l_4=g_{40}-\tfrac{5}{12}g_{20}^2$, $l_2=g_{31}-\tfrac{5}{12}g_{20}g_{11}$, $l_0=g_{22}-\tfrac{5}{18}g_{11}^2-\tfrac{5}{36}g_{20}g_{02}$, $l_{-2}=g_{13}-\tfrac{5}{12}g_{11}g_{02}$, $l_{-4}=g_{04}-\tfrac{5}{12}g_{02}^2$, and all remaining terms are polynomials in the $g_{ij}$.
Our proof turns out to be more transparent in reverse.
Thus we can use the above to construct a morphism from $({\mathfrak h}_{3}\oplus{\mathfrak h}_{3}^*)/\mathfrak{S}_4$ to the intersection ${\mathcal S}\cap\overline\0$.
Since all of our generating invariants $g_{ij}$ appear, this map is a closed immersion.
By general results on branching and dimension, the image equals the intersection ${\mathcal S}\cap\overline{\0}$.
One can verify that the image lies in $\overline{\0}$ by checking the condition $\rho(x)^5=0$.
\end{proof}

\begin{remark}\label{F4calcremark}
a) It is possible to argue from Thm. \ref{d4S4thm} that ${\mathcal S}\cap\overline{\0}$ is isomorphic to ${\mathbb C}^6/\mathfrak{S}_4$.
(A smooth equivalence of singularities follows more or less immediately; our result establishes an isomorphism as varieties.)
We have included the above computational proof because it helps to illustrate the general approach, which we will also employ in the following subsection.
The precise computation will also be useful in the proof by the second, third and fourth authors of (the global version of) Lusztig's special pieces conjecture \cite{Fu-Juteau-Levy-Sommers:GeomSP}.

b) The symplectic leaves in $({\mathfrak h}_3\oplus{\mathfrak h}_3^*)/\mathfrak{S}_4$ are in one-to-one correspondence with the parabolic subgroups of $\mathfrak{S}_4$ up to conjugacy.
This is summarized in the following diagram:
$$
\small{\xymatrix@=.2cm{
{} & F_4(a_3) \ar@{-}[d] & {} &&&& {} & 1 \ar@{-}[d] & {} \\
{} & C_3(a_1) \ar@{-}[dr] \ar@{-}[dl] & {} &&&& {} & \mathfrak{S}_2 \ar@{-}[dr] \ar@{-}[dl] & {} \\
\tilde{A}_2+A_1 \ar@{-}[dr] & {} & B_2 \ar@{-}[dl] &&&& \mathfrak{S}_3 \ar@{-}[dr] & {} & \mathfrak{S}_2\mathfrak{S}_2 \ar@{-}[dl] \\
{} & A_2+\tilde{A}_1 & {} &&&& {} & \mathfrak{S}_4 & {} }}
$$
\end{remark}

\subsection{The exceptional special slice singularity in $E_8$}\label{E8subsec}

The notation is as in the previous subsection, except that $\g$ is simple of type $E_8$, $\0$ has label $E_8(a_7)$ and $\0'$ has label $A_4+A_3$.  
We used a computer to show that ${\mathcal S}\cap\overline{\0}$ 
has the form predicted by the Main Theorem.

%is isomorphic to $({\mathfrak h}_4\oplus{\mathfrak h}_4^*)/\mathfrak{S}_5$.
%It follows from the discussion in the previous subsection that the coordinate ring of the quotient has eighteen generators; the degrees are as for the $\mathfrak{S}_4$ case, with the addition of six generators of degree 5.
%We found thirty-five relations satisfied by these generators: four of degree 7, nine of degree 8, twelve of degree 9 and ten of degree 10.

\begin{theorem}\label{E8excslicethm}
%Let $\g,\0',\0,{\mathcal S}$ be as above.
The slice ${\mathcal S}\cap\overline{\0}$ is isomorphic to $({\mathfrak h}_4\oplus{\mathfrak h}_4^*)/\mathfrak{S}_5$.
\end{theorem}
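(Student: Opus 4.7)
The plan is to mirror the computational strategy from Theorem~\ref{F4excslicethm}, adapted to type $E_8$ with $n=5$ in place of $n=4$. First I would fix an explicit realization of $\g = \mathfrak{e}_8$ via a Chevalley basis (in GAP), and choose an $\mathfrak{sl}_2$-triple $\{h,e,f\}$ with $e \in \0_{A_4+A_3}$, for instance by taking $e$ to be a suitable sum of simple root vectors associated with a subsystem subalgebra of type $A_4+A_3$. I would then compute $\g^e$, the reductive centralizer $\cg := \g^e \cap \g^h$, and the decomposition of $\g^e$ as a $\cg \times {\mathbb G}_m$-module, where ${\mathbb G}_m$ acts via the grading from $\mathrm{ad}(h)$ as in~\S\ref{transverse}.

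Next, I would construct the isomorphism in reverse, as in the proof of Theorem~\ref{F4excslicethm}. Set $V = {\mathfrak h}_4 \oplus {\mathfrak h}_4^*$ and let $g_{ij}: V \to {\mathbb C}$ (for $2 \le i+j \le 5$) be the $18$ polarised invariants generating $R := {\mathbb C}[V]^{\mathfrak{S}_5}$. Using the $\cg \times {\mathbb G}_m$-module decomposition of $\g^e$, I would define a morphism $\Phi: V \to f + \g^e$ which is $\mathfrak{S}_5$-invariant (hence factors through $V/\mathfrak{S}_5$) and ${\mathbb G}_m$-equivariant, with the property that each $g_{ij}$ occurs as (a non-zero multiple of) a coordinate function on the image; here the Slodowy-slice weight $i+j+1$ on each isotypic summand of $\g^e$ should match the natural ${\mathbb G}_m$-weight of the corresponding $g_{ij}$. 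Since every generator of $R$ appears, the pullback $\Phi^*$ will be surjective onto $R$, so the induced map $\bar\Phi: V/\mathfrak{S}_5 \to f + \g^e$ is a closed immersion. Note that this strategy sidesteps the question of whether the $35$ known relations among the $g_{ij}$ form a complete set.

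The principal obstacle is the remaining step: verifying that the image of $\bar\Phi$ lies in $\overline{\0}$. Unlike in $F_4$, there is no small faithful representation of $\mathfrak{e}_8$, so membership in $\overline{\0_{E_8(a_7)}}$ must be tested via conditions on the $248$-dimensional adjoint action. A workable criterion combines rank bounds on $(\mathrm{ad}\,x)^k$ matching the partition of $E_8(a_7)$ on $\g$, together with vanishing of a few lower-degree invariants to rule out larger orbits sharing the same partition. This is far beyond what can be checked by hand in a $248\times 248$ matrix of polynomials in $8$ parameters; the column-by-column technique suggested by Chris Parker and acknowledged in the introduction is precisely the kind of tool that makes this calculation tractable, and I expect it to be the essential ingredient here.

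Once the image is shown to lie in $\overline{\0}$, the conclusion follows from the general principle used throughout the paper. We have $\dim V/\mathfrak{S}_5 = 8 = \codim_{\overline{\0}} \0'$, and by \S\ref{branching} the slice ${\mathcal S}\cap\overline\0$ is unibranch (and hence irreducible of dimension $8$) at $f$, so the closed immersion $\bar\Phi$ maps $V/\mathfrak{S}_5$ isomorphically onto ${\mathcal S}\cap\overline\0$.
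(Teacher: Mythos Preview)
Your outline follows the paper's strategy closely, and the endgame (closed immersion plus unibranchness and dimension count) is exactly right; your observation that this sidesteps the question of whether the $35$ relations are complete is also correct, and is in fact the reason the paper proceeds as it does. But there is a concrete gap in your construction of $\Phi$. The centralizer $\g^e$ has dimension $48$; as a $\cg$-module (with $\cg\cong\mathfrak{sl}_2$) it decomposes as $V(2)\oplus V(3)\oplus V(4)\oplus V(5)\oplus W$, where $V(i)$ denotes the $(i{+}1)$-dimensional irreducible and $W$ is a reducible $30$-dimensional piece. The eighteen invariants $g_{ij}$ with $2\le i+j\le 5$ match the $V(2)\oplus\cdots\oplus V(5)$ summand, but you have not said what the thirty $W$-components of $\Phi$ are. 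Setting them to zero does not land in $\overline\0$; getting them right is the substance of the computation, and you cannot simply guess an equivariant $\Phi$ and hope it works. The paper resolves this by working forwards first: restricting the condition $(\ad x)^{11}=0$ (together with ${\rm tr}((\ad x)^2)=0$) to $\mathcal S$ produces enough equations with a linear term in a $W$-coordinate to eliminate all thirty of them as explicit polynomials in the remaining eighteen. This yields a subvariety $Y\cong V(2)\oplus\cdots\oplus V(5)$ of $\mathcal S$ containing $\mathcal S\cap\overline\0$, and only then does one parametrise an $8$-dimensional $X\subset Y$ and verify $X\subset\overline\0$.

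Your membership criterion is also vaguer than needed. The paper uses the single condition $(\ad x)^{11}=0$, which already cuts out $\overline\0$ exactly: the weighted Dynkin diagram of $E_8(a_7)$ is supported on $\alpha_5$ with value $2$, so the highest $\ad h$-eigenvalue is $10$, while the two orbits minimally above $E_8(a_7)$ (namely $D_5{+}A_1$ and $A_6$) have highest eigenvalue $\ge 13$. This is both the criterion used for the elimination above and for the final verification. The column-by-column check you mention is indeed one of the two methods the paper uses to confirm $X\subset\overline\0$; the quicker alternative is to check that a small set of the matrix entries already vanish on $X$ and that their zero locus in $\mathcal S$ has dimension $8$ at a smooth point of $X$, whence it equals $\mathcal S\cap\overline\0$ by irreducibility.
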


\begin{proof}
The weighted Dynkin diagram of $\0$ is nonzero only on the fifth simple root, where its value is $2$.  The highest root has coefficient $5$ on this simple root, which implies that $10$ is the largest 
eigenvalue for $\ad(h)$, where $h$ is the semisimple element of the $\sl2$-subalgebra for $\0$.  
It follows that $({\rm ad}\, x)^{11}=0$ for all $x$ in the closure of $\0$.
The two nilpotent orbits which are minimal among those not contained in $\overline\0$, namely $D_5$ and $A_6$, have Jordan blocks of sizes $15$ and $13$ respectively on the adjoint representation.  It follows that the zero set of the equations $({\rm ad}\, x)^{11}$ for $x \in \g$ is exactly $\overline{\0}$.

The centralizer $\mathfrak z$ in $\g$ of an element in $\0'$ has dimension $48$.
The reductive part $\cg$ of $\mathfrak z$ is isomorphic to $\sl2$, as in the $F_4$ case. 
% Let $V(i)$ denote the irreducible representation of $\sl2$ of highest weight $i$.   
As a $\cg$-module, $\mathfrak z$ decomposes as $V(2) \oplus V(3) \oplus V(4) \oplus V(5) \oplus W$, where $W$ is a $30$-dimensional reducible $\sl2$-representation.
We found in Magma a basis for $\mathfrak z$ that simultaneously respected this $\sl2$-decomposition and the Kazhdan grading.  
(For our calculations in GAP, we used the basis constructed in \cite{Lawther-Testerman}.)
Then by restricting the generic matrix $({\rm ad}\, x)^{11}$ to $\mathcal S$ using the dual basis, we located 
several equations that contained linear terms in the dual basis, allowing us to eliminate those variables.
After eliminating those variables (also using the condition ${\rm tr}(({\rm ad}\, x)^2)=0$), we again found more variables we could eliminate.  Continuing in this way, we were able to eliminate all $30$ variables coming from $W$.  

We thus constructed a subvariety $Y$ of ${\mathcal S}$, which is isomorphic to $V(2)\oplus V(3)\oplus V(4)\oplus V(5)$ and which contains ${\mathcal S}\cap\overline\0$.
Because we do not know that we have obtained a complete set of relations for $({\mathfrak h}_4\oplus {\mathfrak h}_4^*)/\mathfrak{S}_5$, we could not repeat the next step in the proof of Thm. \ref{F4excslicethm}.
Instead, we used some additional equations in $({\rm ad}\, x)^{11}$ to parametrize an $8$-dimensional ${\mathbb G}_m$-stable subvariety $X$ of $Y$ that could potentially lie in the zero set of $({\rm ad}\, x)^{11}$ and be isomorphic to $({\mathfrak h}_4\oplus{\mathfrak h}_4^*)/\mathfrak{S}_5$ and also respect the action of $\cg$.  
However, we could not easily show that $X \subset \overline{\0}$.  We have two different methods to accomplish this fact.  One way was to check the condition $({\rm ad}\, x)^{11}=0$ in GAP, column by column, for $x \in X$. This was very time consuming, requiring a few hours for each column.  A faster way was to pick a small set of the matrix equations and show that they vanish on $X$.  Then we checked that the zero set of this small set of equations on $\mathcal S$ at a regular point of $X$ has dimension $8$.  
(We ported the small set of equations used in the proof to Singular, and found that the ideal they generate has a minimal set of $35$ generators, and that the degrees match those of the known relations for ${\mathbb C}[{\mathfrak h}_4\oplus {\mathfrak h}_4^*]^{{\mathfrak S}_5}$, as discussed in \S \ref{Sninvs}.)
As the full ideal of equations on $\mathcal S$ has zero set of dimension $8$ and is irreducible (since $\overline\0$ is unibranch at $\0'$), we know that the zero set must be exactly $X$, completing the proof.
\end{proof}

\begin{remark}\label{E8calcremark}
a) Theorems \ref{F4excslicethm} and \ref{E8excslicethm} give new proofs of the appearance of the non-normal singularity $m$ for the degenerations $(A_2+\tilde{A}_1,\tilde{A}_2+A_1)$ in $F_4$ and $(A_4+A_3,A_5+A_1)$ and $(A_4+A_3,D_5(a_1)+A_1)$ in $E_8$.
For example, the first singularity is given by the image in the quotient of all points with stabilizer equal to $\mathfrak{S}_3\subset\mathfrak{S}_4$.
This corresponds to a pair of diagonal matrices of the form:
$(x,y)=({\rm diag}\, (s,s,s,-3s), {\rm diag}\, (t,t,t,-3t))$.
It is easy to see that the values of the invariants ${\rm tr}(x^iy^j)$ induce functions generating the ring ${\mathbb C}[s^2,st,t^2,s^3,s^2t,st^2,t^3]$.

b) Similarly to the $F_4$ case, we have a poset equivalence between the orbits in ${\mathcal P}(\0)$ and the parabolic subgroups of $\mathfrak{S}_5$ up to conjugacy:

$$
\small{\xymatrix@=.2cm{
{} & E_8(a_7)\ar@{-}[d] & {} &&&& {} & 1 \ar@{-}[d] & {} \\
{} & E_7(a_5) \ar@{-}[dl]\ar@{-}[dr]   & {} &&&& {} & \mathfrak{S}_2 \ar@{-}[dl]\ar@{-}[dr]  & {} \\
E_6(a_3)+A_1 \ar@{-}[d] \ar@{-}[drr]  & {} & D_6(a_2) \ar@{-}[d]\ar@{-}[dll]   &&&& \mathfrak{S}_3 \ar@{-}[d]\ar@{-}[drr]    & {} & \mathfrak{S}_2\mathfrak{S}_2\ar@{-}[d]\ar@{-}[dll]    \\
A_5+A_1\ar@{-}[dr]  & {} & D_5(a_1)+A_2\ar@{-}[dl]  &&&& \mathfrak{S}_3\mathfrak{S}_2\ar@{-}[dr]   & {} & \mathfrak{S}_4\ar@{-}[dl]   \\
{} & A_4+A_3 & {} &&&& {} & \mathfrak{S}_5 & {}  }}
$$

c) We are thankful to Amihay Hanany for the following (conjectural) observation: the larger slice from $A_4+A_2+A_1$ to $E_8(a_7)$ should also be a quotient by $\mathfrak{S}_5$.
%In the terminology of \cite{Hanany_2018}, o
One expects this to be the quotient ${\mathcal C}_5/\mathfrak{S}_5$ where ${\mathcal C}_5$ is the Coulomb branch of the quiver gauge theory with a central $U(2)$-node attached to five $U(1)$-nodes.
This is a direct generalisation of the exceptional special slice in type $F_4$.
%with only four $U(1)$-nodes, we get the singularity $d_4/\mathfrak{S}_4$.
This observation also connects to the family of symplectic singuilarities that we studied (with Bellamy and Bonnaf\'e) in \cite{BBFJLS}: a `bottom edge' in the Hasse diagram of ${\mathcal C}_k/\mathfrak{S}_k$ is expected to be the singularity there denoted ${\mathcal Y}(k)$, as per the discussion in \cite[\S 4.4]{Bourget-Grimminger}.
Note that ${\mathcal Y}(4)=a_2$ and ${\mathcal Y}(5)$ is denoted $\chi$ in \cite{FJLS} (and is the singularity of the minimal degeneration $A_4+A_2+A_1<A_4+A_3$).
\end{remark}

\subsection{Two further quotient singularities}\label{furthersubsec}

Let $\g$ be a simple Lie algebra of type $E_7$, let $\0$ be the nilpotent orbit in $\g$ with Bala-Carter label $E_7(a_5)$, and let ${\mathcal S}$ be a Slodowy slice to an element of the orbit $\0'=\0_{A_4+A_2}\subset \overline\0$.

\begin{theorem}\label{B3thm}
With ${\mathcal S}$ and $\0$ as above, the slice ${\mathcal S}\cap\overline\0$ is isomorphic to $S^3({\mathbb C}^2/\mu_2)$.
\end{theorem}

\begin{proof}
Let ${\mathfrak h}_{B_3}$ be a Cartan subalgebra of $\mathfrak{so}_7$, considered as a reflection representation for $W(B_3)$.
Recall that the coordinate ring of $S^3({\mathbb C}^2/\mu_2)=({\mathfrak h}_{B_3}\oplus{\mathfrak h}^*_{B_3})/W(B_3)$ has a minimal set of fifteen generators.
Let $\rho:\g\rightarrow{\rm End}(V_{\rm min})$ denote the 56-dimensional faithful representation for $\g$.
For a nilpotent element $x\in\g$, the Jordan block sizes of $\rho(x)$ can be determined by including $\g$ in a simple Lie algebra of type $E_8$, which decomposes as $\g\oplus \mathfrak{sl}_2\oplus V_{\rm min}\oplus V_{\rm min}$.
A quicker approach is to inspect the Tables in \cite{Stewart}.
The condition for membership of $\overline\0$ is $\rho(x)^{10}=0$.
(The nilpotent orbits which are minimal among those not contained in $\overline\0$ are those with labels $A_6$ and $D_5$; elements of both of these orbits have two Jordan blocks of size 11.)
Let $\{ h,e,f\}$ be an $\mathfrak{sl}_2$-triple with $e\in\0'$.
The centralizer ${\mathfrak z}$ of $e$ in $\g$ has dimension 27, with reductive centralizer ${\mathfrak c}={\mathfrak g}^e\cap{\mathfrak g}^f$ isomorphic to $\mathfrak{sl}_2$.
The ${\mathfrak c}$-module structure of ${\mathfrak z}$ is as follows:
${\mathfrak z}=V(2)\oplus V(4)\oplus V(6)\oplus W$, where $W$ is a reducible submodule of dimension 12.
Similarly to Thm. \ref{F4excslicethm} and Thm. \ref{E8excslicethm}, we were able to eliminate variables corresponding to basis elements in $W$, thus showing that the intersection ${\mathcal S}\cap\overline\0$ is contained in a subvariety $Y$ of $f+{\mathfrak z}$ which is isomorphic to $V(2)\oplus V(4)\oplus V(6)$.
Note that this precisely corresponds to the structure of a minimal generating set of ${\mathbb C}[{\mathfrak h}_{B_3}\oplus {\mathfrak h}_{B_3}^*]^{W(B_3)}$.
We can now check that the restriction to $Y$ of the condition $\rho(f+z)^{10}=0$ gives precisely the thirty-six relations generating the ideal of relations for ${\mathbb C}[{\mathfrak h}_{B_3}\oplus {\mathfrak h}_{B_3}^*]^{W(B_3)}$, as outlined in \S \ref{wrprodinvs}.
(This requires a certain amount of reparametrization of $Y$.)
It also turns out to be straightforward to directly parametrize (in terms of points in ${\mathfrak h}_{B_3}\oplus{\mathfrak h}_{B_3}^*$) the intersection $Y\cap\overline\0$, providing a closed immersion $S^3({\mathbb C}^2/\mu_2)\rightarrow Y\cap\overline\0$ which, by the usual arguments, must be an isomorphism onto the image.
(The calculation in GAP of the $10$th power of the $56\times 56$ parametrized matrix takes a few minutes.)
\end{proof}

\begin{remark}\label{WB3remark}
a) Similarly to Remark \ref{F4calcremark} and Remark \ref{E8calcremark}, we note the following isomorphism of posets:
$$
\small{\xymatrix@=.2cm{
{} & E_7(a_5)\ar@{-}[dl]\ar@{-}[dr] & {} &&&& {} & 1 \ar@{-}[dl]\ar@{-}[dr] & {} \\
D_6(a_2)\ar@{-}[d]\ar@{-}[dr]\ar@{-}[drr] & {} & E_6(a_3)\ar@{-}[dl]\ar@{-}[d] &&&& A_1\ar@{-}[d]\ar@{-}[dr]\ar@{-}[drr] & {} & \tilde{A}_1\ar@{-}[dl]\ar@{-}[d] \\
A_5+A_1\ar@{-}[dr]  & A'_5\ar@{-}[d] & D_5(a_1)+A_1\ar@{-}[dl] &&&& A_2 \ar@{-}[dr] & A_1\tilde{A}_1\ar@{-}[d] & B_2\ar@{-}[dl]    \\
{} & A_4+A_2 & {} &&&& {} & B_3 & {}  }}
$$
On the right hand side we denote by $X_n$ a parabolic subgroup of $W(B_n)$; each such parabolic subgroup (up to conjugacy) defines a symplectic leaf, the image in the quotient of the points with stabilizer $W(X_n)$.
The partial order ensures that this isomorphism is unique up to swapping $A_1\tilde{A}_1$ and $B_2$; we verified by computation that the points with stabilizer $W(B_2)$ map to $D_5(a_1)+A_1$.

b) We can (somewhat conjecturally) interpret the Theorem in terms of the affinized ${A}(\0)$-cover $\widetilde\0$ of $\0$.
(Recall that $A(\0)$ denotes the component group of the centralizer of $e$ in the adjoint group of $G$).
Note that the union of the orbits $A_5+A_1$, $D_6(a_2)$ and $E_7(a_5)$ is the special piece containing $\0$; we will prove later that the corresponding Slodowy slice singularity is $({\mathfrak h}_2\oplus {\mathfrak h}_2^*)/\mathfrak{S}_3$.
Lusztig's special pieces conjecture in this case (to be verified in \cite{Fu-Juteau-Levy-Sommers:GeomSP}) says that $\widetilde\0$ is smooth over points of $A_5+A_1$.
Let $\pi:\widetilde\0\rightarrow\overline\0$ be the surjection extending the $A(\0)$-cover of $\0$.
Since $D=\mu_2\times\mu_2\times\mu_2$ is a normal subgroup of $W(B_3)$ with quotient $W(B_3)/D\cong\mathfrak{S}_3$, it is natural to expect $\pi^{-1}({\mathcal S})$ to be isomorphic to $({\mathfrak h}_{B_3}\oplus {\mathfrak h}_{B_3}^*)/D$, i.e. to ${\mathbb C}^2/\mu_2\times {\mathbb C}^2/\mu_2\times {\mathbb C}^2/\mu_2$.

c) A further interesting aspect to this case is that, while $A(\0)=\mathfrak{S}_3$, the {\it fundamental} group of $\0$ is $\mathfrak{S}_3\times\mu_2$.
Thus the affinized {\it universal} cover $\widetilde\0_{\rm uni}$ of $\0$ maps onto $\widetilde\0$ via a $\mu_2$-quotient.
It seems possible that the pre-image of ${\mathcal S}$ in $\widetilde\0_{\rm uni}$ is the quotient of ${\mathbb C}^2\times{\mathbb C}^2\times{\mathbb C}^2$ by the Klein 4-group generated by $(-I,-I,I)$ and $(I,-I,-I)$.
If this is the case then the pre-images of the $E_6(a_3)$ and $A'_5$ orbits become smooth, and the affinized universal cover has only terminal singularities (specifically, a $b_2$ singularity at any point lying above the $D_5(a_1)+A_1$ orbit).
\end{remark}

For our second result of this subsection, we instead let $\g$ be a simple Lie algebra of type $E_6$, with $\0$ the nilpotent orbit with label $E_6(a_3)$ and ${\mathcal S}$ the Slodowy slice at an element of $\0'=\0_{A_4+A_1}$.
Let the cyclic group $\mu_3$ of order 3 act on ${\mathbb C}^2$ via inclusion in the diagonal matrices in $\SL_2$.

\begin{theorem}\label{G(3,1,2)thm}
With ${\mathcal S}$, $\0$ as above, the intersection ${\mathcal S}\cap\overline\0$ is isomorphic to $S^2({\mathbb C}^2/\mu_3)$.
\end{theorem}

\begin{proof}
Let $V$ be the (complex) reflection representation for $\Gamma=G(3,1,2)$.
Recall that the coordinate ring of $S^2({\mathbb C}^2/\mu_3)$ is equal to ${\mathbb C}[V\oplus V^*]^\Gamma$, and it has eight generators.
These generators are of bidegree $(2,0)$, $(3,1)$, $(3,-1)$, $(4,0)$, $(5,1)$, $(5,-1)$, $(6,2)$ and $(6,-2)$ (respectively denoted $t$, $u$, $v$, $\delta^2$, $\delta\xi$, $\delta\eta$, $\xi^2$ and $\eta^2$ in \S \ref{wrprodinvs}).
Let $\rho:\g\rightarrow{\rm End}(V_{27})$ be either of the two irreducible $\g$-representations of dimension 27.
We can determine block sizes $\rho(x)$ for nilpotent $x\in{\mathfrak g}$ by including ${\mathfrak g}$ (in the standard way) as a subalgebra of a simple Lie algebra of type $E_7$, which decomposes as ${\mathfrak g}\oplus V_{27}\oplus V_{27}^*\oplus {\mathbb C}$.
Alternatively, one can inspect the tables in \cite{Stewart} to see that $x\in{\mathfrak g}$ belongs to $\overline\0$ if and only if $\rho(x)^9=0$.

Let $h,e,f$ be an $\mathfrak{sl}_2$-triple with $e\in\0'$.
We outline the structure of the centralizer ${\mathfrak z}$ of $e$, which has dimension 16.
The reductive centralizer $\g^e\cap\g^f={\mathfrak c}={\mathbb C}c$ is a one-dimensional (toral) subalgebra.
The $({\rm ad}\, h)$-action induces a grading on ${\mathfrak z}$, which we denote ${\mathfrak z}=\oplus_{i\geq 0}{\mathfrak z}(i)$.
Since $c\in{\mathfrak c}={\mathfrak z}(0)$, then each graded component ${\mathfrak z}(i)$ has a basis of eigenvectors for ${\rm ad}\, c$.
(We can assume these eigenvalues are integers, and the eigenvalues $\pm 1$ occur.)
%Recall that the Kazhdan grading on the Slodowy slice ${\mathcal S}=f+{\mathfrak z}$ is such that $s\cdot (f+\sum_i z_i) = f+\sum_i s^{i+2} z_i$, where $z_i\in{\mathfrak z}(i)$.
Combining the actions of ${\rm ad}\, h$ and ${\rm ad}\, c$ defines a bigrading on ${\mathfrak z}$.
%Denote by ${\mathbb C}_{a,b}$ the 1-dimensional $({\mathbb C}h\oplus {\mathbb C}c)$-module such that $h$, resp. $c$ acts by multiplication by $a$, resp. $b$.
It can be read off from the tables in \cite{Lawther-Testerman} that ${\mathfrak z}=U\oplus W$, where both $U$ and $W$ are $(h,c)$-stable subspaces, $\dim U=\dim W=8$ and $U$ has a basis of $(h,c)$-eigenvectors with eigenvalues: $(0,0), (1,1), (1,-1), (2,0), (3,1), (3,-1), (4,2), (4,-2)$.
Recalling that the canonical contracting ${\mathbb G}_m$-action on the slice is given by $$t\cdot \left(f+\sum_{i\geq 0}z_i\right)=f+\sum_{i\geq 0} t^{i+2} z_i,$$ (where $z_i\in{\mathfrak z}(i)$), we see that a basis for $U$ has precisely the bidegrees corresponding to the above set of generators for ${\mathbb C}[V\oplus V^*]^\Gamma$.
%$${\mathfrak z}={\mathbb C}_{0,0}\oplus {\mathbb C}_{1,1}\oplus {\mathbb C}_{1,-1}\oplus {\mathbb C}_{2,0}\oplus {\mathbb C}_{2,0}\oplus {\mathbb C}_{3,1}\oplus {\mathbb C}_{3,-1}\oplus {\mathbb C}

Similarly to the calculations in other cases, we first constructed a basis for ${\mathfrak z}$ in GAP and used the conditions ${\rm tr}(\rho(x)^2)=0$ and $\rho(x)^9=0$ to show that the intersection ${\mathcal S}\cap\overline\0$ is contained in a subvariety $Y$ of ${\mathcal S}$ which is isomorphic to $U$ (in other words, $Y$ is given by a section of the projection $f+U\oplus W\rightarrow U$).
Along the same lines as our calculations in the three previous cases, we next checked that the restriction to $Y$ of the condition $\rho(x)^9=0$ is equivalent to the ideal of relations (with nine generators, as detailed in \S \ref{wrprodinvs}) satisfied by the generators of ${\mathbb C}[V\oplus V^*]^\Gamma$.
It is also (at this stage) a straightforward task to directly parametrize the intersection $Y\cap\overline\0$, thus obtaining an explicit isomorphism $S^2({\mathbb C}^2/\mu_3)\rightarrow Y\cap\overline\0={\mathcal S}\cap\overline\0$.

\end{proof}

\begin{remark}
There are only four symplectic leaves in this case:
$$
\small{\xymatrix@=.2cm{
{} & E_6(a_3) \ar@{-}[dr] \ar@{-}[dl] & {} &&&& {} & 1 \ar@{-}[dr] \ar@{-}[dl] & {} \\
A_5 \ar@{-}[dr] & {} & D_5(a_1) \ar@{-}[dl] &&&& \mathfrak{S}_2 \ar@{-}[dr] & {} & \mu_3 \ar@{-}[dl] \\
{} & A_4+{A}_1 & {} &&&& {} & G(3,1,2) & {} }}
$$
Note that the singularity attached to each edge is $A_2$, except for the edge $E_6(a_3)>A_5$ (which is an $A_1$ singularity).
Similarly to Remark \ref{WB3remark}, the component group $A(\0)=\mathfrak{S}_2$ in the adjoint group is equal to the quotient $\Gamma/D$, where $D$ is the diagonal subgroup of $\Gamma$ (in this case $\mu_3\times\mu_3$).
The pre-image of ${\mathcal S}$ in the affinized $A(\0)$-cover should therefore be ${\mathbb C}^2/\mu_3\times{\mathbb C}^2/\mu_3$, with the singular locus lying above the closure of the $D_5(a_1)$ orbit.
Another common feature with Remark \ref{WB3remark} is that the fundamental group of $\0$ is $\mathfrak{S}_2\times\mu_3$.
It seems likely that the pre-image of ${\mathcal S}$ in the affinized universal cover of $\0$ should equal ${\mathbb C}^4/\mu_3$ (i.e. points lying above $A_5$ and $D_5(a_1)$ are smooth).
\end{remark}

\subsection{An $\mathfrak{S}_4$-quotient from outer space}\label{outerspace}

In this subsection, let $\g=\mathfrak{sl}_3$.
Recall that $a_2$ denotes the closure of the minimal nilpotent orbit in $\g$.
There are two natural embeddings of $\mathfrak{S}_4$ in the automorphism group of $\g$.
The first of these embeds $\mathfrak{S}_4$ in $\GL_3$, hence in the inner automorphism group of $\mathfrak{sl}_3$, via the $3$-dimensional reflection representation.
(It is easy to see that twisting by the sign representation has no impact on the inclusion in ${\rm Aut}(\mathfrak{sl}_3)$.)
Concretely, we consider the (unique) Klein four subgroup $\Gamma_4$ of $\SL_3$ contained in the diagonal maximal torus.
The Weyl group $\mathfrak{S}_3$ acts on $\Gamma_4$ via the permutation action on the three non-identity elements.
Choosing the permutation matrices as representatives of the Weyl group, we obtain a subgroup of $\GL_3$ isomorphic to $\mathfrak{S}_3\cong \mathfrak{S}_4/\Gamma_4$, which together with $\Gamma_4$ generates a subgroup of the inner automorphism group isomorphic to $\mathfrak{S}_4$.
For short, we call this {\it inner $\mathfrak{S}_4$}.

All elements of $\mathfrak{S}_4$ in the above construction belong to ${\rm O}_3(\C)$ and hence commute with the outer automorphism $\gamma:g\mapsto g^{-T}$.
Replacing each odd permutation of inner $\mathfrak{S}_4$ by its composition with $\gamma$, we obtain a different subgroup of ${\rm Aut}(\g)$ which we call {\it outer $\mathfrak{S}_4$}.
Let us fix this action of $\mathfrak{S}_4$ on $\g$ (and hence on $a_2$).
Let $\0$ (resp. $\0'$) be the nilpotent orbit with label $E_8(a_6)$ (resp. $E_8(b_6)$) and let ${\mathcal S}$ be the Slodowy slice at an element of $\0'$.
We will show that ${\mathcal S}\cap\overline\0$ is isomorphic to $a_2/{\mathfrak S}_4$.

The action on $a_2$ is not free: each $4$-cycle has a 2-dimensional fixed point subset, which is equal to the set of nilpotent elements in a proper Levi subalgebra.
This can be seen by a direct calculation, using the much easier fact that each element of $\Gamma_4$ fixes an $\mathfrak{sl}_2$.
Note however that transpositions act freely on the minimal nilpotent orbit.
(This contrasts with inner $\mathfrak{S}_4$, where the nilpotent elements in a proper Levi subalgebra are fixed by a pair of commuting transpositions, with a $4$-cycle acting by an involution of $\mathfrak{sl}_2$.)

\subsubsection{Generators and relations for $a_2/\mathfrak{S}_4$}

Our first step is to determine the generators and relations of the coordinate ring of $a_2/{\mathfrak S}_4$.
It will be useful to consider the larger set $X$ of elements of $\g$ of rank at most 1, which contains $a_2$ as a divisor.
The elements of $X$ can be written in the form $xy^T$ where $x,y\in{\mathbb C}^3$.
Hence the coordinate ring of $X$ can be identified with $\C[x_i y_j : 1\leq i,j\leq 3]\subset\C[x_1,\ldots ,y_3]$.
With this description, the ideal of $a_2$ is principal, generated by $z:=x_1y_1+x_2y_2+x_3y_3$.
It will be convenient to write the indices $i,j$ modulo $3$.
Note that each non-identity element of $\Gamma_4$ fixes the diagonal entries $x_i y_i$ and one of the pairs $x_i y_{i+1}, x_{i+1} y_i$, and acts as $-1$ on all other $x_i y_j$.
The same argument as Lemma \ref{Gamma4quot} therefore gives us:

\begin{lemma}
a) The $\Gamma_4$-invariants on ${\mathbb C}[X]$ are generated by:
$$\{ b_{i} = x_iy_i, c_i = x_{i+1}^2 y_{i-1}^2, d_i = x_{i-1}^2 y_{i+1}^2 : i\in{\mathbb Z}/(3)\}.$$

b) Let $\lambda_i=b_{i+1}-b_{i-1}$ for $i\in{\mathbb Z}/(3)$.
(In particular, $\sum_i \lambda_i=0$.)
The action of $\mathfrak{S}_4/\Gamma_4\cong\mathfrak{S}_3$ on ${\mathbb C}[X]^{\Gamma_4}$ is as follows:

 $\bullet$ $z=b_1+b_2+b_3$ spans a copy of the sign representation;

 $\bullet$ $\mathfrak{S}_3$ acts by permuting the columns of the matrix:
$$\begin{pmatrix} \lambda_1 & \lambda_2 & \lambda_3 \\ c_1 & c_2 & c_3 \\ d_1 & d_2 & d_3 \end{pmatrix}.$$
\end{lemma}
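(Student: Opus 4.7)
The plan is to prove (a) by analyzing the $\Gamma_4$-action on monomials in the $x_iy_j$ directly, then reducing an arbitrary $\Gamma_4$-invariant monomial to a product of $b_i$'s, $c_i$'s, $d_i$'s by an elementary combinatorial argument. For (b), we carry out a direct computation of how an even or odd (suitably twisted) element of $\mathfrak{S}_4/\Gamma_4 \cong \mathfrak{S}_3$ acts on each of the generators.

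More concretely, for (a), identify $\mathbb{C}[X]$ with the subring of $\mathbb{C}[x_1,\dots,x_3,y_1,\dots,y_3]$ spanned by monomials $x^a y^b$ with $|a|=|b|$. Each non-identity element of $\Gamma_4$ is $\mathrm{diag}(\epsilon_1,\epsilon_2,\epsilon_3)$ with exactly one $+1$ and product $1$; since this lies in $\SL_3$ and acts on $\mathfrak{sl}_3$ by conjugation, it acts on both $x_i$ and $y_i$ by $\epsilon_i$. Hence $x^a y^b$ is $\Gamma_4$-invariant precisely when $a_i+b_i$ is even for every $i$. Using the identity $x_iy_j\cdot x_jy_i = b_ib_j$ in $\mathbb{C}[X]$, we may trade any pair $(x_iy_j,x_jy_i)$ for a factor of $b_ib_j$; after extracting $\prod_i b_i^{\min(a_i,b_i)}$, we are left with a monomial whose $x$-support is disjoint from its $y$-support and whose exponents are all even. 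A simple matching argument then pairs each unit of $x_i^2$ with a unit of $y_j^2$ (automatically with $i\ne j$), and each such pair is one of the $c_\ell$ or $d_\ell$, completing (a).

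For (b), write $f_{ij}(M)=M_{ij}$ on $\mathfrak{sl}_3$. Even elements of $\mathfrak{S}_3$ are represented by permutation matrices $w$, acting by conjugation and hence by $f_{ij}\mapsto f_{w(i),w(j)}$; this immediately permutes $\{b_i\}$, $\{c_i\}$, $\{d_i\}$, $\{\lambda_i\}$ according to $w$ and fixes $z$. For a transposition $w$ in the outer embedding, the acting element is $\sigma = w\circ\gamma$ with $\gamma(M)=-M^T$, and one computes $\sigma(M)=-wM^Tw^{-1}$, whence $\sigma\cdot f_{ij}=-f_{w(j),w(i)}$. This gives $\sigma\cdot b_i=-b_{w(i)}$, so $z\mapsto -z$, matching the sign-representation claim. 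The key point is that an odd $w\in\mathfrak{S}_3$ reverses the cyclic order, i.e.\ $w(i\pm 1)=w(i)\mp 1$; combined with the minus sign and the index swap in $\sigma\cdot f_{ij}$, this yields $\sigma\cdot\lambda_i = -(b_{w(i)-1}-b_{w(i)+1}) = \lambda_{w(i)}$, and since $c_i,d_i$ are squares, the $-1$ disappears and the index swap composed with cyclic reversal gives $\sigma\cdot c_i = f_{w(i)+1,w(i)-1}^2 = c_{w(i)}$ (and similarly $\sigma\cdot d_i=d_{w(i)}$). The main obstacle, and the step that needs the most care, is aligning sign conventions and cyclic-order reversal in this last computation so that odd permutations really act by $c_i\mapsto c_{w(i)}$ rather than interchanging the $c$- and $d$-rows.
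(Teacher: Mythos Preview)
Your proof is correct. For part (a), the paper does not give an explicit argument but merely notes that each non-identity element of $\Gamma_4$ fixes the diagonal entries $x_iy_i$ and one pair $\{x_iy_{i+1},x_{i+1}y_i\}$ while negating the rest, and then says ``the same argument as Lemma \ref{Gamma4quot}'' applies. Your monomial-parity analysis is exactly what unpacking that reference amounts to: the invariance condition on $x^ay^b$ is that all $a_i+b_i$ have the same parity, and the constraint $|a|=|b|$ forces them all even. Your reduction using $(x_iy_j)(x_jy_i)=b_ib_j$ and the extraction of $\prod b_i^{\min(a_i,b_i)}$ is a clean way to see directly that $b_i,c_i,d_i$ suffice, whereas the reference to Lemma \ref{Gamma4quot} would naively also produce the mixed products $(x_iy_j)(x_jy_i)$ and the triple products, which then have to be seen to be redundant.

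For part (b) the paper offers no argument at all, so your direct computation is genuinely additional content. Your key observation---that an odd permutation $\pi$ of $\mathbb Z/(3)$ reverses the cyclic order, i.e.\ $\pi(i\pm1)=\pi(i)\mp1$---is exactly what is needed, and your check that the sign and the index swap from $\gamma$ combine with this reversal to give $c_i\mapsto c_{\pi(i)}$ (rather than landing in the $d$-row) is correct. The worry you flag at the end does not materialize: the transpose in $\gamma$ swaps the two subscripts of $f_{i+1,i-1}$, and the cyclic reversal swaps them back, so $c$ and $d$ are each preserved as rows.
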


Note the similarity of the description in (b) with the proof of Thm. \ref{d4S4lem}.
We can now easily list generators for ${\mathbb C}[a_2]^{{\mathfrak S}_4}$: these come from the elementary invariants in the $\lambda_i$ and their polarisations $\sum_i c_i$, $\sum_i \lambda_i c_i$, $\sum_i c_i^2 d_i$ etc.
In fact, several of these turn out to be redundant.
To obtain the relations between them, we exploit the isomorphism ${\mathbb C}[a_2]^{{\mathfrak S}_4}\cong {\mathbb C}[X]^{{\mathfrak S}_4}/{\mathbb C}[X]^{{\mathfrak S}_4}\cap(z)$.
The ideal ${\mathbb C}[X]^{{\mathfrak S}_4}\cap(z)$ is simply the set of all $zh$ with $h\in{\mathbb C}[X]^{{\mathfrak A}_4}$ a semi-invariant for ${\mathfrak S}_4$.

Thus we work in the ring ${\mathbb C}[X]^{{\mathfrak A}_4}$.
It is not difficult to obtain an exhaustive list of generators.
Those which are invariant for ${\mathfrak S}_4$ can be obtained by polarisations as above; there are 18 of them.
Those which are semi-invariant for ${\mathfrak S}_4$ can be obtained similarly, by considering eigenvectors for a $3$-cycle.
Apart from the linear term $z$, there are quadratic semi-invariants such as $\sum \lambda_i (b_{i+1}-b_{i-1})$, and cubic terms $\prod_{i<j} (\lambda_i-\lambda_j)$ along with similar mixed cubic terms in $\lambda_i$, $c_i$, $d_i$.
There are $14$ generating semi-invariants.

Calculations in GAP allowed us to eliminate ten of the invariants and seven of the semi-invariants.
Hence ${\mathbb C}[X]^{{\mathfrak A}_4}$ (resp. ${\mathbb C}[a_2]^{{\mathfrak S}_4}$) has fifteen (resp. eight) generators.
Note that there is an action of a cyclic group of order 3 on $a_2/\mathfrak{S}_4$, coming from the element ${\rm diag}(\omega, 1, \omega^{-1})\in\SL_3$ (which normalizes $\mathfrak{S}_4$ in ${\rm Aut}(\g)$); then we can choose homogeneous generators of ${\mathbb C}[a_2]^{{\mathfrak S}_4}$ which are eigenvectors for this action.
Denote the generators $$f_{0},f_{1},f_{2},g_{0},g_{1},g_{2},h_{1},h_{2}$$
where the $f_i$ (resp. $g_i$, $h_i$) are of degree $4$ (resp. $6$, $8$) and the subscript $i$ indicates a $\omega^i$-eigenvector for the above element of order 3.

We used GAP to find relations among the generators of ${\mathbb C}[X]^{{\mathfrak A}_4}$.
Setting $z=0$, we thus obtained the following nine elements of the ideal of relations:
$$5f_0^3+108f_0f_1f_2-216f_1h_2-216f_2h_1+24g_0^2-216g_1g_2, \quad 8f_0^2f_1-15f_0h_1-18f_2h_2+6g_0g_1+18g_2^2,$$
$$8f_0^2f_2-15f_0h_2-18f_1h_1+6g_0g_2+18g_1^2, \quad f_0^2g_1-10f_0f_1g_0+18f_0f_2g_2+12f_2^2g_0+12g_0h_1-36g_2h_2,$$
$$f_0^2g_0-4f_1f_2g_0-12g_1h_2-12g_2h_1, \quad f_0^2g_2-10f_0f_2g_0+18f_0f_1g_1+12f_1^2g_0+12g_0h_2-36g_1h_1,$$
$$f_0^4-68f_0^2f_1f_2-48f_0g_1g_2+144f_1^2h_1-144f_1g_1^2+144g_2^2h_2-144f_2g_2^2+144h_1h_2,$$
$$9f_0^3f_1+32f_0^2f_2^2-36f_0f_1^2f_2-18f_0^2h_1-120f_0f_2h_2+24f_0g_2^2+72f_1^2h_1+72f_1g_1g_2+72f_2g_1^2+72h_2^2,$$
$$9f_0^3f_2+32f_0^2f_1^2-36f_0f_1f_2^2-18f_0^2h_2-120f_0f_1h_1+24f_0g_1^2+72f_2^2h_1+72f_2g_1g_2+72f_1g_2^2+72h_1^2.$$

We then checked using Magma that the ideal generated by these relations is prime, of height 4.
Hence this is a complete list of relations.
(We note once again the coincidence with ${\mathbb C}[a_2]$, where there are eight generators and nine relations.)
This preparation allowed us to computationally verify the following:

\begin{theorem}\label{a2S4thm}
Let $\0'$ and $\0$ be the nilpotent orbits in $E_8$ with respective Bala-Carter labels $E_8(b_6)$ and $E_8(a_6)$, and let ${\mathcal S}$ be the Slodowy slice to an element $f\in\0'$.
Then ${\mathcal S}\cap\overline{\0}$ is isomorphic to $a_2/\mathfrak{S}_4$.
\end{theorem}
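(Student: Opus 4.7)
The plan is to proceed computationally, following the template of Theorems \ref{F4excslicethm} and \ref{E8excslicethm} and exploiting the explicit presentation of $\C[a_2]^{\mathfrak{S}_4}$ established just above. Fix an $\mathfrak{sl}_2$-triple $\{h,e,f\}$ with $e,f\in\0'$, and set ${\mathcal S}=f+\g^e$. First, we identify defining equations for $\overline\0$ inside $\g$: reading off the largest $\ad$-eigenvalue of a neutral element for $\0_{E_8(a_6)}$ and comparing with the heights of the orbits lying immediately above it in the closure order, we find an integer $N$ such that $(\ad x)^N=0$ characterizes $\overline\0$ set-theoretically. Restricting the matrix entries of this identity to ${\mathcal S}$, together with the scalar condition ${\rm tr}((\ad x)^2)=0$, yields explicit defining equations of ${\mathcal S}\cap\overline\0$ inside $f+\g^e$.

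Next, working in a basis of $\g^e$ that respects both the Kazhdan grading and the decomposition of $\g^e$ over the reductive centralizer $\mathfrak{c}=\g^e\cap\g^h$ (cf.\ the basis of \cite{Lawther-Testerman}), many of the restricted equations contain linear terms in certain coordinates. Iteratively eliminating these variables embeds ${\mathcal S}\cap\overline\0$ into a comparatively small affine subspace $Y\subset{\mathcal S}$. The crucial step is then to exhibit an explicit $\mathfrak{S}_4$-invariant morphism $\psi\colon a_2\to{\mathcal S}$ whose image lies in $\overline\0$, by writing each coordinate of $\psi$ (in $Y$) as a polynomial in the eight generators $f_0,f_1,f_2,g_0,g_1,g_2,h_1,h_2$ of $\C[a_2]^{\mathfrak{S}_4}$. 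The search for $\psi$ is tightly constrained: it must be equivariant for the Kazhdan $\mathbb{G}_m$-action, for the residual $\mathbb{Z}/(3)$-action produced by $\diag(\omega,1,\omega^{-1})\in\SL_3$, and for the $\mathfrak{c}$-module structure of $\g^e$; moreover, each coordinate relation on $\mathcal S$ pulled back by $\psi$ must be a consequence of the nine relations among the $f_i,g_i,h_i$ computed above. Since all eight generators appear non-trivially in $\psi$, the induced map $\bar\psi\colon a_2/\mathfrak{S}_4\to{\mathcal S}\cap\overline\0$ is automatically a closed immersion.

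It remains to verify (a) that $\psi(a_2)\subset\overline\0$, and (b) that $\bar\psi$ is surjective. For (b), the slice singularity ${\mathcal S}\cap\overline\0$ is irreducible of dimension $4=\dim a_2/\mathfrak{S}_4$ by unibranchness (see \S \ref{branching}), so the closed immersion $\bar\psi$ is surjective by a dimension count. The main obstacle is (a): verifying the full matrix identity $(\ad \psi(x))^N=0$ column-by-column in GAP is prohibitively slow at rank $8$, so we will follow the second strategy used in the proof of Theorem \ref{E8excslicethm}. Namely, we select a small subset of the matrix equations, verify symbolically that it vanishes on $\psi(a_2)$, and then check at a single regular point that this subset already cuts out a $4$-dimensional subvariety of ${\mathcal S}$; combined with the irreducibility from (b), its zero set must then coincide with $\psi(a_2)$, completing the proof.
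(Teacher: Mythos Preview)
Your proposal is correct and follows essentially the same computational template as the paper's own proof: reduce the slice equations $(\ad x)^{19}=0$ (together with ${\rm tr}((\ad x)^2)=0$) to an $8$-dimensional affine subspace of ${\mathcal S}$, match its coordinates with the eight generators $f_0,\ldots,h_2$ of ${\mathbb C}[a_2]^{\mathfrak{S}_4}$, and conclude via the dimension count and unibranchness. The one substantive difference is in the verification that $\psi(a_2)\subset\overline\0$: you assert that column-by-column checking is prohibitively slow and opt for the small-subset-plus-regular-point method from Theorem~\ref{E8excslicethm}, whereas the paper in fact carries out the column-by-column verification in GAP (the full matrix power overflows memory, but individual columns do not); your alternative is perfectly valid, just not what was actually done here.
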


\begin{proof}
To verify this, we used the fact that an element $x\in{\mathcal S}$ belongs to $\overline{\0}$ if and only if $(\ad x)^{19}=0$.
Although the Slodowy slice is of dimension $28$, it was not difficult (in GAP) to find an $8$-dimensional subset, isomorphic to ${\mathbb A}^8$, which must contain ${\mathcal S}\cap\overline{\0}$.
By carefully choosing coordinate functions on the sub-slice and identifying them with $f_0, \ldots , h_2$, we were able to determine that the relations above are {\it necessary} conditions for the $19$-th power to equal zero.
This implies that the relations are also sufficient conditions, since $\0'$ is of codimension $4$ in $\overline{\0}$ (and $\overline{\0}$ is unibranch at $f$).

The phrase `must contain' in the above paragraph is perhaps slightly unsatisfactory.
To be a bit more transparent, we have also verified in GAP that the conditions are sufficient.
View $f_0, \ldots , h_2$ as functions on $a_2$, hence we have a map $\Theta:a_2\rightarrow{\mathcal S}$ which factors through the quotient by $\mathfrak{S}_4$.
We want to check that the image of $\Theta$ is contained in $\overline{\0}$.
Similarly to Thm. \ref{E8excslicethm}, GAP runs out of memory when computing the $19$-th power of this matrix; we have verified column by column that the $19$-th power equals zero.
A much quicker probabilistic proof can be obtained by checking that $\Theta(g)=0$ for a few specific $g$ in $a_2$.
\end{proof}

\begin{remark}
The quotient $a_2/\mathfrak{S}_4$ and similar ``combined ${\mathfrak S}_n$ and ${\mathbb Z}_q$ quotients'' have been studied in the context of Coulomb branches of quiver gauge theories in \cite{hanany2023actions}.
\end{remark}

\subsubsection{A non-normal minimal degeneration}

As in the previous subsection, let $\0$ (resp. $\0'$) be the orbit with Bala-Carter label $E_8(a_6)$ (resp. $E_8(b_6)$).
Recall that $\0''=\0_{D_7(a_1)}$ is the only orbit intermediate between $\0'$ and $\0$.
In \cite{FJLS} we asserted that ${\mathcal S}\cap\overline{\0''}$ is isomorphic to the non-normal singularity $\mu={\rm Spec}\, {\mathbb C}[s^2t^2, s^3t^3, s^4, t^4, s^5t, st^5]$.
Let us show how this follows from Thm. \ref{a2S4thm}.
The nilpotent elements of $a_2$ with non-trivial stabilizer subgroup in $\mathfrak{S}_4$ are precisely those fixed by the 4-cycles, hence are the nilpotent elements lying in the three type $A_1$ Levi subalgebras.
It follows that the singular locus of $a_2/\mathfrak{S}_4$ is isomorphic to the image of the quotient homomorphism applied to (the nilpotent elements in) one of these Levi subalgebras.
We fix the following choice: $$N_{st}=\begin{pmatrix} st & s^2 & 0 \\ -t^2 & -st & 0 \\ 0 & 0 & 0 \end{pmatrix} : s,t\in{\mathbb C}.$$
Recalling the generators of the $\Gamma_4$-invariants, we can see that we have $$\begin{pmatrix} \lambda_1 &\lambda_2 & \lambda_3 \\ c_1 & c_2 & c_3 \\ d_1 & d_2 & d_3 \end{pmatrix} = \begin{pmatrix} -st & -st & 2st \\ 0 & 0 & s^4 \\ 0 & 0 & t^4 \end{pmatrix}.$$
In particular, we obtain the following values of the symmetric functions in the columns of the matrix:
$$\sum_i\lambda_i^2=6s^2t^2, \quad \sum_i\lambda_i^3=6s^3t^3, \quad \sum_i c_i=s^4,\quad\sum_i d_i=t^4, \quad \sum_i\lambda_i c_i=2s^5 t, \quad \sum_i \lambda_i d_i = 2st^5,$$
and it is easy to see that all other symmetric functions, evaluated at $N_{st}$, belong to ${\mathbb C}[s^2t^2,s^3t^3,s^4,t^4,s^5t,st^5]$.

\begin{corollary}
With the notation of Thm. \ref{a2S4thm}, ${\mathcal S}\cap\overline{\0''}$ is isomorphic to $\mu$.
\end{corollary}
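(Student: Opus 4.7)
The strategy is to combine Thm. \ref{a2S4thm} with a direct examination of the singular locus of $a_2/\mathfrak{S}_4$ using the explicit parametrization supplied by the Levi subalgebras fixed by $4$-cycles. First I would transport the problem to the quotient: under the isomorphism ${\mathcal S}\cap\overline{\0}\cong a_2/\mathfrak{S}_4$ of Thm. \ref{a2S4thm}, the three orbits $\0'<\0''<\0$ of the Slodowy intersection correspond bijectively to the three $\mathfrak{S}_4$-stabilizer classes on $a_2$ (trivial, cyclic of order $4$, and all of $\mathfrak{S}_4$). Since $\0''=\0_{D_7(a_1)}$ is the unique intermediate orbit, ${\mathcal S}\cap\overline{\0''}$ must correspond to the closure of the image in $a_2/\mathfrak{S}_4$ of the points with non-trivial stabilizer, i.e., to the singular locus of $a_2/\mathfrak{S}_4$. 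By unibranchness at $\0'$ (or by a dimension count, since $\0''$ has codimension $2$ in $\overline{\0}$ and the Levi fixed-point set has dimension $2$), this identification is an equality of closed subvarieties.

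Next I would use the fact, recalled in the excerpt, that the locus of non-trivial stabilizers is the union of the nilpotent cones in the three $A_1$ Levi subalgebras of $\mathfrak{sl}_3$ stabilized by $4$-cycles. These three subvarieties are permuted transitively by $\mathfrak{S}_4$, so all have the same image in the quotient, and it suffices to parametrize that image by the chosen family $N_{st}$. Writing $N_{st}=xy^T$ with $x=(s,-t,0)^T$, $y=(t,s,0)^T$, one reads off the values $b_1=st,\; b_2=-st,\; b_3=0$ and $c_3=s^4,\; d_3=t^4$ (all other $c_i,d_i$ vanishing), which immediately reproduces the six generators $s^2t^2,\; s^3t^3,\; s^4,\; t^4,\; s^5t,\; st^5$ of $\mu$ as the images of $p_2(\lambda),\; p_3(\lambda),\; \sum c_i,\; \sum d_i,\; \sum \lambda_i c_i,\; \sum\lambda_i d_i$ respectively.

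Finally I would verify that the restriction map $\mathbb{C}[a_2/\mathfrak{S}_4]\to\mathbb{C}[s,t]$ has image exactly $\mu$. Containment $\subseteq\mu$ is clear for each of the finitely many additional generating invariants: every surviving monomial at $N_{st}$ is supported on the unique nonzero column $(\lambda_3,c_3,d_3)=(2st,s^4,t^4)$ (possibly after a symmetrization picking up monomials of the form $\lambda_1^a c_i^b d_j^c$ which vanish unless $i=j=3$), and hence lies in $\mathbb{C}[st,s^4,t^4]$, whose intersection with the even-weight subring of $\mathbb{C}[s,t]$ (forced by the $\mathbb{Z}/2$-symmetry $(s,t)\mapsto(-s,-t)$ coming from the central element of $\Gamma_4$) is exactly $\mu$. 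Containment $\supseteq$ was noted in the previous paragraph. Hence the coordinate ring of ${\mathcal S}\cap\overline{\0''}$ is $\mathbb{C}[s^2t^2,s^3t^3,s^4,t^4,s^5t,st^5]$, proving the claim.

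The one step that requires real care is the first: justifying that ${\mathcal S}\cap\overline{\0''}$ is the singular locus of ${\mathcal S}\cap\overline{\0}\cong a_2/\mathfrak{S}_4$. Dimension alone narrows $\0''$ to either the singular locus or an irreducible component thereof, but to get equality one must either invoke the stabilizer-to-orbit bijection (which in turn rests on the description of the outer $\mathfrak{S}_4$-action, with transpositions acting freely on $\0_{\min}(\mathfrak{sl}_3)$ and $4$-cycles fixing precisely the $A_1$-Levi nilpotent cones) or directly appeal to unibranchness of $\overline{\0}$ at $\0'$ together with irreducibility of the singular locus. Once that correspondence is secured, the remaining steps are a mechanical invariant-theoretic verification.
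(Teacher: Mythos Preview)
Your approach is essentially the paper's: identify ${\mathcal S}\cap\overline{\0''}$ with the singular locus of $a_2/\mathfrak{S}_4$ via Thm.~\ref{a2S4thm}, then compute that singular locus explicitly as the image of the $N_{st}$-family. The paper's justification for the first identification is slightly cleaner than yours: it simply uses the standard fact that the singular locus of a nilpotent orbit closure $\overline{\0}$ is exactly its boundary $\overline{\0}\setminus\0=\overline{\0''}$, and that transverse slices preserve singular loci. Your alternative via the stabilizer-to-orbit bijection is also fine (and is implicitly what the paper has already checked in describing the stabilizers on $a_2$).

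There is, however, a small but genuine flaw in your containment argument. You claim that the intersection of $\mathbb{C}[st,s^4,t^4]$ with the subring fixed by $(s,t)\mapsto(-s,-t)$ is $\mu$. This is false: $st$, $s^4$, $t^4$ are all even under this involution, so the whole ring $\mathbb{C}[st,s^4,t^4]$ is fixed, and no element of $\Gamma_4$ acting on the $N_{st}$-family gives a nontrivial constraint (the element ${\rm diag}(-1,-1,1)$ fixes every $N_{st}$, and the other two act by $(s,t)\mapsto(\pm is,\mp it)$, which again fix $st,s^4,t^4$). The correct reason the image lands in $\mu$ and not in all of $\mathbb{C}[st,s^4,t^4]$ is a degree argument: the outer $\mathfrak{S}_4$ has no nonzero fixed vector in $\mathfrak{sl}_3$, so every $\mathfrak{S}_4$-invariant function on $a_2$ has degree $\geq 2$ in the $\mathfrak{sl}_3$-coordinates, hence degree $\geq 4$ in $(s,t)$ under $N_{st}$. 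Since $\mu$ is exactly the subalgebra of $\mathbb{C}[st,s^4,t^4]$ generated by elements of degree $\geq 4$ (the only missing monomial being $st$ itself), the image lies in $\mu$; together with the six generators you exhibited, this gives equality.
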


\begin{proof}
This follows from the fact that the singular locus of $\overline{\0}$ equals $\overline{\0}\setminus\0=\overline{\0''}$.
\end{proof}

The normalization of $\mu$ is easily seen to be ${\rm Spec}\,{\mathbb C}[st,s^4,t^4]$, i.e. an $A_3$ surface singularity.
This is the only known degeneration of special orbits with a non-normal unibranch singularity.

\subsubsection{Monodromy action}

The component group $\mathfrak{S}_3$ of $\0'$ acts on the Slodowy slice, and therefore on $a_2/\mathfrak{S}_4$.
In fact, $N_{{\rm Aut} {\mathfrak g}}(\mathfrak{S}_4)/\mathfrak{S}_4$ is isomorphic to $\mathfrak{S}_3$, and the monodromy action coincides with the action of the normalizer.
Concretely, ${\mathfrak A}_3$ is generated by conjugation by ${\rm diag}(\omega,1,\omega^2)$, and $g\mapsto -g^T$ is a transposition in $\mathfrak{S}_3$.

Similarly, we have an action of $\mathfrak{S}_3$ on $\mu$: taking the normalization, this is nothing other than the action of $\mathfrak{S}_3$ on a simple singularity of type $A_3$, via the inclusion of a cyclic subgroup of $\SL_2$ of order $4$ in a binary dihedral subgroup of order $24$.
It is easy to check that $\mu/\mathfrak{S}_3$ is normal.

%\section{Smooth equivalences of degenerations}\label{equivalences}

\section{Slodowy slices in special pieces}\label{Section:locallspconj}

We are now ready to prove the local version of Lusztig's special pieces conjecture.
Let ${\mathcal P}(\0)$ be a special piece in a simple Lie algebra, let $\0_{m}$ be the unique minimal orbit in ${\mathcal P}(\0)$, let ${\mathcal S}_m$ be the Slodowy slice at an element of $\0_{m}$ and let $H$ be the subgroup of $\bar{A}(\0)$ defined by Lusztig in \cite{Lusztig:unipotent}.
Denote by ${\mathfrak h}_{n-1}$ the reflection representation for $\mathfrak{S}_n$.

%\subsection{Exceptional Groups}

%When $\g$ is of exceptional type, the group appearing in Lusztig's special pieces conjecture, and hence in our Main Theorem, is $\mathfrak{S}_n$ for some $n\leq 5$.
%Denote by ${\mathfrak h}_{n-1}$ the reflection representation for $\mathfrak{S}_n$.
%Then Theorem \ref{main} asserts that the intersection ${\mathcal S}_m\cap\overline\0$ is $({\mathfrak h}_{n-1}\oplus{\mathfrak h}_{n-1}^*)^k/\mathfrak{S}_n$ for some $k$.

\begin{table}[htp] 
	\caption{The non-trivial special pieces in the exceptional types}\label{exceptional_special_pieces_table}
\begin{center}
\begin{tabular}{|c | c| c| c| c|}
\hline
$(n,k)$ & sing. & $\g$ & pairs $(\0,\0_m)$ \\
\hline
$(2,1)$ & $A_1$ & $E_6$ & $(E_6(a_3),A_5)$, $(A_2,3A_1)$ \\
 & & $E_7$ & $(E_7(a_3), D_6)$, $(E_6(a_3),A'_5)$, $(D_4(a_1)+A_1,A_3+2A_1)$, $(A_2,(3A_1)')$ \\
 & & $E_8$ & $(E_8(a_3), E_7)$, $(E_8(a_5), D_7)$, $(E_8(b_6),A_7)$, $(D_6(a_1),D_5+A_1)$, \\
 & & &  $(E_6(a_3),A_5)$, $(D_4(a_1)+A_2,A_3+A_2+A_1)$, $(2A_2,A_2+3A_1)$, $(A_2,3A_1)$ \\
 \hline
 $(2,2)$ & $c_2$ & $E_7$ & $(D_5(a_1),D_4+A_1)$ \\
 & & $E_8$ & $(E_7(a_3),D_6)$, $(A_4+2A_1,2A_3)$ \\
 \hline
 $(2,3)$ & $c_3$ & $F_4$ & $(\tilde{A}_1,A_1)$ \\
 & & $E_7$ & $(A_2+A_1,4A_1)$ \\
 & & $E_8$ & $(D_5(a_1),D_4+A_1)$ \\
 \hline
 $(2,4)$ & $c_4$ & $E_8$ & $(A_2+A_1,4A_1)$ \\
 \hline
 $(3,1)$ & ${\mathbb C}^4/\mathfrak{S}_3$ & $G_2$ & $(G_2(a_1),A_1)$ \\
 & & $E_6$ & $(D_4(a_1),2A_2+A_1)$ \\
 & & $E_7$ & $(E_7(a_5),A_5+A_1)$, $(D_4(a_1),2A_2+A_1)$ \\
 & & $E_8$ & $(E_8(b_5),E_6+A_1)$, $(D_4(a_1),2A_2+A_1)$ \\
 \hline
 $(3,2)$ & ${\mathbb C}^8/\mathfrak{S}_3$ & $E_8$ & $(D_4(a_1)+A_1,2A_2+2A_1)$ \\
 \hline
 $(4,1)$ & ${\mathbb C}^6/\mathfrak{S}_4$ & $F_4$ & $(F_4(a_3),A_2+\tilde{A}_1)$ \\
 \hline
 $(5,1)$ & ${\mathbb C}^8/\mathfrak{S}_5$ & $E_8$ & $(E_8(a_7),A_4+A_3)$ \\
 \hline
\end{tabular}
\end{center}
\end{table}

\begin{namedtheorem}[Main Theorem]\label{localLSPthm}
%\begin{theorem}\label{localLSPthm}
For $\g$ exceptional, the intersection ${\mathcal S}_m\cap {\mathcal P}(\0)$ is isomorphic to 
$$({\mathfrak h}_{n-1}\oplus{\mathfrak h}_{n-1}^*)^k/\mathfrak{S}_{n}$$
 for some $k$ and $n$ (and $H=\mathfrak{S}_n$).

For $\g$ classical, let $\0_1, \0_2, \dots \0_r$ be minimal degenerations of $\0$ contained in ${\mathcal P}(\0)$.
Then $H$ is a product of $r$ copies of $\mathfrak{S}_{2}$.  Let $k_i = \frac{1}{2} \codim_{\overline \0} \0_i$.
Then  ${\mathcal S}_m\cap {\mathcal P}(\0)$ is isomorphic to 
$$\prod_{i=1}^r ({\mathfrak h}_{1}\oplus{\mathfrak h}_{1}^*)^{k_i}/\mathfrak{S}_{2}.$$

In both cases, the slice is isomorphic to $\C^d/H$ where $d$ is the codimension of $\0_m$ in $\overline{\0}$ and the action
of $H$ is the one described in the above isomorphisms.
%\end{theorem}
\end{namedtheorem}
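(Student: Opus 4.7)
The plan is a case-by-case verification, organised around Table \ref{exceptional_special_pieces_table} in the exceptional types and a uniform reduction in the classical types. Throughout, set $d := \codim_{\overline{\0}} \0_m$. Two preliminary observations: first, ${\mathcal S}_m \cap {\mathcal P}(\0) = {\mathcal S}_m \cap \overline{\0}$, because a transverse slice at $f \in \0_m$ only meets orbits whose closure contains $\0_m$, and if some $\0' \supseteq \0_m$ lay in a closure $\overline{\0''}$ with $\0'' < \0$ special then $\0_m \subseteq \overline{\0''}$ would contradict $\0_m \in {\mathcal P}(\0)$; second, ${\mathcal S}_m \cap \overline{\0}$ is unibranch at $f$ (see \S \ref{branching}), hence irreducible.

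For the exceptional types, the $n = 2$ rows of the table describe minimal degenerations; the Slodowy slice singularity is then the generic singularity $\mathrm{Sing}(\0,\0_m)$ classified in \cite{FJLS}, and inspection of those tables shows it is of type $c_k$ in each case, that is, the closure of the minimal orbit in $\mathfrak{sp}_{2k}$, which is $\C^{2k}/\{\pm 1\} = (\mathfrak{h}_1 \oplus \mathfrak{h}_1^*)^k/\mathfrak{S}_2$. The three isolated rows with larger $(n,k)$ are covered directly by dedicated results: Lemma \ref{2A2+2A1lem} for $(3,2)$, Theorem \ref{F4excslicethm} for $(4,1)$, and Theorem \ref{E8excslicethm} for $(5,1)$. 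In each of the six $(3,1)$ rows $d = 4$, and $g_2^{\special} = \overline{\0_{\mini}(\mathfrak{so}_8)}/\mathfrak{S}_3 \cong (\mathfrak{h}_2 \oplus \mathfrak{h}_2^*)/\mathfrak{S}_3$ is four-dimensional and irreducible (\S \ref{G2case}). The $G_2$ row is Lemma \ref{G2minlem}; for the remaining five rows I would carry out a computation analogous to Lemmas \ref{G2minlem} and \ref{2A2+2A1lem}, choosing an $\mathfrak{sl}_2$-triple $\{h,e,f\}$ with $f\in\0_m$, decomposing $\g^e$ over the reductive part $C := G^e\cap G^h$ (which in these cases contains a factor isomorphic to $\SL_2$), and exhibiting an explicit representative of $\0$ in $f + \g^e$ whose $C \times \mathbb{G}_m$-orbit closure matches the presentation of $(\mathfrak{h}_2 \oplus \mathfrak{h}_2^*)/\mathfrak{S}_3$ given by Lemma \ref{S3lem}(ii). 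Since that orbit closure has dimension $4$ and ${\mathcal S}_m \cap \overline{\0}$ is unibranch of the same dimension, the two coincide.

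For classical types, $H = \mathfrak{S}_2 \times \cdots \times \mathfrak{S}_2$ with $r$ factors. By Kraft--Procesi's normality of special pieces \cite{Kraft-Procesi:special}, it suffices to identify ${\mathcal S}_m \cap \0$ with the open part of the product $\prod_{i=1}^r \overline{\0_{\mini}(\mathfrak{sp}_{2k_i})}$, each factor carrying its diagonal $\mathfrak{S}_2$-action, as indicated in the paragraph following the original statement of the Main Theorem in the Introduction. Rewriting each $\overline{\0_{\mini}(\mathfrak{sp}_{2k_i})} \cong \C^{2k_i}/\{\pm 1\}$ as $(\mathfrak{h}_1 \oplus \mathfrak{h}_1^*)^{k_i}/\mathfrak{S}_2$ finishes that case. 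The final identification with $\C^d/H$ is then automatic from either decomposition, since $2k(n-1) = d$ in exceptional types and $2\sum k_i = d$ in classical types.

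The principal obstacle is the family of $(3,1)$ rows: because the minimal degeneration from $\0_m$ inside the special piece is typically not to $\0$ itself (the codimension $4$ is often achieved through two steps of codimension $2$), the singularity at any individual minimal step is strictly smaller than the full $g_2^{\special}$. Ensuring that the explicit representative of $\0$ constructed in the slice really does deform through the intermediate orbit(s) of the piece into the generic element of $\0$, and that the centralizer structure permits the application of Lemma \ref{S3lem}(ii), is the main nontrivial point; the dimension count together with unibranchness then closes the argument.
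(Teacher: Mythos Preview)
Your organisation and the treatment of the $n=2$ rows, the $(3,2)$, $(4,1)$, $(5,1)$ cases, and the $G_2$ row all match the paper exactly. The classical case is also essentially right in spirit, though the paper supplies the concrete mechanism you gesture at: the partition of $\0$ is obtained from that of any non-special $\0'$ in the piece by replacing each block $[s^{m(s)}]$ (with $s\equiv\epsilon$, $h(s)\not\equiv\epsilon'$) by $[s+1,s^{m(s)-2},s-1]$; each such block contributes a factor $\mathfrak{sp}_{m(s)}$ to the reductive centralizer of $e\in\0'$, and taking $e_0$ to be the sum of minimal nilpotents in those factors gives $f+e_0\in\0$ with $e_0$ of height $2$, so \cite[Cor.\ 4.9 and Lemma 4.3]{FJLS} identify the slice with $\overline{C(e)\cdot e_0}\cong\prod_i\overline{\0_{\mini}(\mathfrak{sp}_{2k_i})}$.

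The real problem is your proposed handling of the five remaining $(3,1)$ rows. Your plan is to work directly at $\0_m$ and invoke Lemma~\ref{S3lem} with $n=1$, which requires an $\SL_2$ factor in the reductive centralizer $C$ of $e\in\0_m$ acting so that $S^3V$ appears in $\g^e$. But this hypothesis fails: for instance, for $\0_m=2A_2{+}A_1$ in $E_6$ and for $\0_m=E_6{+}A_1$ in $E_8$, the connected reductive centralizer is a one-dimensional torus, with no semisimple part at all. So the direct analogue of Lemmas~\ref{G2minlem} and \ref{2A2+2A1lem} cannot be run at $\0_m$ in these cases.

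The paper circumvents this by a nested-slice reduction. For each of the five cases one can find an orbit $\0'$ \emph{below} $\0_m$ with $\0_m>\0'$ a minimal degeneration and $\0>\0'$ a minimal special degeneration (avoiding $\0'=A_2{+}3A_1$ in $E_7$). The reductive centralizer at $\0'$ has a simple factor $\mathfrak{c}_0$ of type $G_2$, and the slice ${\mathcal S}'\cap\overline\0$ equals $f'+\overline{C_0\cdot e_0}$ with $e_0$ subregular in $\mathfrak{c}_0$; i.e.\ the slice at $\0'$ is $g_2^{\special}$. Choosing an $\mathfrak{sl}_2$-triple $\{e_m,h_m,f_m\}$ in $\mathfrak{c}_0$ with $e_m$ a long root element, one has $f'+f_m\in\0_m$, and by \cite[Prop.\ 4.8]{FJLS} the affine space $f'+f_m+\g^{e'}\cap\g^{e_m}$ is a Slodowy slice at $f'+f_m$. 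Its intersection with $\overline\0$ is therefore the Slodowy slice from the minimal orbit to the subregular orbit closure \emph{inside $G_2$}, which is Lemma~\ref{G2minlem}. This reduces all five cases uniformly to the one already done, with no further computation.
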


\begin{proof}
{\bf Case: $\g$ exceptional.}  When $\g$ is of exceptional type, the group appearing in Lusztig's special pieces conjecture, and hence in our Main Theorem, is $\mathfrak{S}_n$ for some $n\leq 5$.  
There is nothing to prove if ${\mathcal P}(\0)=\0$ (in which case $H$ is trivial).
If $H=\mathfrak{S}_2$ then there are two orbits in ${\mathcal P}(\0)$, hence $\0>\0_m$ is a minimal degeneration and we only need to check that the Slodowy slice singularity is a minimal singularity of type $C$.
This follows from our earlier work, see \cite[\S 13]{FJLS}.
The statement when $H=\mathfrak{S}_4$ (resp. $\mathfrak{S}_5$) is Thm. \ref{F4excslicethm} (resp. Thm. \ref{E8excslicethm}).
If $\0=\0_{D_4(a_1)+A_1}$ in type $E_8$ then $H=\mathfrak{S}_3$ and this was proved in Lemma \ref{2A2+2A1lem}.
(This is the only case with $n>2$ and $k>1$.)
If $\0$ is the subregular orbit in type $G_2$ then this is Lemma \ref{G2minlem}.
This leaves one special piece in $E_6$, two in $E_7$ and two in $E_8$.
We show that all of the remaining Slodowy slice singularities are isomorphic to the slice from the minimal orbit to the subregular orbit closure in $G_2$ (hence to $({\mathfrak h}_2\oplus{\mathfrak h}_2^*)/\mathfrak{S}_3$).
To see this, we note that there are always three orbits in the special piece and (for these cases) there always exists a minimal special degeneration $\0>\0'$ such that $\0_m>\0'$ is a minimal degeneration.
Furthermore, we can assume that $\0'$ is not $A_2+3A_1$ in type $E_7$.
(When $\0=\0_{D_4(a_1)}$ we choose $\0'=\0_{2A_2}$ instead.)
This ensures the following: if ${\mathcal S}=f+\g^e$ is the Slodowy slice at an element $f\in\0'$, then ${\mathcal S}\cap\overline\0=f+\overline{C_0\cdot e_0}$, where $e_0$ is a subregular element of $\cg_0$, the simple component of $\g^e\cap\g^h$ of type $G_2$.
Note in particular that if $\{ e_m,h_m,f_m\}\subset\cg_0$ is an $\mathfrak{sl}_2$-triple with $e_m$ a long root element of $\cg_0$, then $f+f_0\in\0_m$ and, by the argument in \cite[Prop. 4.8]{FJLS}, $f+f_m+\g^e\cap\g^{e_m}$ is a Slodowy slice at $f+f_0$.
It follows that the Slodowy slice singularity from $\0_m$ to $\overline\0$ is isomorphic to the Slodowy slice singularity from the minimal orbit to the subregular orbit closure in $G_2$.

{\bf Case: $\g$ classical.}  
%We follow the notation from Section 2.4 and Section 5 in \cite{JLS:Duality}.
For generalities on the classification of nilpotent orbits via $\epsilon$-partitions, including criteria for specialness, we refer to \cite[Ch. 5]{C-M}.
For a partition $\lambda$ of $n$ and an integer $s$, let $m(s)$ (resp. $h(s)$) denote the multiplicity (resp. height, i.e. $\sum_{j\geq s}m(j)$) of $s$.
Let $\0$ be a special orbit and let $\lambda$ be the partition of $\0$.
Let $\0'$ be a non-special orbit in the piece, with partition $\nu$.  Pick $e \in \0'$.
As will be explained in \cite{JLS:Duality}, it follows from the classification of special orbits (detailed in \cite[\S 6.3]{C-M}) that $\lambda$ is obtained from $\nu$ by replacing occurrences of $[s^{m(s)}]$ in $\nu$ with parts $[s{+}1, s^{m(s)-2},s{-}1]$ in $\lambda$, where $s \equiv \epsilon$ and $h(s) \not \equiv \epsilon'$.
Note that for each such part $[s^{m(s)}]$ there is a simple factor $\mathfrak{sp}_m(s)$ of the reductive centralizer of $e$.
Choose a minimal nilpotent element in each such simple factor, and let $e_0$ be their sum.
Then $e_0$ is of height 2 in $\g$, hence \cite[Cor. 4.9 and Lemma 4.3]{FJLS} apply.
It is also now clear (by considering $\mathfrak{sl}_2$-triples) that $f+e_0$ belongs to $\0$, hence the slice is equal to $\overline{C(e)\cdot (f+e_0)}\cong \overline{C(e)\cdot e_0}$.
%Let $\0''$ be the minimal degeneration of $\0$ corresponding to replacing 
%$[s{+}1, s^{m(s)-2},s{-}1]$ in $\lambda$ by $[s^{m(s)}]$.  Then 
%$\0''$ has codimension $m(s)$ in $\overline{\0}$ and $\0''$ contains $\0'$ in its closure.  
The result therefore holds for any orbit in the special piece, including $\0_m$, since the minimal orbit closure in $\mathfrak{sp}(2k_i)$ is isomorphic to $\C^{2k_i}/\mathfrak{S}_2$ with the nontrivial elements acting by $-1$, as noted before.
In the case of $\0_m$, we have $r$ is the number of factors of $\mathfrak{S}_2$ in $H$, completing the proof.
\end{proof}

\begin{remark}\label{orderremark}
It follows from the Main Theorem that there is an order-reversing bijection between the strata in ${\mathcal S}_m\cap{\mathcal P}(\0)$, hence the nilpotent orbits contained in ${\mathcal P}(\0)$, and the parabolic subgroups of $H$ up to conjugacy.
This bijection is identical with the one described by Lusztig in \cite[\S 6]{Lusztig:unipotent}.
Indeed, since both bijections are order-reversing, this is immediate in classical types or when $H=\mathfrak{S}_2$ or $\mathfrak{S}_3$.
Hence, one only has to inspect the special pieces containing $F_4(a_3)$ and $E_8(a_7)$, see Remarks \ref{F4calcremark}(b) and \ref{E8calcremark}(b).
\end{remark}

In Table \ref{exceptional_special_pieces_table}, we list values of $n$ and $k$ for all special pieces in exceptional types with more than one orbit.

\bibliographystyle{alpha}
\bibliography{fjls}

\quad \vspace{0.5cm}

 Baohua Fu (\email{bhfu@math.ac.cn})

AMSS, HLM and MCM, Chinese Academy of Sciences, 5[Thm. 5.5]5 ZhongGuanCun East Road, Beijing, 100190, China and School of Mathematical Sciences, University of Chinese Academy of Sciences, Beijing, China
 \vspace{0.3 cm}

 Daniel Juteau (\email{daniel.juteau@u-picardie.fr} )

LAMFA, Universit\'e de Picardie Jules Verne, CNRS, Amiens, France
\vspace{0.3 cm}

Paul Levy (\email{p.d.levy@lancaster.ac.uk})

Department of Mathematics and Statistics Fylde College,
Lancaster University, Lancaster, LA1 4YF, United Kingdom
\vspace{0.3 cm}

 Eric Sommers(\email{esommers@umass.edu})

Department of Mathematics and Statistics, University of Massachusetts Amherst, Amherst,
MA 01003-4515, USA

\end{document}